\setlist[enumerate,1]{label=\textnormal{(\arabic*)}}
\setlist[enumerate,2]{label=\textnormal{(\alph*)}}
\setlist[enumerate,3]{label=\textnormal{(\roman*)}}
\let\oldmarginpar\marginpar
\renewcommand\marginpar[1]{\-\oldmarginpar[\raggedleft\footnotesize #1]%
{\raggedright\footnotesize #1}}
\numberwithin{equation}{section}
   \newtheoremstyle{example}{\topsep}{\topsep}%
     {}
     {}
     {\bfseries}
     {}
     {\newline}
     {\thmname{#1}\thmnumber{ #2}\thmnote{ #3}}
\newtheorem{theorem}{Theorem}[section]
\newtheorem{proposition}[theorem]{Proposition}
\newtheorem{lemma}[theorem]{Lemma}
\newtheorem{corollary}[theorem]{Corollary}
\theoremstyle{definition}
\newtheorem{definition}[theorem]{Definition}
\theoremstyle{remark}
\newtheorem{rmk}[theorem]{Remark}
\theoremstyle{example}
\newcommand{\diff}{\mathcal{D}}
\newcommand{\struct}{\mathcal{O}}
\newcommand{\cplx}{\mathbb{C}}
\DeclareMathOperator{\A}{\mathcal{A}}
\newcommand{\Spec}{\mathrm{Spec}}
\newcommand{\proj}{\mathbb{P}}
\newcommand{\sC}{\mathscr{C}}
\newcommand{\sM}{\mathscr{M}}
\newcommand{\tM}{\widetilde{\sM}}
\newcommand{\bfA}{\mathbf{A}}
\newcommand{\bfW}{\mathbf{W}}
\newcommand{\bfs}{\mathbf{s}}
\newcommand{\bfWa}{\mathbf{W}^{\mathrm{aff}}}
\newcommand{\orbit}{\mathscr{O}}
\newcommand{\bfw}{\mathbf{w}}
\newcommand{\bhw}{\hat{\mathbf{w}}}
\newcommand{\n}{\nabla}
\newcommand{\Waff}{W^{\mathrm{aff}}}
\newcommand{\ord}{\mathrm{ord}}
\newcommand{\Lie}{\mathrm{Lie}}
\newcommand{\Z}{\mathbb{Z}}
\newcommand{\id}{\mathrm{I}}
\newcommand{\N}{\mathbb{N}}
\newcommand{\Q}{\mathbb{Q}}
\newcommand{\C}{\mathbb{C}}
\newcommand{\glo}{C}
\newcommand{\triv}{\mathrm{triv}}
\newcommand{\ov}{\overline}
\newcommand{\tfl}{\mathfrak{t}^{\flat}}
\newcommand{\Tfl}{T^\flat}
\newcommand{\fP}{\mathfrak P}
\newcommand{\fp}{\mathfrak p}
\newcommand{\bfP}{\bar{\mathfrak P}}
\newcommand{\fI}{\mathfrak I}
\newcommand{\bfI}{\bar{\mathfrak I}}
\newcommand{\fo}{\mathfrak{o}}
\newcommand{\ft}{\mathfrak{t}}
\newcommand{\fd}{\mathfrak{d}}
\newcommand{\fs}{\mathfrak{s}}
\newcommand{\bft}{\bar{\mathfrak{t}}}
\newcommand{\fh}{\mathfrak{h}}
\newcommand{\fz}{\mathfrak{z}}
\newcommand{\fq}{\mathfrak{q}}
\newcommand{\fg}{\mathfrak g}
\newcommand{\fn}{\mathfrak{n}}
\newcommand{\fu}{\mathfrak{u}}
\newcommand{\fw}{\mathfrak{w}}
\newcommand{\sL}{\mathscr{L}}
\newcommand{\bfe}{\mathbf{e}}
\newcommand{\bL}{\bar{L}}
\newcommand{\bM}{\bar{M}}
\newcommand{\bN}{\bar{N}}
\newcommand{\bpi}{\bar{\pi}}
\newcommand{\bn}{\beta_\nu}
\newcommand{\An}{A_\nu}
\newcommand{\bbn}{\bar{\beta}_\nu}
\newcommand{\nt}{\nabla_\tau}
\renewcommand{\a}{\alpha}
\renewcommand{\b}{\beta}
\newcommand{\an}{\a_\nu}
\newcommand{\ep}{\epsilon}
\DeclareMathOperator{\Der}{\mathrm{Der}}
\DeclareMathOperator{\Res}{\mathrm{Res}}
\DeclareMathOperator{\GL}{\mathrm{GL}}
\DeclareMathOperator{\gl}{\mathfrak{gl}}
\DeclareMathOperator{\Hom}{\mathrm{Hom}}
\DeclareMathOperator{\Ad}{\mathrm{Ad}}
\DeclareMathOperator{\ad}{\mathrm{ad}}
\DeclareMathOperator{\Tr}{\mathrm{Tr}}
\DeclareMathOperator{\diag}{\mathrm{diag}}
\DeclareMathOperator{\gr}{\mathrm{gr}}
\DeclareMathOperator{\End}{\mathrm{End}}
\DeclareMathOperator{\Aut}{\mathrm{Aut}}
\DeclareMathOperator{\rank}{\mathrm{rank}}
\DeclareMathOperator{\spa}{\mathrm{span}}
\DeclareMathOperator{\slope}{\mathrm{slope}}
\DeclareMathOperator{\res}{\mathrm{res}}
\newcommand{\tmu}{\tilde{\mu}}
\newcommand{\tmug}{\tilde{\mu}_{\GL_n}}
\title[Moduli Spaces of Irregular Singular Connections]{Moduli spaces
  of Irregular Singular Connections}
\author{Christopher L.~Bremer}
\address{Department of Mathematics\\
  Louisiana State University\\
  Baton Rouge, LA 70803} 
\email{cbremer@math.lsu.edu} 
\author{Daniel
  S.~Sage} 
\email{sage@math.lsu.edu} 
\thanks{The research of the
  second author was partially supported by NSF grant~DMS-0606300 and
  NSA grant~H98230-09-1-0059.}  
\subjclass[2010]{Primary:14D24, 22E57; Secondary: 34Mxx, 53D30} 
\keywords{meromorphic connections, irregular singularities, moduli
  spaces, parahoric subgroups, fundamental stratum, regular stratum}
\begin{document}
\begin{abstract}
  In the geometric version of the Langlands correspondence, irregular
  singular point connections play the role of Galois representations
  with wild ramification.  In this paper, we develop a geometric
  theory of fundamental strata to study irregular singular connections
  on the projective line.  Fundamental strata were originally used to
  classify cuspidal representations of the general linear group over a
  local field.  In the geometric setting, fundamental strata play the
  role of the leading term of a connection.  We introduce the concept
  of a regular stratum, which allows us to generalize the condition
  that a connection has regular semisimple leading term to connections
  with nonintegral slope.  Finally, we construct a moduli space of
  meromorphic connections on the projective line with specified formal
  type at the singular points.

\end{abstract}

\maketitle
\section{Introduction}

A fundamental problem in the theory of differential equations is the
classification of first order singular differential operators up to
gauge equivalence.  An updated version of this problem, rephrased into
the language of algebraic geometry, is to study the moduli space of
meromorphic connections on an algebraic curve $C/k$, where $k$ is an
algebraically closed field of characteristic $0$.  This problem has
been studied extensively in recent years due to its relationship with
the geometric Langlands correspondence.  To elaborate, the classical
Langlands conjecture gives a bijection between automorphic
representations of a reductive group $G$ over the ad\`eles of a global
field $K$ and Galois representations taking values in the Langlands
dual group $G^\vee$.  By analogy, meromorphic connections (or, to be
specific, flat $G^\vee$-bundles) play roughly the same role in the
geometric setting as Galois representations (see \cite{F, FGa} for
more background).  Naively, one would like to find a description of
the moduli space of meromorphic connections that resembles the space
of automorphic representations of a reductive group.

A more precise statement is that the geometric Langlands data on the
Galois side does not strictly depend on the connection itself, but
rather on the monodromy representation determined by the connection.
When the connection has regular singularities, i.e., when there is a basis
in which the matrix of the connection has simple poles at each
singular point, the Riemann-Hilbert correspondence states that the
monodromy representation is simply a representation of the fundamental
group.  However, when the connection is irregular singular, the
monodromy has a more subtle description due to the Stokes phenomenon.
The irregular monodromy data consists of a collection of Stokes
matrices at each singular point, which characterize the asymptotic
expansions of a horizontal section on sectors around each irregular
singular point (see \cite{Wa}, or \cite{Wi} for a modern treatment.)

The irregular Riemann-Hilbert map from moduli spaces of connections to
the space of Stokes matrices is well understood in the following
situation.  Let $V \cong \struct_C^n$ be a trivializable rank $n$
vector bundle, and let $\nabla$ be a meromorphic connection on $V$
with an irregular singular point at $x$.  After fixing a local
parameter $t$ at $x$, suppose that $\nabla$ has the following local
description:

\begin{equation*}
\nabla = d + M_r \frac{dt}{t^{r+1}} + M_{r-1} \frac{dt}{t^r} + \ldots,
\end{equation*}
where $M_j \in \gl_n(\cplx)$ and the leading term $M_r$ has pairwise
distinct eigenvalues.  Then, we say that $\nabla$ has a regular
semisimple leading term at $x$.  Under this assumption, Jimbo, Miwa
and Ueno \cite{JMU} classify the deformations of $\nabla$ that
preserve the Stokes data by showing that they satisfy a system of
differential equations (the so-called isomonodromy equations).  In
principle, the isomonodromy equations give a foliation of the moduli
space of connections, with each leaf corresponding to a single
monodromy representation.  Indeed, the Riemann-Hilbert map has
surprisingly nice geometric properties for connections with regular
semisimple leading terms.  Consider the moduli space $\mathscr{M}$ of
connections on $\proj^1$ which have singularities with regular
semisimple leading terms at $\{x_1, \ldots, x_m\}$ and which belong
to a fixed formal isomorphism class at each singular point.  Boalch,
whose paper \cite{Boa} is one of the primary inspirations for this
project, demonstrates that $\mathscr{M}$ is the quotient of a smooth,
symplectic manifold $\widetilde{\mathscr{M}}$ by a torus action.
Moreover, the space of Stokes data has a natural symplectic structure
which makes the Riemann-Hilbert map symplectic.

However, many irregular singular connections that arise naturally in
the geometric Langlands program do not have a regular semisimple
leading term.  In \cite[Section 6.2]{Wi}, Witten considers a
connection of the form
\begin{equation*}
\nabla = d + \begin{pmatrix} 0 & t^{-r} \\ t^{-r+1} & 0 \end{pmatrix}dt,
\end{equation*}
which has a nilpotent leading term.  Moreover, it is not even locally
gauge-equivalent to a connection with regular semisimple leading term
unless one passes to a ramified cover.  A particularly important
example is described by Frenkel and Gross in \cite{FGr}.  They
construct a flat $G$-bundle on $\proj^1$, for arbitrary reductive $G$,
that corresponds to a `small' supercuspidal representation of $G$ at
$\infty$ and the Steinberg representation at $0$.  In the $\GL_n$
case, the result is a connection (due originally to Katz~\cite{Katz})
with a regular singular point at $0$ (with unipotent monodromy) and an
irregular singular point with nilpotent leading term at $\infty$.
This construction suggests that connections with singularities
corresponding to cuspidal representations of $G$, an important case in
the geometric Langlands correspondence, do not have regular semisimple
leading term in the sense above.  These examples lead us to one of our
main questions: is there a natural generalization of the notion of a
regular semisimple leading term which allows us to extend the results
of Boalch, Jimbo, Miwa and Ueno?

The solution to this problem is again motivated by analogy with 
the classical Langlands correspondence.
Suppose  that $F$ is a local field and $W$ is a ramified representation
of $\GL_n(F)$.    Let $ P \subset \GL_n (F)$ be a parahoric subgroup with a decreasing filtration 
by congruence subgroups $\{P^j\}$, and suppose that
$\beta$ is an irreducible representation of $P^r$ on which $P^{r+1}$
acts trivially. We say that $W$ contains the \emph{stratum} $(P, r, \beta)$ 
if the restriction of $W$ to $P^r$ has a subrepresentation isomorphic to $\beta$.

In the language of Bushnell and Kutzko \cite{Bu, BK, Ku}, the data
$(P,r, \beta)$ is known as a \emph{fundamental stratum} if $\beta$
satisfies a certain non-degeneracy condition (see
Section~\ref{subsec:strata}).  If we write $e_P$ for the period of the
lattice chain stabilized by $P$, an equivalent condition is that $(P,
r, \beta)$ attains the minimal value $r/e_P$ over all strata contained
in $W$ (\cite[Theorem~1]{Bu}).  It was proved independently by Howe
and Moy~\cite{HM} and Bushnell~\cite{Bu} that every irreducible
admissible representation of $\GL_n(F)$ contains a fundamental
stratum.  It was further shown in \cite{BK} and \cite{Ku} that
fundamental strata play an important role in the classification of
supercuspidal representations, especially in the case of wild
ramification.

As a tool in representation theory, fundamental strata play much the
same role as the leading term of a connection in the cases considered
above.  Therefore, we are interested in finding an analogue of the
theory of strata in the context of meromorphic connections in order to
study moduli spaces of connections with cuspidal type singularities.

In this paper, we develop a geometric theory of strata and apply it to
the study of meromorphic connections.  We introduce a class of strata
called \emph{regular strata} which are particularly well-behaved:
connections containing a regular stratum have similar behavior to
connections with regular semisimple leading term.  More precisely, a
regular stratum associated to a formal meromorphic connection allows
one to ``diagonalize'' the connection so that it has coefficients in
the Cartan subalgebra of a maximal torus $T$.  We call the
diagonalized form of the connection a $T$-formal type.  In
Section~\ref{formaliso}, we show that two formal connections that
contain regular strata are isomorphic if and only if their formal
types lie in the same orbit of the relative affine Weyl group of $T$.

The perspective afforded by a geometric theory of strata has a number
of benefits.
\begin{enumerate}

\item 
The description of formal connections obtained in terms of
  fundamental strata translates well to global connections on curves;
  one reason for this is that, unlike the standard  local classification theorem \cite[Theorem
  III.1.2]{Mal}, one does not need to pass to a ramified cover.  
  In the second half of the paper, we use regular strata to explicitly
  construct moduli spaces of irregular connections on $\proj^1$ with a
  fixed formal type at each singular point.
In particular, we obtain a concrete description of the moduli
space of connections with singularities of ``supercuspidal'' formal type.
\item Fundamental strata provide an illustration of the wild ramification case of the geometric
Langlands correspondence; specifically, in the Bushnell-Kutzko theory
\cite[Theorem 7.3.9]{BK}, refinements of 
fundamental strata correspond to induction data for admissible
irreducible representations of $\GL_n$.
\item The analysis of the irregular Riemann-Hilbert map due to Jimbo,
  Miwa, Ueno, and Boalch \cite{JMU, Boa} generalizes to a much larger
  class of connections.  Specifically, one can concretely describe the
  isomonodromy equations for families of connections that contain
  regular strata \cite{BrSa2}.

\item Since the approach is purely Lie-theoretic, it can be adapted to
  study flat $G$-bundles (where $G$ is an arbitrary reductive group)
  using the Moy-Prasad theory of minimal $K$-types \cite{MoPr}.
\end{enumerate}

Here is a brief outline of our results.  In Section~\ref{sec:strata},
we adapt the classical theory of fundamental strata to the geometric
setting.  Next, in Section~\ref{regstrata}, we introduce the notion of
regular strata; these are strata that are centralized (in a graded
sense defined below) by a possibly non-split maximal torus $T$.  The
major result of this section is Theorem~\ref{thm1}, which states that
regular strata split into blocks corresponding to the minimal Levi
subgroup $L$ containing $T$.

In Section~\ref{strataconnections}, we show how to associate a stratum
to a formal connection with coefficients in a Laurent series field $F$
or, equivalently, to a flat $\GL_n$-bundle over the formal punctured
disk $\Spec(F)$.  By Theorem~\ref{fundcontain}, every formal
connection $(V, \nabla)$ contains a fundamental stratum $(P, r,
\beta)$, and the quantity $r/e_P$ for any fundamental stratum
contained in $(V, \nabla)$ is precisely the slope.  Moreover,
Theorem~\ref{thm2} states that any splitting of $(P, r, \beta)$
induces a splitting of $(V, \nabla)$.  In particular, any connection
containing a regular stratum has a reduction of structure to the Levi
subgroup $L$ defined above.  In Section~\ref{formaltypes}, we show
that the matrix of any connection containing a regular stratum is
gauge-equivalent to a matrix in $\ft=\Lie(T)$, which we call a formal
type.  We show in Section~\ref{formaliso} that the set of formal types
associated to an isomorphism class of formal connections corresponds
to an orbit of the relative affine Weyl group in the space of formal
types.

In Section~\ref{modspace}, we construct a moduli space $\mathscr{M}$
of meromorphic connections on $\proj^1$ with specified formal type at a collection
of singular points as the symplectic reduction of a product of smooth
varieties that only depend on local data.  By Theorem~\ref{modthm},
there is a symplectic manifold $\widetilde{\mathscr{M}}$ which
resolves $\mathscr{M}$ by symplectic reduction.  Finally,
Theorem~\ref{thm3} relaxes the regularity condition on strata at
regular singular points so that it is possible to consider connections
with unipotent monodromy.  In particular, this construction contains
the $\GL_n$ case of the flat $G$-bundle described by Frenkel and
Gross.

\section{Strata}\label{sec:strata}

In this section, we describe an abstract theory of fundamental strata
for vector spaces over a Laurent series field in characteristic zero.
Strata were originally developed to classify cuspidal representations
of $\GL_n$ over non-Archimedean local fields~\cite{Bu, BK, Ku}.  We
will show that there is an analogous geometric theory with
applications to the study of flat connections with coefficients in
$F$.  In Section~\ref{regstrata}, we introduce a novel class of
fundamental strata of ``regular uniform'' type.  These strata will
play an important role in describing the moduli space of connections
constructed in section \ref{modspace}.

\subsection{Lattice Chains and the Affine Flag Variety}
Let $k$ be an algebraically closed field of characteristic zero.
Here, $\mathfrak{o} = k[[t]]$ is the ring of formal power series in a
variable $t$, $\mathfrak{p}=t\mathfrak{o}$ is the maximal ideal in
$\mathfrak{o}$, and $F = k((t))$ is the field of fractions.

Suppose that $V$ is an $n$-dimensional vector space over $F$.  An
$\mathfrak{o}$-lattice $L \subset V$ is a finitely generated
$\mathfrak{o}$-module with the property that $L \otimes_\mathfrak{o} F
\cong V$.  If we twist $L$ by powers of $t$,
\begin{equation*}\label{twist}
L(m) = t^{-m} L,
\end{equation*}
then every $L(m)$ is an $\mathfrak{o}$-lattice as well.
\begin{definition}
A \emph{lattice chain} $\mathscr{L}$ is a collection of lattices
$(L^i)_{i\in\Z}$
with the following properties:
\begin{enumerate}
\item $L^i \supsetneq L^{i+1}$; and
\item $ L^i (m) = L^{i-m e}$ for some fixed $e = e_{\mathscr{L}}$.
\end{enumerate}
\end{definition}

Notice that a shift in indexing $(L[j])^i := L^{i+j} $ produces a
(trivially) different lattice chain $\mathscr{L}[j]$.  The lattice
chain $\mathscr{L}$ is called \emph{complete} if $e =n$; equivalently,
$L^i /L^{i +1}$ is a one dimensional $k$-vector space for all $i$.

\begin{definition}\label{parahoric}
  A parahoric subgroup $P \subset \GL(V)$ is the stabilizer of a
  lattice chain $\mathscr{L}$, i.e., $P=\{g \in \GL(V)\mid g
  L^i = L^i \text{ for all } i\}$. The Lie algebra of $P$ is the
  parahoric subalgebra $\mathfrak{P} \subset \gl(V)$ consisting of
  $\mathfrak{P} = \{p \in \gl(V) \mid p L^i \subset L^i \text{ for all
  } i\}$.  Note that $\mathfrak{P}$ is in fact an associative
  subalgebra of $\gl(V)$.  An Iwahori subgroup $I$ is the stabilizer
  of a complete lattice chain, and an Iwahori subalgebra
  $\mathfrak{I}$ is the Lie algebra of $I$.
\end{definition}

There are natural filtrations on $P$ (resp. $\mathfrak{P}$) by
congruence subgroups (resp. ideals).  For $r\in \Z$, define the
$\mathfrak{P}$-module $\mathfrak{P}^r$ to consist of $X \in
\mathfrak{P}$ such that $X L^i \subset L^{i+r}$ for all $i$; it is an
ideal of $\mathfrak{P}$ for $r\ge 0$ and a fractional ideal otherwise.
The congruence subgroup $P^r \subset P$ is then defined by $P^0 = P$
and $P^r = \id_n + \mathfrak{P}^r$ for $r > 0$.
Define $e_P = e_{\mathscr{L}}$; then, $t\mathfrak{P} =
\mathfrak{P}^{e_P}$.  Finally, $P$ is \emph{uniform} if $\dim L^i /
L^{i+1} = n / e$ for all $i$.  In particular, an Iwahori subgroup $I$
is always uniform.
 \begin{proposition}[{\cite[Proposition 1.18]{Bu}}]\label{uniformprop}
   The Jacobson radical of the parahoric subalgebra $\mathfrak{P}$ is
   $\mathfrak{P}^1$.  Moreover, when $P$ is uniform, there exists an
   element $\varpi_{P} \in \mathfrak{P}$ such that $\varpi_P
   \mathfrak{P} = \mathfrak{P} \varpi_P = \mathfrak{P}^1$.
 \end{proposition}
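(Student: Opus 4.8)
The plan is to prove the two assertions separately: first to pin down $J(\mathfrak{P})$ by a two-sided inclusion, and then, in the uniform case, to exhibit the normalizing generator $\varpi_P$ in an explicit coordinate model attached to an adapted basis.

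For the equality $J(\mathfrak{P})=\mathfrak{P}^1$, I would first record that $\mathfrak{P}^r\mathfrak{P}^s\subseteq\mathfrak{P}^{r+s}$ (immediate from $XL^i\subseteq L^{i+r}$, $YL^i\subseteq L^{i+s}$), so $\mathfrak{P}^1$ is a two-sided ideal and $(\mathfrak{P}^1)^{e_P}\subseteq\mathfrak{P}^{e_P}=t\mathfrak{P}$. Hence for $x\in\mathfrak{P}^1$ the powers $x^n$ tend to $0$ $t$-adically; since $\mathfrak{P}$ is a finitely generated $\mathfrak{o}$-module and $\mathfrak{o}$ is $t$-adically complete, the series $\sum_{n\ge0}x^n$ converges in $\mathfrak{P}$ to a two-sided inverse of $1-x$. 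Thus $1+\mathfrak{P}^1\subseteq\mathfrak{P}^{\times}$, giving $\mathfrak{P}^1\subseteq J(\mathfrak{P})$. For the reverse inclusion I would show $\mathfrak{P}/\mathfrak{P}^1$ is semisimple. Put $d_i=\dim_k L^{i-1}/L^i$ (periodic of period $e_P$, summing to $n$). Each $p\in\mathfrak{P}$ induces $\bar p_i\in\End_k(L^{i-1}/L^i)$, and the $\mathfrak{o}$-algebra map $\mathfrak{P}\to\prod_{i=1}^{e_P}\End_k(L^{i-1}/L^i)$, $p\mapsto(\bar p_i)$, has kernel exactly $\mathfrak{P}^1$. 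I would check surjectivity by choosing, for each $i$, an $\mathfrak{o}$-basis of $L^{i-1}$ adapted to $L^i\supseteq tL^{i-1}$ and lifting a prescribed $\phi\in\End_k(L^{i-1}/L^i)$ to an element of $\mathfrak{P}$ that acts by $\phi$ on the $i$-th graded piece and by $0$ on all the others; summing over $i$ yields surjectivity. This identifies $\mathfrak{P}/\mathfrak{P}^1\cong\prod_{i=1}^{e_P}\gl_{d_i}(k)$, which is semisimple, so $J(\mathfrak{P})\subseteq\mathfrak{P}^1$.

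For the second statement, assume $P$ uniform, so $d_i=d:=n/e_P$ for all $i$. I would fix a basis of $V$ adapted to $\mathscr{L}$, which identifies $(\mathscr{L},\mathfrak{P})$ with a standard balanced model: $L^0=\mathfrak{o}^n$, and $L^i$ for $0\le i\le e_P$ is obtained by successively replacing blocks of $d$ generators by $t$ times themselves, so that $\mathfrak{P}$ is the evident block parahoric with $e_P$ diagonal blocks of size $d$. Let $\varpi_P\in\GL(V)$ be the block-cyclic element that shifts these $e_P$ blocks cyclically and multiplies by $t$ on the wrap-around. Then $\varpi_P L^i\subseteq L^{i+1}$ for every $i$, and since $\varpi_P$ is invertible with $\ord(\det\varpi_P)=d$, the lattice $\varpi_P L^i$ has the same colength $d$ in $L^i$ as $L^{i+1}$; being contained in $L^{i+1}$ it equals it, so $\varpi_P L^i=L^{i+1}$ for all $i$. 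Consequently $\varpi_P\mathfrak{P}$ and $\mathfrak{P}\varpi_P$ lie in $\mathfrak{P}^1$ (they shift the chain), while for $X\in\mathfrak{P}^1$ the maps $\varpi_P^{-1}X$ and $X\varpi_P^{-1}$ preserve the chain and so lie in $\mathfrak{P}$, giving $X\in\varpi_P\mathfrak{P}\cap\mathfrak{P}\varpi_P$. Hence $\varpi_P\mathfrak{P}=\mathfrak{P}\varpi_P=\mathfrak{P}^1$.

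The routine points are the convergence of the geometric series and the block bookkeeping in the standard model. The steps I expect to require the most care are the surjectivity of $\mathfrak{P}\to\prod_i\End_k(L^{i-1}/L^i)$ (equivalently, the semisimplicity of $\mathfrak{P}/\mathfrak{P}^1$) and the exact equality $\varpi_P L^i=L^{i+1}$: the latter colength/determinant computation is exactly where uniformity is indispensable, since without it no single cyclic shift can carry $L^i$ onto $L^{i+1}$ simultaneously for all $i$.
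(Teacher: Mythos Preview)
Your argument is correct. Note, however, that the paper does not actually prove this proposition: it is simply quoted from Bushnell~\cite[Proposition~1.18]{Bu} with no accompanying proof. That said, the paper later supplies exactly the ingredients you use. The surjectivity of $\mathfrak{P}\to\prod_i\End_k(L^{i-1}/L^i)$ and the resulting identification $\mathfrak{P}/\mathfrak{P}^1\cong\prod_i\gl_{d_i}(k)$ is precisely the $r=0$ case of Lemma~\ref{subquotient}, proved there by the same adapted-basis lift you describe. Your block-cyclic $\varpi_P$ is the element constructed in Remark~\ref{unifvarpi}, where it is obtained as $\varpi_I^{n/e_P}$ for a compatible Iwahori $\varpi_I$; the paper verifies $\varpi_P L^i=L^{i+1}$ directly from this description rather than via your colength/determinant count, but the two checks are equivalent. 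So your approach matches what the paper (and Bushnell) do, just assembled in one place.
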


As an example,
 suppose that $V = V_k \otimes_k F$ for a given $k$-vector space
 $V_k$.  There is a distinguished lattice $V_\mathfrak{o} = V_k
 \otimes_k \mathfrak{o}$, and an evaluation map
\begin{equation*}
\rho : V_\mathfrak{o} \to V_{k}
\end{equation*}
obtained by setting $t = 0$.  Any subspace $W \subset V_k$ determines
a lattice $\rho^{-1} (W) \subset V$.  Thus, if $\mathscr{F} = (V_k =
V^0 \supset V^1 \supset \ldots \supset V^e = \{ 0\})$ is a flag in
$V_k$, then $\mathscr{F}$ determines a lattice chain by
\begin{equation*}
\mathscr{L}_{\mathscr{F}} =  
( \ldots  \supset t^{-1}\rho^{-1} (V^{n-1}) \supset V_{\mathfrak{o}} \supset 
\rho^{-1} (V^1) \supset \ldots
\supset \rho^{-1}(V^{n-1}) \supset tV_{\mathfrak{o}} \supset \ldots).
\end{equation*}
We call such lattice chains (and their associated parahorics) {\em
  standard}.  Thus, if $\mathscr{F}_0$ is the complete flag determined
by a choice of Borel subgroup $B$, then $\rho^{-1}(B)$ is the standard
Iwahori subgroup which is the stabilizer of
$\mathscr{L}_{\mathscr{F}_0}$.  Similarly, the partial flag in $V_k$
determined by a parabolic subgroup $Q$ gives rise to a standard
parahoric subgroup $\rho^{-1}(Q)$ which is the stabilizer of the
corresponding standard lattice chain.  In particular, the maximal
parahoric subgroup $\GL_n(\mathfrak{o})$ is the stabilizer of the
standard lattice chain associated to $(V_k \supset \{0\}).$

In this situation, the obvious $\GL_n(F)$-action acts transitively on
the space of complete lattice chains, so we may identify this space
with the affine flag variety $\GL_n(F) / I$, where $I$ is a standard
Iwahori subgroup.  More generally, every lattice chain is an element
of a partial affine flag variety $\GL_n(F) / P$ for some standard
parahoric $P$.  For more details on the relationship between
affine flag varieties and lattice chains in general, see \cite{Sa00}.

For any maximal subfield $E\subset\gl(V)$, there is a unique Iwahori
subgroup $I_E$ such that $\fo^\times_E\subset I_E$.

\begin{lemma}\label{219}
Suppose that $P$ is a parahoric subgroup of $\GL(V)$ that
stabilizes a lattice chain $\mathscr{L}$.  Let $E/F$ be a degree
$n=\dim V$ field extension with a fixed embedding in $\gl(V)$ 
such that $\mathfrak{o}_E^\times \subset P$.
Then, there exists a complete lattice chain $\mathscr{L}_E \supset
\mathscr{L}$ with stabilizer $I_E$ 
such that $\mathfrak{o}_E^\times \subset I_E$ and $I_E \subset P$; it is
unique up to translation of the indexing.  In particular,
$\fo_E^\times$ is contained in a unique Iwahori subgroup.
\end{lemma}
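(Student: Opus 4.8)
The plan is to reduce everything to the model case $V = E$ and then read off $\mathscr{L}_E$ from the fractional ideals of $\mathfrak{o}_E$. Since $[E:F] = n = \dim_F V$ and $E$ acts $F$-linearly on $V$ via the given embedding $E \hookrightarrow \gl(V)$, the space $V$ is one-dimensional over $E$; fix a generator $v$, so $V = E\cdot v$. Because $k$ is algebraically closed, the residue field of the complete discrete valuation ring $\mathfrak{o}_E$ is $k$, so $E/F$ is totally ramified of degree $n$: if $\varpi_E$ is a uniformizer then $t = c\,\varpi_E^{n}$ for some $c \in \mathfrak{o}_E^\times$, and $\mathfrak{o}_E = \mathfrak{o}[\varpi_E]$. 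Put $L_E^j = \varpi_E^{j}\mathfrak{o}_E\cdot v$ for $j \in \Z$. Then $L_E^j \supsetneq L_E^{j+1}$, $\, t^{-m}L_E^j = L_E^{j-mn}$ (the unit $c$ being irrelevant), and $L_E^j/L_E^{j+1}\cong \mathfrak{o}_E/\varpi_E\mathfrak{o}_E = k$ is one-dimensional over $k$; hence $\mathscr{L}_E := (L_E^j)$ is a complete lattice chain and its stabilizer $I_E$ is an Iwahori subgroup. Multiplication by a unit of $\mathfrak{o}_E$ carries $\varpi_E^j\mathfrak{o}_E v$ to itself, so $\mathfrak{o}_E^\times \subseteq I_E$.

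The key observation is that any $\mathfrak{o}$-lattice $L \subseteq V$ stabilized by $\mathfrak{o}_E^\times$ is automatically an $\mathfrak{o}_E$-submodule. Indeed $1+\varpi_E \in \mathfrak{o}_E^\times$, so $(1+\varpi_E)L = L$, whence $\varpi_E L = (1+\varpi_E)L - L \subseteq L$; iterating and using $\mathfrak{o}L\subseteq L$ gives $\mathfrak{o}[\varpi_E]L = \mathfrak{o}_E L \subseteq L$. A nonzero finitely generated $\mathfrak{o}_E$-submodule of $V = E\cdot v$ has the form $\varpi_E^{a}\mathfrak{o}_E\cdot v$ for some $a\in\Z$, that is, it is one of the $L_E^a$. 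Applying this to each lattice $L^i$ of $\mathscr{L}$: since $\mathfrak{o}_E^\times \subseteq P$ stabilizes $L^i$, we get $L^i = L_E^{a_i}$ for some $a_i$. Thus every lattice of $\mathscr{L}$ occurs in $\mathscr{L}_E$, which is exactly the assertion $\mathscr{L}_E \supset \mathscr{L}$; and since $I_E$ stabilizes every $L_E^j$, it stabilizes every $L^i$, that is, $I_E \subseteq P$.

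For the uniqueness statement (and the final sentence), let $\mathscr{M} = (M^j)$ be any complete lattice chain whose stabilizer contains $\mathfrak{o}_E^\times$. By the key observation each $M^j$ equals $\varpi_E^{b_j}\mathfrak{o}_E\cdot v$ for some $b_j\in\Z$, and the lattice-chain axioms force $b_{j+1} > b_j$ together with $b_{j+n} = b_j + n$, hence $b_{j+1} = b_j + 1$ for all $j$; therefore $\mathscr{M} = \mathscr{L}_E[b_0]$, and $\mathscr{M}$ has the same stabilizer $I_E$. This shows that $\mathscr{L}_E$ is unique up to reindexing and that every Iwahori subgroup containing $\mathfrak{o}_E^\times$ equals the single Iwahori $I_E$. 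The one step that is not purely formal is the reduction at the start: it is the algebraic closedness of $k$ --- forcing the residue field of $\mathfrak{o}_E$ to be $k$, so that $\mathscr{L}_E$ is complete and $I_E$ is an Iwahori rather than a larger parahoric --- combined with the remark that $\mathfrak{o}_E$ is generated over $\mathfrak{o}$ by the single unit $1+\varpi_E$.
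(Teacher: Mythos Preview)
Your proof is correct and follows essentially the same approach as the paper: identify $V$ with $E$, recognize that each $L^i$ is an $\mathfrak{o}_E$-submodule (hence a fractional ideal $\varpi_E^a\mathfrak{o}_E$), and take $\mathscr{L}_E$ to be the full chain of fractional ideals. Your version is more explicit in two places where the paper is terse: you spell out why $\mathfrak{o}_E^\times L = L$ forces $\mathfrak{o}_E L \subseteq L$ (via the unit $1+\varpi_E$ and $\mathfrak{o}_E = \mathfrak{o}[\varpi_E]$), and you make explicit that algebraic closedness of $k$ is what guarantees $E/F$ is totally ramified so that $\mathscr{L}_E$ is genuinely complete.
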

\begin{proof}
  We may identify $V$ with $E$ as an $E$-module.  Since
  $\mathfrak{o}_E^\times \subset P$, it follows that $\mathfrak{o}_E
  \subset \mathfrak{P}$.  Therefore, we may view $\mathscr{L}$ as a
  filtration of $E$ by nonzero $\mathfrak{o}_E$-fractional ideals.
  Since $\fo_E$ is a discrete valuation ring, there is a maximal
  saturation $\mathscr{L}_E$ of $\mathscr{L}$, unique up to indexing,
  consisting of all the nonzero fractional ideals, and it is clear
  that $\mathfrak{o}_E^\times \subset I_E \subset P$.  The final
  statement follows by taking $P$ to be the stabilizer of the lattice
  $\fo_E$.
 
\end{proof}

\subsection{Duality}\label{sec:dual}
Let $\Omega^1_{F/k}$ be the space of one-forms on $F$,
and let $\Omega^\times \subset \Omega^1_{F/k}$ be the $F^\times$-torsor
of non-zero one-forms.
If $\nu \in  \mathfrak{o}^\times \frac{dt}{t^\ell} \subset \Omega^\times$,
its {\em order} is defined by $\ord(\nu) = - \ell$.
Any $\nu \in \Omega^\times$ defines
a nondegenerate invariant  symmetric $k$-bilinear form $\langle , \rangle_\nu$ on $\gl_n(F)$ by
\begin{equation*}
\langle A, B \rangle_\nu = \Res \left[ \Tr (A B) \nu \right],
\end{equation*}
where $\Res$ is the usual residue on differential forms.  In most
contexts, one can take $\nu$ to be $\frac{dt}{t}$.

Let $\mathfrak{P}$ be the parahoric subalgebra that preserves a
lattice chain $\mathscr{L}$.

\begin{proposition}[Duality]\label{duality}
Fix $\nu \in \Omega^\times$.  Then,
\begin{equation*}\label{dual1}
(\mathfrak{P}^{s})^\perp = \mathfrak{P}^{1-s - (1+  \ord(\nu)) e_P},
\end{equation*}
and, if $r\le s$,
\begin{equation*}\label{dual2}
  (\mathfrak{P}^r / \mathfrak{P}^s)^\vee \cong \mathfrak{P}^{1-s - (1+  \ord(\nu)) e_P}/\mathfrak{P}^{1-r - (1+  \ord(\nu)) e_P};
\end{equation*} 
here, the superscript ${}^\vee$ denotes the $k$-linear dual.
\end{proposition}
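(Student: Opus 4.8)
The plan is to prove the first identity directly from the definitions of the form $\langle\,,\,\rangle_\nu$ and of the filtration $\{\mathfrak{P}^j\}$, and then deduce the second identity as a formal consequence of the first together with a dimension count. First I would unwind what $X \in (\mathfrak{P}^s)^\perp$ means: it says $\Res[\Tr(XY)\nu] = 0$ for all $Y \in \mathfrak{P}^s$. Writing $\nu = u(t)\,\frac{dt}{t^{\ell}}$ with $u \in \mathfrak{o}^\times$ and $\ell = -\ord(\nu)$, one has $\Res[\Tr(XY)\nu] = \Res[\Tr(XY)u\,t^{-\ell}\,dt]$, so the pairing of $X$ against $\mathfrak{P}^s$ is controlled by the $t^{\ell-1}$-coefficient of $\Tr(XY)u$. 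Because $u$ is a unit in $\mathfrak{o}$, multiplication by $u$ is an automorphism of $\mathfrak{o}$ and one checks it preserves each $\mathfrak{P}^j$; thus it suffices to handle the case $\nu = \frac{dt}{t^{\ell}}$, i.e. $\ord(\nu) = -\ell$, and the general case follows by absorbing $u$. (Alternatively I would simply carry $u$ along; the bookkeeping is identical.)

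The heart of the matter is therefore the standard duality of the congruence filtration under the residue–trace pairing with $\nu = \frac{dt}{t}$ (the case $\ell = 1$, $\ord(\nu)=-1$, where the claimed exponent is $1-s$), and then keeping track of how twisting $\nu$ by $t^{1+\ord(\nu)}$ shifts everything. Concretely: the pairing $\langle A,B\rangle_{dt/t} = \Res[\Tr(AB)\,\frac{dt}{t}]$ identifies $\gl_n(F)$ with its own topological dual so that the annihilator of $\mathfrak{P}^s$ is exactly $\mathfrak{P}^{1-s}$. This is essentially \cite[Prop.\ 1.18]{Bu}-type bookkeeping; the key inputs are (i) $t\mathfrak{P} = \mathfrak{P}^{e_P}$, so twisting by $t$ shifts the filtration by $e_P$, and (ii) for a complete lattice chain the pairing pairs $\mathfrak{P}^s/\mathfrak{P}^{s+1}$ perfectly with $\mathfrak{P}^{-s}/\mathfrak{P}^{1-s}$, which one sees by choosing an $\mathfrak{o}$-basis of $V$ adapted to $\mathscr{L}$ and writing matrices in block form indexed by the graded pieces $L^i/L^{i+1}$; the trace form then becomes a sum of pairings between the $(i,j)$ and $(j,i)$ blocks, and the valuation constraints defining $\mathfrak{P}^s$ and $\mathfrak{P}^{-s}$ are exactly complementary. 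For a general (non-complete) parahoric the same block computation goes through, with blocks indexed by the period $e_P$ and the extra shift by $(1+\ord(\nu))e_P$ coming from replacing $dt/t$ by $\nu$. Having established $(\mathfrak{P}^s)^\perp = \mathfrak{P}^{1-s-(1+\ord(\nu))e_P}$, the quotient statement is immediate: for $r \le s$ the form induces a perfect pairing between $\mathfrak{P}^r/\mathfrak{P}^s$ and $(\mathfrak{P}^r)^\perp/(\mathfrak{P}^s)^\perp$ by general nonsense about nondegenerate forms and nested subspaces (the annihilator of the sub $\mathfrak{P}^s \subset \mathfrak{P}^r$ inside the dual of $\mathfrak{P}^r$ is $(\mathfrak{P}^s)^\perp/(\mathfrak{P}^r)^\perp$, etc.), and substituting the formula for $(\mathfrak{P}^s)^\perp$ and $(\mathfrak{P}^r)^\perp$ gives the stated isomorphism.

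The main obstacle I anticipate is purely notational rather than conceptual: verifying the perfect pairing between graded pieces $\mathfrak{P}^s/\mathfrak{P}^{s+1}$ for a non-uniform parahoric, where the blocks $\Hom(L^i/L^{i+1}, L^j/L^{j+1})$ have varying dimensions and one must check that the valuation condition "$XL^i \subset L^{i+r}$ for all $i$" translates correctly, block by block, into the complementary condition for $\mathfrak{P}^{-s-(1+\ord(\nu))e_P}$ under the trace pairing. One has to be careful that the residue only sees the appropriate diagonal of blocks and that the shift $e_P$ in property (2) of a lattice chain ($L^i(m) = L^{i-me_P}$) is inserted in the right place when $\nu$ has order $\neq -1$. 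Once the complete-chain case is done cleanly, the general parahoric case follows either by the same direct computation or by observing that any parahoric is "squeezed" between its associated Iwahori subgroups (via Lemma~\ref{219}-style saturations) and reducing to the Iwahori case; I would present the direct computation, since it is self-contained and the shift keeps track of itself.
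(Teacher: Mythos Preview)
Your approach is correct and essentially what one would expect for a direct proof. Note, however, that the paper does not actually prove this proposition: it simply cites Proposition~1.11 and Corollary~1.13 of \cite{Bu}. So you are supplying considerably more than the paper does.

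Your outline is the standard argument and matches what is in Bushnell. A few remarks on execution: the reduction from general $\nu$ to $\nu=\frac{dt}{t}$ is cleanest if phrased as $\langle A,B\rangle_\nu = \langle A, u\,t^{1+\ord(\nu)} B\rangle_{dt/t}$, so that the annihilator of $\mathfrak{P}^s$ under $\langle\,,\,\rangle_\nu$ equals the annihilator of $t^{1+\ord(\nu)}\mathfrak{P}^s=\mathfrak{P}^{s+(1+\ord(\nu))e_P}$ under $\langle\,,\,\rangle_{dt/t}$, from which the shift is immediate. For the $\nu=\frac{dt}{t}$ case, the block computation you sketch (using the decomposition of $\bar{\mathfrak{P}}^r$ in Lemma~\ref{subquotient}) is exactly right and works uniformly for non-uniform parahorics; your anticipated obstacle about varying block sizes is not really an issue, since the $(i,j)$ and $(j,i)$ blocks always pair perfectly under trace regardless of their dimensions. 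The passage from the first identity to the quotient dual is indeed formal.
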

This is shown in Proposition 1.11 and Corollary 1.13 of \cite{Bu}.  In
particular, when $\ord(\nu) = -1$, $(\mathfrak{P}^{s})^\perp =
\mathfrak{P}^{1-s}$ and $(\mathfrak{P}^r/\mathfrak{P}^{r+1})^\vee =
\mathfrak{P}^{-r}/\mathfrak{P}^{-r+1}$. 

Observe that any element of $\fP^r$ induces an endomorphism of the
associated graded $\fo$-module $\gr(\mathscr{L})$ of degree $r$;
moreover, two such elements induce the same endomorphism of
$\gr(\mathscr{L})$ if and only if they have the same image in
$\fP^r/\fP^{r+1}$.

The following lemma gives a more precise description of the quotients
$\mathfrak{P}^r/\mathfrak{P}^{r+1}$.  Let $\bar{G}=\GL(L^0/tL^0)\cong
\GL_n(k)$ with $\bar{\mathfrak{g}}$ the corresponding Lie algebra.
Note that there is a natural map from $P\to\bar{G}$ whose image is a
parabolic subgroup $Q$; its unipotent radical $U$ is the image of
$P^1$.  Analogous statements hold for the Lie algebras
$\Lie(Q)=\mathfrak{q}$ and $\Lie(U)=\mathfrak{u}$.

\begin{lemma}\label{subquotient} \mbox{}\begin{enumerate} \item There is a
  canonical isomorphism of  $\mathfrak{o}$-modules
\begin{equation*} 
\mathfrak{P}^r/\mathfrak{P}^{r+1} \cong 
\bigoplus_{i = 0}^{e_P-1} \Hom (L^i / L^{i+1}, L^{i+r}/L^{i+r+1}).
\end{equation*}
\item In the case $r=0$, this isomorphism gives an algebra isomorphism
  between $\mathfrak{P}/\mathfrak{P}^1$ and a Levi subalgebra
  $\mathfrak{h}$ for $Lie(Q)=\mathfrak{q}$ (defined up to conjugacy by
  $U$).  Moreover, $\mathfrak{P}$
  is a split extension of $\mathfrak{h}$ by $\mathfrak{P}^1$.
\item Similarly, if $H$ is a Levi subgroup for $Q$, then $P \cong
  H \ltimes P^1$.
\end{enumerate}
\end{lemma}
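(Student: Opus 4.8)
The three parts are closely linked, and the plan is to prove (1), then deduce (2), and finally obtain (3) by exponentiating. For part (1), I would work with the associated graded module $\gr(\mathscr{L}) = \bigoplus_{i \in \Z} L^i/L^{i+1}$. As noted in the text just above the lemma, an element of $\fP^r$ acts on $\gr(\mathscr{L})$ as a degree-$r$ endomorphism, and two elements of $\fP^r$ induce the same map precisely when they agree modulo $\fP^{r+1}$; this already gives an injection $\fP^r/\fP^{r+1} \hookrightarrow \End_{\mathrm{gr}}(\gr(\mathscr{L}))_r$. To see surjectivity and identify the target, I would use periodicity: $L^{i-e_P} = t^{-1}L^i$, so $\gr(\mathscr{L})$ is a free module of rank one over $k[t,t^{-1}]$ in each ``color'' $i \bmod e_P$, and a degree-$r$ graded endomorphism is determined by its components $L^i/L^{i+1} \to L^{i+r}/L^{i+r+1}$ for $i = 0, \dots, e_P-1$. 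Lifting each such $k$-linear map to an element of $\Hom_\fo(L^i, L^{i+r})$ and assembling (using that $L^i$ is a direct summand of $L^0$ as $\fo$-modules after a suitable identification, or more directly working with the filtration) produces an element of $\fP^r$ realizing it. This gives the claimed isomorphism, and one checks it is $\fo$-linear by tracking the $t$-action, which shifts $i \mapsto i + e_P$.

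For part (2), specialize to $r=0$. The isomorphism of (1) becomes $\fP/\fP^1 \cong \bigoplus_{i=0}^{e_P-1} \End_k(L^i/L^{i+1})$. On the other hand, the reduction map $\fP \to \bar{\fg} = \gl(L^0/tL^0)$ sends $\fP$ onto $\fq = \Lie(Q)$, the parabolic stabilizing the flag $L^0/tL^0 \supset L^1/tL^0 \supset \cdots$, with kernel containing $\fP^1$ (in fact the kernel is exactly $\fP^1$ plus the part of $\fP$ shifting into $tL^0$, which one identifies with $\mathfrak u$). The subalgebra $\bigoplus_i \End_k(L^i/L^{i+1})$ maps isomorphically onto a Levi subalgebra $\fh$ of $\fq$: it is the block-diagonal part with respect to the flag, and it is complementary to $\fu$. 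So $\fP \twoheadrightarrow \fh$ splits, with the splitting given by the inclusion $\fh \cong \bigoplus_i \End_k(L^i/L^{i+1}) \hookrightarrow \fP$ coming from (1); the kernel is $\fP^1$. The choice of $\fh$ inside $\fq$ depends on a splitting of the flag and hence is well-defined up to $U$-conjugacy. Part (3) follows by applying the exponential/truncation: since $\fP^1$ is a nilpotent ideal modulo each $\fP^k$ and $P^1 = \id_n + \fP^1$, the splitting $\fh \hookrightarrow \fP$ integrates to a splitting $H \hookrightarrow P$ of the reduction $P \twoheadrightarrow Q \to H$, giving $P \cong H \ltimes P^1$; concretely, every $g \in P$ factors uniquely as (block-diagonal part)$\cdot$(element of $\id_n + \fP^1$).

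The main obstacle I anticipate is bookkeeping in part (1): making the identification $\fP^r/\fP^{r+1} \cong \bigoplus_{i=0}^{e_P-1}\Hom(L^i/L^{i+1}, L^{i+r}/L^{i+r+1})$ genuinely \emph{canonical} and $\fo$-linear, rather than merely an abstract isomorphism of $k$-vector spaces of the right dimension. The cleanest route is to phrase everything in terms of the graded module $\gr(\mathscr{L})$ with its residual $k[t,t^{-1}]$-action and observe that $\fP^r/\fP^{r+1}$ is by construction the degree-$r$ piece of $\End_{\gr(\mathscr{L})}$; then the direct sum decomposition is just the decomposition of a graded Hom into its finitely many independent "diagonal" strips under periodicity, and $\fo$-linearity is the statement that multiplication by $t$ is the periodicity isomorphism. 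Everything else — the parabolic $Q$, the Levi $\fh$, the semidirect product structure — is then standard structure theory of parahoric subalgebras once (1) is in place.
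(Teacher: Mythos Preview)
Your proposal is correct and follows essentially the same approach as the paper. The only cosmetic difference is in establishing surjectivity in part~(1): the paper chooses an ordered basis $\bfe = \bfe_0 \cup \dots \cup \bfe_{e_P-1}$ for $L^0$ compatible with the flag so that the $\Hom(L^i/L^{i+1}, L^{i+r}/L^{i+r+1})$ appear as disjoint matrix blocks inside $\fP^r$, whereas you phrase the same lift abstractly via a splitting of the filtration; for part~(3) the paper simply repeats the basis argument at the group level rather than integrating the Lie-algebra splitting, but the content is identical.
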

\begin{proof}
  There is a natural $\mathfrak{o}$-module map $\mathfrak{P}^r \to \bigoplus_{i =0}^{e_P-1}
  \Hom (L^i / L^{i+1}, L^{i+r}/L^{i+r+1})$; it is an algebra
  homomorphism when $r=0$.  It is clear that
  $\mathfrak{P}^{r+1}$ is the kernel, since any $\mathfrak{o}$-module
  map that takes $L^i$ to $L^{i+r+1}$ for $0 \le i \le e_P-1$ must lie
  in $\mathfrak{P}^{r+1}$.

  Now, suppose that $(\phi_i) \in \bigoplus_{i = 0}^{e_P-1} \Hom (L^i
  / L^{i+1}, L^{i+r}/L^{i+r+1})$.  Let $\mathscr{F}$ be the partial
  flag in $L^0/tL^0=L^0/L^{e_P}$ given by $\{L^i/L^{e_P}\mid 0\le i\le
  e_P\}$.  We may choose an ordered basis $\bfe$ for $L^0$ that is
  compatible with $\mathscr{F}$ modulo ${L^{e_P}}$.  This means that
  there is a partition $\bfe=\bfe_0\cup\dots\cup\bfe_{e_P-1}$ such
  that $W_j=\spa(\bfe_j)\subset L^j$ is naturally isomorphic to
  $L^j/L^{j+1}$.  In this basis, the groups $\Hom (L^i/L^{i+1},
  L^{i+r}/L^{i+r+1})$ appear as disjoint blocks in $\mathfrak{P}^r$
  (with exactly one block in each row and column of the array of
  blocks), so it is clear that we can construct a lift $\tilde{\phi}
  \in \mathfrak{P}^r$ that maps to $(\phi_i)$.

  Note that when $r=0$, the image of this isomorphism is a Levi
  subalgebra $\mathfrak{h}$ for the parabolic subalgebra
  $\mathfrak{q}$.  The choice of basis gives an explicit embedding
  $\mathfrak{h}\cong\gl(W_1)\oplus\dots\oplus\gl(W_{e_P-1})\subset\GL(V)$,
  so the extension is split.  The proof in the group case is similar.
\end{proof}

\begin{rmk}
  
  The same proof gives an isomorphism
  \begin{equation}\label{priso}\mathfrak{P}^r/\mathfrak{P}^{r+1} \cong \bigoplus_{i = m}^{m+e_P-1}
    \Hom (L^i / L^{i+1}, L^{i+r}/L^{i+r+1})
\end{equation}
for any $m$.  However, if $\Hom (L^i / L^{i+1}, L^{i+r}/L^{i+r+1})$
and $\Hom (L^j / L^{j+1}, L^{j+r}/L^{j+r+1})$ for $i\equiv j\mod e_P$
are identified via homothety, the image of an element of
$\mathfrak{P}^r/\mathfrak{P}^{r+1}$ is independent of $m$ up to cyclic
permutation.  Indeed, this follows immediately from the observation
that if $m=s e_P+j$ for $0\le j< e_P$, then
$t^{s+1}\bfe_0\cup\dots\cup t^{s+1}\bfe_{j-1}\cup t^s\bfe_j\cup
t^s\bfe_{e_P-1}$ is a basis for $L^m$.  In particular,
$\mathfrak{P}/\mathfrak{P}^{1}$ is isomorphic to a Levi subalgebra in
$\gl(L^m/tL^m)$

\end{rmk}

\begin{rmk} \label{levi} Any element
  $\bar{x}\in\mathfrak{P}/\mathfrak{P}^{1}$ determines a canonical
  $GL(L^0/tL^0)$-orbit in $\gl(L^0/tL^0)$, and similarly for $P/P^1$.
  To see this, note that any choice of ordered basis for $L^0$
  compatible with $\mathscr{L}$ maps $\bar{x}$ onto an element of a
  Levi subalgebra of $\gl(L^0/tL^0)$; a different choice of compatible
  basis will conjugate this image by an element of $Q$.  In fact, this
  orbit is also independent of the choice of base point $L^0$ in the
  lattice chain.  Indeed, an ordered basis for $L^0$ compatible with
  $\mathscr{L}$ gives a compatible ordered basis for $L^m$ by
  multiplying basis elements by appropriate powers of $t$ and then
  permuting cyclically.  Using the corresponding isomorphism $L^0\to
  L^m$ to identify $\gl(L^0/tL^0)$ and $\gl(L^m/tL^m)$, the images of
  $\bar{x}$ are the same.  Accordingly, it makes sense to talk about
  the characteristic polynomial or eigenvalues of $\bar{x}$.

\end{rmk}

\subsubsection*{Notational Conventions}

Let $\nu \in \Omega^\times$, and let $V$ be an $F$-vector space.
Suppose that $P \subset \GL(V)$ is a parahoric subgroup with Lie
algebra $\mathfrak{P}$ that stabilizes a lattice chain $(L^i)_{i \in \Z}$.  
We will use the following conventions
throughout the paper:
\begin{enumerate}
\item $\bar{L}^i = L^i/L^{i+1}$.
\item $\bar{P} = P/P^1$, $\bar{\mathfrak{P}} = \mathfrak{P}/\mathfrak{P}^1$.
\item $\bar{P}^\ell = P^\ell / P^{\ell+1}$, $\bar{\mathfrak{P}}^\ell =
\mathfrak{P}^\ell/
\mathfrak{P}^{\ell+1}$.
\item If $X\in\fP^s$, then $\bar{X}$ will denote its image in $\bfP^s$
  and the corresponding degree $s$ endomorphism of $\gr(\mathscr{L})$.
\item If $\a\in(\fP^r)^\vee$, then $\an\in\gl(V)$ with denote an
  element such that $\a=\langle \an,\cdot \rangle_\nu$.
\item If $\beta \in (\bar{\mathfrak{P}}^r)^{\vee}$, then $\beta_\nu$
  will denote an element of $\mathfrak{P}^{r - (1+\ord(\nu)) e_P}$
  such that $\bar{\beta}_\nu \in \bar{\mathfrak{P}}^{r - (1+\ord(\nu))
    e_P}$ is the coset determined by the isomorphism in
  Proposition~\ref{duality}. 
\item Let $X \in \mathfrak{P}^{s}$.  Then, 
$\delta_X : \bar{\mathfrak{P}}^{i} \to \bar{\mathfrak{P}}^{i+s}$
is the map induced by $\ad (X)$.
\item Let $\mathfrak{a} \subset \End (V)$ be a subalgebra.  We define
  $\mathfrak{a}^i=\mathfrak{a} \cap \mathfrak{P}^i$ and 
$\bar{\mathfrak{a}}^i = \mathfrak{a}^i /
\mathfrak{a}^{i+1}.$ 
\item If $A \subset \GL(V)$ is a subgroup, define $A^i=A\cap P^i$ and 
$\bar{A}^i = A^i / A^{i+1}$.
\end{enumerate}

\subsection{Tame Corestriction}\label{sec:cores}

In this section, we first suppose that $\mathscr{L}$ is a complete
(hence uniform) lattice chain in $V$ with corresponding Iwahori
subgroup $I$.  By Proposition \ref{uniformprop}, $\mathfrak{I}^1$ is a
principal ideal generated by $\varpi_I$; similarly, each fractional
ideal $\mathfrak{I}^\ell$ is generated by $\varpi_I^\ell$.  Choose an
ordered basis $(e_0, \ldots, e_{n-1})$ for $V$ indexed by $\Z_n$, so
that $e_{n+i} = e_i$ for all $i$.  Furthermore, we may choose the
basis to be compatible with $\mathscr{L}$: if $r = q n -s$, $0 \le s
<n$, then $L^r$ is spanned by $\{t^{q-1} e_0, \ldots, t^{q-1}
e_{s-1},t^{q} e_{s}, \ldots, t^{q} e_{n-1}\}$.  Thus, if we let
$\bar{e}_i$ denote the image of $e_i$ in $L^0/t L^0$, then
$\mathscr{L}$ corresponds to the full flag in $L^0/t L^0$ determined
by the ordered basis $(\bar{e}_0, \ldots \bar{e}_{n-1})$.

In this basis, we may take
\begin{equation}\label{varpip}
\varpi_I= 
\begin{pmatrix}
0 & 1 & \cdots & 0\\
\vdots & \ddots & \ddots & \vdots \\
0 & \ddots & 0 & 1  \\
t  & 0 & \cdots& 0 
\end{pmatrix}.
\end{equation}
Notice that the characteristic polynomial of $\varpi_I$ is equal to
$\lambda^n - t$, which is irreducible over $F$. Thus, $F[\varpi_I]$ is
a degree $n$ field extension isomorphic to $F[t^{1/n}]$.

\begin{rmk}\label{compatiblebasis}  If $P$ is a parahoric subgroup stabilizing the lattice
  chain $\sL$, we say that a basis for $L^0$ is compatible with
  $\sL$ if it is a basis that is compatible as above for any complete lattice
  chain extending $\sL$.  Note that any pullback of a compatible basis for the induced
  partial flag in $L^0/tL^0$ is such a basis.
\end{rmk}

We first examine the kernel and image of the map 
$\delta_{\varpi_I^r}:\bfI^l\to\bfI^{l-r}$.  Let $\nu \in
\Omega^\times$ have order $-1$.  We define $\psi_\ell (X) = \langle X,
  \varpi_{I}^{-\ell}\rangle_\nu$.  Note that $\psi_\ell
(\mathfrak{I}^{\ell+1}) = 0$; we let $\bar{\psi}_\ell$ be the induced
functional on $\bar{\mathfrak{I}}^\ell$.

Let $\mathfrak{d}\subset\gl_n(k)$ be the subalgebra of diagonal
matrices.  By Lemma~\ref{subquotient}, the Iwahori subgroup and
subalgebra have semidirect product decompositions:
$I=\mathfrak{d}^*\rtimes {I}^1$ and $\mathfrak{I}$
is a split extension of $\mathfrak{d}$ by $\mathfrak{I}^1$.
Accordingly, any coset in $\mathfrak{I}^\ell/\mathfrak{I}^{\ell+1}$
has a unique representative $x\varpi_{I}^\ell$ with $x=\diag(x_0, x_1,
\dots, x_{n-1})\in\mathfrak{d}$.

\begin{lemma}\label{adequation} 
  The image of $\delta_{\varpi_{I}^{-r}}$ in
  $\bar{\mathfrak{I}}^{\ell-r}$ is contained in
  $\ker(\bar\psi_{\ell-r})$, and the kernel of
  $\delta_{\varpi_{I}^{-r}}$ in
  $\bar{\mathfrak{I}}^{\ell}$ contains the
  one-dimensional subspace spanned by $\ov{\varpi_{I}^\ell}$.
  Equality in both cases happens if and only if $\gcd(r, n) =1$.
\end{lemma}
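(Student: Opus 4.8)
The plan is to work entirely in the explicit basis $(e_0,\dots,e_{n-1})$ of Section~\ref{sec:cores}, in which $\varpi_I$ is the companion-type matrix \eqref{varpip}. First I would record that conjugation by $\varpi_I$ cyclically permutes the diagonal: if $x=\diag(x_0,\dots,x_{n-1})$, then $\varpi_I x \varpi_I^{-1}=\diag(x_1,x_2,\dots,x_{n-1},x_0)$, so that $\ad(\varpi_I^{-r})$ sends the coset of $x\varpi_I^\ell$ to the coset of $(\sigma^r x - x)\varpi_I^{\ell-r}$, where $\sigma$ is the cyclic shift $\sigma(x)_i = x_{i-r}$ (I will fix the direction carefully from \eqref{varpip}). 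Thus, after using the semidirect-product normal form $\mathfrak I^\ell/\mathfrak I^{\ell+1}\cong\mathfrak d$ coming from Lemma~\ref{subquotient}, the map $\delta_{\varpi_I^{-r}}\colon\bar{\mathfrak I}^\ell\to\bar{\mathfrak I}^{\ell-r}$ becomes, under the identification of both sides with $\mathfrak d\cong k^n$, the operator $\sigma^r-\id$ on $k^n$.

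Next I would compute the kernel and image of $\sigma^r-\id$ on $k^n$. The permutation $\sigma$ has order $n$, $\sigma^r$ has order $n/\gcd(r,n)$, and the cycle decomposition of $\sigma^r$ on $\Z_n$ consists of $\gcd(r,n)$ cycles each of length $n/\gcd(r,n)$. Hence $\ker(\sigma^r-\id)$ — the fixed vectors — has dimension $\gcd(r,n)$, spanned by the indicator vectors of the cycles, and likewise $\mathrm{coker}(\sigma^r-\id)$ has dimension $\gcd(r,n)$; the image of $\sigma^r-\id$ is exactly the subspace $\{y\in k^n:\sum_{i\in c}y_i=0\text{ for every }\sigma^r\text{-cycle }c\}$. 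When $\gcd(r,n)=1$ there is a single cycle: the kernel is the line of constant vectors and the image is the hyperplane $\{\sum y_i=0\}$, both one-dimensional in the appropriate sense (kernel one-dimensional, image of codimension one).

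It remains to match these linear-algebra statements with the two assertions in the lemma. The constant vector $(1,\dots,1)$ in $\mathfrak d$ corresponds to the coset of $\varpi_I^\ell$ itself, so "$\ker\delta_{\varpi_I^{-r}}$ contains the span of $\overline{\varpi_I^\ell}$" is exactly "$\ker(\sigma^r-\id)\supseteq k\cdot(1,\dots,1)$", with equality iff $\gcd(r,n)=1$. For the image, I would identify the functional $\bar\psi_{\ell-r}$ on $\bar{\mathfrak I}^{\ell-r}$: since $\psi_{\ell-r}(X)=\langle X,\varpi_I^{-(\ell-r)}\rangle_\nu=\Res[\Tr(X\varpi_I^{-(\ell-r)})\nu]$ and $\ord(\nu)=-1$, a direct trace computation on $x\varpi_I^{\ell-r}$ shows $\bar\psi_{\ell-r}(\overline{x\varpi_I^{\ell-r}})$ is a nonzero multiple of $\sum_i x_i$; hence $\ker\bar\psi_{\ell-r}$ is precisely the hyperplane $\{\sum x_i=0\}$. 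The computation above gives $\mathrm{im}(\sigma^r-\id)\subseteq\{\sum y_i=0\}$ always (each cycle-sum vanishes, so in particular the total sum does), i.e. $\mathrm{im}\,\delta_{\varpi_I^{-r}}\subseteq\ker\bar\psi_{\ell-r}$, with equality iff there is only one cycle, i.e. iff $\gcd(r,n)=1$. Combining, both inclusions are equalities simultaneously exactly when $\gcd(r,n)=1$, which is the claim.

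The main obstacle I anticipate is purely bookkeeping rather than conceptual: getting the index conventions exactly right — the direction of the cyclic shift induced by $\varpi_I$ in the basis \eqref{varpip}, the compatibility of the normal form $x\varpi_I^\ell$ with the isomorphism of Lemma~\ref{subquotient} across the two different degrees $\ell$ and $\ell-r$ (one must track how the block structure shifts when $\ell-r$ crosses a multiple of $n$), and verifying that the residue pairing really picks out $\sum x_i$ up to a nonzero scalar independent of the ambiguities. Once the dictionary "$\delta_{\varpi_I^{-r}} = \sigma^r-\id$ on $k^n$, $\overline{\varpi_I^\ell}=(1,\dots,1)$, $\ker\bar\psi=\{\sum x_i=0\}$" is nailed down, the rest is the elementary cycle count for a power of an $n$-cycle.
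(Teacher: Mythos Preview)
Your approach is essentially the same as the paper's: both compute $\delta_{\varpi_I^{-r}}$ in the $x\varpi_I^\ell$ normal form to get the shift-minus-identity operator $x_i\mapsto x_i-x_{i-r}$ on $\mathfrak d\cong k^n$, identify $\bar\psi_{\ell-r}$ with the trace/sum functional, and then analyze the kernel via the cycle structure of the shift by $r$ on $\Z_n$. Your cycle-decomposition argument is slightly more explicit (you get $\dim\ker=\gcd(r,n)$ directly, whereas the paper argues only that the kernel has dimension $\ge 2$ when $\gcd(r,n)>1$), but the content is the same and your anticipated bookkeeping issues are exactly the ones the paper handles by the direct calculation \eqref{directcalc}.
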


\begin{proof}

Take $X=x\varpi_{I}^\ell$ with $x\in\mathfrak{d}$ as above. By direct
calculation, 
\begin{equation}\label{directcalc} [X,\varpi_{I}^{-r}] = x'
  \varpi_{I}^{\ell-r},\text{ with }
x' = \diag(x_0 - x_{-r}, \dots, x_{n-1} - x_{n-1-r} ).
\end{equation}

Therefore, 
\begin{equation*}\begin{aligned}
\psi_{\ell-r} (x' \varpi_{I}^{\ell-r}) & = 
\Res (\Tr (x' \varpi_{I}^{\ell -r} \varpi^{r-\ell}_I) \nu)\\
&= \Res(\nu) \Tr(x')  \\
& = \Res(\nu) \sum_{i = 0}^{n-1} (x_i - x_{i-r}) = 0.
\end{aligned}
\end{equation*}
It follows that $\delta_{\varpi_I^{-r}}
(\bar{\mathfrak{I}}^{\ell}) \subset \ker
(\bar{\psi}_{\ell-r}).$

The kernel of $\delta_{\varpi_{I}^{-r}}$ satisfies the equations
$x_i - x_{i-r} = 0$ for $0 \le i \le n-1$.  If we set $x_0 = \alpha$,
then $x_{-r} = x_{-2 r} = x_{-3 r} = \dots = \alpha$.  When $\gcd(r,n)
= 1$, $j\equiv -mr\pmod n$ is solvable for any $j$, and it follows that
$x_j=\alpha$ for all $j$.  Therefore, the kernel is just the span of
$\ov{\varpi_{I}^\ell}$.  Otherwise, the dimension of the kernel is at
least $2$.  This implies that the image of
$\delta_{\varpi_{I}^{-r}}$ has codimension $1$ if and only if
$\gcd(r, n) = 1$.
\end{proof}
For future reference, we remark that there is a similar formula to
\eqref{directcalc} for $\Ad$.  Any element of $\bar{I}\cong
\mathfrak{d}^*$ is of the form $\bar{p}$ for $p = \diag(p_0, \dots,
p_{n-1})$.

Then,
\begin{equation}\label{Adcalc}
\Ad (\bar{p}) (\ov{\varpi_I^{-r}})  = \ov{p' \varpi_I^{-r}} \in
\bar{\mathfrak{I}}^{-r}, \text { where }
p'  = \diag(\frac{p_0}{p_{-r}}, \frac{p_1}{p_{1-r}}, \ldots, \frac{p_{n-1}}{p_{n-1-r}}).
\end{equation}
In particular, when $\gcd(r, n) = 1$, every generator of
$\mathfrak{I}^{-r}$ lies modulo $\mathfrak{I}^{-r+1}$ in the
$\Ad(I)$-orbit of $a\varpi_I^{-r}$ for some $a\in
k^*$.

Next, we consider more general uniform parahorics.  Let $E/F$ be a
degree $m$ extension; it is unique up to isomorphism.  Now, identify
$V \cong E^{n/m}$ as an $F$ vector space.  We will view $E$ as a
maximal subfield of $\gl_m(F)$: if we define $\varpi_E = \varpi_I
\in\gl_m(F)$ as in \eqref{varpip}, then $E$ is the centralizer of
$\varpi_E$, which is in fact a uniformizing parameter for $E$.  Since
$m|n$, define a Cartan subalgebra $\mathfrak{t} \cong E^{n/m}$ in
$\gl(V)$ as the block diagonal embedding of $n/m$ copies of $E \subset
\gl_m(F)$.  Let $\gcd(r, m) = 1$, and take $\xi = (a_1 \varpi_E^{-r},
\ldots, a_{n/m} \varpi_E^{-r})$ with the $a_i$'s pairwise unequal
elements of $k$.  This implies that $\xi$ is regular semisimple with
centralizer $\mathfrak{t}$.

Let $\mathscr{L}_E$ be the complete lattice chain in $F^m$ stabilized
by $\mathfrak{o}_E$; we let $I_E$ be the corresponding Iwahori
subgroup.  We define a lattice chain $\mathscr{L} =
\bigoplus^{n/m}_{i=1} \mathscr{L}_E$ in $V$ with associated parahoric
subgroup $P$.  It is clear that $P$ is uniform with $e_P = m$.
Moreover, elements of $\mathfrak{P}^{\ell}$ are precisely those
$n/m\times n/m$ arrays of $m\times m$ blocks with entries in
$\mathfrak{I}^{\ell}_E$; in particular, we can take
$\varpi_P=(\varpi_E,\dots,\varpi_E)$.  Note that $\mathfrak{t} \cap
\gl(L) \subset \mathfrak{P}$ for any lattice $L$ in
$\mathscr{L}$.
\begin{proposition}[Tame Corestriction]\label{cores}
There is a morphism of $\mathfrak{t}$-bimodules  $\pi_{\mathfrak{t}} : \gl(V) \to \mathfrak{t}$ 
satisfying the following properties:
\begin{enumerate}[label={\textnormal{(\arabic*)}},ref={\theproposition(\arabic*)}]
\item \label{cores1} $\pi_{\mathfrak{t}}$ restricts to the identity on $\mathfrak{t}$;
\item  \label{cores2} $\pi_{\mathfrak{t}} (\mathfrak{P}^\ell) \subset \mathfrak{P}^\ell$;
\item\label{cores3} the kernel of the induced map
\begin{equation*}
\bar{\pi}_{\mathfrak{t}} : (\mathfrak{t} + \mathfrak{P}^{\ell-r})/ \mathfrak{P}^{\ell-r+1} \to
\mathfrak{t}/ (\mathfrak{t} \cap \mathfrak{P}^{\ell-r+1})
\end{equation*}
is given by the image of $\ad(\mathfrak{P}^\ell) (\xi)$ modulo $\mathfrak{P}^{\ell-r+1}$;
\item \label{cores4} if $z \in \mathfrak{t}$ and $X \in \gl(V)$, then
$\langle z, X\rangle_\nu = \langle z, \pi_{\mathfrak{t}}
(X)\rangle_\nu$;
\item \label{cores5} $\pi_\ft$ commutes with the action of the normalizer $N(T)$ of $T$.
\end{enumerate}
\end{proposition}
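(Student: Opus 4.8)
The plan is to realize $\pi_{\mathfrak{t}}$ as the orthogonal projection onto $\mathfrak{t}$ for the trace form, which makes (1), (4) and (5) immediate; then to obtain (2) from an explicit averaging description of the same map; and finally to prove the one substantive statement, (3), by a dimension count on the associated graded. First, the $F$-bilinear form $(X,Y)\mapsto\Tr(XY)$ on $\gl(V)$ restricts to a non-degenerate form on $\mathfrak{t}$: for $z=(z_a),w=(w_a)\in\mathfrak{t}\cong E^{n/m}$ we have $\Tr(zw)=\sum_a\Tr_{E/F}(z_aw_a)$, and each $\Tr_{E/F}$ is non-degenerate since $E/F$ is separable. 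Thus $\gl(V)=\mathfrak{t}\oplus\mathfrak{t}^\perp$ with $\mathfrak{t}^\perp=\{Y:\Tr(zY)=0\text{ for all }z\in\mathfrak{t}\}$, and I set $\pi_{\mathfrak{t}}$ to be the projection onto $\mathfrak{t}$ along $\mathfrak{t}^\perp$. Because $\mathfrak{t}$ is a commutative subalgebra and $\Tr$ is invariant, $\mathfrak{t}\cdot\mathfrak{t}^\perp$ and $\mathfrak{t}^\perp\cdot\mathfrak{t}$ lie in $\mathfrak{t}^\perp$, so $\pi_{\mathfrak{t}}$ is a morphism of $\mathfrak{t}$-bimodules, necessarily the identity on $\mathfrak{t}$; this is (1). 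Property (4) holds because $X-\pi_{\mathfrak{t}}(X)\in\mathfrak{t}^\perp$ gives $\Tr(z(X-\pi_{\mathfrak{t}}(X)))=0$ in $F$, hence $\langle z,X-\pi_{\mathfrak{t}}(X)\rangle_\nu=\Res[0\cdot\nu]=0$. Property (5) holds because for $g\in N(T)$ the operator $\Ad(g)$ preserves the trace form and satisfies $\Ad(g)\mathfrak{t}=\mathfrak{t}$, hence preserves $\mathfrak{t}^\perp$ and commutes with the projection.

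For (2) I would note that $\pi_{\mathfrak{t}}$ is also computed by first projecting $X$ onto its block-diagonal part and then applying, in each diagonal block, the averaging operator $s_E(Y)=\tfrac1m\sum_{c=0}^{m-1}\varpi_E^{c}Y\varpi_E^{-c}$ on $\gl_m(F)$: this composite is the identity on $\mathfrak{t}$ and its kernel is $\mathfrak{t}^\perp$, the latter since $\ker s_E$ is the trace-orthogonal complement of $E$ in $\gl_m(F)$ (one has $\Tr(z\,s_E(X))=\Tr(zX)$ for $z\in E$, and dimensions match). Now conjugation by $\varpi_E$ shifts the lattice chain $\mathscr{L}_E$ and so preserves each $\mathfrak{I}_E^\ell$, while $s_E(Y)$ commutes with $\varpi_E$ and therefore lies in $E$; hence $s_E(\mathfrak{I}_E^\ell)\subseteq E\cap\mathfrak{I}_E^\ell$. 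Applying this in each block and using that $\mathfrak{P}^\ell$ consists of arrays with entries in $\mathfrak{I}_E^\ell$, we get $\pi_{\mathfrak{t}}(\mathfrak{P}^\ell)\subseteq\mathfrak{t}\cap\mathfrak{P}^\ell$, which is (2); together with (1) this upgrades to $\pi_{\mathfrak{t}}(\mathfrak{P}^\ell)=\mathfrak{t}^\ell$ and a filtration-compatible splitting $\mathfrak{P}^\ell=\mathfrak{t}^\ell\oplus(\mathfrak{P}^\ell\cap\mathfrak{t}^\perp)$, hence $\bar{\mathfrak{P}}^{j}=\bar{\mathfrak{t}}^{j}\oplus\overline{\mathfrak{P}^{j}\cap\mathfrak{t}^\perp}$ on associated graded for every $j$.

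For (3), the splitting just obtained lets a short diagram chase identify $\ker\bar{\pi}_{\mathfrak{t}}$ with $\overline{\mathfrak{P}^{\ell-r}\cap\mathfrak{t}^\perp}\subseteq\bar{\mathfrak{P}}^{\ell-r}$: from a representative $z+Y$ with $z\in\mathfrak{t}$, $Y\in\mathfrak{P}^{\ell-r}$, one replaces $Y$ by $(1-\pi_{\mathfrak{t}})(Y)\in\mathfrak{P}^{\ell-r}\cap\mathfrak{t}^\perp$ and absorbs $z$ into $\mathfrak{P}^{\ell-r+1}$. So (3) becomes the claim that, inside $\bar{\mathfrak{P}}^{\ell-r}$, the image $\operatorname{im}(\delta_{\xi})$ of $\ad(\mathfrak{P}^{\ell})(\xi)$ equals $\overline{\mathfrak{P}^{\ell-r}\cap\mathfrak{t}^\perp}$. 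The inclusion $\subseteq$ is clear from $[\mathfrak{t},\gl(V)]\subseteq\mathfrak{t}^\perp$ and $\ad(\xi)\mathfrak{P}^\ell\subseteq\mathfrak{P}^{\ell-r}$, so it suffices to match dimensions, which reduces to showing $\ker(\delta_{\xi}\colon\bar{\mathfrak{P}}^{\ell}\to\bar{\mathfrak{P}}^{\ell-r})=\bar{\mathfrak{t}}^{\ell}$. Decomposing $\mathfrak{P}^\ell$ into $m\times m$ blocks over the $n/m$ copies of $\gl_m(F)$: on the $(a,a)$-block $\delta_{\xi}$ is $a_a\,\delta_{\varpi_E^{-r}}$, with kernel the line spanned by $\overline{\varpi_E^{\ell}}$ by Lemma~\ref{adequation} (using $\gcd(r,m)=1$); on an $(a,b)$-block with $a\neq b$, $\delta_{\xi}(\bar Y)=0$ becomes, after left multiplication by $\varpi_E^{r}$, the eigenvalue equation $\Ad(\varpi_E^{r})\bar Y=(a_a/a_b)\bar Y$ in $\bar{\mathfrak{I}}_E^{\ell}$. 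Since $\varpi_E^{rm}=t^{r}$ is central, $\Ad(\varpi_E^{r})$ acts on the $m$-dimensional space $\bar{\mathfrak{I}}_E^{\ell}$ by cyclically permuting the lines $\Hom(\bar{L}^{i},\bar{L}^{i+\ell})$ (Lemma~\ref{subquotient}) with the scalars occurring having product $1$, so its characteristic polynomial is $x^{m}-1$ and its eigenvalues are exactly the $m$-th roots of unity; as $\xi$ is regular semisimple the ratios $a_a/a_b$ for $a\neq b$ avoid $\mu_m$ (otherwise $\xi$ would not be regular), so this forces $\bar Y=0$. Hence $\delta_{\xi}$ is injective on the off-diagonal blocks, $\ker\delta_{\xi}=\bar{\mathfrak{t}}^{\ell}$, and $\dim\operatorname{im}(\delta_{\xi})=\dim\bar{\mathfrak{P}}^{\ell}-n/m=\dim\bar{\mathfrak{P}}^{\ell-r}-\dim\bar{\mathfrak{t}}^{\ell-r}=\dim\overline{\mathfrak{P}^{\ell-r}\cap\mathfrak{t}^\perp}$, so the inclusion is an equality.

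The only genuinely nontrivial point is (3), and within it the injectivity of $\delta_{\xi}$ on the off-diagonal blocks: this is precisely where both $\gcd(r,m)=1$ and the regularity of $\xi$ are needed, and it relies on using the centrality of $\varpi_E^{m}=t$ together with Lemma~\ref{subquotient} to pin down the $\mu_m$-eigenvalue structure of $\Ad(\varpi_E^{r})$ on the graded quotients $\bar{\mathfrak{I}}_E^{\ell}$.
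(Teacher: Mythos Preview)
Your proof is correct and the map you construct is the same one the paper defines by the explicit formula $\pi_{\mathfrak{t}}(X)=\sum_{s,i}\psi_s^i(X)\,\varpi_E^s\epsilon_i$ with $\psi_s^i(X)=\tfrac{1}{m}\langle\varpi_E^{-s}\epsilon_i,X\rangle_\nu$; the overall architecture (verify (1),(4),(5) from the definition, then split (3) into diagonal and off-diagonal blocks, using Lemma~\ref{adequation} for the diagonal) is also the same. The differences are in execution. Your abstract realization of $\pi_{\mathfrak{t}}$ as the trace-orthogonal projection makes (1),(4),(5) automatic, and your averaging description $s_E(Y)=\tfrac{1}{m}\sum_{c=0}^{m-1}\varpi_E^{c}Y\varpi_E^{-c}$ gives a clean argument for (2) where the paper only remarks that $\pi_{\mathfrak{t}}(\mathfrak{P}^\ell)\subset\mathfrak{P}^\ell\cap\mathfrak{t}$. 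For the off-diagonal part of (3), the paper simply asserts that $\delta_\xi:\bar V_{ij}^{\ell}\to\bar V_{ij}^{\ell-r}$ is an isomorphism ``by regularity''; your eigenvalue analysis of $\Ad(\varpi_E^{r})$ on $\bar{\mathfrak{I}}_E^{\ell}$ (characteristic polynomial $x^m-1$ since $\varpi_E^{m}=t$ is central and $\gcd(r,m)=1$) makes this step explicit and shows that injectivity on the $(a,b)$-block is equivalent to $a_a/a_b\notin\mu_m$, which is exactly the condition that the centralizer of $\xi$ be $\mathfrak{t}$. This is more transparent than the paper's one-line justification and, incidentally, pinpoints that the hypothesis ``$a_i$ pairwise distinct'' stated before the proposition should really be ``$a_i$ pairwise distinct modulo $\mu_m$'' when $m>1$.
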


\begin{proof}
  First, take $\nu = \frac{dt}{t}$.  Let $\epsilon_i \in \gl(V)$ be the
  identity element in the $i^{th}$ copy of $E$ in $\mathfrak{t}$ and
  $0$ elsewhere.  Define $\psi_s^i (X) =
  \frac{1}{m}\langle \varpi_E^{-s} \epsilon_i, X \rangle_{\nu}$ and
\begin{equation*}
\pi_{\mathfrak{t}} (X) = 
\sum_{s = -\infty}^\infty  \sum_{i = 1}^{n/m} \psi_s^i (X) \varpi_E^s \epsilon_i.
\end{equation*}
It is easily checked that for $s \ll 0$, $\psi_s^i (X) = 0$ and that
$\pi_\mathfrak{t}$ is a $\mathfrak{t}$-map.  A direct calculation
shows that for $n\in N(T)$, $\pi_\ft(\Ad(n)X)=\Ad(n)\pi_\ft(X)$.  Since
$\pi_{\mathfrak{t}}$ is defined using traces, it is immediate that it
vanishes on the off-diagonal blocks $\epsilon_i \gl(V) \epsilon_j$
for $i\ne j$.  Moreover,
$\pi_{\mathfrak{t}}$ is the identity on $\mathfrak{t}$,
since $\psi_s^i (\varpi_E^j \epsilon_k) = 1$, if $j = s$ and $i = k$,
and equals $0$ otherwise.

We note that $\pi_\mathfrak{t}(\mathfrak{P}^\ell)\subset
\mathfrak{P}^\ell \cap \mathfrak{t}$, so the induced map
$\bar{\pi}_{\mathfrak{t}}$ makes sense.  Let $V_{i j}^{\ell-r} =
\epsilon_i \mathfrak{P}^{\ell-r} \epsilon_j$ so that $\bar{V}_{i
  j}^{\ell-r} \subset \ker (\bar{\pi}_\mathfrak{t})$ for $i\ne j$.  By
regularity, $\delta_\xi : \bar{V}_{i j}^{\ell} \to \bar{V}_{i
  j}^{\ell-r}$ is an isomorphism whenever $i \ne j$.  This proves that
the off-diagonal part of $\ker(\bar{\pi}_\mathfrak{t})$ is of the
desired form.  We may now reduce without loss of generality to the case of
a single diagonal block, i.e., $\mathfrak{t} = E$ and $\mathfrak{P}=\mathfrak{I}$.

Since $ \pi_{\mathfrak{t}}$ is the identity on ${\mathfrak{t}}$, the
kernel of $\bar{\pi}_{\mathfrak{t}}$ is contained in
$\bar{\mathfrak{I}}^{\ell-r} = \varpi_I^{\ell-r} \bar{\mathfrak{I}}$.
Notice that when $s < \ell - r$, $\varpi_I^{-s} \varpi_I^{\ell-r}
\mathfrak{I} \subset \mathfrak{I}^1$; therefore, $\psi_s
(\mathfrak{I}^{\ell-r}) = 0$.  It is trivial that $\psi_s (X)
\varpi_E^s \in \mathfrak{I}^{\ell-r+1}$ for $s > \ell-r$.
It follows that $\ker(\bar{\pi}_{\mathfrak{t}}) =
\ker(\bar{\psi}_{\ell-r}).$ By Lemma \ref{adequation},
$\ad(\mathfrak{I}^\ell)(\varpi_{E}^{-r}) = \ker(
\bar{\psi}_{\ell-r})$.  This completes the proof of the third part of
the proposition.

Finally, for arbitrary $\nu' = f \nu$, $\langle z, X \rangle_{\nu'} =
\langle z, f X \rangle_{\nu}$.  Since $f \in F \subset T$, and
$\pi_{\mathfrak{t}}$ is a $\mathfrak{t}$-map, it suffices to prove the
fourth part when $\nu = \frac{dt}{t}$.  Although $z\in\mathfrak{t}$ is
an infinite sum of the form $\sum_{s\ge q}\sum_{i=1}^{n/m} a_{s
  i}\varpi_E^s\epsilon_i$ for some $a_{s i}\in k$, only a finite
number of terms contribute to the inner products.  Hence, it suffices
to consider $z=\varpi_E^s\epsilon_i$.  Observing that $\langle  \varpi_E^s \epsilon_i, \varpi_E^{-r}
  \epsilon_j \rangle_\nu = m \delta_{i j} \delta_{r s}$, we see
that
\begin{equation*}
\langle  \varpi_E^s \epsilon_i, X \rangle_\nu =  m \psi_{-s}^i  (X) = 
\langle  \varpi_E^s \epsilon_i, \psi_{-s}^i(X) \varpi_E^{-s}  \epsilon_i\rangle_\nu
= \langle  \varpi_E^s \epsilon_i, \pi_{\mathfrak{t}} (X) \rangle_\nu,
\end{equation*}
as desired.

\end{proof}

\begin{rmk}\label{unifvarpi}
  Suppose that $P \subset \GL(V)$ is a uniform parahoric that
  stabilizes a lattice chain $\mathscr{L}$.  Let $H \subset P$ be the
  Levi subgroup that splits $P \to P / P^1$ as in
  Lemma~\ref{subquotient}, and let $\mathfrak{h}\subset \fP$ be the
  corresponding subalgebra.  We will show that there is a generator
  $\varpi_P$ for $\fP^1$ that is well-behaved with respect to $H$,
  akin to $\varpi_{I} \in \fI^1$.  In the notation used in the proof
  of Lemma~\ref{subquotient}, $\mathfrak{h}$ is determined by an
  ordered basis $\bfe$ for $L^0$ partitioned into $e_P$ equal
  parts: $\bfe=\bigcup_{j=0}^{e_P-1}\bfe_j$.  Setting
  $W_j=\spa{\bfe_j}\cong\bL^j$, we have  $\mathfrak{h} = \bigoplus_{j = 0}^{e_P-1}
  \gl(W_j)$ and $H = \prod_{j = 0}^{e_P -1} \GL(W_j)$.

  Now, let $\mathscr{L}'$ be the complete lattice chain determined by
  the ordered basis for $L^0$ given above, and let $I$ be the
  corresponding Iwahori subgroup.  If $\varpi_I$ is the generator of
  $\fI^1$ constructed in \eqref{varpip}, define $\varpi_P =
  \varpi_I^{m}$, where $m=n/e_P$.  This matrix is an $e_P\times e_P$
  block matrix of the same form as \eqref{varpip}, but with scalar
  $m\times m$ blocks.  Evidently, $\varpi_P (L^i) = L^{i+1}$, so
  $\varpi_P$ generates $\mathfrak{P}^1$.  Furthermore, $\varpi_P (W_j)
  = W_{j+1}$ for $0 \le j < e_P-1$, and $\varpi_P (t^{-1}W_{e_P-1}) =
  W_0$.  It follows that $\varpi_P$ normalizes $H$, and
  $\Ad(\varpi_P)(\mathfrak{h}) \subset \mathfrak{h}$.  In fact, if
  $A=\diag(A_0,\dots,A_{e_P-1})\in\fh$, then
  $\Ad(\varpi_P^r)(A)=\diag(A_r,\dots,A_{r+e_P-1})$, with the indices
  understood modulo $e_P$.
  
\end{rmk}

\subsection{Strata}\label{subsec:strata}

For the remainder of Section~\ref{sec:strata}, $\nu\in\Omega^\times$
will be a fixed one-form of order $-1$. 

\begin{definition}\label{stratdef}
Let $V$ be an $F$ vector space.
A \emph{stratum} in $\GL(V)$ is a triple $(P, r, \beta)$ consisting of
\begin{itemize}
\item $P \subset \GL(V)$  a parahoric subgroup;
\item $r \in \Z_{\ge 0}$;
\item $\beta \in (\bar{\mathfrak{P}}^r)^\vee$.
\end{itemize}
\end{definition}

Proposition \ref{duality} states that $(\bar{\mathfrak{P}}^r)^\vee =
\bar{\mathfrak{P}}^{-r}$.  Therefore, we may choose a representative
$\beta_\nu \in \mathfrak{P}^{-r}$ for $\beta$.  Explicitly, a stratum
is determined by a triple $(\mathscr{L}, r, \beta_\nu)$, where
$\mathscr{L}$ is the lattice chain preserved by $P$, and $\beta_\nu$
is a degree $-r$ endomorphism of $\mathscr{L}$.  The triples
$(\mathscr{L}, r, \beta_\nu)$ and $(\mathscr{L'}, r', \beta'_\nu)$
give the stratum if and only if $r=r'$, $\mathscr{L'}$ is a translate
of $\mathscr{L}$, and $\beta_\nu$ and $\beta'_\nu$ induce the same
maps on $\gr(\mathscr{L})$, i.e., $\bar{\beta}_\nu=\bar\beta_\nu$.

We say that $(P, r, \beta)$ is \emph{fundamental} if 
 $\beta_\nu + \mathfrak{P}^{- r+1}$ contains no nilpotent
elements of $\gl_n(F)$.  
By \cite[Lemma 2.1]{Bu}, a stratum is non-fundamental if and only 
if $(\beta_\nu)^m \in \mathfrak{P}^{1 - r m}$ for some $m$. 
\begin{rmk}\label{asgrd}
  A stratum $(P, r, \beta)$ is fundamental if and only if
  $\bar{\beta}_\nu\in\End(\gr(\mathscr{L}))$ 
  is non-nilpotent in the usual sense.  In particular, if
  $\beta_\nu(L^i) = L^{i-r}$ for all $i\in\Z$, then $(P, r, \beta)$ is
  necessarily fundamental.
\end{rmk}

\begin{definition}\label{stratared}
  Let $(P, r, \beta)$ be a stratum in $\GL(V)$.  A reduction of $(P,
  r, \beta)$ is a $\GL(V)$-stratum $(P', r', \beta')$ with the
  following properties: $\left(\beta_\nu' + (\mathfrak{P}')^{1-r'}
  \right) \cap \left( \beta_\nu + \mathfrak{P}^{1-r}\right) \ne
  \emptyset $, $\beta_\nu + \mathfrak{P}^{1-r} \subset
  (\mathfrak{P'})^{-r'}$, and there exists a lattice $L$ that lies in
  both of the associated lattice chains $\mathscr{L}$ and
  $\mathscr{L}'$.
\end{definition}

Let $(P', r', \beta')$ be a reduction of $(P, r, \beta)$.  The first
property allows one to choose $\beta_\nu \in \gl_n(F)$ to represent
both $\beta$ and $\beta'$.  The second implies that any representative
$\beta_\nu$ for $\beta$ determines an element of
$(\bar{\mathfrak{P}}')^{r'}$.  Note that it is
possible to have two different reductions $(P', r', \beta'_1)$ and
$(P', r', \beta'_2)$ with the same $P'$ and $r'$, if
$\mathfrak{P}^{1-r} \nsubseteq (\mathfrak{P}')^{1-r'}$.

An important invariant of a stratum $(P, r, \beta)$ is its \emph{slope},
which is defined by
$\slope(P, r, \beta) = r/e_P$.  The following theorem
describes the relationship between slope and fundamental strata.

\begin{theorem}\label{theorem:Bu1}
Suppose that $r\ge 1$.  Then $(P, r, \beta)$ is a non-fundamental stratum if and only if
there is a reduction $(P', r', \beta')$ with
$\slope(P, r, \beta)<  \slope(P', r', \beta')$. 
\end{theorem}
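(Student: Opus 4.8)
The plan is to prove Theorem~\ref{theorem:Bu1} by reducing it to the Iwahori case, where the corresponding statement is essentially the classical result \cite[Theorem 2.3 and Lemma 2.1]{Bu} rephrased in our geometric language. Throughout, recall that a stratum $(P,r,\beta)$ is non-fundamental precisely when $(\beta_\nu)^m\in\mathfrak{P}^{1-rm}$ for some $m$, equivalently $\bar\beta_\nu\in\End(\gr(\mathscr{L}))$ is nilpotent (Remark~\ref{asgrd}); and that $\slope(P,r,\beta)=r/e_P$.

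\medskip
\noindent\textbf{The ``if'' direction.} Suppose $(P',r',\beta')$ is a reduction of $(P,r,\beta)$ with $r'/e_{P'}>r/e_P$. By the definition of reduction, we may pick a single $\beta_\nu\in\gl_n(F)$ representing both $\beta$ and $\beta'$, and $\beta_\nu+\mathfrak{P}^{1-r}\subset(\mathfrak{P}')^{-r'}$. Passing to a common lattice $L$ in both chains, consider the endomorphism $\bar\beta_\nu$ of $\gr(\mathscr{L})$; I want to show it is nilpotent. The key numerical point: since $\beta_\nu\in(\mathfrak{P}')^{-r'}$, we have $(\beta_\nu)^m\in(\mathfrak{P}')^{-r'm}$, which pushes the lattice $L$ to $L'$ with the $\mathscr{L}'$-filtration jump at least $r'm$; translating this into the coarser $\mathscr{L}$-filtration via the common lattice and using $r'/e_{P'}>r/e_P$, for $m$ large enough $(\beta_\nu)^m$ lands in $\mathfrak{P}^{1-rm}$. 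This forces $\bar\beta_\nu$ nilpotent, so $(P,r,\beta)$ is non-fundamental. This direction is a direct computation with the two filtrations and should not present difficulties.

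\medskip
\noindent\textbf{The ``only if'' direction} is the substantial one. Assume $(P,r,\beta)$ is non-fundamental, so $\bar\beta_\nu\in\End(\gr(\mathscr{L}))$ is nilpotent. The strategy is to use Lemma~\ref{219}-type considerations to find a finer lattice chain adapted to $\beta_\nu$. Concretely: since $\bar\beta_\nu$ is a nilpotent degree $-r$ endomorphism of the graded module $\gr(\mathscr{L})$, one builds a new complete (or at least finer) lattice chain $\mathscr{L}'$ containing some lattice $L$ of $\mathscr{L}$, constructed from a filtration by kernels/images of powers of $\beta_\nu$, so that $\beta_\nu$ shifts $\mathscr{L}'$ by strictly more than $r/e_P$ per step. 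Taking $P'$ to be the stabilizer of $\mathscr{L}'$ and $\beta'$ the image of $\beta_\nu$ in $(\bar{\mathfrak{P}}')^{-r'}$ for the appropriate $r'$, one checks the three conditions of Definition~\ref{stratared} (common lattice $L$; $\beta_\nu+\mathfrak{P}^{1-r}\subset(\mathfrak{P}')^{-r'}$; nonempty intersection of the affine classes), and that $\slope$ strictly increased. The hard part is precisely the construction of $\mathscr{L}'$: one must arrange that the nilpotency of $\bar\beta_\nu$ translates into a genuine gain in the $t$-adic ``rate'' at which $\beta_\nu$ moves lattices, and this is where the fine structure of how $\beta_\nu$ interacts with the associated graded (Lemma~\ref{subquotient}, and the block description of $\mathfrak{P}^r/\mathfrak{P}^{r+1}$) is used. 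A clean way to organize this is to reduce to the case $e_P=n$ (Iwahori) by first refining $\mathscr{L}$ to a complete chain stabilized by a sublattice configuration compatible with $\beta_\nu$, then invoke \cite[Theorem 2.3]{Bu} directly, which is stated exactly in the form that a non-fundamental stratum admits such a reduction.

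\medskip
\noindent Because Bushnell's \cite{Bu} already proves the group-theoretic content, the real task here is translation: phrasing ``reduction'' in terms of our lattice-chain/graded-endomorphism formalism and verifying the dictionary. I expect the main obstacle to be bookkeeping — matching the indexing conventions ($e_P$, the normalization $\ord(\nu)=-1$, translations of lattice chains) so that the slope inequality comes out with the correct direction — rather than any genuinely new idea. I would therefore present the proof as: (1) recall the equivalence non-fundamental $\iff$ $\bar\beta_\nu$ nilpotent; (2) prove ``if'' by the two-filtration estimate above; (3) for ``only if'', cite \cite[Theorem 2.3, Lemma 2.1]{Bu} and spell out the identification of Bushnell's reduction data with a stratum reduction in the sense of Definition~\ref{stratared}, checking the slope strictly increases.
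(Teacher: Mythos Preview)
The paper does not give its own proof; immediately after the statement it records ``This is proved in Theorem~1 and Remark~2.9 of \cite{Bu}.'' So your plan to invoke Bushnell and spell out the dictionary is exactly what the paper intends.

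There is, however, a genuine problem with the direction of the inequality. The statement as printed appears to be a typo: compare how the result is \emph{used} in the proof of Theorem~\ref{fundcontain}, where from a non-fundamental stratum $(P_0,r_0,\beta_0)$ one extracts a reduction $(P_1,r_1,\beta_1')$ with $r_1/e_{P_1}<r_0/e_{P_0}$, i.e., \emph{smaller} slope. This is also what Bushnell's Theorem~1 actually asserts. The correct inequality is $\slope(P',r',\beta')<\slope(P,r,\beta)$.

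You have taken the printed inequality at face value, and your ``if'' argument does not go through with it. From $\beta_\nu\in(\mathfrak{P}')^{-r'}$ one gets $(\beta_\nu)^m\in(\mathfrak{P}')^{-r'm}$, which in $t$-adic terms says $\beta_\nu^m$ shifts a common lattice by at least $r'm/e_{P'}$. To conclude $(\beta_\nu)^m\in\mathfrak{P}^{1-rm}$ you need this shift to be \emph{strictly less} than $rm/e_P$ for large $m$, i.e., $r'/e_{P'}<r/e_P$, which is the opposite of what you assumed. Your estimate therefore proves precisely the correct ``if'' direction (reduction with smaller slope $\Rightarrow$ non-fundamental), just with the hypothesis misread. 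The same remark applies to your ``only if'' sketch: a kernel/image refinement adapted to a nilpotent $\bar\beta_\nu$ produces a chain on which $\beta_\nu$ has \emph{lower} depth rate, not higher --- again consistent with Bushnell and with the corrected inequality.

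Once you flip the inequality (and flag the typo), your outline is correct and coincides with the paper's approach, which is simply to defer to \cite{Bu}.
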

This is proved in Theorem 1 and Remark 2.9 of \cite{Bu}.
\begin{definition}\label{uniform} 
  A stratum $(P, r, \beta)$ is called \emph{uniform} if it is
  fundamental, $P$ is a uniform parahoric subgroup, and $\gcd(r, e_P)
  = 1$.  The stratum is \emph{strongly uniform} if it is uniform and
  $\beta_\nu (L^i) = L^{i-r}$ for all $L^i \in \mathscr{L}$.
\end{definition}

\begin{rmk} \label{sug} A uniform stratum $(P, r, \beta)$ is strongly
  uniform if and only if the induced maps
  $\bar{\beta}_\nu:\bL^i\to\bL^{i-r}$ are isomorphisms for each $i$.
  The forward implication follows since the $\bL^i$'s have the same
  dimension.  For the converse, note that if the $\bar{\beta}^i_\nu$'s
  are isomorphisms, then, in particular,
  $L^{i-r+j}=\beta_\nu(L^{i+j})+L^{i-r+j+1}$ for $0\le j<e_P$.
  Substituting gives
  $L^{i-r}=\sum_{j=0}^{e_P-1}\beta_\nu(L^{i+j})+L^{i-r+e_P}=\beta_\nu(L^i)+tL^{i-r}$,
  so $\beta_\nu(L^i)=L^{i-r}$ by Nakayama's Lemma.
\end{rmk}

Any  fundamental stratum has a reduction with $\gcd(r, e_P) = 1$.
\begin{lemma}\label{gcdlemma}
If $(P, r, \beta)$ is a fundamental stratum, there is a fundamental reduction
$(P', r', \beta')$ with the property that $\gcd(r', e_P) = 1$.
\end{lemma}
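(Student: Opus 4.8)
The plan is to \emph{coarsen} the lattice chain attached to $P$ by the integer $d:=\gcd(r,e_P)$: this replaces $P$ by a (generally larger) parahoric of period $e_P/d$, to which $\beta_\nu$ descends as a stratum of level $r/d$ and the same slope, and then the new period $e_P/d$ and level $r/d$ are coprime by construction.

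Here are the steps. If $\gcd(r,e_P)=1$ there is nothing to prove, since $(P,r,\beta)$ is a reduction of itself; so assume $d:=\gcd(r,e_P)>1$ and write $r=dr'$, $e_P=de'$ (so $\gcd(r',e')=1$). Let $\mathscr{L}=(L^i)_{i\in\Z}$ be the lattice chain stabilized by $P$, and put $\mathscr{L}'=(L^{di})_{i\in\Z}$. First I would check that $\mathscr{L}'$ is again a lattice chain: $L^{di}\supsetneq L^{d(i+1)}$, and $t^{-m}L^{di}=L^{di-me_P}=L^{d(i-me')}$, so its period is $e'$. Let $P'$ be its stabilizer, so $\mathfrak{P}\subseteq\mathfrak{P}'$, $e_{P'}=e'$, and $(\mathfrak{P}')^{s}=\{X\mid X L^{di}\subseteq L^{d(i+s)}\ \text{for all }i\}$. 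Since $\beta_\nu L^i\subseteq L^{i-r}$ for all $i$ and $r=dr'$, we get $\beta_\nu L^{di}\subseteq L^{d(i-r')}$, i.e.\ $\beta_\nu\in(\mathfrak{P}')^{-r'}$; let $\beta'\in\bigl((\mathfrak{P}')^{r'}/(\mathfrak{P}')^{r'+1}\bigr)^{\vee}$ be the functional attached by Proposition~\ref{duality} to the image of $\beta_\nu$ in $(\mathfrak{P}')^{-r'}/(\mathfrak{P}')^{-r'+1}$. The claim is that $(P',r',\beta')$ is the desired reduction.

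Next I would verify the three conditions of Definition~\ref{stratared}. The lattice $L^0$ lies in both $\mathscr{L}$ and $\mathscr{L}'$. For $X\in\mathfrak{P}^{1-r}$ one has $X L^{di}\subseteq L^{di+1-r}\subseteq L^{di-r}=L^{d(i-r')}$, so $\mathfrak{P}^{1-r}\subseteq(\mathfrak{P}')^{-r'}$ and hence $\beta_\nu+\mathfrak{P}^{1-r}\subseteq(\mathfrak{P}')^{-r'}$. Finally, taking $\beta_\nu$ itself as a representative of $\beta'$, it lies in $\bigl(\beta_\nu+(\mathfrak{P}')^{1-r'}\bigr)\cap\bigl(\beta_\nu+\mathfrak{P}^{1-r}\bigr)$, so this intersection is nonempty. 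Thus $(P',r',\beta')$ is a reduction of $(P,r,\beta)$.

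The one point that needs a real argument — and which I expect to be the main obstacle — is that $(P',r',\beta')$ is \emph{fundamental}. By \cite[Lemma~2.1]{Bu} it suffices to show $\beta_\nu^m\notin(\mathfrak{P}')^{1-mr'}$ for every $m\ge1$. If $\beta_\nu^m\in(\mathfrak{P}')^{1-mr'}$, then $\beta_\nu^m L^{di}\subseteq L^{d(i+1-mr')}=L^{di+d-mr}$ for all $i$; writing an arbitrary $j$ as $j=di+s$ with $0\le s\le d-1$, the inclusion $L^j\subseteq L^{di}$ together with $di+d-mr\ge(di+s)+1-mr=j+1-mr$ gives $\beta_\nu^m L^j\subseteq\beta_\nu^m L^{di}\subseteq L^{di+d-mr}\subseteq L^{j+1-mr}$. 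As this holds for every $j$, we conclude $\beta_\nu^m\in\mathfrak{P}^{1-mr}$, contradicting the fundamentality of $(P,r,\beta)$ by \cite[Lemma~2.1]{Bu}. (Alternatively, since $\slope(P',r',\beta')=r'/e'=r/e_P=\slope(P,r,\beta)$, fundamentality of $(P',r',\beta')$ also follows from Theorem~\ref{theorem:Bu1}.) Since finally $\gcd(r',e_{P'})=\gcd(r/d,e_P/d)=1$, this completes the argument. Everything besides the fundamentality check is a routine comparison of lattice indices once $\mathscr{L}'=(L^{di})$ is in place.
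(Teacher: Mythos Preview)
Your proof is correct and follows essentially the same approach as the paper: coarsen the lattice chain by taking every $d$th lattice (the paper uses $g$ for your $d$), observe that $\beta_\nu$ descends to a stratum of level $r'=r/d$ on the coarser chain, and verify fundamentality by showing that $\beta_\nu^m\in(\mathfrak{P}')^{1-mr'}$ forces $\beta_\nu^m\in\mathfrak{P}^{1-mr}$ via the same index comparison. You are slightly more thorough than the paper in verifying the conditions of Definition~\ref{stratared} explicitly, and your alternative fundamentality argument via Theorem~\ref{theorem:Bu1} is a nice observation the paper does not make.
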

\begin{proof}

  Let $g = \gcd (r, e_P)$ and $r' = r/g$.  Let $(L')^j = L^{j g}$, and
  set $\mathscr{L}' = ((L')^j)$.  This is the sub-lattice chain of
  $\mathscr{L}$ consisting of all lattices of the form $L^{a e_P + b
    r}$ with $a,b \in \Z$.  If we choose a representative $\beta_\nu$
  for $\beta$, $\beta_\nu ((L')^j) \subset (L')^{j - r'}$.  Thus,
  $\beta_\nu + \mathfrak{P}^{1-r} \subset (\mathfrak{P}')^{-r'}$.  Let
  $\beta'\in((\bfP')^{r})^\vee$ be the
  functional determined by the image of $\beta_\nu$ in
  $(\bfP')^{-r'}$.  If $\beta_\nu^N
  \in (\mathfrak{P}')^{-Nr' + 1},$ then $\beta_\nu^N \in
  \mathfrak{P}^{-Nr + 1}$: if $\beta_\nu (L')^j \subset (L')^{j-Nr' +
    1},$ then for any $j\in\Z$ and $0<m<g$, $\beta_\nu (L^{jg+m})
  \subset\beta_\nu (L^{jg}) \subset L^{jg - Nr + g}\subset L^{jg - Nr
    +m+ 1}$.  Thus, if $(P, r, \beta)$ is fundamental, so is $(P', r',
  \beta')$.

\end{proof}

\subsection{Split Strata}\label{section:splittings}
We now generalize the notion of a `split stratum' given in
\cite[Section 2]{Ku} and \cite[Section 2.3]{BK} to the geometric
setting.  Suppose that $(P, r, \beta)$ is a stratum in $V$ and that
$\mathscr{L} = (L^i)_{i\in\Z}$ is the lattice chain stabilized by $P$.
Let $V = V_1 \oplus V_2$ with $V_1, V_2 \ne \{0\}$.  Define $L^i_j =
L^i \cap V_j$ for $j = 1, 2$.  Note that $L^i_j$ has maximal rank in
$V_j$, so it is indeed a lattice.  Let $\mathscr{L}_j$ be the lattice
chain consisting of $(L^i_j)$ omitting repeats.  We denote the
parahoric associated to $\mathscr{L}_j$ by $P_j$.  Note that if $L^i =
L^i_1 \oplus L^i_2 $ for all $i$, then each $L^i_j$ is automatically a
lattice in $V_j$.

\begin{definition}\label{lvlr}
We say that $(V_1, V_2)$ \emph{splits} $P$ if
\begin{enumerate}
\item $L^i = L^i_1 \oplus L^i_2 $ for all $i$, and
\item $\mathscr{L}_1$ is a uniform lattice chain
with $e_{P_1} = e_P$.
\end{enumerate}
In addition, $(V_1, V_2)$ splits $\beta$  at level $r$ if
$\beta_\nu (L^i_j) \subset L^{i-r}_j + L^{i-r+1}$.
\end{definition}
Note that the above definition is independent of the choice of
representative $\beta_\nu$. However, it is possible to choose a
`split' representative for $\beta_\nu$.  Let $\pi_j : V \to V_j$ be
the projection, $\iota_j : V_j \to V$ the inclusion, and $\epsilon_j =
\iota_j \circ \pi_j$.  Set $\beta_{j\nu}=\pi_j \circ \beta_\nu \circ
\iota_j$ and $\beta'_{\nu}=\beta_{1\nu}\oplus\beta_{2\nu}$.  Whenever
$(V_1, V_2)$ splits $\beta$ and $P$, $\beta'_\nu\in \beta_\nu +
\mathfrak{P}^{1-r}$.  Thus, by replacing $\beta_{\nu}$ by
$\beta'_\nu$, we may assume without loss of generality that the
representative $\beta_{\nu}$ is ``block-diagonal'', i.e., it satisfies
$\beta_\nu' (L_j^i) \subset L_j^{i-r}$.  If $\beta_j$ is the
functional induced by $\beta_{j\nu}$, then $(P_j, r, \beta_j )$ is a
stratum in $\GL(V_j)$.

\begin{rmk}\label{Vsplit}
  If $P$ is uniform and $(V_1,V_2)$ splits $P$, then $P_2$ is also
  uniform with $e_{P_2} = e_P$, since $\bar{L}^i \cong \bar{L}^i_1
  \oplus \bar{L}^i_2$.  Furthermore, if $(P, r, \beta)$ is strongly
  uniform, $(V_1, V_2)$ splits $\beta$, and the first part of the
  splitting condition for $P$ is satisfied, then $(V_1, V_2)$ splits
  $P$.  Since $\gcd(r, e_P) = 1$ and $\beta_\nu (L^i) = L^{i-r}$ for
  all $i$, we may choose integers $a$ and $b$ such that $\alpha_\nu =
  t^a \beta_\nu^b$ generates the $\fP$-module $\mathfrak{P}^1$.  Thus,
  if we choose $\beta_\nu$ such that $\beta_\nu (V_j) \subset V_j$ as
  above, it is clear that $\alpha_\nu (L^i_1) = L^{i+1}_1$ and
  $\bar{\alpha}_\nu (\bar{L}^i_1) = \bar{L}^{i+1}_1$.  Accordingly,
  the subquotients $\bar{L}_1^i$ have the same dimension for all $i$
  and there are no repeats in the lattice chain.  This implies that
  $\mathscr{L}_1$ is uniform and $e_{P_1} = e_P$.  In fact, we see
  that  $(P_1, r, \beta_1 )$ is strongly uniform.
  \end{rmk}

Define  $V_{1 2}$ to be the vector space $\Hom_F(V_2, V_1)$, and
let $\partial_{\beta_\nu}$ be the operator
\begin{equation*}\begin{aligned}
\partial_{\beta_\nu} : V_{1 2} & \to V_{1 2}\\
x & \mapsto \beta_{1\nu} x - x \beta_{2\nu}.
\end{aligned}
\end{equation*}
We remark that if we embed $V_{12}$ in $\gl(V)$ in the obvious way and
assume that $\beta_\nu$ is block-diagonal, then
$\partial_{\beta_\nu} (x) = [\beta_\nu, x] = \delta_{\bn} (x)$.
 
The map $\partial_{\beta_\nu}$ is a degree $-r$ endomorphism of the 
$\mathfrak{o}$-lattice chain $\mathscr{M} = (M^j)$ defined by
\begin{equation*}
M^j = \{ x \in V_{1 2} \mid  x L^i_2 \subset L^{i +j}_1\text{ for all
} i\}.
\end{equation*}
By \cite[Lemma 2.2]{Ku}, $\mathscr{M}$ is a uniform lattice chain with
period $e_P$.  The functional on $\bfP$ induced by
$\partial_{\beta_\nu}$ is independent of the choice of representative;
we denote the corresponding stratum by $(P_{1 2}, r, \partial_\beta)$.

Recall from Remark~\ref{levi} that any element of $\fP/\fP^1$
determines a conjugacy class in $\gl_n(k)$.  Accordingly, if $(P, 0,
\beta)$ is a stratum with $r = 0$, it makes sense to refer to the
`eigenvalues' of $\bar{\beta}_\nu$.  If the stratum splits at level
$0$, then the eigenvalues of the diagonal blocks $(\bar{\beta}_1)_\nu$
and $(\bar{\beta}_2)_\nu$ are well-defined.

\begin{definition}\label{splitstrata}
We say that $(V_1, V_2)$ splits the fundamental stratum $(P, r, \beta)$ if
\begin{enumerate}[label={(\arabic*)},ref={\thedefinition(\arabic*)}]
\item\label{splitpart} $(V_1, V_2)$ splits $P$ and $\beta$ at level $r$;
\item\label{partialpart} $(P_1, r,  \beta_1)$ and $(P_{1 2}, r,
  \partial_\beta)$ are strongly uniform; and
\item\label{regpart} when $r = 0$, the eigenvalues of
  $\bar{\beta}_{1\nu}$ are distinct from the eigenvalues of
  $\bar{\beta}_{2\nu}$ modulo $\Z$.
\end{enumerate}
\end{definition}

\begin{rmk}\label{splitp1}

  The congruence subgroup $P^1$ acts on the set of splittings of
  $(P,r,\b)$, i.e., if $g\in P^1$ and $(V_1,V_2)$ splits $(P,r,\b)$,
  then so does $(gV_1,gV_2)$. First, note that $P^1$ stabilizes this
  stratum.  Next, given $g\in P^1$, it is clear that $L^i=g L^i=
  g L^i_1 \oplus g L^i_2$ and that $g\sL_1$ is uniform
  with the same period as $\sL_1$.  Thus, $(gV_1,gV_2)$ splits $P$; it
  also splits $\b$ at level $r$, since $g x\in x+ L^{i+1}$ for
  any $x\in L^i$.  It is obvious that the induced strata on $gV_1$ and
  $gV_{12}$ are strongly uniform.  Finally, note that when viewed as
  subalgebras of $\bfP$ is the natural way, $\bfP_j$ and
  $g\fP_j/g\fP_j^1$ are the same.  It follows that the eigenvalues of
  $g\beta_{j\nu}+g\fP_j^1$ and $\bar{\beta}_{j\nu}$ are the same, so
  the last condition also holds.
\end{rmk}

If $P_1$ is a uniform parahoric, then $P_{1 2}$ is as
well, with the same period.  To see this, note that there is an isomorphism
\begin{equation}\label{P12uniform}
  \bar{M}^j \to \bigoplus_{\ell = 0}^{e_P-1} \Hom (\bar{L}_2^\ell, \bar{L}_1^{\ell+j}).
\end{equation}
Since $\dim_k(\bar{L}_1^{\ell}) = \dim_k(L^0_1/t L^0_1)/e_{P_1}$ for all $\ell$, 
\begin{equation*}
\dim_k(\bar{M}^j) = \dim_k(L^0_1/t L^0_1) \dim_k(L^0_2/ t L^0_2) / e_{P_1},
\end{equation*}
and $P_{12}$ is uniform.  Furthermore, since $\dim_k (M^j/tM^j) = \dim_k (L^0_1/t L^0_1) \dim_k
(L^0_2/t L^0_2)$, it follows that $tM^j=M^{j+e_{P_1}}$, i.e.,
$e_{P_{12}} = e_{P_1}=e_P$.

Let $V_{2 1} = \Hom_F(V_1, V_2)$.  Define a lattice chain $\mathscr{N}
= \{N^i\}$ in $V_{2 1}$ in the same way as for $\mathscr{M}$.  An
argument similar to that given above shows that $\mathscr{N}$ is
uniform with period $e_P$ and that the operator
$\partial'_{\beta_\nu}$ on $V_{2 1}$ defined by $\partial'_{\beta_\nu} (x) =
\beta_{2\nu} x - x \beta_{1\nu}$ is an endomorphism of $\mathscr{N}$
of degree $-r$.  We let $(P_{2 1}, r, \partial_\beta')$ be the
associated stratum.

\begin{lemma}\label{spstratalemma}
The stratum $(P_{2 1}, r, \partial_\beta')$ is strongly uniform.
\end{lemma}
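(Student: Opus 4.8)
The plan is to exhibit the stratum $(P_{2 1}, r, \partial_\beta')$ as the ``transpose'' of the strongly uniform stratum $(P_{1 2}, r, \partial_\beta)$ with respect to the trace form $\langle\,,\,\rangle_\nu$, and then read off the conclusion. Recall that $\mathscr{N}$ has already been shown to be uniform with period $e_P$, that $\partial'_{\beta_\nu}$ is a degree $-r$ endomorphism of $\mathscr{N}$, and that $\gcd(r,e_P)=1$ is inherited from $(P,r,\beta)$; thus the stratum is uniform as soon as it is fundamental, and by Remark~\ref{asgrd} together with the Nakayama argument in Remark~\ref{sug} it is enough to prove that each induced map $\overline{\partial'_{\beta_\nu}}\colon\bN^i\to\bN^{i-r}$ is an isomorphism. (By uniformity of $\mathscr{N}$ the source and target have the same $k$-dimension, so surjectivity alone would suffice.) Strong uniformity then follows.

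First I would set up the duality between $\mathscr{M}$ and $\mathscr{N}$. View $V_{1 2}=\Hom_F(V_2,V_1)$ and $V_{2 1}=\Hom_F(V_1,V_2)$ inside $\gl(V)$ as the blocks $\epsilon_1\gl(V)\epsilon_2$ and $\epsilon_2\gl(V)\epsilon_1$; then $\langle x,y\rangle_\nu=\Res[\Tr(xy)\,\nu]$ for $x\in V_{1 2}$, $y\in V_{2 1}$, and since $\Tr(xy')=0$ whenever $y'$ lies in $V_{1 1}$, $V_{1 2}$, or $V_{2 2}$, the form $\langle\,,\,\rangle_\nu$ restricts to a perfect pairing $V_{1 2}\times V_{2 1}\to k$. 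The splitting condition $L^i=L^i_1\oplus L^i_2$ gives $\mathfrak{P}^s=\bigoplus_{a,b}(\mathfrak{P}^s\cap V_{ab})$ with $\mathfrak{P}^s\cap V_{1 2}=M^s$ and $\mathfrak{P}^s\cap V_{2 1}=N^s$, so feeding this block decomposition into Proposition~\ref{duality} (with $\ord(\nu)=-1$) makes the perfect pairing $(\mathfrak{P}^j/\mathfrak{P}^{j+1})\times(\mathfrak{P}^{-j}/\mathfrak{P}^{-j+1})\to k$ restrict to a perfect pairing $\bM^j\times\bN^{-j}\to k$; hence $(\bM^j)^\vee\cong\bN^{-j}$ for every $j$.

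Next I would record the adjunction. Taking the block-diagonal representative, so that $\partial_{\beta_\nu}(x)=\beta_{1\nu}x-x\beta_{2\nu}$ and $\partial'_{\beta_\nu}(y)=\beta_{2\nu}y-y\beta_{1\nu}$, cyclicity of the trace gives
\begin{equation*}
\langle\partial_{\beta_\nu}(x),y\rangle_\nu=\Res\bigl[\bigl(\Tr(xy\,\beta_{1\nu})-\Tr(x\,\beta_{2\nu}y)\bigr)\nu\bigr]=-\langle x,\partial'_{\beta_\nu}(y)\rangle_\nu
\end{equation*}
for all $x\in V_{1 2}$, $y\in V_{2 1}$. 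Since $\partial_{\beta_\nu}$ and $\partial'_{\beta_\nu}$ have degree $-r$ on $\mathscr{M}$ and $\mathscr{N}$, this identity descends to the associated graded pieces; under the identifications of the previous paragraph it says precisely that $\overline{\partial'_{\beta_\nu}}\colon\bN^i\to\bN^{i-r}$ is, up to sign, the transpose of $\overline{\partial_{\beta_\nu}}\colon\bM^{r-i}\to\bM^{-i}$ (using $(\bM^{-i})^\vee\cong\bN^i$ and $(\bM^{r-i})^\vee\cong\bN^{i-r}$).

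Finally, since $(P_{1 2}, r, \partial_\beta)$ is strongly uniform, Remark~\ref{sug} gives that $\overline{\partial_{\beta_\nu}}\colon\bM^j\to\bM^{j-r}$ is an isomorphism for every $j$; applying this with $j=r-i$, and using that a linear map of finite-dimensional $k$-vector spaces is an isomorphism if and only if its transpose is, we conclude that $\overline{\partial'_{\beta_\nu}}\colon\bN^i\to\bN^{i-r}$ is an isomorphism for every $i$. Hence $(P_{2 1}, r, \partial_\beta')$ is strongly uniform. I expect the only real obstacle to be the bookkeeping in the middle steps --- matching the duality of quotients to the index normalization of Proposition~\ref{duality} and keeping the degree shifts straight --- but this is routine; the substance of the argument is the adjunction formula together with the hypothesis on $(P_{1 2}, r, \partial_\beta)$.
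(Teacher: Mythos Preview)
Your proposal is correct and follows essentially the same route as the paper: both establish the duality $(\bM^j)^\vee\cong\bN^{-j}$ via the block decomposition and Proposition~\ref{duality}, verify the adjunction $\langle\partial_{\beta_\nu}(x),y\rangle_\nu=-\langle x,\partial'_{\beta_\nu}(y)\rangle_\nu$ by cyclicity of trace, and then read off the isomorphism on $\bN$-graded pieces from the strongly uniform hypothesis on $(P_{12},r,\partial_\beta)$ via Remark~\ref{sug}. Your write-up is somewhat more explicit about the transpose interpretation and the index bookkeeping, but the substance is identical.
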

\begin{proof}
It only remains to show that $\partial'_\beta (N^i) = N^{i-r}$ for all
$i$.  First, observe that there is a natural injection
$\bM^i\hookrightarrow\bfP^i$, so by Proposition~\ref{duality}, we have
a surjection $\bfP^{-i}\cong(\bfP^i)^\vee\to(\bM^i)^\vee$.  Since the
kernel of this map consists of the image of the ``block upper
triangular'' matrices, we see that $(\bar{M}^i)^\vee \cong
\bar{N}^{-i}$.  Next, if $x \in M^i$
and $y \in N^{-i+r}$,
\begin{equation*}\begin{aligned}
\langle \beta_1 x - x \beta_2, y \rangle_\nu & = 
\Res (\Tr(\beta_1 x y)\nu - \Tr(x \beta_2 y)\nu) \\
& = \Res (\Tr( x y \beta_1)\nu - \Tr(x \beta_2 y)\nu) \\
& = -\langle x, \beta_2 y- y \beta_1 \rangle_\nu .
\end{aligned}
\end{equation*}
Since $\bar{\partial}_{\beta_\nu}:\bM^i\to\bM^{i-r}$ is an isomorphism
by Remark~\ref{sug}, it follows that $\bar{\partial}'_\beta (\bar{N}^{-i+r})
= \bar{N}^{-i} $, so $\partial_\beta (N^i) = N^{i-r}$ by the same
remark.

\end{proof}

Suppose that $(V_1, V_2)$ splits a uniform stratum $(P, r, \beta)$ as above.
By Remark~\ref{Vsplit}, $e_{P_1} = e_{P_2} = e_P$.
Thus, it is never the case that $L^i_j = L^{i+1}_j$ using the indexing
convention in Definition~\ref{lvlr}, and indeed $\mathscr{L}_j = (L^i_j)_{i \in \Z}$.
In this setting, it makes sense to think of $(P, r, \beta)$
as the direct sum of $(P_1, r, \beta_1)$ and $(P_2, r, \beta_2)$.

In the following, let $J$ be a finite  indexing set, and suppose
$V_J = \bigoplus_{j \in J} V_j$,
with each $V_j \ne \{0\}$.
Let $(P_j, r, \beta_j)$ be a stratum in 
$\GL(V_j)$ corresponding to a uniform parahoric $P_j$,
and let $e_{P_j} = e_{P_k}$ for all $j, k$.  
Define $L^i_J = \bigoplus_{j \in J} L^i_j$ 
and $\mathscr{L}_J = (L^i_J)_{i \in \Z}$, and let $P_J \subset \GL(V)$ 
be the parahoric subgroup that stabilizes $\mathscr{L}_J$.
Finally, let $\beta_J = \bigoplus_{j \in J} \beta_j$.
\begin{definition}\label{directsum}
Under the assumptions of the previous paragraph:
\begin{enumerate}
\item When $J = \{1, 2\}$, we say that $(P_J, r, \beta_J)=(P_1, r,
  \beta_1) \oplus (P_2, r, \beta_2)$ if $(V_1,V_2)$ splits $(P_J, r,
  \beta_J)$.
\item When $J = \{1, \ldots, m\}$, we define the direct
  sum recursively by
\begin{equation*}\label{sumeq}
\bigoplus_{j\in J} (P_j, r, \beta_j) = 
(P_1, r, \beta_1) \oplus \left( (P_2, r, \beta_2) \oplus \left( \ldots \oplus \left(P_m, r, \beta_m
\right) \right) \right).
\end{equation*}
Note that if we set $J_\ell = \{\ell, \ldots, m \}$ for $\ell\in J$,
then $(V_\ell, V_{J_{\ell+1}})$ must split $(P_{J_\ell}, r,
\beta_{J_\ell})$ for all $\ell$.
\item We say that a uniform stratum $(P, r, \beta) \in \GL(V)$ splits
  into the direct sum $\bigoplus_{j\in J} (P_j, r, \beta_j)$ if there
  is an isomorphism $V \cong V_J$ under which $(P, r, \beta)$ and
  $(P_J, r, \beta_J)$ are equivalent.
\end{enumerate}
\end{definition}
\begin{rmk}
  It is clear that the direct sum operation is associative.  It is not
  symmetric because Definition~\ref{partialpart} implies that $(P_j, r,
  \beta_j)$ is strongly uniform whenever $j \ne m$.  The definition is
  easily modified to make it symmetric, but we will not do so here.
\end{rmk}

We can determine if a stratum has a splitting by considering the characteristic polynomial
of $(P, r, \beta)$.  Fix a parameter $t \in F$ and let 
$g = \gcd(r, e_{P})$.  Define an element
\begin{equation}\label{ybeta}
y_\beta = \beta_\nu^{e_P/g} t^{r/g} + \mathfrak{P}^1\in\bfP.
\end{equation}
 Recall from Remark~\ref{levi} that $y_\beta$ determines a conjugacy
 class in $\gl_n(k)$.
\begin{definition}
We define the characteristic polynomial $\phi_\beta\in k[X]$
of the stratum $(P, r, \beta)$ to be the characteristic polynomial of
$y_\beta$.
\end{definition}

The local field version of the following proposition is in \cite[Proposition 3.4]{Ku}.
\begin{proposition}\label{splittingprop}
Suppose that $\gcd(r, e_P) = 1$ and $r > 0$.  The stratum $(P, r, \beta)$ is fundamental
if and only if $\phi_\beta (X)$ has a non-zero root.  
If $(P, r,\beta)$ is fundamental, it splits if
$\phi_\beta (X) = g(X) h(X)$ for
$g, h \in k[X]$ relatively prime of positive degree.  
\end{proposition}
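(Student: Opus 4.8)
The plan is to reduce the splitting statement to an application of a splitting criterion for strata, using the factorization $\phi_\beta = gh$ to produce the decomposition $V = V_1 \oplus V_2$. First I would invoke Proposition~\ref{splittingprop}'s own hypotheses: since $\gcd(r,e_P)=1$, the element $y_\beta = \beta_\nu^{e_P} t^{r} + \mathfrak{P}^1 \in \bar{\mathfrak{P}}$ is (by Remark~\ref{levi}) a well-defined conjugacy class in $\gl_n(k)$ whose characteristic polynomial is $\phi_\beta$. The relatively prime factorization $\phi_\beta = g h$ with $g,h$ of positive degree gives, via the generalized kernel decomposition for the endomorphism $y_\beta$ on $L^0/tL^0$, a direct sum decomposition $\bar L^0 = \bar V_1 \oplus \bar V_2$ into $y_\beta$-stable subspaces on which $y_\beta$ acts with characteristic polynomial $g$ and $h$ respectively. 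Since $y_\beta$ is, up to the scalar $t^r$, the endomorphism $\bar\beta_\nu^{e_P}$ of $\gr(\mathscr{L})$ restricted to degree $0$, and since $\bar\beta_\nu$ shifts degree by $-r$ with $\gcd(r,e_P)=1$, the subspaces $\bar V_j$ propagate compatibly through the lattice chain: one lifts $\bar V_1, \bar V_2$ to $V_1, V_2 \subset V$ using the $\beta_\nu$-action, exactly as in the proof of Kutzko's \cite[Proposition 3.4]{Ku}.

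Next I would verify that this $(V_1,V_2)$ satisfies Definition~\ref{splitstrata}. Condition~\ref{splitpart}: that $L^i = L^i_1 \oplus L^i_2$ and that $\mathscr{L}_1$ is uniform with $e_{P_1} = e_P$. The direct sum decomposition of each $L^i$ follows because the $\bar V_j$ are $y_\beta$-stable and we transport them by powers of $\beta_\nu$; uniformity of $\mathscr{L}_1$ follows since $\gcd(r,e_P)=1$ lets us (as in Remark~\ref{Vsplit}) find $a,b$ with $\alpha_\nu = t^a\beta_\nu^b$ generating $\mathfrak{P}^1$, and $\alpha_\nu$ preserves $V_1$ and induces isomorphisms $\bar L^i_1 \to \bar L^{i+1}_1$, forcing the subquotients to have constant dimension and no repeats. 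That $(V_1,V_2)$ splits $\beta$ at level $r$ is immediate from $\beta_\nu$-stability of the $V_j$ modulo the filtration. Condition~\ref{partialpart}: $(P_1,r,\beta_1)$ is strongly uniform because, by Remark~\ref{Vsplit}, a split summand of a strongly uniform stratum is strongly uniform — here I would first note that the original $(P,r,\beta)$ can be taken strongly uniform, or if not, pass to the fundamental reduction with $\beta_\nu(L^i) = L^{i-r}$ available because $\gcd(r,e_P)=1$ and $(P,r,\beta)$ is fundamental (the root of $\phi_\beta$ being nonzero, by the first half of the proposition). Then $(P_{12},r,\partial_\beta)$ is strongly uniform by the argument already recorded in the text just before Lemma~\ref{spstratalemma} together with Lemma~\ref{spstratalemma} applied to $\mathscr{M}$ and $\mathscr{N}$: one needs $\bar\partial_{\beta_\nu}: \bar M^i \to \bar M^{i-r}$ to be an isomorphism, which holds because on each block $\Hom(\bar L^\ell_2, \bar L^{\ell+i}_1)$ the map $\delta_\xi$ induced by $\beta_\nu$ is an isomorphism — this is precisely where $g$ and $h$ being \emph{coprime} enters, since then $\bar\beta_{1\nu}^{e_P}$ and $\bar\beta_{2\nu}^{e_P}$ have no common eigenvalue (up to the common scalar $t^{-r}$), so $\mathrm{ad}$ acting between the two blocks is invertible. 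Condition~\ref{regpart} is vacuous since $r>0$.

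The main obstacle, I expect, is condition~\ref{partialpart}, specifically the strong uniformity of $(P_{12},r,\partial_\beta)$: one must show the commutator operator $\partial_{\beta_\nu}(x) = \beta_{1\nu}x - x\beta_{2\nu}$ induces isomorphisms on all the graded pieces $\bar M^i$, and this is exactly the point at which the coprimality of $g$ and $h$ must be converted into invertibility of an $\mathrm{ad}$-type operator. The clean way is to pass to the unramified situation where $\beta_\nu^{e_P/g}$ is a genuine semisimple-type operator: after twisting by the appropriate power of $t$, invertibility of $\delta_\xi: \bar M^\ell \to \bar M^{\ell - r}$ reduces to the statement that $y_\beta^{(1)}$ (the restriction of $y_\beta$ to $\bar V_1$) and $y_\beta^{(2)}$ (its restriction to $\bar V_2$) share no eigenvalue over $k$ — which is literally the statement that $\gcd(g,h) = 1$, since $g, h$ are their characteristic polynomials. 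I would isolate this as a short linear-algebra lemma: if $A \in \gl(\bar V_1)$, $B \in \gl(\bar V_2)$ have coprime characteristic polynomials, then $x \mapsto Ax - xB$ is an automorphism of $\Hom(\bar V_2, \bar V_1)$. The remaining bookkeeping — that the lift of the eigenspace decomposition from $\bar L^0$ genuinely respects the whole lattice chain, using $\gcd(r, e_P) = 1$ — is routine and follows the template of \cite[Section 3]{Ku}.
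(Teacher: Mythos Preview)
There is a genuine gap in your lifting step. You propose to decompose $\bar L^0 = \bar V_1 \oplus \bar V_2$ via the generalized eigenspaces of $y_\beta$ on $L^0/tL^0$, and then to ``lift $\bar V_1, \bar V_2$ to $V_1, V_2 \subset V$ using the $\beta_\nu$-action.'' But a decomposition of a graded piece does not lift canonically to an $F$-linear decomposition of $V$: propagating through the lattice chain via powers of $\bar\beta_\nu$ yields compatible $k$-subspaces of the various $\bar L^i$, not $F$-subspaces of $V$. The paper resolves this with Hensel's lemma: the characteristic polynomial $\tilde\phi_\beta \in \mathfrak{o}[X]$ of the $F$-linear operator $\tilde y = t^r \beta_\nu^{e_P}$ reduces modulo $\mathfrak{p}$ to $\phi_\beta$, so the coprime factorization $\phi_\beta = gh$ lifts to $\tilde\phi_\beta = \tilde g \tilde h$ over $\mathfrak{o}$, and one sets $V_1 = \ker(\tilde g(\tilde y))$, $V_2 = \ker(\tilde h(\tilde y))$. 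These are honest $F$-subspaces with $V = V_1 \oplus V_2$, after which the splitting conditions are verified by \cite[Lemmas~3.5,~3.6]{Ku}. Your Sylvester-type argument for strong uniformity of $(P_{12}, r, \partial_\beta)$ is a legitimate and self-contained alternative to citing those lemmas, but it only becomes available once $V_1, V_2$ exist as $F$-subspaces.

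Two secondary points. Your argument for strong uniformity of $(P_1, r, \beta_1)$ via Remark~\ref{Vsplit} presupposes that the original stratum is strongly uniform, which is not part of the hypothesis; the phrase ``pass to the fundamental reduction'' does not help, since we already have a fundamental stratum. The correct route is that since $g, h$ are coprime, at most one of them vanishes at $0$; relabeling so that $g(0) \neq 0$, the restriction of $y_\beta$ to $\bar V_1$ is invertible, which (combined with $\gcd(r, e_P) = 1$) forces $\bar\beta_{1\nu}$ to be an isomorphism on each $\bar L^i_1$, hence strong uniformity by Remark~\ref{sug}. Finally, you do not address the first assertion of the proposition, though it is quick: $\bar\beta_\nu$ and $y_\beta = t^r \bar\beta_\nu^{e_P}$ are simultaneously nilpotent as endomorphisms of $\gr(\mathscr{L})$, and the latter is nilpotent exactly when $\phi_\beta(X) = X^n$.
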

\begin{rmk}
Given any fundamental stratum $(P, r, \beta)$, one can always find a reduction 
that satisfies the condition $\gcd(r, e_P) = 1$ by Lemma \ref{gcdlemma}.
\end{rmk}
\begin{proof}
  Note that $\bar{\beta}_\nu$ and $y_\beta=t^r\bar{\beta}^{e_P}_\nu$,
  viewed as endomorphisms of $\gr(\mathscr{L})$, are either
  simultaneously nilpotent or not.  Using the identification of $\bfP$
  and a Levi subalgebra of $\gl_n(k)$, we see that the latter is
  nilpotent if and only its characteristic polynomial
  $\phi_\beta(X)$ equals $X^n$.  Since $(P, r, \beta)$ is not fundamental if
  and only if $\bar{\beta}_\nu$ is nilpotent, we see that $(P, r,
  \beta)$ is fundamental if and only if $\phi_\beta(X)$ has a nonzero
  root.

  Let $\tilde{\phi}_\beta\in F[X]$ be the characteristic polynomial of
  $\tilde{y} = t^r\beta_\nu^{e_P}$.  Then, $\tilde{\phi}_\beta$
  necessarily has coefficients in $\mathfrak{o}$ and
  $\tilde{\phi}_\beta \equiv \phi_\beta \pmod{\mathfrak{p}}$.
  Hensel's lemma 
  states that
  $\tilde{\phi}(X) = \tilde{g}(X) \tilde{h}(X)$, where $\tilde{h}
  \equiv h \pmod{\mathfrak{p}}$ and $\tilde{g} \equiv g
  \pmod{\mathfrak{p}}$.

We take $V_1= \ker(g(\tilde{y}))$ and $V_2 = \ker(h(\tilde{y}))$.  By
Lemmas 3.5 and 3.6 of \cite{Ku}, $(V_1, V_2)$ splits $P$ and $\beta$
at level $r$, $\beta_1(L^i_1) = L^{i-r}_1$ for all $i$, and
$\partial_\beta(M^j) = M^{j-r}$ for all $j$.  Therefore, $(V_1,V_2)$
splits $(P, r, \beta)$.
\end{proof}

\begin{corollary}\label{nilsplitting}
Suppose that $(P, r, \beta)$ is a uniform stratum that is not strongly uniform.  Then, 
$(P, r, \beta)$ splits into the direct sum of two strata $(P_1, r, \beta_1)$
and $(P_2, r, \beta_2)$, where
$(P_1, r, \beta_1)$ is strongly uniform and $(P_2, r, \beta_2)$
is non-fundamental.  
\end{corollary}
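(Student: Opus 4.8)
The plan is to read off the splitting from the characteristic polynomial $\phi_\beta$ of the stratum, invoking Proposition~\ref{splittingprop} for a suitable factorization. The key preliminary fact is that a uniform stratum is strongly uniform if and only if $\phi_\beta(0)\neq 0$. To see this, recall from Remark~\ref{sug} that strong uniformity is the condition that every map $\bar\beta_\nu\colon\bar{L}^{i}\to\bar{L}^{i-r}$ be an isomorphism; since $\gcd(r,e_P)=1$, the exponents $0,-r,\dots,-(e_P-1)r$ exhaust $\Z/e_P\Z$, so the degree-$0$ endomorphism $y_\beta=\beta_\nu^{e_P}t^{r}+\mathfrak{P}^{1}$ acts on each $\bar{L}^{i}$ as a composite of $e_P$ of these maps followed by a homothety. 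Thus $y_\beta$ is invertible precisely when all of them are, that is, precisely when $(P,r,\beta)$ is strongly uniform; and by Remark~\ref{levi}, $y_\beta$ is invertible if and only if $\phi_\beta(0)\neq 0$. Since we assume $(P,r,\beta)$ is not strongly uniform, $\phi_\beta(0)=0$; and since it is fundamental, Proposition~\ref{splittingprop} gives that $\phi_\beta$ has a nonzero root and $\phi_\beta(X)\neq X^{n}$. Hence we may write $\phi_\beta(X)=X^{a}q(X)$ with $q(0)\neq 0$, $a\geq 1$, and $\deg q\geq 1$.

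Next I would apply Proposition~\ref{splittingprop} to the coprime, positive-degree factorization $\phi_\beta=g\cdot h$ with $g(X)=q(X)$ and $h(X)=X^{a}$. As in its proof, this produces $V=V_{1}\oplus V_{2}$ with $V_{1}=\ker\tilde q(\tilde y)$ and $V_{2}=\ker\tilde h(\tilde y)$, where $\tilde y=\beta_\nu^{e_P}t^{r}$ and $\tilde q,\tilde h$ are the Hensel lifts over $\mathfrak{o}$ of $q$ and $X^{a}$, such that $(V_{1},V_{2})$ splits $(P,r,\beta)$ in the sense of Definition~\ref{splitstrata}. In particular $(P_{1},r,\beta_{1})$ is strongly uniform, $P_{1}$ and $P_{2}$ are uniform with period $e_P$ (Remark~\ref{Vsplit}), and $(P,r,\beta)$ splits into $(P_{1},r,\beta_{1})\oplus(P_{2},r,\beta_{2})$ (Definition~\ref{directsum}). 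It remains to identify $(P_{2},r,\beta_{2})$ as non-fundamental: since $\tilde h\equiv X^{a}\pmod{\mathfrak{p}}$, every eigenvalue of $\tilde y|_{V_{2}}$ lies in $\mathfrak{p}$, so (taking $\beta_\nu$ block-diagonal, so that $\tilde y|_{V_2}$ reduces to $y_{\beta_2}$) the operator $y_{\beta_{2}}$ is nilpotent; as $y_{\beta_{2}}$ and $\bar\beta_{2\nu}$ are simultaneously nilpotent (the first step of the proof of Proposition~\ref{splittingprop}), $\bar\beta_{2\nu}$ is nilpotent, and so $(P_{2},r,\beta_{2})$ is non-fundamental by Remark~\ref{asgrd}.

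The substantive part of the argument is the equivalence between $(P,r,\beta)$ failing to be strongly uniform and $0$ being a root of $\phi_\beta$, together with the care needed to assign the non-nilpotent factor $q$ to $V_{1}$ — so that Proposition~\ref{splittingprop} returns the strongly uniform stratum as the \emph{first} summand while $V_{2}$ absorbs the topologically nilpotent part; once the factorization is arranged this way, everything else is bookkeeping with Remarks~\ref{sug}, \ref{levi}, \ref{Vsplit}, \ref{asgrd} and the construction already present in the proof of Proposition~\ref{splittingprop}. The one loose end is that Proposition~\ref{splittingprop} is stated for $r>0$; the remaining case $r=0$ forces $e_P=1$ (as $\gcd(r,e_P)=1$), where $\mathscr{L}$ consists of a single homothety class and the splitting is just the Fitting decomposition of $\bar\beta_\nu$ into its invertible and nilpotent parts, which I would treat directly.
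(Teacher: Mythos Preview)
Your proposal is correct and follows essentially the same approach as the paper: factor $\phi_\beta(X)=g(X)h(X)$ with $h(X)=X^m$ and $g(0)\ne 0$, apply Proposition~\ref{splittingprop}, then use Remarks~\ref{sug} and~\ref{asgrd} to identify the summands as strongly uniform and non-fundamental respectively. You are somewhat more explicit than the paper in spelling out the equivalence between strong uniformity and $\phi_\beta(0)\ne 0$, and you correctly flag the $r=0$ edge case (which the paper's proof does not address, since Proposition~\ref{splittingprop} assumes $r>0$); otherwise the arguments coincide.
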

\begin{proof}
Factor $\phi_\beta(X) = g (X) h (X)$ so that
$h(X) = X^m$ and $g(0) \ne 0$, and let $V_1$
and $V_2$ be the subspaces from the proof of the previous
proposition.  Since  $(P, r, \beta)$ is fundamental, $\deg(g)>0$ and
$V_1$ is nontrivial.   Moreover, Remark~\ref{sug} implies that $V_1\ne V$; if not,
$y_\beta=t^r\bar{\beta}_\nu^{e_P}$ (and hence $\bar{\beta}_\nu$) would
be invertible endomorphisms of $\gr(\mathscr{L})$, contradicting the
fact that  $(P, r, \beta)$ is not strongly uniform.

Since $y_\beta$ (and hence $\bar{\beta}_\nu$) restricts to an
automorphism of $\gr(\mathscr{L}_1)$, the same remark shows that
$(P_1, r, \beta_1)$ is strongly uniform.  On the other hand, since
$y_\beta$ restricts to a nilpotent endomorphism of
$\gr(\mathscr{L}_2)$, Remark~\ref{asgrd} shows that $(P_2, r,
\beta_2)$ is not fundamental.
\end{proof}

\section{Regular Strata}\label{regstrata}
In this section, we make precise the notion of a stratum with regular
semisimple ``leading term''. We introduce the concept of a regular
stratum; this is a stratum which is ``graded-centralized'' by a maximal
torus.  Regular strata do not appear in the theory of strata for local
fields.  However, they play an important role in the geometric
theory.

\subsection{Classification of regular strata}

Consider a stratum $(P,r,\b)$.  Recall that the congruence subgroups
$P^i$ act on $\beta_\nu$ by the adjoint action.  In particular,
\begin{equation*}
\Ad(P^i)(\beta_\nu) \subset \beta_\nu +\mathfrak{P}^{i-r},
\end{equation*}
since $p \in P^i$ implies that $p$ and $p^{-1}$ act trivially on $L^j/
L^{i+j}$.  Define $Z^i(\beta_\nu) \subset \bar{P}^i$ to be the
stabilizer of $\beta_\nu \pmod{\mathfrak{P}^{i-r+1}}$.  Notice that
this is independent of the choice of representative $\beta_\nu$ for
$\beta$: if $\beta'_\nu = \beta_\nu + \gamma$, for some $\gamma \in
\mathfrak{P}^{-r+1}$, then $ \Ad(P^i) (\gamma) \subset \gamma +
\mathfrak{P}^{i-r+1}$.

Let $T\subset\GL(V)$ be a maximal torus.  The corresponding Cartan
subalgebra $\ft\subset\gl(V)$ is the centralizer of a regular
semisimple element and is therefore an associative subalgebra.  In
particular, since $\mathfrak{t}$ must have the structure of a
commutative semisimple algebra, $\mathfrak{t}$ is the product of field
extensions of $F$: $\mathfrak{t} = E_1 \times E_2 \times \ldots \times
E_\ell$ and $T = E_1^\times \times E_2^\times \times \ldots \times
E_\ell^\times$.  We let $\fo_j$ be the ring of integers of $E_j$ and
$\fp_j$ its maximal ideal.  Let $s_j=[E_j:F]$.  The field $E_j$
contains a uniformizer which is an $s_j^{th}$ root of $t$; we let
$\omega_j\in\ft$ denote this uniformizer supported on the $j^{th}$
summand.  We will also denote the identity of the $j^{th}$ Wedderburn
component of $\ft$ by $\chi_j$.

There is a map $N_T : T \to (F^\times)^\ell$ obtained by
taking the norm on each summand.  Define $T(\mathfrak{o}) = N_T^{-1}
(\mathfrak{o}^\times)^\ell=\prod_j \fo_j^\times$.  Similarly, there is
a trace map $\Tr_{\ft}:\ft\to F^\ell$, and we set
$\ft(\fo)=\Tr_\ft^{-1}(\fo)=\prod_j \fo_j$. We also define a
finite-dimensional $k$-toral subalgebra and $k$-torus:
$\tfl\subset\ft(\fo)$ is the $k$-linear span of the $\chi_j$'s and
$\Tfl\overset{\mathrm{def}}{=}(\tfl)^\times\subset T(\fo)$.  Of
course, $\Tfl\cong(k^\times)^\ell$ and $\tfl\cong k^\ell$.

We will be concerned with tori which are compatible with a given
parahoric subgroup in the sense that $T(\fo)\subset P$ or equivalently
$\ft(\fo)\subset\fP$. 

\begin{lemma}\label{comptor}  If $T(\fo)\subset P$, then $T\cap P^i=T(\fo)\cap P^i$
  and  $\ft\cap \fP^i=\ft(\fo)\cap \fP^i$ for all $i\ge 0$.
\end{lemma}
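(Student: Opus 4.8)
The plan is to reduce both equalities to the case $i=0$ and then split the lattice chain $\sL$ using the Wedderburn idempotents of $\ft$. Since the filtrations $\{P^i\}$ and $\{\fP^i\}$ are decreasing, for $i\ge 0$ we have $T\cap P^i=(T\cap P)\cap P^i$ and $\ft\cap\fP^i=(\ft\cap\fP)\cap\fP^i$, so it suffices to prove $T\cap P=T(\fo)$ and $\ft\cap\fP=\ft(\fo)$. In each case the inclusion $\supseteq$ is immediate from $T(\fo)\subseteq P$ (equivalently $\ft(\fo)\subseteq\fP$) together with $T(\fo)\subseteq T$ and $\ft(\fo)\subseteq\ft$.

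For the reverse inclusions, write $\chi_1,\dots,\chi_\ell\in\ft(\fo)$ for the identities of the Wedderburn components of $\ft$. These are orthogonal idempotents with $\sum_j\chi_j=1$, and since $\ft(\fo)\subseteq\fP$ each $\chi_j$ lies in $\fP$, so $\chi_j L^i\subseteq L^i$ for every $L^i\in\sL$. Hence each lattice decomposes as $L^i=\bigoplus_{j=1}^\ell L^i_j$ with $L^i_j:=\chi_j L^i$ a full-rank $\fo$-lattice in $V_j:=\chi_j V$. Moreover $\fo_j\chi_j\subseteq\ft(\fo)\subseteq\fP$, so $L^i_j$ is $\fo_j$-stable; being a finitely generated torsion-free module over the discrete valuation ring $\fo_j$, it is a free $\fo_j$-module (of rank $\dim_{E_j}V_j$).

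Now an element $x=(x_j)_j\in\ft$ acts on $V=\bigoplus_j V_j$ componentwise, with $x_j\in E_j$ acting on $V_j$ by scalar multiplication; thus $x\in\fP$ if and only if $x_jL^i_j\subseteq L^i_j$ for all $i,j$. Fixing $j$ and an $\fo_j$-basis of one of the $L^i_j$, the containment $x_j L^i_j\subseteq L^i_j$ forces $x_j\in\fo_j$ (expand $x_j b$ in the basis, which is also an $E_j$-basis of $V_j$), so $x\in\prod_j\fo_j=\ft(\fo)$; this proves $\ft\cap\fP=\ft(\fo)$. For the group statement, if $g=(g_j)_j\in T\cap P$ then $g^{-1}\in T\cap P$ too, and applying the previous observation to $g$ and $g^{-1}$ gives $g_j,g_j^{-1}\in\fo_j$, i.e.\ $g_j\in\fo_j^\times$ for all $j$, so $g\in\prod_j\fo_j^\times=T(\fo)$. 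The argument is routine; the one point to check with care is that each $L^i_j$ is \emph{free} over $\fo_j$, which is exactly what makes ``$x_j$ preserves $L^i_j$'' equivalent to ``$x_j$ is integral'', and this is where one uses that $\fo_j$ is a discrete valuation ring.
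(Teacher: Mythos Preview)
Your proof is correct and shares the same overall architecture as the paper's: both reduce to the case $i=0$, observe that the group statement follows from the Lie algebra statement by taking units, and use the Wedderburn idempotents $\chi_j$ to reduce to a single field factor $E_j$. The difference lies only in how the integrality $x_j\in\fo_j$ is established. The paper argues by contradiction: if $x_j\notin\fo_j$, write $x\chi_j=\omega_j^q f$ with $q<0$ and $f\in\fo_j^\times$; since $\fP$ is an associative ring containing $f^{-1}\chi_j$, one gets $\omega_j^q\in\fP$, hence $t^q\chi_j=\omega_j^{qs_j}\in\fP$, and iterating yields $t^s\chi_j\in\fP$ for all $s\in\Z$, which is absurd. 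Your route is more module-theoretic: you decompose each lattice as $L^i=\bigoplus_j L^i_j$, note that $L^i_j$ is a finitely generated torsion-free $\fo_j$-module over a DVR and hence free, and then read off $x_j\in\fo_j$ from a basis. Your argument is slightly more structural and avoids the contradiction; the paper's avoids invoking the structure theorem over a PID and uses only the ring closure of $\fP$. Either way the content is the same.
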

\begin{proof} It suffices to show that $T\cap P=T(\fo)$ and
  $\ft\cap\fP=\ft(\fo)$; moreover, the first statement follows from
  the second by taking units.  Since the central primitive idempotents
  $\chi_j$ are contained in $\ft\cap\fP$, it is enough to check that if $x\chi_j\in E_j\cap
  \fP$, then $x\in\fo_j$.  Suppose $x\notin\fo_j$, so that
  $x\chi_j=\omega_j^q f$ for some $f\in
  \fo_j^\times$ and $q<0$.   Since $\fo_j^\times\chi_j\in P\subset\fP$,
  we see that $\omega_j^q\in\fP$.  This implies that
  $t^q\chi_j=\omega_j^{qm}\in\fP$.  We deduce that $t^s\chi_j\in\fP$ for
  all $s\in\Z$, which is absurd.
\end{proof}

\begin{definition}\label{regstratum}
  A uniform stratum $(P, r, \beta)$ is called \emph{regular} if there
  exists a maximal torus $T$ (possibly non-split) with the following
  properties:
\begin{itemize}
\item $T(\mathfrak{o}) \subset P$;
\item $\bar{T}^i = Z^i(\beta_\nu)$ for all $i$;
\item $y_\beta \in \bar{P}$ (defined as in \eqref{ybeta}) is semisimple;
\item in the case $r = 0$ (and thus $e_p = 1$), 
the eigenvalues of $\bar{\beta}_\nu \in \gl(\bL^0)$ are distinct
modulo $\Z$.
\end{itemize}
 We say that
$T$ centralizes $(P, r, \beta)$.  If $T \cong E^\times$ for some field
extension $E/F$, the stratum  is called \emph{pure}.
\end{definition}

\begin{rmk}\label{regularaction} Suppose that $(P,r,\b)$ is a regular stratum
  centralized by $T$,  and $L$ is a lattice with $P\subset \GL(L)$.
  Then, for any $g\in \GL(L)$, $(gPg^{-1}, r, \Ad^*(g)\b)$ is a regular
  stratum centralized by $gTg^{-1}$.
\end{rmk}
\begin{rmk}\label{Tcentral}
  If $T$ centralizes a regular stratum $(P, r, \beta)$, then any
  conjugate of $T$ by an element of $P^1$ also centralizes $(P, r,
  \beta)$.  Thus, $T$ is not unique.
\end{rmk}

It will be useful to have a variation of Definition~\ref{regstratum}
in terms of the graded action of $\mathfrak{t}$ on $\beta$.  Define
$\mathfrak{z}^i (\beta_\nu) \subset \bar{\mathfrak{P}}^i$ to be the
image of $\{z \in \mathfrak{P}^i \mid \ad(z) (\beta_\nu) \in
\mathfrak{P}^{-r+i+1}\}$.

\begin{proposition}\label{liecen}
  Let $P$ be a uniform parahoric, and let $(P, r, \beta)$ be a regular
  stratum centralized by the torus $T$.  Then $\bar{T}^i = Z^i
  (\beta_\nu)$ if and only if $\bar{\mathfrak{t}}^i = \mathfrak{z}^i
  (\beta_\nu)$ for each $i\ge 0$. 
\end{proposition}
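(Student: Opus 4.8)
The plan is to relate the group-theoretic stabilizer $Z^i(\beta_\nu) \subset \bar{P}^i$ and the Lie-algebra stabilizer $\mathfrak{z}^i(\beta_\nu) \subset \bar{\mathfrak{P}}^i$ via the exponential-type correspondence between $P^i/P^{i+1}$ and $\mathfrak{P}^i/\mathfrak{P}^{i+1}$. For $i \geq 1$, the map $X \mapsto \id_n + X$ induces a group isomorphism $\bar{\mathfrak{P}}^i \xrightarrow{\sim} \bar{P}^i$, and the key computation is that for $p = \id_n + X$ with $X \in \mathfrak{P}^i$, one has $\Ad(p)(\beta_\nu) = \beta_\nu + [X,\beta_\nu] + (\text{higher order})$, so that $\Ad(p)(\beta_\nu) \equiv \beta_\nu \pmod{\mathfrak{P}^{i-r+1}}$ precisely when $[X,\beta_\nu] \in \mathfrak{P}^{i-r+1}$ (the higher-order terms $\frac{1}{2}[X,[X,\beta_\nu]]+\dots$ automatically lie in $\mathfrak{P}^{2i-r} \subset \mathfrak{P}^{i-r+1}$ since $i \geq 1$). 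Hence $\bar{X} \in \bar{\mathfrak{P}}^i$ lies in $\mathfrak{z}^i(\beta_\nu)$ if and only if $\id_n + X \pmod{P^{i+1}}$ lies in $Z^i(\beta_\nu)$; that is, the isomorphism $\bar{\mathfrak{P}}^i \cong \bar{P}^i$ carries $\mathfrak{z}^i(\beta_\nu)$ onto $Z^i(\beta_\nu)$, and under Lemma~\ref{comptor} together with the compatibility $T(\mathfrak{o}) \subset P$ it carries $\bar{\mathfrak{t}}^i = \bar{\mathfrak{t}(\mathfrak{o})}^i$ onto $\bar{T}^i = \bar{T(\mathfrak{o})}^i$. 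Thus for $i \geq 1$ the equality $\bar{T}^i = Z^i(\beta_\nu)$ is \emph{equivalent} to $\bar{\mathfrak{t}}^i = \mathfrak{z}^i(\beta_\nu)$, with no regularity hypothesis needed.

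The remaining case is $i = 0$, where $\bar{P}^0 = \bar{P} = P/P^1$ and $\bar{\mathfrak{P}}^0 = \bar{\mathfrak{P}} = \mathfrak{P}/\mathfrak{P}^1$, and here the exponential argument does not directly apply since $\bar{P}$ is a (possibly nonabelian) reductive group rather than a vector space. The strategy for $i=0$ is: first observe that $\bar{T}^0 = \bar{T}$ and $Z^0(\beta_\nu)$ are the $k$-points of algebraic subgroups of the Levi subgroup $\bar{P} = P/P^1 \cong H$ of $\gl_n(k)$ (via Lemma~\ref{subquotient} and Remark~\ref{levi}), and $\bar{\mathfrak{t}}^0$, $\mathfrak{z}^0(\beta_\nu)$ are subalgebras of $\bar{\mathfrak{P}}$; moreover $\bar{\mathfrak{t}}^0 = \Lie(\bar{T}^0)$ always, and $\mathfrak{z}^0(\beta_\nu) \supseteq \Lie(Z^0(\beta_\nu))$ with equality because we are in characteristic zero (the scheme-theoretic stabilizer of $\bar{\beta}_\nu$ under the adjoint action of $\bar P$ is smooth, so its Lie algebra is exactly $\mathfrak{z}^0(\beta_\nu)$ — here one uses that $\bar\beta_\nu$ is a \emph{semisimple} element, which is part of the regularity hypothesis via $y_\beta$ semisimple, to identify $Z^0(\beta_\nu)$ with a genuine centralizer). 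Given this, $\bar{T}^0 = Z^0(\beta_\nu)$ implies $\bar{\mathfrak{t}}^0 = \Lie(\bar{T}^0) = \Lie(Z^0(\beta_\nu)) = \mathfrak{z}^0(\beta_\nu)$. Conversely, if $\bar{\mathfrak{t}}^0 = \mathfrak{z}^0(\beta_\nu)$, then $\bar{T}^0$ and $Z^0(\beta_\nu)$ are subgroups of $\bar{P}$ with the same Lie algebra; since $\bar{T}^0$ is connected, $\bar{T}^0 \subseteq Z^0(\beta_\nu)^\circ$, and one shows $Z^0(\beta_\nu)^\circ = \bar{T}^0$ because the centralizer of the semisimple element $\bar\beta_\nu$ in the reductive group $\bar P$ is reductive with maximal torus of the same dimension as $\bar T^0$ forcing equality of identity components, and then connectedness of $Z^0(\beta_\nu)$ follows from the regularity of the stratum (this is essentially the content that makes $T$ a \emph{maximal} torus centralizing $\beta$).

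The main obstacle I expect is precisely this $i = 0$ case — specifically, pinning down why $Z^0(\beta_\nu)$ is connected and equals $\bar{T}^0$ rather than merely containing it with finite index, and being careful about the role of the hypothesis ``$y_\beta$ semisimple'' versus ``$\bar\beta_\nu$ semisimple'' (these differ only by the invertible scalar factor $t^{r/g}$ raised to a power, so they are equivalent, but this should be stated). For $r \geq 1$ one has $\bar P$ a genuine Levi $\cong \gl_n(k)$ or a product of $\gl$'s, and the centralizer of a semisimple element in such a group is a connected reductive group, so connectedness is automatic; for $r = 0$ the extra fourth bullet in Definition~\ref{regstratum} (eigenvalues of $\bar\beta_\nu$ distinct mod $\mathbb{Z}$) is what is really doing work elsewhere but for this proposition one only needs semisimplicity plus the centralizer being exactly $\bar T$. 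I would handle $r \geq 1$ and $r = 0$ uniformly by noting $\bar P \cong \prod_j \GL_{n_j}(k)$ in all cases and invoking the standard fact that centralizers of semisimple elements in $\GL$ are connected. A clean writeup would therefore be: (1) treat $i \geq 1$ by the exponential bijection and the higher-order-term estimate, getting equivalence with no hypotheses; (2) treat $i = 0$ by Lie algebra of the scheme-theoretic centralizer in characteristic zero plus connectedness of $\GL$-centralizers of semisimple elements; (3) assemble, noting the regularity hypothesis enters only to guarantee $\bar\beta_\nu$ (equivalently $y_\beta$) is semisimple so that $Z^0(\beta_\nu)$ is an honest reductive centralizer.
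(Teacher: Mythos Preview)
Your treatment of $i\ge 1$ is correct and is exactly the paper's argument: the bijection $X\mapsto 1+X$ from $\bar{\mathfrak P}^i$ to $\bar P^i$ carries $\mathfrak z^i(\beta_\nu)$ onto $Z^i(\beta_\nu)$ and $\bar{\mathfrak t}^i$ onto $\bar T^i$, because $\Ad(1+X)(\beta_\nu)\in\beta_\nu+\ad(X)(\beta_\nu)+\mathfrak P^{-r+i+1}$.

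For $i=0$ your approach diverges from the paper's, and there is a genuine gap. You repeatedly describe $Z^0(\beta_\nu)$ as ``the centralizer of the semisimple element $\bar\beta_\nu$ in the reductive group $\bar P$'' and invoke the fact that centralizers of semisimple elements in products of $\GL$'s are connected. But $\bar\beta_\nu$ does \emph{not} live in $\bar{\mathfrak P}^0=\Lie(\bar P)$; it lives in $\bar{\mathfrak P}^{-r}$, which for $e_P>1$ is a $\bar P$-module $\bigoplus_i\Hom(\bar L^i,\bar L^{i-r})$ with $(g_i)$ acting by $\phi_i\mapsto g_{i-r}\phi_i g_i^{-1}$. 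So $Z^0(\beta_\nu)$ is a \emph{stabilizer in a representation}, not a centralizer in $\bar P$ or its Lie algebra, and the semisimplicity hypothesis on $y_\beta\in\bar{\mathfrak P}^0$ says nothing directly about this stabilizer. (Your parenthetical ``these differ only by an invertible scalar factor'' conflates $\bar\beta_\nu\in\bar{\mathfrak P}^{-r}$ with $y_\beta\in\bar{\mathfrak P}^0$; they live in different spaces.) Likewise the assertions that $\bar T^0$ is a closed connected algebraic subgroup with $\Lie(\bar T^0)=\bar{\mathfrak t}^0$ are not justified; $\bar T^0$ is a priori just the image of the pro-group $T(\mathfrak o)$ in $\bar P$.

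The paper avoids all of this with an elementary observation you are missing: for $z\in P$ one has $z\beta_\nu z^{-1}-\beta_\nu\in\mathfrak P^{-r+1}$ if and only if $\ad(z)\beta_\nu=(z\beta_\nu z^{-1}-\beta_\nu)z\in\mathfrak P^{-r+1}$, since right multiplication by $z\in P$ preserves $\mathfrak P^{-r+1}$. Under the inclusion $\bar P\hookrightarrow\bar{\mathfrak P}^0$ this says precisely $Z^0(\beta_\nu)=\mathfrak z^0(\beta_\nu)\cap\bar P$, and the same reasoning gives $\bar T^0=\bar{\mathfrak t}^0\cap\bar P$. The implication $\bar{\mathfrak t}^0=\mathfrak z^0\Rightarrow\bar T^0=Z^0$ is then immediate, with no Lie theory or connectedness needed. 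For the converse the paper shows $(\mathfrak z^0)^\times=(\bar{\mathfrak t}^0)^\times$ (lifting units through the Jacobson radical $\mathfrak P^1$ and using $Z^0=\bar T^0$), and concludes via the elementary fact that a finite-dimensional $k$-algebra is spanned by its units. Note that the regularity hypothesis (in particular semisimplicity of $y_\beta$) is \emph{not used} in the paper's proof of this proposition; it is a purely algebraic equivalence at each level $i$.
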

\begin{proof} First, we take $i=0$.  Suppose $\bar{\mathfrak{t}}^0 =
  \mathfrak{z}^0 (\beta_\nu)$.  Given $z\in P$, it is clear that $z\bn
  z^{-1}-\bn\in\fP^{-r+1}$ if and only if $\ad(z)\bn\in \fP^{-r+1}$.
  This immediately gives $\bar{T}^0\subset Z^0 (\beta_\nu)$.  It also
  implies that if $zP^1\in Z^0 (\beta_\nu)$, then
  $z+\fP^1\in\mathfrak{z}^0 (\beta_\nu)$.  By assumption, this means
  that there exists $s\in\ft(\fo)$ such that $z-s\in\fP^1$, and we
  obtain $s\in P\cap \ft(\fo)=T(\fo)$.  Consequently, $s^{-1}z\in
  P^1$, i.e., $zP^1\in\bar{T}^0$.

  Next, suppose that $Z^0 (\bn) = \bar{T}^0$.  Recall that a
  finite-dimensional $k$-algebra is spanned by its units.  (Let $A$ be
  such an algebra with Jacobson radical $J$.  Since $1+x\in A^\times$
  for $x\in J$, $J$ is in the span of $A^\times$.  Moreover, $a\in
  A^\times$ if and only if $\bar{a}\in (A/J)^\times$.  The result now
  follows because $A/J$ is a product of matrix algebras, and hence is
  spanned by its units.)  We show that
  $(\fz^0(\bn))^\times=(\bft^0)^\times$.  Suppose
  $y\in(\fz^0(\bn))^\times\subset\bfP^\times$.  Since $\fP^1$ is the
  Jacobson radical of $\fP$, any $z\in\fP$ lifting $y$ is invertible,
  hence lies in $P$.  The argument above shows that $zP^1\in
  Z^0(\bn)$, so we can assume $z\in T$, i.e., $y\in(\bft^0)^\times$.
  A similar argument (using the fact that a lift of $y\in
  (\bft^0)^\times$ to $\ft(\fo)$ actually lies in $T(\fo)$) gives the
  reverse inclusion.  We conclude that
  $\fz^0(\bn)=\spa\left((\fz^0(\bn))^\times\right)=\spa\left((\bft^0)^\times\right)=\bft^0$.

Now suppose $i >0$.  There is an isomorphism $\bar{\fP}^i \to
\bar{P}^i$ induced by $X \mapsto 1 + X$.  Since $\Ad (1+X) (\beta_\nu)
\in \beta_\nu + \ad (X) (\beta_\nu) + \mathfrak{P}^{-r+i+1}$ for $X
\in \mathfrak{P}^i$, it is clear that this map restricts to give an
isomorphism between $\mathfrak{z}^i (\beta)$ and $Z^i (\beta)$.  Since
this same map takes $\bar{\mathfrak{t}}^i$ to $\bar{T}^i$, the proof
is complete.

\end{proof}

\begin{rmk}\label{repint}
  If $T$ centralizes $(P, r, \beta)$, then in fact
  $\bar{\mathfrak{t}}^i = \mathfrak{z}^i (\beta_\nu)$ for all
  $i\in\Z$.  For $i\ge 0$, this has been shown in the proposition.  On
  the other hand, if $i<0$ and $s$ is any integer such that $i+s
  e_P\ge 0$, then the result follows because multiplication by $t^s$
  induces isomorphisms $\bft^i\cong\bft^{i+se_P}(\bn)$ and
  $\fz^i\cong\fz^{i+se_P}(\bn)$.  Since $\bbn\in\fz^{-r}(\bn)$,
  $\bn\in\ft\cap\fP^{-r}+\fP^{-r+1}$; it follows that we can always
  choose the representative $\bn\in\ft\cap\fP^{-r}$.
\end{rmk}

\begin{corollary}\label{liecencor}
Let $(P, r, \b)$ be a regular stratum centralized by $T$ and let $X \in \fP^\ell$.
If $\bn \in \ft^{-r} $ is a representative for $\b$, and $\ad(X) (\bn) \in \fP^{-r+j}$,
then $X \in \ft^\ell + \fP^j$.
\end{corollary}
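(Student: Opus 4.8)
The plan is a straightforward successive‑approximation (induction) argument whose only real ingredient is the graded centralizer identity $\fz^i(\bn)=\bft^i$, valid for \emph{every} $i\in\Z$: this is Proposition~\ref{liecen} for $i\ge 0$, extended to all $i$ by Remark~\ref{repint}. Recall that $\fz^i(\bn)\subset\bfP^i$ is by definition the image of $\{z\in\fP^i\mid \ad(z)(\bn)\in\fP^{-r+i+1}\}$, and recall that $\bn\in\ft^{-r}\subset\ft$, which is commutative, so $\ad(z)(\bn)=0$ for every $z\in\ft$. First I would dispose of the range $j\le\ell$, where $\fP^\ell\subset\fP^j$ forces $X\in\fP^j\subset\ft^\ell+\fP^j$ and there is nothing to prove; so I may assume $j>\ell$ and induct on $j$.

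For the base case $j=\ell+1$: the hypothesis $\ad(X)(\bn)\in\fP^{-r+\ell+1}$ says exactly that the image $\bar X\in\bfP^\ell$ lies in $\fz^\ell(\bn)=\bft^\ell$, so choosing $z\in\ft^\ell$ representing $\bar X$ gives $X-z\in\fP^{\ell+1}$, i.e. $X\in\ft^\ell+\fP^{\ell+1}$. For the step $j\to j+1$: given $X\in\fP^\ell$ with $\ad(X)(\bn)\in\fP^{-r+j+1}\subset\fP^{-r+j}$, the inductive hypothesis yields $z\in\ft^\ell$ with $Y:=X-z\in\fP^j$; since $\ad(z)(\bn)=0$ we get $\ad(Y)(\bn)=\ad(X)(\bn)\in\fP^{-r+j+1}$, hence $\bar Y\in\fz^j(\bn)=\bft^j$, so a representative $z'\in\ft^j\subset\ft^\ell$ of $\bar Y$ satisfies $Y-z'\in\fP^{j+1}$, and therefore $X=(z+z')+(Y-z')\in\ft^\ell+\fP^{j+1}$. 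Iterating finitely many times from $\ell+1$ up to the given $j$ completes the argument.

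I do not expect a genuine obstacle here. The one point requiring care is that Proposition~\ref{liecen} is stated only for nonnegative indices, whereas the induction uses $\fz^i(\bn)=\bft^i$ at indices $i$ that may be negative (e.g. when $\ell<0$); this is precisely the content of Remark~\ref{repint}, which is also what guarantees that the representative $\bn$ may be chosen in $\ft^{-r}$ --- the standing hypothesis of the corollary. The remaining verifications ($\ft^j\subset\ft^\ell$ and $\fP^{-r+j+1}\subset\fP^{-r+j}$ for $j\ge\ell$, and commutativity of $\ft$) are immediate from the definitions in the Notational Conventions.
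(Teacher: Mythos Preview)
Your proof is correct and follows essentially the same approach as the paper: both dispose of the trivial case $j\le\ell$, establish the base case $j=\ell+1$ via $\fz^\ell(\bn)=\bft^\ell$ (invoking Remark~\ref{repint} for negative $\ell$), and then induct by peeling off an element of $\ft$ and reapplying the base case at the next level. Your write-up is if anything slightly more explicit about the role of Remark~\ref{repint} than the paper's.
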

\begin{proof}
When $\ell \ge j$, there is no content.  
We note that the case $j = \ell+1$ follows from Proposition~\ref{liecen}
and Remark~\ref{repint}.  By induction on $j > \ell$, 
suppose that the statement is true for
$j -1$.  Let $Y \in \fP^{j-1}$ satisfy $X - Y \in \ft$.  Then, 
$X \in \ft^\ell + \fP^j$ if and only if $Y \in \ft^{j-1} + \fP^j$.  The latter
statement follows from the base step above.
\end{proof}

The main goal of this section is to give a structure theorem for
regular strata.
\begin{theorem}\label{thm1} Let $(P, r, \beta)$ be a regular stratum.\begin{enumerate}\item If $(P,r,\b)$ is pure, then
    $e_P=\dim V$.
\item
  \begin{enumerate}
\item If $(P, r, \beta)$  is strongly uniform, then it splits into a
  direct sum of pure strata (necessarily of the same dimension).
\item If $(P, r, \beta)$ is not strongly uniform, then $e_P = 1$ and
  $(P, r, \beta)$ is the direct sum of a regular, strongly uniform
  stratum and a non-fundamental stratum of dimension $1$.
\end{enumerate}
\noindent In each case, the splitting coincides with the splitting induced by a
$P^1$-conjugate of $T$.
\end{enumerate}
\end{theorem}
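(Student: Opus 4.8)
The plan is to settle the pure case (part~1) by a direct computation and then deduce part~2 from it using the Wedderburn decomposition $\ft=E_1\times\dots\times E_\ell$.

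\textit{Part 1.} Suppose $T\cong E^\times$ with $[E:F]=n=\dim V$. Since $k$ is algebraically closed $E/F$ is totally ramified, $\fo_E=k[[\omega]]$ with $\omega^n\doteq t$; identify $V$ with $E$ as a rank-one $E$-module. From $\fo_E\subset\fP$ every $L^i\in\mathscr L$ is a fractional $\fo_E$-ideal, so $\mathscr L$ is a subchain of $(\omega^j\fo_E)_j$ with $e_P\mid n$; writing $m:=n/e_P$ one computes $\fo_E\cap\fP^j=\fp_E^{jm}$, so $\bft^0\cong\fo_E/\fp_E^m$ is $m$-dimensional over $k$, and it suffices to show $m=1$. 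Choose $\bn\in\ft\cap\fP^{-r}=\fp_E^{-rm}$ (Remark~\ref{repint}); fundamentality forces $v_E(\bn)\in\{-rm,\dots,-rm+m-1\}$. An element of $\fo_E$ of positive valuation acts on each $\bar L^i\cong\fo_E/\fp_E^m$ by a strictly upper-triangular, hence nilpotent, operator, so semisimplicity and non-nilpotence of $y_\beta$ force $v_E(t^r\bn^{e_P})=0$, i.e.\ $\bn=\omega^{-rm}u$ with $u\in\fo_E^\times$ and $y_\beta=\overline{u^{e_P}}$. Diagonalizability of multiplication by $\overline{u^{e_P}}$ on the local ring $\fo_E/\fp_E^m$ forces $u^{e_P}\equiv u_0^{e_P}\pmod{\fp_E^m}$ with $u_0=\bar u\in k^\times$, hence (characteristic zero) $u\equiv u_0\pmod{\fp_E^m}$, so $\bn\equiv u_0\omega^{-rm}\pmod{\fP^{-r+1}}$. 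Since the complete chain refining $\mathscr L$ is the $\fo_E$-chain (Lemma~\ref{219}), Remark~\ref{unifvarpi} gives $\varpi_P=\omega^m$, so $\bbn=u_0\,\overline{\varpi_P^{\,-r}}$ and $\delta_{\bn}=u_0\,\delta_{\varpi_P^{-r}}$ as a map $\bfP^0\to\bfP^{-r}$. Computing as in Lemma~\ref{adequation} but with scalar $m\times m$ blocks (Remark~\ref{unifvarpi}): for $X=\diag(X_0,\dots,X_{e_P-1})\in\bfP^0\cong\fh$ one gets $[\varpi_P^{-r},X]=\diag(X_{-r}-X_0,\dots,X_{-1}-X_{e_P-1})\,\varpi_P^{-r}$, whose kernel, since $\gcd(r,e_P)=1$, is $\{\diag(A,\dots,A):A\in\gl_m(k)\}\cong\gl_m(k)$, of dimension $m^2$. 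But regularity gives $\fz^0(\bn)=\bft^0$, so $m^2=m$ and $m=1$, i.e.\ $e_P=n$.

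\textit{Part 2.} Let $\chi_j$ be the orthogonal central idempotents of $\ft$, $s_j=[E_j:F]$, and $V_j=\chi_jV$. Since $\ft$ is a Cartan subalgebra, $\sum_js_j=\dim_F\ft=n=\sum_j\dim_FV_j$, and $\dim_FV_j\ge s_j$ because $V_j$ is an $E_j$-module; hence $\dim_FV_j=s_j$ and $T_j:=E_j^\times$ is a maximal torus of $\GL(V_j)$. Choosing $\bn\in\ft$ (Remark~\ref{repint}), $\bn$ is block diagonal and, since $\chi_j\in\fP$, $L^i=\bigoplus_j\chi_jL^i$; thus the candidate splitting is the decomposition by the $V_j$ and $\bn$ splits $\beta$ at level $r$. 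Because $\ft$, $\bn$, and $\mathscr L$ are block diagonal, the $j$-th diagonal blocks of $\fz^i(\bn)$ and $\bft^i$ are $\fz^i(\beta_{j\nu})$ and $\bft_j^i$; together with the inheritance of the $y_{\beta_j}$-condition and (for $r=0$) the eigenvalue condition, this shows each $(P_j,r,\beta_j)$ is a regular pure stratum, so $e_{P_j}=\dim_FV_j=s_j$ by part~1. Moreover the off-diagonal blocks of $\fz^i(\bn)$ vanish (as $\bft^i$ is block diagonal); the $(j,k)$-block of $\delta_{\bn}$ is the operator $\bar\partial_\beta$ on $\bar M^i$ of \S\ref{section:splittings}, so each $\bar\partial_\beta$ is injective and, by equality of dimensions in the uniform chain $\mathscr M$, an isomorphism; hence each $(P_{jk},r,\partial_\beta)$ is strongly uniform. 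These are exactly the clauses of Definitions~\ref{splitstrata} and~\ref{directsum} needed to split off the $V_j$ one at a time.

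\textit{Parts 2(a) and 2(b).} If $(P,r,\beta)$ is strongly uniform, $\bbn$ is an isomorphism on each $\bar L^i$, hence (being block diagonal) on each $\bar L^i_j$, so each $(P_j,r,\beta_j)$ is strongly uniform; then part~1 gives $\dim_k\bar L^i_j=s_j/e_{P_j}=1$, so $n=\dim_k\bar L^i\cdot e_P=\ell e_P$, and since $e_{P_j}=s_j\mid e_P$ with $\sum_js_j=\ell e_P$ and each $s_j\le e_P$, all $s_j=e_P$; thus $(P,r,\beta)=\bigoplus_j(P_j,r,\beta_j)$ is a direct sum of pure strata of equal dimension $e_P$. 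If $(P,r,\beta)$ is not strongly uniform, Corollary~\ref{nilsplitting} splits it as $(P_1,r,\beta_1)\oplus(P_2,r,\beta_2)$ with the first strongly uniform and the second non-fundamental, $V_2$ the generalized $0$-eigenspace of $t^r\bn^{e_P}\in\ft$; as this element is block diagonal with $j$-th component in the field $E_j$, $V_2=\bigoplus_{j\in S}V_j$ with $S=\{j:\beta_{j\nu}=0\}$. For $j\in S$ the $j$-th diagonal block of $\fz^0(\bn)$ is all of $\chi_j\bfP^0\chi_j$, which must lie in the commutative algebra $\bft^0$, forcing $\dim_k\bar L^i_j\le1$ and hence $s_j=e_{P_j}=1$; and two distinct indices in $S$ would place a nonzero off-diagonal block into $\fz^0(\bn)=\bft^0$, impossible. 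So $|S|=1$: $V_2$ is one-dimensional over $F$, $(P_2,r,\beta_2)=(P_{j_0},r,0)$ is non-fundamental of dimension $1$, and $e_P=e_{P_2}=1$ by Remark~\ref{Vsplit}; the complement $(P_1,r,\beta_1)=\bigoplus_{j\ne j_0}(P_j,r,\beta_j)$ is regular (centralized by the maximal torus $\prod_{j\ne j_0}E_j^\times$ of $\GL(V_1)$) and strongly uniform. In all cases the splitting is the one induced by $T$, equivalently by a $P^1$-conjugate of $T$ in view of Remark~\ref{Tcentral}.

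\textit{Main obstacle.} The crux is part~1 — specifically, recognizing that semisimplicity of $y_\beta$ not only pins down $v_E(\bn)$ but forces $\bbn$ to be a \emph{scalar} multiple of $\overline{\varpi_P^{-r}}$, after which the block form of the computation in Lemma~\ref{adequation} yields $\dim_k\fz^0(\bn)=m^2$, incompatible with the regularity identity $\fz^0(\bn)=\bft^0$ unless $m=1$. Given this, part~2 is essentially bookkeeping: verifying each clause of Definition~\ref{splitstrata} for the idempotent decomposition and tracking the periods $e_{P_j}$.
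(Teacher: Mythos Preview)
Your Part~1 is correct and takes a more direct route than the paper's Lemma~\ref{dimstrata}. You exploit from the outset that, since $\fo_E\subset\fP$, every lattice in $\mathscr L$ is an $\fo_E$-fractional ideal and uniformity of $P$ forces $L^i=\fp_E^{im}$; this lets you compute $\bft^0$ and $\fz^0(\bn)$ by bare hands in terms of the $E$-valuation. The paper instead passes to the Iwahori refinement $I_E\supset P$ and must first establish $\bn\in\fI^{-rm}$ and then $\bn\in E+\fI^{-rm+1}$ via commutator estimates with $\varpi_E$, before a binomial expansion of $t^r\bn^{e_P}$ and semisimplicity of $y_\beta$ reach the same conclusion. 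Both arguments end with the same kernel count (your $m^2=\dim\fz^0(\bn)=\dim\bft^0=m$ is exactly the block version of the paper's appeal to Lemma~\ref{adequation}). Part~2(a) is also close in spirit to the paper---both split along the primitive idempotents of $\ft$---but you bypass Lemma~\ref{stronglyunif} by reading off $\fz^i(\beta_{j\nu})=\bft_j^i$ directly from the block structure. One step you should make explicit: before the block decomposition of $\fz^i$ matches the $P_j$-filtration (and before $(P_j,r,\beta_j)$ can even be called ``uniform''), you need $e_{P_j}=e_P$, i.e.\ no repeats in $\mathscr L_j$; this follows from strong uniformity since the isomorphisms $\bbn:\bar L_j^i\overset{\sim}{\to}\bar L_j^{i-r}$ together with $\gcd(r,e_P)=1$ force all $\dim_k\bar L_j^i$ to be equal and positive.

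Part~2(b) has a genuine gap. From $\chi_j\bfP^0\chi_j\subset\bft^0$ you extract only commutativity, conclude $\dim_k\bar L_j^i\le1$, and then assert ``hence $s_j=e_{P_j}=1$''. Commutativity alone gives only $s_j=\sum_i\dim_k\bar L_j^i\le e_P$, not $s_j=1$. What actually does the work is the \emph{equality} $\chi_j\bfP^0\chi_j=\bft_j^0$: the right side is a quotient of the local ring $\fo_{E_j}$, hence has no nontrivial idempotents, while the left side is $\prod_i\gl(\bar L_j^i)$; equality therefore forces exactly one nonzero factor, of dimension $1$, so $s_j=1$. Your $|S|\le1$ step has the same flaw: for distinct $j,k\in S$ the off-diagonal block $\chi_j\bfP^0\chi_k\cong\bigoplus_i\Hom(\bar L_k^i,\bar L_j^i)$ can vanish if the supports $\{i:\bar L_j^i\ne0\}$ and $\{i:\bar L_k^i\ne0\}$ are disjoint, so $\fz^0$ alone does not yield a contradiction; you must use $\fz^i(\bn)=\bft^i$ for a suitable $i>0$ as well. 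The paper avoids these issues entirely by applying Lemma~\ref{stronglyunif}, which feeds the non-fundamental piece into Lemma~\ref{nonfund} to conclude $\dim V_2=1$ and $e_P=1$ directly. A final minor point: Corollary~\ref{nilsplitting} rests on Proposition~\ref{splittingprop}, which assumes $r>0$; when $r=0$ the condition $\gcd(0,e_P)=1$ already gives $e_P=1$, and the paper handles this case separately via the eigenvalue hypothesis.
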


\begin{corollary}\label{uniformsplitting}
  Let $(P, r, \beta)$ be a regular stratum centralized by $T$.  Take
  $E/F$ to be the unique (up to isomorphism) field extension of degree
  $e_P$.  Then, $T \cong (E^\times)^{n / e_P}$.  Moreover, the maps
  $\Tfl \to \bar{T}^0$ and $\tfl \to \bar{\ft}^0$ are isomorphisms.
\end{corollary}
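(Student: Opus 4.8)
The plan is to read the statement off from Theorem~\ref{thm1}, the only extra ingredient being the structure of field extensions of $F=k((t))$. By Remark~\ref{Tcentral} any $P^1$-conjugate of $T$ still centralizes $(P,r,\beta)$ and is isomorphic to $T$ as an algebraic group, so the last sentence of Theorem~\ref{thm1} lets us assume that the decomposition $V=\bigoplus_{j\in J}V_j$ appearing there is the one cut out by the central idempotents $\chi_j$ of $\ft$; equivalently, $T=\prod_{j\in J}T_j$ with $T_j\subset\GL(V_j)$ the maximal torus centralizing the block $(P_j,r,\beta_j)$. I would also recall the standard fact that, $k$ being algebraically closed of characteristic zero, $F$ has exactly one extension field of each degree $d$ (the totally tamely ramified one $k((t^{1/d}))$); write $E$ for the one of degree $e_P$.

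First suppose $(P,r,\beta)$ is strongly uniform. By Theorem~\ref{thm1}(2)(a) each block $(P_j,r,\beta_j)$ is pure, so $T_j\cong E_j^\times$ for a field extension $E_j/F$, and since $T_j$ is a maximal torus of $\GL(V_j)$ we have $[E_j:F]=\dim_F V_j$, which equals $e_{P_j}$ by Theorem~\ref{thm1}(1). The parahorics occurring in a direct sum all share a common period, equal to $e_P$ by the construction of $\mathscr{L}_J$ in Definition~\ref{directsum}; hence $[E_j:F]=e_P$ for every $j$, and uniqueness of extensions gives $E_j\cong E$. Comparing $F$-dimensions forces $|J|=n/e_P$, so $T\cong(E^\times)^{n/e_P}$. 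If instead $(P,r,\beta)$ is not strongly uniform, Theorem~\ref{thm1}(2)(b) gives $e_P=1$ (so $E=F$) and $J=\{1,2\}$ with $(P_1,r,\beta_1)$ regular strongly uniform and $\dim_F V_2=1$; the strongly uniform case applied to $(P_1,r,\beta_1)$ (whose parahoric also has period $1$) gives $T_1\cong(F^\times)^{\dim_F V_1}$, and $T_2$ is the whole of $\GL(V_2)\cong F^\times$, so again $T\cong(F^\times)^n=(E^\times)^{n/e_P}$.

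It remains to prove the two isomorphisms. Writing $T=\prod_{j=1}^{n/e_P}T_j$ with $T_j\cong E^\times$, $\dim_F V_j=e_P$, and $\chi_j$ the corresponding idempotent, Lemma~\ref{comptor} and compatibility give $T\cap P=T(\fo)=\prod_j\fo_E^\times\chi_j$ and $T\cap P^1=\prod_j\bigl(\fo_E^\times\chi_j\cap P^1\bigr)$, with analogous statements for $\ft$. The one point to check is $\fo_E\chi_j\cap\fP^1=\fp_E\chi_j$: a uniformizer of $E$, embedded as in Section~\ref{sec:cores}, sends $L^i$ into $L^{i+1}$, so $\fp_E\chi_j\subset\fP^1$; conversely the $k$-part of an element of $\fo_E\chi_j$ is a scalar on $V_j$, hence preserves the filtration of $\mathscr{L}$ exactly, so it must vanish if the element lies in $\fP^1$. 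Since $\fo_E$ is a complete discrete valuation ring with residue field $k$, this yields $\bar T^0=\prod_j\fo_E^\times/(1+\fp_E)\cong(k^\times)^{n/e_P}$ and $\bar{\ft}^0=\prod_j\fo_E/\fp_E\cong k^{n/e_P}$, and on each block the natural map from $\Tfl=\prod_j k^\times\chi_j$ (resp.\ from $\tfl=\bigoplus_j k\chi_j$) is the identity of the residue field; hence both maps are isomorphisms.

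Since everything reduces to Theorem~\ref{thm1}, I do not expect a genuine obstacle. The two points needing a little care are (i) unpacking ``the splitting coincides with the splitting induced by a $P^1$-conjugate of $T$'' into the statement $T=\prod_j T_j$ with $T_j$ centralizing the $j$-th block, so that purity of the block yields $T_j\cong E_j^\times$; and (ii) the filtration computation $\fo_E\chi_j\cap\fP^1=\fp_E\chi_j$, which is where one uses that the embedding $E\hookrightarrow\gl(V)$ is the one set up in Section~\ref{sec:cores} and Remark~\ref{unifvarpi}.
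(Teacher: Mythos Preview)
Your proof is correct and follows essentially the same route as the paper's: both deduce $T\cong(E^\times)^{n/e_P}$ from Theorem~\ref{thm1} (pure blocks have $e_{P_j}=\dim V_j=e_P$, hence each $T_j\cong E^\times$), and both establish the isomorphisms $\Tfl\cong\bar T^0$, $\tfl\cong\bar\ft^0$ by showing that the image of $\prod_j\fp_E$ in $\bfP$ vanishes, i.e., that the uniformizers $\varpi_E\chi_j$ lie in $\fP^1$. The paper phrases the last step via Lemma~\ref{410} (so it suffices that the nilradical $\fn$ of $\bar\ft^0$ vanishes) together with the observation that $(\varpi_E,\dots,\varpi_E)$ generates $\fP^1$, whereas you compute $\fo_E\chi_j\cap\fP^1=\fp_E\chi_j$ directly; these are the same content.
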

\begin{rmk}
We note that if $(P, r, \beta)$ is not strongly uniform,
Theorem~\ref{thm1} implies that $e_P = 1$.  By the corollary, this
can only happen when $T$ is totally split. 
\end{rmk}

The following proposition allows us to make sense of what it means
for an element $\bn$ to have regular semisimple leading term.

\begin{proposition}\label{lemrss} If 
  $(P, r, \beta)$ is regular, then every representative $\beta_\nu$
  for $\beta$ is regular semisimple.
\end{proposition}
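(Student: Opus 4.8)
The plan is to exploit the fact that a regular stratum is centralized by a maximal torus $T$ and to use the structure theorem (Theorem~\ref{thm1}) to reduce to the pure case. By Remark~\ref{repint}, we may choose a representative $\beta_\nu \in \ft \cap \fP^{-r}$, and it suffices to show that \emph{this particular} representative is regular semisimple; any other representative for $\beta$ is $\Ad(P^1)$-conjugate to it modulo nothing (they induce the same map on $\gr(\mathscr{L})$, but more importantly any two representatives differ by an element of $\fP^{-r+1}$, so I will need the slightly stronger claim that every element of $\beta_\nu + \fP^{-r+1}$ is regular semisimple). Actually the cleanest route: first prove the distinguished representative $\beta_\nu \in \ft$ is regular semisimple, then show any other representative $\beta_\nu' = \beta_\nu + \gamma$ with $\gamma \in \fP^{-r+1}$ is as well, using that such $\beta_\nu'$ still has the property that its centralizer is no larger than $\ft$.

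For the distinguished representative: by Corollary~\ref{uniformsplitting}, $T \cong (E^\times)^{n/e_P}$ where $E/F$ is the degree $e_P$ extension, so $\ft \cong E^{n/e_P}$ is a product of fields, hence a commutative semisimple $F$-algebra; therefore every element of $\ft$ is semisimple as an element of $\gl(V)$. It remains to check $\beta_\nu$ is \emph{regular}, i.e., that its centralizer in $\gl(V)$ is exactly $\ft$. Write $\beta_\nu = (b_1, \dots, b_{n/e_P})$ under $\ft \cong E^{n/e_P}$, with each $b_j \in E$. The centralizer of $\beta_\nu$ contains $\ft$ automatically; it equals $\ft$ iff (i) each $b_j$ generates $E$ over $F$ (so that $\gl_{e_P}(F)$-centralizer of $b_j$ is exactly $E$), and (ii) no two of the $b_j$ are conjugate over $F$ by an element of $\GL_{e_P}(F)$ carrying one block to another, i.e., the $b_j$ are ``distinct enough''. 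Both should follow from regularity: if some $b_j \notin F[\text{uniformizer}]$ fails to generate, or two blocks coincide, then $Z^i(\beta_\nu)$ would be strictly larger than $\bar T^i$ for some $i$ (one gets extra graded centralizing elements in the off-diagonal blocks at the appropriate degree, using Lemma~\ref{subquotient} to see $\bfP^i$ as a sum of $\Hom$-blocks and the tame corestriction computation of $\ker \bar\pi_\ft$ in Proposition~\ref{cores}), contradicting $\bar T^i = Z^i(\beta_\nu)$. In the strongly uniform case, Theorem~\ref{thm1}(2a) splits $(P,r,\beta)$ into pure blocks, and the same analysis applies block by block, with condition (ii) across distinct pure blocks coming from Definition~\ref{splitstrata}(2)--(3) (strong uniformity of $(P_{12}, r, \partial_\beta)$, i.e. $\delta_{\beta_\nu}$ an isomorphism on the relevant subquotients, forces the blocks to be non-conjugate). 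In the non-strongly-uniform case, Theorem~\ref{thm1}(2b) gives $e_P = 1$, $T$ split, and $\beta_\nu$ is already a diagonalizable matrix over $F$ with distinct eigenvalues (the strongly uniform summand contributes distinct nonzero values and the one-dimensional non-fundamental summand a value that is distinct from the others modulo $\Z$, in particular distinct), hence regular semisimple.

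The passage from the distinguished representative to an arbitrary one: let $\beta_\nu' = \beta_\nu + \gamma$ with $\gamma \in \fP^{-r+1}$. Since $(P, r, \beta)$ is regular, $\bar T^i = Z^i(\beta_\nu)$ and by Proposition~\ref{liecen} and Remark~\ref{repint}, $\bft^i = \fz^i(\beta_\nu)$ for all $i \in \Z$; by Corollary~\ref{liecencor}, if $\ad(X)(\beta_\nu) \in \fP^{-r+j}$ then $X \in \ft^\ell + \fP^j$. I claim the centralizer of $\beta_\nu'$ in $\gl(V)$ is $\ft$ as well: if $[X, \beta_\nu'] = 0$ then $[X,\beta_\nu] = -[X,\gamma] \in \fP^{\ell - r + 1}$ (after arranging $X \in \fP^\ell$), so by Corollary~\ref{liecencor} $X \in \ft^\ell + \fP^{\ell+1}$; iterating/taking limits in the $\fP$-adic topology pins $X$ down to lie in $\ft + (\text{centralizer corrections})$, and a successive-approximation argument shows the full centralizer has dimension $n$, hence $\beta_\nu'$ is regular. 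Semisimplicity of $\beta_\nu'$ then follows since a matrix whose centralizer has the minimal dimension $n$ and which has semisimple part also of centralizer dimension $n$ must be semisimple — more directly, $\beta_\nu'$ regular with the \emph{same} centralizing torus $\ft$ (a product of fields) forces $\beta_\nu' \in \ft$ up to conjugacy, but cleanest is: the centralizer being the commutative algebra $\ft$ means $\beta_\nu'$ lies in the commutant of $\ft$, which is $\ft$ itself, so $\beta_\nu' \in \ft$ and is therefore semisimple.

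\textbf{Main obstacle.} The crux is the reduction to the distinguished representative and the off-diagonal argument showing $Z^i(\beta_\nu) = \bar T^i$ genuinely forces each block to generate $E/F$ and the blocks to be pairwise non-conjugate — this is where the tame corestriction machinery (Proposition~\ref{cores}) and the block description of $\bfP^i$ (Lemma~\ref{subquotient}) must be combined carefully, and where one has to be attentive to the non-split/ramified nature of $E/F$. The successive-approximation step for arbitrary representatives is technical but routine given Corollary~\ref{liecencor}.
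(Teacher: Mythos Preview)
Your treatment of the distinguished representative $\beta_\nu \in \ft$ is essentially sound, and indeed close to what the paper does implicitly (the distinctness of the $b_j$'s is exactly what is checked at the start of the proof of Lemma~\ref{A1}). The paper, however, handles regularity of \emph{all} representatives in one stroke by citing the rank argument buried in the proof of Lemma~\ref{nonfund}: the centralizer of any representative in $\fP$ is a free $\fo$-module whose rank is bounded above by $\sum_{i=0}^{e_P-1}\dim_k\bft^i = n$, hence equals $n$.

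The genuine gap is in your passage to an arbitrary representative $\beta_\nu' = \beta_\nu + \gamma$. Your claim that the centralizer of $\beta_\nu'$ is $\ft$ is false whenever $\gamma \notin \ft$: since $\ft$ is its own commutant, $\ft$ centralizing $\beta_\nu'$ would force $\beta_\nu' \in \ft$. So your ``cleanest'' conclusion is circular. Your iteration also stalls: from $X = s_0 + X_1$ with $s_0 \in \ft^\ell$, $X_1 \in \fP^{\ell+1}$, one gets $[X_1,\beta_\nu] = -[X_1,\gamma] - [s_0,\gamma]$, and the term $[s_0,\gamma] \in \fP^{\ell-r+1}$ does not improve, so Corollary~\ref{liecencor} only returns $X_1 \in \ft^{\ell+1} + \fP^{\ell+1} = \fP^{\ell+1}$. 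Finally, even granting regularity of $\beta_\nu'$, your sentence ``a matrix whose centralizer has the minimal dimension $n$ and which has semisimple part also of centralizer dimension $n$ must be semisimple'' is simply false (regular nilpotents are a counterexample to the spirit of this).

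What you are missing is precisely Lemma~\ref{A1}: any element of $\ft + \fP^{-r+m}$ is $P^m$-conjugate to an element of $\ft$. The paper applies this with $m=1$ to conjugate an arbitrary representative into $\ft$, whence semisimplicity. The proof of Lemma~\ref{A1} is itself a successive-approximation, but it works on the \emph{element} (conjugating away the off-$\ft$ piece using Proposition~\ref{cores3}) rather than on a hypothetical centralizing $X$, and this is why it converges.
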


By Remark~\ref{repint}, we may choose $\bn \in \ft \cap \fP^{-r}$.
Corollary~\ref{uniformsplitting} implies that $\bn$ is a block
diagonal matrix with entries in $F[\varpi_E]^\times$.  Then, the
\emph{leading term} $\bn'$ is the matrix consisting of the degree
$-r/e_P$ terms from each block diagonal entry in $\bn$ (after
identifying $\varpi_E$ with $t^{1/e_P}$).

It suffices to check that $\bn - \bn' \in \fP^{-r+1}$, in which case
Proposition~\ref{lemrss} implies that $\bn'$ is regular semisimple.
If $e_P = 1$, this is clear.  When $e_P > 1$, we may assume without
loss of generality that the splittings for $T$ and $(P,r, \beta)$ are
induced by the same splitting of $V$.  In particular, the $j^{th}$
block-diagonal entry $\beta_{j \nu}$ is a representative for the
$j^{th}$ summand of $(P, r, \beta)$.  Therefore, by
Theorem~\ref{thm1}, we may reduce to the case where $(P, r, \beta)$ is
pure and $e_P = \dim V$.  In this case, $\varpi_E$ generates
$\fP^{1}$, so it is clear that $\bn - \bn' \in \fP^{-r+1}$.

We call a maximal torus \emph{uniform} if it isomorphic to
$(E^\times)^\ell$ for some field extension $E$.  Given a fixed lattice
$L$ and a uniform maximal torus $T$ with $T(\fo)\subset \GL(L)$, we
can associate a corresponding parahoric subgroup $P_{T,L}\subset
\GL(L)$ containing $T(\fo)$ as follows. The isomorphism $\ft\cong
E^{\ell}$ induces splittings $V=\oplus V_j$ and $L=\oplus L_j$.
Lemma~\ref{219} states that there is a unique complete lattice chain
$(L_j^i)_{i \in \Z}$ in $V_j$ up to indexing; we normalize it so that
$L_j^0=L_j$. Let $\mathscr{L}_T$ be the lattice chain with $L^i=\oplus
L_j^i$, and let $P_{T,L}$ be its stabilizer.  Since $L_0=L$, we have
$P_{T,L}\subset \GL(L)$ as desired.  It is obvious that $T(\fo)\subset
P_{T,L}$.  Note that $e_{P_{T,L}}=n/\ell=[E:F]$.

Given a uniform torus $T$, there is a canonical $\Z$-grading on its
Cartan subalgebra $\ft$; the $i^{th}$ graded piece is given by
$\bigoplus k\varpi_E^i\chi_i$, where $\varpi_E$ is a uniformizer in
$E$ which is an $[E:F]^{th}$ root of $t$.  We denote the corresponding
filtration by $\ft^{(i)}=\bigoplus \fo_E\varpi_E^i\chi_i$.  There is a
corresponding canonical $\N$-filtration on $T(\fo)$ given by
$T^{(0)}=T(\fo)$ and $T^{(i)}=1+\ft^{(i)}$.


 \begin{proposition}\label{uniquedet} Let $L$ be a fixed lattice, and
   let $T\cong (E^\times)^\ell$ be a uniform maximal torus with
   $T(\fo)\subset \GL(L)$.\begin{enumerate}
\item If $P \subset \GL(L)$ is a parahoric subgroup
   for which there exists a regular stratum $(P, r, \b)$
  centralized by $T$, then $P=P_{T,L}$.
\item
  \label{uniquedetp1} Let $r\ge 0$ satisfy $(r,n/\ell)=1$, and suppose
  $x\in\ft^{-r}$ has regular semisimple leading term.  Then, there is
  a unique regular stratum $(P,r,\beta)$ with $P \subset \GL(L)$ which
  has $x$ as a representative.
\item The canonical filtrations on $\ft$ and $T(\fo)$ coincide with
  the filtrations induced by $P_{T,L}$, i.e., $\ft^{(i)}=\ft\cap
  \fP^i_{T,L}$ and $T^{(i)}=T\cap P_{T,L}^i$.

\end{enumerate}
\end{proposition}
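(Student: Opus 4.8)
The plan is to dispatch (3) first (it is a direct computation and is used by the rest), then (1), then the existence and uniqueness halves of (2).

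\medskip
\noindent\textbf{Part (3).} Fix for each $j$ an isomorphism $V_j\cong E_j$ of $E_j$-modules carrying $L_j$ to $\fo_{E_j}$; then $\mathscr{L}_T$ is the chain $L^i=\bigoplus_j\fp_{E_j}^{\,i}$. An element $z=\sum_j z_j\chi_j\in\ft$ (with $z_j\in E_j$) lies in $\fP_{T,L}^i$ iff $z_j\fp_{E_j}^{\,k}\subseteq\fp_{E_j}^{\,k+i}$ for all $k$, i.e.\ iff $z_j\in\fp_{E_j}^{\,i}$; hence $\ft\cap\fP_{T,L}^i=\bigoplus_j\fp_{E_j}^{\,i}\chi_j=\ft^{(i)}$. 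For the torus: the case $i=0$ reads $T\cap P_{T,L}=T(\fo)=T^{(0)}$, which is Lemma~\ref{comptor} together with $T(\fo)\subset P_{T,L}$; and for $i\ge1$, since $T\subset\ft$ we get $T\cap P_{T,L}^i=1+(\ft\cap\fP_{T,L}^i)=1+\ft^{(i)}=T^{(i)}$, using $1+\ft^{(i)}\subseteq T$.

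\medskip
\noindent\textbf{Part (1).} Normalize so $L=L^0$ in the lattice chain $\mathscr{L}$ of $P$. Since the idempotents $\chi_j$ lie in $\ft(\fo)\subset\fP$, each $L^i$ splits as $\bigoplus_j\chi_jL^i$, so in particular $L=\bigoplus_j\chi_jL$. Comparing the intrinsic number of Wedderburn components of $\ft$ with the descriptions $T\cong(E^\times)^\ell$ and $T\cong((E')^\times)^{n/e_P}$ (the latter from Corollary~\ref{uniformsplitting}) forces $\ell=n/e_P$, hence $e_P=n/\ell=[E:F]=e_{P_{T,L}}$ and $E'\cong E$. If $e_P=1$ then $\mathscr{L}=(t^{-i}L)_{i\in\Z}$ is forced, and $\mathscr{L}_T$ is the same chain, so $P=P_{T,L}$. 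If $e_P>1$ then $(P,r,\beta)$ is strongly uniform by the remark after Corollary~\ref{uniformsplitting}, so by Theorem~\ref{thm1} it splits into $n/e_P=\ell$ pure strata, the splitting being that induced by a conjugate $T'=gTg^{-1}$ with $g\in P^1$. Each pure summand $(P_j',r,\beta_j')$ has $e_{P_j'}=\dim V_j'=[E:F]$ by the first part of Theorem~\ref{thm1}, so its lattice chain is the unique complete $\fo_{E_j'}$-chain of $V_j'$ with $0$-th lattice $L^0\cap V_j'=gL_j$; as $g\in\GL(L)$, this chain equals $g\mathscr{L}_j$ ($\mathscr{L}_j$ the $j$-th block of $\mathscr{L}_T$). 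Hence $\mathscr{L}=\bigoplus_jg\mathscr{L}_j=g\mathscr{L}_T$, i.e.\ $P=gP_{T,L}g^{-1}$. Finally, $\fP^1$ and $\fP_{T,L}^1$ are the Jacobson radicals of $\fP$ and $\fP_{T,L}$ (Proposition~\ref{uniformprop}), so $P^1=g(P_{T,L})^1g^{-1}$; combined with $g\in P^1$ this forces $g\in(P_{T,L})^1$, whence $P=gP_{T,L}g^{-1}=P_{T,L}$.

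\medskip
\noindent\textbf{Part (2).} For existence take $P=P_{T,L}$ and $\beta=\langle x,\cdot\rangle_\nu$ on $\bar\fP_{T,L}^r$; by (3), $x\in\ft^{(-r)}=\ft\cap\fP_{T,L}^{-r}$, so $\beta$ is well defined with representative $x$. One checks $(P_{T,L},r,\beta)$ is a uniform stratum (uniformity of $P_{T,L}$ and $\gcd(r,[E:F])=1$ are immediate; fundamentality follows from $y_\beta\ne0$ below) centralized by $T$: $T(\fo)\subset P_{T,L}$ holds by construction; the parts of the proof of Proposition~\ref{liecen} establishing ``$\bar\ft^i=\mathfrak z^i(\beta_\nu)\Rightarrow\bar T^i=Z^i(\beta_\nu)$'' use only $T(\fo)\subset P$ and $[T,x]=0$, so it suffices to show $\mathfrak z^i(x)=\bar\ft^i$ for $i\ge0$. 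Replacing $x$ by its leading term $\xi=\sum_ja_j\varpi_E^{-r}\chi_j$ changes nothing modulo $\fP^{-r+1}$, so this amounts to $\ker(\delta_\xi\colon\bar\fP^i\to\bar\fP^{i-r})=\bar\ft^i$. Here $\xi$ is regular semisimple with centralizer $\ft$ and the $a_j$ are distinct, so Proposition~\ref{cores3} applies and gives that $\mathrm{im}(\delta_\xi\colon\bar\fP^{r-i}\to\bar\fP^{-i})$ is a complement of $\bar\ft^{-i}$ in $\bar\fP^{-i}$; combining this with $\langle\delta_\xi(a),b\rangle_\nu=-\langle a,\delta_\xi(b)\rangle_\nu$ (invariance of the trace form), the perfect pairing $(\bar\fP^i)^\vee\cong\bar\fP^{-i}$ of Proposition~\ref{duality}, the relation $\bar\ft^i\perp\mathrm{im}(\delta_\xi)$ (again by invariance), and the equalities $\dim\bar\fP^i=\dim\bar\fP^{-i}$, $\dim\bar\ft^{\pm i}=\ell$, one gets $\ker(\delta_\xi|_{\bar\fP^i})=\bar\ft^i$. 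Lastly $y_\beta$ is the class of $\xi^{e_P}t^r=\sum_ja_j^{e_P}\chi_j$, which on $L^0/tL^0$ is the scalar-block matrix $\diag(a_1^{e_P}I,\dots,a_\ell^{e_P}I)$, hence semisimple and nonzero; when $r=0$ the remaining condition is precisely the hypothesis on the leading term of $x$ (eigenvalues distinct mod $\Z$). For uniqueness: if $(P,r,\beta)$ is any regular stratum with $P\subset\GL(L)$ and representative $x$, then $x$ is regular semisimple by Proposition~\ref{lemrss}, and — reducing via Theorem~\ref{thm1} to the pure case, where $\varpi_E$ generates $\fP^1$ and the leading term of a representative is pinned down by $\beta$ — the centralizing torus must have Cartan $C_{\gl(V)}(x)=\ft$, i.e.\ must be $T$; part (1) then gives $P=P_{T,L}$ and $\beta=\langle x,\cdot\rangle_\nu$.

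\medskip
\noindent\textbf{Main obstacle.} The delicate point is the uniqueness clause of (2): excluding the possibility that an a priori non-conjugate maximal torus centralizes a regular stratum admitting $x$ as a representative. This is exactly where one must invoke the rigidity of leading terms of regular strata (Proposition~\ref{lemrss} and the discussion preceding it), together with the reduction to the pure case afforded by Theorem~\ref{thm1}. Everything else is bookkeeping, except for the one subtlety in (1) of upgrading ``$P$ is conjugate to $P_{T,L}$ by an element of $P^1$'' to ``$P=P_{T,L}$'', which is handled by the identification $\fP^1=\mathrm{rad}(\fP)$.
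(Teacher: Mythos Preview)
Your proofs of (3) and of existence in (2) are correct; the latter is considerably more detailed than the paper's terse ``clear from the construction.'' Your proof of (1) is also correct, though the paper's route is cleaner: the \emph{proof} (not just the statement) of Theorem~\ref{thm1} shows that in the strongly uniform case the splitting of $V$ induced by $T$ itself already splits $(P,r,\beta)$, so one gets $L^i=\bigoplus_j(L^i\cap V_j)$ directly; Lemma~\ref{219} then pins down each chain $\{L^i\cap V_j\}_i$ as the unique complete $\fo_{E_j}$-chain through $L_j$, giving $\mathscr{L}=\mathscr{L}_T$ without ever introducing a conjugate $gTg^{-1}$ or needing your final Jacobson-radical step.

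The uniqueness argument in (2), however, has a genuine gap. You assert that ``the centralizing torus must have Cartan $C_{\gl(V)}(x)=\ft$,'' but this is exactly what requires proof. If $T'$ centralizes $(P,r,\beta)$, one knows only that $\ft'$ \emph{graded}-centralizes $x$: Remark~\ref{repint} gives $x\in\ft'+\fP^{-r+1}$, not $x\in\ft'$, so $C_{\gl(V)}(x)=\ft$ need not equal $\ft'$. Your reduction to the pure case does not close this, since it proceeds via the $T'$-splitting, and knowing that $\bar x$ lies in the one-dimensional space $\bar{\ft'}{}^{-r}$ still does not force $x\in\ft'$ (any $I^1$-conjugate of $T'$ yields the same coset). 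The paper's own ``the uniqueness part is now immediate'' glosses over the same step. A correct completion: from $x\in\ft'+\fP^{-r+1}$, Lemma~\ref{A1} (applied with $T'$ in place of $T$) produces $p\in P^1$ with $\Ad(p)(x)\in\ft'$; since $x$ is regular semisimple by Proposition~\ref{lemrss} with centralizer $\ft$, this forces $\ft'=\Ad(p)(\ft)$, whence $T=\Ad(p^{-1})(T')$ also centralizes $(P,r,\beta)$ by Remark~\ref{Tcentral}, with $T(\fo)=\Ad(p^{-1})(T'(\fo))\subset P$; now part~(1) applies.
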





\begin{proof}  
  Let $(P,r,\beta)$ be a regular stratum as in the first statement.
  Since $P \subset \GL(L)$, we may take $L^0= L$ in the
  corresponding lattice chain $\mathscr{L}$.  By
  Corollary~\ref{uniformsplitting}, $\ft\cong E^{n/e_{P}}$, so
  $e_{P}=n/\ell$. Theorem~\ref{thm1} implies that the splitting
  $V=\oplus V_j$ induced by $\ft\cong E^{n/{e_P}}$ splits $(P, r,
  \beta)$ into a direct sum of pure strata when $e_{P} > 1$ and a sum
  of one-dimensional strata (with at most one non-fundamental summand)
  when $e_{P} = 1$.  In either case, each $V_j$ is an $E$-vector space
  of dimension one, and $L^i=\oplus(L^i\cap V_j)$.  However,
  Lemma~\ref{219} states that there is a unique complete lattice chain
  $(L_j^i)_{i \in \Z}$ in $V_j$ up to indexing, and we know that
  $L^0_j=L \cap V_j$.  By definition, $\mathscr{L}=\mathscr{L}_T$,
  so $P=P_{T,L}$.

The uniqueness part of the second statement is now immediate.  For
existence, it is clear from the construction of $P_{T,L}$ that  
$(a_1 \varpi_E^{-r}, \ldots,
  a_{\ell} \varpi_E^{-r}) \in E^{\ell} \cong \ft$ determines a regular
  stratum with parahoric subgroup $P_{T,L}$ if $a_i \ne a_j$ whenever $i \ne
  j$ and $r$ is coprime to $n/\ell$.  Since the leading term of $x$ is
  regular semisimple, we obtain a regular stratum $(P_{T,L},r,\beta)$ with
  the leading term of $x$, and hence $x$ itself, as a representative.

Finally, $\ft|_{V_j} \cong E$.  It follows that $L_j^i =
\varpi_E^j L_j^0$.  We deduce that 
$\ft \cap \fP_{T, L}^i = \bigoplus (\ft|_{V_j} \cap \fP^i_{T_{V_j}, L_j^0}) =
\bigoplus \fo_E \varpi_E^i \chi_j = \ft^{(i)}$.  The fact that
$T^{(i)}=T\cap P_{T,L}^i$ is an immediate consequence when $i\ge 1$.
The $i=0$ case is obtained by taking units in $\ft^{(0)}=\ft\cap\fP_{T,L}$.

\end{proof} 

If $(P,r,\b)$ is a regular stratum centralized by $T$, then the
proposition shows that $\{\ft^i\}$ and $\{T^i\}$ are actually the
canonical filtrations.

\begin{rmk}\label{standardtorus}  Given a fixed $F$-isomorphism $V\overset{\sim}{\to}F^n$, we can choose a
  standard representative of each conjugacy class of uniform maximal
  tori.  Indeed, if the torus is isomorphic to
  $(F((t^{1/e}))^\times)^{n/e}$, then under the identification
  $\GL(V)\cong \GL_n(F)$, we can choose a block diagonal
  representative $T$ (and $\ft$) with each uniformizer $t^{1/e}$
  mapping to the $e\times e$ matrix $\varpi_I$ from \eqref{varpip} in
  the corresponding block.  In this case, $P_T$ is the standard
  uniform parahoric subgroup that is `block upper-triangular modulo $t$'.
\end{rmk}

\subsection{Lemmas and proofs}

We now give proofs of the results described above.  We also include
some lemmas that will be needed later.  We remark that this section is
largely technical in nature.

\begin{lemma}\label{410}
  The homomorphism $\Tfl \to \bar{T}^0$ is an injection, and if $U$ is
  the unipotent radical of $\bar{T}^0$, then the induced map $\Tfl \to
  \bar{T}^0 /U$ is an isomorphism.  Similarly, the map $\tfl \to
  \bar{\mathfrak{t}}^0$ is an injection which induces an isomorphism
  $\tfl \cong \bar{\mathfrak{t}}^0 / \mathfrak{n}$, where
  $\mathfrak{n} \subset \bar{\mathfrak{t}}^0$ is the Jacobson
  radical.
\end{lemma}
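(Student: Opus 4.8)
The plan is to reduce everything to the structure of the $\bfP$-valued images of $\ft(\fo)$ and $T(\fo)$. By Lemma~\ref{comptor} we have $\ft^i=\ft(\fo)\cap\fP^i$ and $T^i=T(\fo)\cap P^i$, so $\bft^0=\ft(\fo)/(\ft(\fo)\cap\fP^1)$ and $\bar{T}^0=T(\fo)/(T(\fo)\cap P^1)$. By Lemma~\ref{subquotient}, $\bfP=\fP/\fP^1$ is a finite-dimensional associative $k$-algebra and $\bar P=P/P^1$ is identified with its unit group, compatibly with the projections $\fP\to\bfP$ and $P\to\bar P$. Since $\ft(\fo)=\prod_j\fo_j$ is a commutative subalgebra of $\fP$, its image $\bft^0$ is a finite-dimensional commutative $k$-subalgebra of $\bfP$; and since every unit of a quotient of a local ring lifts to a unit, the image $\bar{T}^0$ of $T(\fo)=\prod_j\fo_j^\times$ is exactly $(\bft^0)^\times$. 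Thus it suffices to analyze the finite-dimensional commutative $k$-algebra $\bft^0$ together with its distinguished subalgebra $\tfl$.

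For injectivity it is enough to show $\tfl\cap\fP^1=0$. As $\fP^1$ is the Jacobson radical of $\fP$ (Proposition~\ref{uniformprop}), it contains no nonzero idempotent; so if $x=\sum_j c_j\chi_j\in\fP^1$ with $c_j\in k$, then $c_j\chi_j=x\chi_j\in\fP^1$ for each $j$, forcing $c_j=0$. Hence $\tfl\hookrightarrow\bft^0$ and, restricting to units, $\Tfl\hookrightarrow\bar{T}^0$. For the structure, the $\bar{\chi}_j$ are nonzero (by the above) orthogonal idempotents summing to $1$, so $\bft^0=\prod_j\bar{\chi}_j\bft^0$, where $\bar{\chi}_j\bft^0$ is the image of $\chi_j\ft(\fo)\cong\fo_j$, i.e.\ $\fo_j/(\fo_j\cap\fP^1)$. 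The ideal $\fo_j\cap\fP^1$ of $\fo_j$ is proper since it omits $\chi_j$, hence lies in $\fp_j$; therefore $\bar{\chi}_j\bft^0$ is a finite-dimensional local $k$-algebra with residue field $\fo_j/\fp_j=k$ and nilpotent maximal ideal $\mathfrak{m}_j$. It follows that the Jacobson radical of $\bft^0$ is $\fn=\prod_j\mathfrak{m}_j$, that $\bft^0/\fn\cong k^\ell$ with the $\bar{\chi}_j$ as the standard idempotents, and hence that the composite $\tfl\to\bft^0\to\bft^0/\fn$, which sends $\chi_j$ to the $j$-th idempotent, is an isomorphism. Taking units gives $\bar{T}^0=\prod_j(\bar{\chi}_j\bft^0)^\times=\bigl(\prod_j k^\times\bar{\chi}_j\bigr)\times(1+\fn)$; here $1+\fn$ is a connected unipotent normal subgroup with torus quotient $(k^\times)^\ell$, hence is the unipotent radical $U$, while $\prod_j k^\times\bar{\chi}_j$ is precisely the image of $\Tfl$. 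Therefore $\Tfl\to\bar{T}^0/U$ is an isomorphism, and the parallel computation gives $\tfl\cong\bft^0/\fn$ with $\fn$ the Jacobson radical.

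I expect the only genuine obstacle to be the bookkeeping in the first paragraph — matching the group quotient $\bar{T}^0=P/P^1$ with the unit group $(\bft^0)^\times$ of the algebra $\bft^0$ via Lemma~\ref{subquotient} — together with the small but essential observation that $\fo_j\cap\fP^1$ is a \emph{proper} ideal of $\fo_j$ (equivalently, that $\fP^1$ contributes exactly the non-unit part of each Wedderburn component). That point is what pins the number of local blocks of $\bft^0$ to $\ell$ and makes $\fn$ the full Jacobson radical; everything else is routine manipulation with orthogonal idempotents, local rings, and unipotent radicals.
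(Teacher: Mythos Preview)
Your proof is correct and follows essentially the same approach as the paper. The paper's argument is extremely terse (three sentences): it asserts that $\Tfl\cap P^1=\{1\}$ for injectivity and that the unipotent radical of $\bar{T}^0$ is the image of $\prod_j(1+\fp_j)$, whence the isomorphism; you have simply unpacked these assertions by analyzing $\bft^0$ as a product of local $k$-algebras $\fo_j/(\fo_j\cap\fP^1)$ via the orthogonal idempotents $\bar{\chi}_j$, which makes both claims transparent. One minor organizational point: your identification $\bar{T}^0=(\bft^0)^\times$ in the first paragraph implicitly uses that each $\fo_j\cap\fP^1$ is a proper ideal, which you only establish in the second paragraph, but this is easily reordered.
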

\begin{proof}

  It is immediate from the definitions that $\Tfl \subset P$ and $\Tfl
  \cap P^1=\{1\}$, so $\Tfl \to \bar{T}^0$ is injective.  Moreover,
  the unipotent radical $U$ of $\bar{T}^0$ is the image of
  $\prod_j(1+\fp_j)$, whence the isomorphism $\Tfl \to \bar{T}^0 /U$.
  A similar proof works for $\tfl$.

\end{proof}

\begin{lemma}\label{nonfund}
  Suppose that $P$ is a uniform parahoric and $(P, r, \beta)$ is a
  non-fundamental stratum in an $F$-vector space $V$.  If $(P, r,
  \beta)$ satisfies the first three conditions of
  Definition~\ref{regstratum} and $\gcd (r, e_P) = 1$, then $V$ must
  have dimension one.
\end{lemma}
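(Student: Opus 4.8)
The plan is to argue by contradiction: I will show that conditions (1)--(3) together with non-fundamentality force the associated-graded leading term $\bar\beta_\nu$ to be a \emph{regular} nilpotent endomorphism, and that this can be reconciled with non-fundamentality only when $\dim V=1$. Two reductions come first. Since $\gcd(r,e_P)=1$ we have $y_\beta=t^r\bar\beta_\nu^{e_P}+\mathfrak P^1$; as $(P,r,\beta)$ is non-fundamental, $\bar\beta_\nu\in\End(\gr(\mathscr L))$ is nilpotent by Remark~\ref{asgrd}, hence so is $y_\beta$, and the third condition then forces $y_\beta=0$, i.e.\ $\bar\beta_\nu^{e_P}=0$. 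Secondly, the proofs of Proposition~\ref{liecen} and Remark~\ref{repint} use only conditions (1) and (2) together with the fact (Proposition~\ref{uniformprop}) that $\mathfrak P^1$ is the Jacobson radical of $\mathfrak P$, and not fundamentality; so $\mathfrak z^i(\beta_\nu)=\bar{\mathfrak t}^i$ for all $i\in\Z$, and we may take the representative $\beta_\nu\in\mathfrak t\cap\mathfrak P^{-r}$.

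The heart of the argument is a dimension count. For each $i$ the map $\delta_{\beta_\nu}\colon\bar{\mathfrak P}^i\to\bar{\mathfrak P}^{i-r}$ has kernel exactly $\mathfrak z^i(\beta_\nu)=\bar{\mathfrak t}^i$. Summing over a period $0\le i<e_P$: uniformity of $P$ and Lemma~\ref{subquotient}(1) give $\sum_i\dim_k\bar{\mathfrak P}^i=n^2$, while Lemma~\ref{comptor} gives $\sum_i\dim_k\bar{\mathfrak t}^i=\dim_k\bigl(\mathfrak t(\mathfrak o)/t\,\mathfrak t(\mathfrak o)\bigr)=n$. On the other hand, in the associated graded algebra $\bigoplus_{i\in\Z}\mathfrak P^i/\mathfrak P^{i+1}\cong\gl_n(k[t,t^{-1}])$ the maps $\delta_{\beta_\nu}$ assemble into $\ad(\bar\beta_\nu)$, so $\sum_i\dim_k\ker(\delta_{\beta_\nu}|_{\bar{\mathfrak P}^i})$ is the $k[t,t^{-1}]$-rank of the centralizer of $\bar\beta_\nu$, hence $\dim_{k(t)}C_{\gl_n(k(t))}(\bar\beta_\nu)$. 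For a nilpotent matrix this centralizer has dimension $\ge n$, with equality precisely for a regular nilpotent; so condition (2) forces $\bar\beta_\nu$ to be regular nilpotent.

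For the endgame, suppose $n\ge2$ (if $n=1$ there is nothing to prove). Then $\bar\beta_\nu\ne0$ has nilpotency index $n$, so $\bar\beta_\nu^{e_P}=0$ together with $e_P\le n$ forces $e_P=n$; thus $P$ is an Iwahori, each $\bar L^i$ is a line, and $\bar{\mathfrak P}^0$ is the diagonal subalgebra $\mathfrak d$. Condition (2) at $i=0$ then says $\bar{\mathfrak t}^0=C_{\mathfrak d}(\bar\beta_\nu)$; since $\bar\beta_\nu$ is a regular nilpotent whose support lies on the single $n$-cycle $\{(j-r,j)\}$, at most one of its cyclic entries vanishes, so the only diagonal matrices commuting with it are the scalars and $\bar{\mathfrak t}^0=k$. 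By Lemma~\ref{410} the injection $\tfl\cong k^\ell\hookrightarrow\bar{\mathfrak t}^0$ forces $\ell=1$, hence $T=E_1^\times$ with $[E_1:F]=n$; since $P$ is an Iwahori containing $\mathfrak o_{E_1}^\times$ and such an Iwahori is unique (Lemma~\ref{219}), $P=I_{E_1}$, so $\mathfrak P^j=\varpi_{E_1}^j\mathfrak P$ for a uniformizer $\varpi_{E_1}$ of $E_1$ and $\mathfrak t\cap\mathfrak P^{-r}=\varpi_{E_1}^{-r}\mathfrak o_{E_1}$. Writing $\beta_\nu=\varpi_{E_1}^{-r}u$ with $u\in\mathfrak o_{E_1}$, non-vanishing of $\bar\beta_\nu$ makes $u$ a unit, so $\bar\beta_\nu$ is a nonzero scalar multiple of $\overline{\varpi_{E_1}^{-r}}$ and hence induces isomorphisms $\bar L^i\xrightarrow{\sim}\bar L^{i-r}$ for every $i$; this contradicts $\bar\beta_\nu^{e_P}=0$ (equivalently, by Remark~\ref{asgrd} it would make $(P,r,\beta)$ fundamental). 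Therefore $\dim V=1$.

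I expect the dimension count of the middle paragraph to be the main obstacle: one must set up the identification $\bigoplus_i\bar{\mathfrak P}^i\cong\gl_n(k[t,t^{-1}])$ carefully enough that $\delta_{\beta_\nu}$ becomes $\ad(\bar\beta_\nu)$ and the graded kernel dimensions add up to $\dim_{k(t)}C_{\gl_n(k(t))}(\bar\beta_\nu)$, and then invoke the classification of nilpotent orbits (the centralizer of a nilpotent has minimal dimension $n$ exactly in the regular case). A lesser point requiring verification is that Proposition~\ref{liecen} and Remark~\ref{repint} genuinely use only conditions (1)--(2), so that they apply to the present non-fundamental stratum.
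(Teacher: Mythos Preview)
Your proof is correct, and the overall architecture matches the paper's: a centralizer dimension count forces $\bar\beta_\nu$ to be regular nilpotent, whence $e_P=n$ and $T=E^\times$, and then a final contradiction.  The two genuine points of divergence are worth noting.

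First, your dimension count is carried out on the associated graded algebra $\bigoplus_i\bar{\mathfrak P}^i\cong\gl_n(k[t,t^{-1}])$, whereas the paper works with the centralizer $\mathfrak z$ of a fixed lift $\beta_\nu$ inside $\mathfrak P$, bounding $\dim_k(\mathfrak z/t\mathfrak z)$ via Nakayama and the inclusions $\bar{\mathfrak z}^i\subset\mathfrak z^i(\beta_\nu)=\bar{\mathfrak t}^i$.  These are equivalent arguments; yours requires checking that the graded centralizer of a homogeneous element is itself graded (true) and that its $k[t,t^{-1}]$-rank equals $\dim_{k(t)}C_{\gl_n(k(t))}(\bar\beta_\nu)$ (true by flatness of localization), but once set up it is clean.

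Second, and more substantively, your endgame differs.  The paper chooses the representative $\beta_\nu=x\varpi_I^{-r}$ with $x\in\mathfrak d$, shows by direct computation that exactly one coordinate of $x$ vanishes, and then checks via \eqref{directcalc} that $\varpi_E\in\mathfrak t^1$ fails to satisfy $\ad(\varpi_E)(\beta_\nu)\in\mathfrak I^{2-r}$, contradicting $\bar T^1\subset Z^1(\beta_\nu)$.  You instead exploit the fact (justified by your observation that Remark~\ref{repint} needs only conditions (1)--(2)) that $\beta_\nu$ may be chosen in $\mathfrak t=E$ itself; then nonzero implies unit, hence $\bar\beta_\nu$ is bijective on each $\bar L^i$, directly contradicting nilpotency.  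Your route is shorter and coordinate-free, at the cost of needing the applicability of Remark~\ref{repint} to non-fundamental strata---which, as you correctly note, does hold.  The paper's route is more explicit but requires tracking the cyclic combinatorics of $\varpi_I$.
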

\begin{proof}
  Since $(P, r, \beta)$ is non-fundamental, it follows that there is a
  minimal $m > 0$ such that $\beta_\nu^{m} \in \mathfrak{P}^{-r m
    +1}$.  Without loss of generality, we may assume $\beta_\nu^m =
  0$.  Indeed, after choosing a basis, Lemma~\ref{subquotient} shows
  that we may take the representative $\beta_\nu$ to be the product of
  $\varpi_P^{-r}$ with an element $D \in \mathfrak{h}$.  By Remark
  \ref{unifvarpi}, we may assume $\Ad(\varpi_P) (\mathfrak{h}) \subset
  \mathfrak{h}$.  Therefore, $(D \varpi_P^{-r})^m = D' \varpi_P^{-r
    m}$ for some $D' \in \mathfrak{h}$.  Since $D' \varpi_P^{-rm} \in
  \mathfrak{P}^{-rm+1}$, $D' \in \mathfrak{P}^1 \cap \mathfrak{h} =
  \{0\}$. 

  First, we claim that $\beta_\nu$ is regular nilpotent.  Let
  $\mathfrak{z}$ be the centralizer of $\beta_\nu$ in $\fP$.  Note
  that $\mathfrak{z}$ is a free $\mathfrak{o}$-module of rank equal to
  the dimension of the centralizer of $\beta_\nu$ in $\gl(V)$, hence
  is at least $n$.  Since Nakayama's lemma implies that
  $\rank(\fz)=\dim_k(\mathfrak{z}/t \mathfrak{z})$, to show that $\bn$
  is regular, it now suffices to show that $\dim_k(\mathfrak{z}/t
  \mathfrak{z})\le n$.

  By Lemma~\ref{comptor}, $\ft^0 /\ft^{e_P}=\ft(\fo)/t\ft(\fo)$, which clearly has
  $k$-dimension $n$.  Also, recalling our convention that
  $\bar{\fz}^i$ is the projection of $\fz$ in $\bfP^i$, we have
  $\bar{\fz}^i\subset \mathfrak{z}^i (\beta)$.  It follows that
\begin{equation*}
\dim_k (\mathfrak{z}/t \mathfrak{z}) \le \sum_{i = 0}^{e_P-1} 
\dim_k (\mathfrak{z}^i (\beta)) = \sum_{i = 0}^{e_P - 1} \dim_k (\bar{\mathfrak{t}}^i)
 = \dim_k (\mathfrak{t}(\mathfrak{o}) / t \mathfrak{t} (\mathfrak{o}))=n
\end{equation*}
as desired.  Note that this argument actually shows that any coset
representative $\bn$ is regular.

Since the index of nilpotency of a regular nilpotent matrix is $n$, we
have $m=n$.  In particular, since $y_\beta = t^{r} \beta_\nu^{e_P} +
\mathfrak{P}^1\in\bfP$ is nilpotent, $y_\beta$ is semisimple only if
$t^r\bn^{e_P}\in\fP^1$.  This implies that
$\bn^{e_P}\in\fP^{-re_P+1}$, so $n\ge e_P\ge m$, i.e., $e_P = n$.
Thus, $P = I$ is an Iwahori subgroup, and $\gcd(n, r) = 1$.

In the notation of Section~\ref{sec:cores}, we write $\beta_\nu = x
\varpi_I^{-r}$ where $x = \diag (x_0,  \dots, x_{n-1}) \in
\mathfrak{d} $.  Define $\sigma^q(x) = (x_{-qr}, x_{1 - q r}, \ldots,
x_{n-1 - q r})$ to be the cyclic shift of the coefficients of $x$ by
$-q r$ places (with indexing in $\Z_n$).
It is immediate from \eqref{directcalc} that
$\Ad(\varpi_I^q)(x)=\sigma^q (x)$.  Therefore,
\begin{equation*}
\bn^s= x  \Ad (\varpi_I^{-r}) (x) \ldots \Ad (\varpi_I^{-(s-1) r}) (x) \varpi_I^{-rs}
 = x  \sigma^{1}(x) \ldots \sigma^{s-1} (x).
\end{equation*}
By assumption, $\bn^{n-1} \ne 0$.  Thus, one of the components
$x'_j = x_j x_{j -r} \ldots x_{j - (n-2)r}$ of $x' = x \sigma^{1} (x) \ldots \sigma^{n-2} (x)$ 
is non-zero;  since $\gcd (r, n) = 1$, $x'_j$ is the product of all but one
of the components of $x$.
Moreover, $\bn^{n} = 0$, so $x'_j x_{j - (n-1) r} = 0$ and
we conclude that exactly one component of $x$ is equal to $0$.  

Without loss of generality, assume that $x_0 = 0$.  Then, if $\bar{p}
\in Z^0 (\beta),$ we may choose a representative $p = \diag (p_0,
\ldots, p_{n-1}) \in \mathfrak{d}^*$.  Equation \eqref{Adcalc} shows
that $p_i = p_{i-r}$ for all $i$; again, since $\gcd (r, n) = 1$, this
implies that $p_0 = p_1 = \dots = p_{n-1}$.  It follows that
$Z^0(\beta)$ has dimension $1$.  Lemma~\ref{410} now implies that
$\Tfl$ also has dimension $1$, so $T = E^\times$, where $E/F$ is a
field extension of degree $n$.

By Lemma~\ref{219}, $\mathfrak{L}$ is a saturated chain of
$\mathfrak{o}_E$-lattices, so may assume that $\varpi_I$ is a
uniformizer in $E$.  Applying \eqref{directcalc}, we see that $\ad (x)
(\varpi_I) = x' \varpi_I$ where $x' = \diag (x_0 - x_1, \dots)$.
However, $x_0 = 0$ and $x_1 \ne 0$ whenever $n > 1$, so $E^\times$
only centralizes $(P, r, \beta)$ when $n = 1$.
\end{proof}
\begin{lemma}\label{stronglyunif}

  Let $(P, r, \beta)$ be a regular stratum centralized by a torus $T$.
  If $(V_1, V_2)$ is a splitting of $(P, r, \beta)$, then $(P_1, r,
  \beta_1)$ is regular, and $(P_2, r, \beta_2)$ is either regular or
  non-fundamental.  In the latter case, $V_2$ has dimension $1$ and
  $e_P = 1$.  Moreover, there exists $p\in P^1$ such that $(V_1,V_2)$
  splits the torus $pTp^{-1}$ (which also centralizes $(P, r, \beta)$) into
  $T_1\times T_2$; here, $T_1$ centralizes $(P_1, r, \beta_1)$ and
  $T_2$ centralizes $(P_2, r, \beta_2)$ when this stratum is regular.
\end{lemma}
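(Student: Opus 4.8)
The plan is to first replace $T$ by a $P^1$-conjugate that is actually split by $(V_1,V_2)$, and then to verify the four conditions of Definition~\ref{regstratum} blockwise for each summand. At the outset, recall that $P_1$ and $P_2$ are uniform with $e_{P_1}=e_{P_2}=e_P$ (Remark~\ref{Vsplit}), that $\gcd(r,e_P)=1$, and that $(P_1,r,\beta_1)$, $(P_{12},r,\partial_\beta)$ and $(P_{21},r,\partial'_\beta)$ are strongly uniform (Definition~\ref{splitstrata} and Lemma~\ref{spstratalemma}).

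\emph{Aligning the torus.} First I would choose, via Remark~\ref{repint}, a representative $\beta_\nu\in\ft\cap\fP^{-r}$ of $\beta$, and let $\epsilon_1,\epsilon_2\in\gl(V)$ be the projections attached to $V=V_1\oplus V_2$. Since $(V_1,V_2)$ splits $P$ and $\beta$ at level $r$ and $L^i=L^i_1\oplus L^i_2$, the off-diagonal blocks $\epsilon_1\beta_\nu\epsilon_2$ and $\epsilon_2\beta_\nu\epsilon_1$ lie in $\fP^{1-r}$, so $\ad(\epsilon_1)(\beta_\nu)\in\fP^{1-r}$; as $\epsilon_1\in\fP^0$, Corollary~\ref{liecencor} then gives $\epsilon_1\in\ft^0+\fP^1=\ft(\fo)+\fP^1$ (the equality is Lemma~\ref{comptor}). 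Writing $\epsilon_1=s+x$ with $s\in\ft(\fo)$ and $x\in\fP^1$, one checks $s^2-s\in\ft\cap\fP^1$, so $s$ is an idempotent modulo the radical $\prod_j\fp_j=\ft\cap\fP^1$ of $\ft(\fo)\cong\prod_j\fo_j$; hence $s$ agrees modulo $\fP^1$ with the idempotent $e_1:=\sum_{j\in S}\chi_j$, where $S$ is the set of indices on which $s$ is a unit, and therefore $e_1-\epsilon_1\in\fP^1$. Since $\fP^1$ is the Jacobson radical of the associative algebra $\fP$ (Proposition~\ref{uniformprop}), the congruent idempotents $e_1,\epsilon_1$ are conjugate by $p:=\epsilon_1 e_1+(1-\epsilon_1)(1-e_1)\in 1+\fP^1=P^1$, with $p e_1 p^{-1}=\epsilon_1$. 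Thus $p\ft p^{-1}$ contains $\epsilon_1$, so the torus $pTp^{-1}$, which still centralizes $(P,r,\beta)$ by Remark~\ref{Tcentral}, is split by $(V_1,V_2)$. Since it suffices to prove the lemma with $T$ replaced by $pTp^{-1}$, I henceforth assume $T=T_1\times T_2$ with $\ft=\ft_1\oplus\ft_2$ and $\ft_j=\ft\epsilon_j$ a Cartan subalgebra of $\gl(V_j)$ (clear, as $\epsilon_j$ is a sum of primitive idempotents of $\ft$), and, re-choosing the representative, $\beta_\nu=\beta_{1\nu}\oplus\beta_{2\nu}$ with $\beta_{j\nu}\in\ft_j\cap\fP_j^{-r}$ representing $\beta_j$.

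\emph{Splitting the graded centralizers and checking the conditions.} From $L^i=L^i_1\oplus L^i_2$ one gets $\ft\cap\fP^i=(\ft_1\cap\fP_1^i)\oplus(\ft_2\cap\fP_2^i)$, hence $\bft^i=\bft_1^i\oplus\bft_2^i$. On the other hand, for $z\in\fP^i$ the two off-diagonal blocks of $\ad(z)(\beta_\nu)$ are, up to sign, $\partial_{\beta_\nu}(z_{12})$ and $\partial'_{\beta_\nu}(z_{21})$; strong uniformity of the $\partial$-strata (Remark~\ref{sug}) forces $z_{12},z_{21}\in\fP^{i+1}$ whenever $\ad(z)(\beta_\nu)\in\fP^{i-r+1}$, so $\mathfrak{z}^i(\beta_\nu)=\mathfrak{z}^i(\beta_{1\nu})\oplus\mathfrak{z}^i(\beta_{2\nu})$. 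Comparing this blockwise with the equality $\bft^i=\mathfrak{z}^i(\beta_\nu)$ of Remark~\ref{repint} yields $\bft_j^i=\mathfrak{z}^i(\beta_{j\nu})$ for all $i\ge 0$ and $j=1,2$. Feeding this into the argument of Proposition~\ref{liecen}, which uses only that $P_j$ is uniform, that $T_j(\fo)\subset P_j$ (clear since $T(\fo)=T_1(\fo)\times T_2(\fo)$) and that $\fP_j^1$ is the Jacobson radical of $\fP_j$, gives $\bar T_j^i=Z^i(\beta_{j\nu})$ for all $i$. Moreover $y_\beta=y_{\beta_1}\oplus y_{\beta_2}$ is block diagonal, so semisimplicity of $y_\beta$ passes to each $y_{\beta_j}$, and when $r=0$ the eigenvalues of $\bar\beta_{j\nu}$, being a sub-multiset of those of $\bar\beta_\nu$, remain distinct modulo $\Z$. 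Since $(P_1,r,\beta_1)$ is strongly uniform it is fundamental (Remark~\ref{asgrd}), so all four conditions of Definition~\ref{regstratum} hold and $(P_1,r,\beta_1)$ is regular, centralized by $T_1$. For $(P_2,r,\beta_2)$: if it is fundamental, the same four conditions give that it too is regular, centralized by $T_2$; if it is not fundamental, it satisfies the first three conditions of Definition~\ref{regstratum} with $\gcd(r,e_{P_2})=1$, so Lemma~\ref{nonfund} forces $\dim V_2=1$, whence $e_{P_2}=e_P=1$.

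\emph{Main obstacle.} I expect the torus-alignment step to be the crux: once $T$, $\beta$ and the given splitting are simultaneously block-diagonal, everything afterward is routine bookkeeping with results already available (the decomposition of $\mathfrak{z}^i$ and the transfer of the four conditions). The delicate input is Corollary~\ref{liecencor} together with the lifting of $\epsilon_1$ to an idempotent in $\ft$ and the conjugation of congruent idempotents inside $\fP$ — this is where regularity of the stratum and the identification of $\fP^1$ with the Jacobson radical genuinely enter.
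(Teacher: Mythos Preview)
Your two–phase strategy (align $T$ with the given splitting by a $P^1$–conjugation, then verify Definition~\ref{regstratum} blockwise using the strong uniformity of $\partial_\beta$ and $\partial'_\beta$) is exactly the paper's, and the verification phase is carried out correctly. The one substantive issue is in the alignment step.

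\textbf{The circular step.} You assert the equality $\ft\cap\fP^1=\prod_j\fp_j$ and use it twice: first to pass from $s^2-s\in\ft\cap\fP^1$ to ``$s$ is idempotent modulo $\prod_j\fp_j$'', and then to conclude $e_1-s\in\prod_j\fp_j\subset\fP^1$. But this equality is the content of Proposition~\ref{uniquedet}(3) (the identification $\ft^{(i)}=\ft\cap\fP^i$), whose proof invokes Theorem~\ref{thm1} and Corollary~\ref{uniformsplitting}, both of which rely on the lemma you are proving. So as written this is circular.

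\textbf{How to repair it without that identity.} Define $S=\{j:s_j\in\fo_j^\times\}$ and $e_1=\sum_{j\in S}\chi_j$ as you do, and set
\[
v=\sum_{j\in S}s_j\chi_j+\sum_{j\notin S}(s_j-1)\chi_j.
\]
Each component of $v$ is a unit (for $j\notin S$ one has $s_j\in\fp_j$, hence $s_j-1\in\fo_j^\times$), so $v\in T(\fo)\subset P$. A direct componentwise check gives $v(s-e_1)=s^2-s$, whence $s-e_1=v^{-1}(s^2-s)\in\fP\cdot\fP^1=\fP^1$ and therefore $e_1-\epsilon_1\in\fP^1$, with no appeal to $\ft\cap\fP^1=\prod_j\fp_j$. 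Your construction of $p=\epsilon_1 e_1+(1-\epsilon_1)(1-e_1)\in P^1$ and the conjugation $p e_1 p^{-1}=\epsilon_1$ then go through verbatim.

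\textbf{Comparison with the paper.} The paper reaches the same idempotent $e_1\in\tfl$ by a different route: it first proves $\mathfrak z^0(\beta_\nu)=\mathfrak z^0_1\times\mathfrak z^0_2$ (your off–diagonal argument, done up front), observes $\bar\epsilon_j\in\mathfrak z^0_j$, and then uses Lemma~\ref{410} ($\tfl\cong\bar\ft^0/\fn$) to produce the splitting idempotents $\epsilon'_j\in\tfl$ with $\epsilon'_j\equiv\epsilon_j\pmod{\fP^1}$. Your route via Corollary~\ref{liecencor} is a bit more direct and avoids Lemma~\ref{410}; once patched as above it is a legitimate alternative. One stylistic point: in the verification phase you invoke ``the argument of Proposition~\ref{liecen}'' rather than the proposition itself, since $(P_j,r,\beta_j)$ is not yet known to be regular; that is fine, but it is worth saying explicitly that only the hypotheses $P_j$ uniform, $T_j(\fo)\subset P_j$, and $\fP_j^1=\mathrm{rad}(\fP_j)$ are used in the relevant direction of that proof.
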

\begin{proof}
  Let $Z^i_j (\beta_\nu) = Z^i(\beta_{j\nu})$, the centralizer of
  $\beta_{j\nu}$ in $\bar{P}^i_j$; similarly, let $\mathfrak{z}^i_j
  (\bn)= \mathfrak{z}^i (\beta_{j\nu})$.  First, we claim that $Z^i
  (\bn) = Z_1^i(\bn) \times Z_2^i (\bn)$, which is embedded in
  $\bar{P}^i$ by diagonal blocks.  It is clear that $Z_1^i
  (\beta_{\nu}) \times Z_2^i (\beta_{\nu}) \subset Z^i(\bn)$.  Recall
  that $\partial_{\bn}$ (resp. $\partial'_{\bn}$) has trivial kernel
  in $\bar{M}^i$ (resp. $\bar{N}^i$) by Definition~\ref{splitstrata}
  (resp. by Lemma~\ref{spstratalemma}).  If we identify $\bar{M}^i$
  and $\bar{N}^i$ with the upper and lower off-diagonal components of
  $\bar{\mathfrak{P}}^i$, then $\delta_{\bn}$ preserves each of these
  subspaces and restricts to $\partial_{\bn}$ (resp.
  $\partial'_{\bn}$) on $\bar{M}^i$ (resp.  $\bar{N}^i$).  Since
  $\delta_{\bn}$ also preserves the diagonal blocks, its kernel lies
  in $\bar{\mathfrak{P}}^i_1 \times \bar{\mathfrak{P}}^i_2 $.

  We handle the cases $i >0$ and $i = 0$ separately.  When $i = 0$,
  $Z^0 (\bn) \subset \bar{P} \subset \bar{\mathfrak{P}}$.  It is clear
  that $Z^0 (\bn)$ lies in the kernel of $\delta_{\bn}$.
  Therefore, $Z^0 (\bn)$ is supported on the diagonal blocks, so $Z^0
  (\bn) = Z^0_1 (\bn) \times Z^0_2 (\bn)$.  When $i > 0$, there is an
  isomorphism $\bar{\mathfrak{P}}^i \to \bar{P}^i $ induced by $x
  \mapsto 1 + x$.  Moreover, $\Ad (1+x) (\bn) \in \beta_{\nu} + \ad(x)
  (\beta_\nu) + \mathfrak{P}^{i-r+1}$.  Thus, $Z^i (\bn))$ must lie in
  $1 + \ker(\delta_{\bn})$.  The same argument as above implies
  that $Z^i (\bn)$ is supported on the diagonal blocks and equals
  $Z^i_1 (\bn) \times Z^i_2 (\bn)$.  Similarly, one shows that
  $\mathfrak{z}^i(\bn) = \mathfrak{z}^i_1 (\bn) \times
  \mathfrak{z}^i_2 (\bn)$.

  Let $\mathfrak{u}$ be the Jacobson radical of $\mathfrak{z}^0
  (\bn)$.  If $\epsilon_j \in \mathfrak{P}$ is the idempotent
  corresponding to the identity in $\mathfrak{P}_j$, its image
  $\bar{\epsilon}_j$ in $\bar\fP$ lies in $\mathfrak{z}^0_j(\bn)$.
  Therefore, $\mathfrak{u} = \bar{\epsilon}_1 \mathfrak{u} +
  \bar{\epsilon}_2 \mathfrak{u}$, and the splitting $\mathfrak{z}^0
  (\bn) = \mathfrak{z}^0_1 (\bn) \times \mathfrak{z}^0_2 (\bn)$
  induces a splitting on $\mathfrak{z}^0(\bn) /\mathfrak{u}$.
  Moreover, this splitting is non-trivial, since $\epsilon_j$ has
  non-trivial image in $\mathfrak{z}^0 (\bn) / \mathfrak{u}$.  Lemma
  \ref{410} implies that $\tfl \cong \mathfrak{z}^0
  (\bn)/\mathfrak{u}$, so $\tfl$ is split.  Let $\epsilon_1'$ and
  $\epsilon_2'$ be the idempotents corresponding to the identity in
  each summand of $\tfl$.  The same lemma implies that
  $\bar{\epsilon}_j'\in\bar{\mathfrak{P}}$ is simply the identity
  matrix in the corresponding diagonal block; thus, $\epsilon_j' \in
  \epsilon_j + \mathfrak{P}^1$.  These idempotents determine a
  splitting of $\mathfrak{t}$, and thus of $T$.  Write $T = T_1'
  \times T_2'$.  We claim that $Z^i_j(\bn) = (\bar{T}'_j)^i$.  Since
  $\bar{T}^i=Z^i(\bn)$, it suffices to show that $(\bar{T}'_j)^i$ maps
  to $Z_j^i (\bn)$.  When $i = 0$, this is clear.  In the case $i >
  0$, $\epsilon_j' \mathfrak{t} \cap \mathfrak{P}^i \subset \epsilon_j
  \mathfrak{P}^i \epsilon_j + \mathfrak{P}^{i+1}$.  Since $(T'_j\times
  1) \cap P^i = 1 + \epsilon_j' \mathfrak{t} \cap \mathfrak{P}^i$ and
  $\epsilon_j \mathfrak{P}^i \epsilon_j$ is the image of
  $\mathfrak{P}^i_j$ embedded as a diagonal block, we see that
  $(\bar{T}'_j)^i$ maps to $Z_j^i (\bn)$ as desired.

  Let $(V_1', V_2')$ be the splitting of $V$ determined by $V'_j =
  \epsilon'_j V$.  Let $p = \epsilon_1 \epsilon'_1 + \epsilon_2
  \epsilon'_2$, so $p(V_j') \subset V_j$.  The map $p$ induces the
  identity map on $\bar{L}^i$, and since the kernel of $p$ lies in
  $\cap_{i\in \Z} L^i = \{0\}$, we deduce that $p \in P^1$.  It is
  clear that $p T p^{-1}$ centralizes $(P, r, \beta)$ (indeed, this is true
  for any $p \in P^1$ by remark \ref{Tcentral}), and that $(V_1, V_2)$
  splits $pTp^{-1}$ into a product $T_1 \times T_2$.  Moreover, $y_\beta$
  is semisimple if and only if $y_{\beta_1}$ and $y_{\beta_2}$ are,
  since $y_\beta = t^r (\beta_{1 \nu} + \beta_{2 \nu})^{e_P} = t^r
  \beta_{1 \nu}^{e_P} + t^r \beta_{2 \nu}^{e_P} = y_{\beta_1} +
  y_{\beta_2}$.  The fact that $T_1$ centralizes $(P_1, r, \beta_1)$
  follows from the previous paragraph, so $(P_1, r, \beta_1)$ is
  regular.  The first part of Remark~\ref{Vsplit} implies that $P_2$
  is a uniform parahoric with $e_{P_2} = e_P$.  If $(P_2, r, \beta_2)$
  is fundamental, we conclude in the same way that it is regular and
  centralized by $T_2$.  Finally, if $P_2$ is non-fundamental, $(P_2,
  r, \beta_2)$ satisfies the conditions of Lemma~\ref{nonfund}.  Thus,
  $V_2$ has dimension $1$, and moreover $e_P = e_{P_2} = 1$.
\end{proof}

\begin{lemma}\label{dimstrata} If $(P, r, \beta)$ is a pure stratum, then
$e_P = n$. 
\end{lemma}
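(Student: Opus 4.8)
The plan is to force $P$ to be the Iwahori subgroup attached to $E$ by playing the two remaining conditions of Definition~\ref{regstratum} --- semisimplicity of $y_\beta$, and the equality $\bar{T}^0 = Z^0(\beta_\nu)$ --- against each other. First I would record what purity buys. Since $\ft=E$ is a single field extension and $\ft$, being the centralizer in $\gl(V)\cong\gl_n(F)$ of a regular semisimple element, has $F$-dimension $n$, we get $[E:F]=n$; because $k$ is algebraically closed, $E/F$ is totally ramified, so we may fix a uniformizer $\varpi_E$ with $\varpi_E^n=t$ and $\fo_E=k[[\varpi_E]]$. From $T(\fo)\subset P$ one gets $\fo_E\subset\fP$, so every lattice $L^i$ is a fractional $\fo_E$-ideal, hence $L^i=\varpi_E^{m_i}\fo_E$; writing $s:=n/e_P$, uniformity of $P$ makes $\dim_k\bL^i=s$ constant, so $m_{i+1}-m_i=s$, and after a harmless translation of the indexing $L^i=\varpi_E^{is}\fo_E$. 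Consequently $\fP^\ell\cap E=\varpi_E^{\ell s}\fo_E$ for every $\ell$; in particular $\bft^0=\fo_E/\varpi_E^s\fo_E$ is a local artinian $k$-algebra with residue field $k$, and $\bar{T}^0=(\fo_E/\varpi_E^s\fo_E)^\times$ is abelian.

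Next I would pin down $\beta_\nu$. By Remark~\ref{repint} I may take $\beta_\nu\in\ft\cap\fP^{-r}=\varpi_E^{-rs}\fo_E$, say $\beta_\nu=\varpi_E^{-rs}u$ with $u\in\fo_E$; using $\varpi_E^n=t$ and $se_P=n$ one computes $\beta_\nu^{e_P}t^r=u^{e_P}$, so $y_\beta=\overline{u^{e_P}}$ lies in the local ring $\bft^0\subset\bfP$. An element of $\bft^0$ differs from a scalar by a nilpotent, so --- viewed in $\bfP$ as in Remark~\ref{levi} --- it is semisimple only if it is a scalar; thus $y_\beta=\lambda$ for some $\lambda\in k$, and since $(P,r,\beta)$ is fundamental $y_\beta$ is non-nilpotent (as in the proof of Proposition~\ref{splittingprop}), so $\lambda\neq 0$. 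Hence $u\in\fo_E^\times$, which forces $\beta_\nu L^i=L^{i-r}$ for all $i$ and makes each $\bbn\colon\bL^i\to\bL^{i-r}$ an isomorphism; moreover $u^{e_P}\equiv\lambda\pmod{\varpi_E^s\fo_E}$ shows that $\bbn^{e_P}\in\bfP^{-re_P}$ is the class of the scalar $\lambda t^{-r}$, i.e.\ it is $\lambda$ times the canonical identification $\bL^i\xrightarrow{\sim}\bL^{i-re_P}$ given by multiplication by $t^{-r}$.

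Then I would compute $Z^0(\beta_\nu)$. By Lemma~\ref{subquotient} I identify $\bar{P}$ with $\prod_{i\in\Z/e_P}\GL(\bL^i)$, acting on $\gr(\mathscr{L})$ in the evident $t$-equivariant way; then $\bar g=(g_i)$ centralizes $\bbn$ iff $g_{i-r}=\bbn\,g_i\,\bbn^{-1}$ for all $i$. Since $\gcd(r,e_P)=1$, this determines all $g_i$ from $g_0$, and the one relation obtained by running once around the cycle says that the $t^{-r}$-transport of $g_0$ to $\GL(\bL^{-re_P})$ equals its conjugate by $\bbn^{e_P}$ --- which, by the previous paragraph, is that same $t^{-r}$-transport, so the relation is automatic. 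Hence restriction to the zeroth block gives an isomorphism $Z^0(\beta_\nu)\xrightarrow{\sim}\GL(\bL^0)\cong\GL_s(k)$. But $Z^0(\beta_\nu)=\bar{T}^0$ is abelian, so $\GL_s(k)$ is abelian, which forces $s=1$, i.e.\ $e_P=n$. (When $r=0$ the standing hypothesis $\gcd(r,e_P)=1$ already gives $e_P=1$, and the same argument applies.)

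I expect the main obstacle to be the bookkeeping in the third step: the $t$-equivariant identification $\bar{P}\cong\prod_i\GL(\bL^i)$, tracking the twisted cyclic structure of $Z^0(\beta_\nu)$, and verifying carefully that closing the cycle contributes only conjugation by $\bbn^{e_P}$ and that this contribution is trivial once $\bbn^{e_P}$ is recognized as a scalar multiple of the canonical identification $\bL^0\cong\bL^{-re_P}$. Everything else reduces to routine manipulation of fractional $\fo_E$-ideals, together with the elementary point that $\bft^0$ is local with residue field $k$ --- which is exactly what promotes ``$y_\beta$ semisimple'' to ``$y_\beta$ scalar''.
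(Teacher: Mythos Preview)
Your argument is correct and takes a genuinely different route from the paper's.

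The paper assumes $m=n/e_P>1$ and passes to the Iwahori $I_E$ obtained by saturating $\mathscr{L}$ to the full $\fo_E$-lattice chain (Lemma~\ref{219}). It does not invoke Remark~\ref{repint}; instead it shows directly, using $\ad(\varpi_E^j)(\beta_\nu)\in\fP^{-r+\lfloor j/m\rfloor+1}$, that $\beta_\nu\in\fI_E^{-rm}$ and then that $\ad(\varpi_E)(\beta_\nu)\in\fI_E^{-rm+2}$, whence Lemma~\ref{adequation} gives $\beta_\nu\in E+\fI_E^{-rm+1}$. A binomial expansion of $t^r\beta_\nu^{e_P}$ shows that semisimplicity of $y_\beta$ forces the ``non-$E$'' tail of $\beta_\nu$ into $\fP^{-r+1}$, after which $\fz^0(\beta_\nu)=\fz^0(\alpha\varpi_E^{-rm})$ and Lemma~\ref{adequation} gives $\dim\fz^0=1$ iff $\gcd(rm,n)=m=1$.

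Your route short-circuits the Iwahori bookkeeping by taking $\beta_\nu\in E$ from the start via Remark~\ref{repint} (which is logically available and independent of this lemma). The observation that $\bft^0\cong\fo_E/\varpi_E^s\fo_E$ is a local artinian $k$-algebra then upgrades ``$y_\beta$ semisimple'' to ``$y_\beta$ scalar'' in one line, replacing the paper's binomial expansion. The endgames also differ: the paper compares \emph{dimensions} of $\fz^0$ via Lemma~\ref{adequation} inside the Iwahori quotient, whereas you compute the full group $Z^0(\beta_\nu)\cong\GL_s(k)$ via the cyclic recursion and conclude from the \emph{group structure} that abelianness forces $s=1$. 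Your approach is shorter and more structural once Remark~\ref{repint} is granted; the paper's is more self-contained in that it rederives $\beta_\nu\in E+\fP^{-r+1}$ from the raw centralizing hypotheses rather than quoting the representative-in-$\ft$ statement.
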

\begin{proof}
  Set $m=n/e_P$, and assume that $m>1$.  Let $T=E^\times$ be a torus
  centralizing $(P, r, \beta)$.  By Lemma~\ref{219}, we can find a
  saturation $\mathscr{L}_E=\{L_E^i\}$ of $\mathscr{L}$ that is
  stabilized by $\mathfrak{o}_E$.  We index $\mathscr{L}_E$ so that
  $L_E^{m i} = L^i$, and let $I$ be the Iwahori subgroup that
  stabilizes $\mathscr{L}_E$.  We fix a uniformizer $\varpi_E$ for
  $E$; we can assume that $\varpi_P=\varpi_E^m$.  Recall that
  $\mathfrak{I}_E^1 = \varpi_E \mathfrak{I} = \mathfrak{I} \varpi_E$
  by Proposition~\ref{uniformprop}.  Thus, $\varpi_E^j L^i = L_E^{m i
    + j}$, and furthermore $\varpi_E^j \in \mathfrak{P}^{\lfloor
    \frac{j}{m} \rfloor}$.  By Proposition~\ref{liecen}, $\ad
  (\varpi_E^j) (\beta_\nu) \in \mathfrak{P}^{-r + \lfloor \frac{j}{m}
    \rfloor +1}$.

First, we show that $\beta_\nu \in \fI^{-r m}$.  By
assumption,  $\beta_\nu (L_E^{im}) \subset L_E^{im - rm}$.  Now, take $0 < j
< m$.  We have 
\begin{equation*}
\beta_\nu (L_E^{m i + j}) = \beta_\nu \varpi_E^j (L^i) =
(\varpi_E^j \beta_\nu - \ad(\varpi_E^j) (\beta_\nu)) L^i \subset
L_E^{ im + j - rm},
\end{equation*}
since $\ad (\varpi_E^j) (\beta_\nu) L^i \subset L^{i-r+1} = L_E^{im+
  m-r m}$.  Thus, $\beta_\nu \in \fI^{- r m}$.

We next show that $\ad(\varpi_E)(\bn)\in\fI^{-rm+2}$.
The calculation above actually showed that
$\beta_\nu \varpi_E^j v \equiv \varpi_E^j \beta_\nu v \pmod{L_E^{-rm +j + 1}}$
for any $v \in L^0$ and $0\le j < m$.   In particular, this gives 
\begin{equation*}
\varpi_E \beta_\nu \varpi_E^j v \equiv \varpi_E^{j+1} \beta_\nu v
\equiv \beta_\nu \varpi_E \varpi_E^j v
 \pmod{L_E^{-rm + j + 2}},
\end{equation*}
for $0<j<m-1$, with the first congruence also holding for $j=m-1$.
However, the second congruence is also true when $j=m-1$; in this
case, $\varpi_E^{j+1}=\varpi_P$, and the congruence follows from $\ad
(\varpi_P) (\beta_\nu) L^0 \subset L^{-r + 2} = L_E^{-r m + 2
  m}\subset L_E^{-rm+(m-1)+2}$.  The congruences also hold trivially
for $j=0$, so $\ad (\varpi_E) \beta_\nu \in\fI^{-rm + 2}$ as desired.

By Lemma~\ref{adequation}, $\beta_\nu \in E + \fI^{-rm + 1}$.  Thus,
there exists $\beta'_\nu \in \fI^{- r m} \cap E$ such that
$\beta'_\nu \in \beta_\nu + \fI^{-r m+1}$.  Let
$\beta_{-rm} = \alpha \varpi_E^{-r m}$, with $\alpha \in k^\times$, be
the homogeneous degree $-rm$ term of $\beta'_\nu$, so that $X =
\beta_\nu - \beta_{-rm} \in \fI^{-r m +1}$.  Clearly, $\ad
(\beta_{-rm}) (\beta_\nu) = \ad (\beta_{-rm}) (X)$.  Moreover, since
$E$ centralizes $(P, r, \beta)$, the remark after
Proposition~\ref{liecen} shows that $\ad (\beta_{-rm}) (X) \in
\mathfrak{P}^{-2 r + 1}$.  It follows that $X$ and $\beta_{-rm}$
commute up to a term in $\mathfrak{P}^{-2 r + 1}$, so
\begin{equation}\label{binomial}
t^r \beta_\nu^{e_P} =  \alpha^{e_P} 1 + e_P t^r X \beta_{-rm}^{e_P -1}
+ \text{higher order terms}.
\end{equation}
If $X \in \fI^{-rm + j}$ for $1\le j<e_P$, then $e_P t^r X
\beta_{-rm}^{e_P -1} \in \fI^{j} $ and the higher order terms of
\eqref{binomial} lie in $\fI^{2 j}$.  In particular, if $X \notin
\mathfrak{P}^{-r+1}$, there exists $1 \le j < e_P$ such that
$X\in\fI^{-rm+j}\setminus\fI^{-rm+j+1}$, and it now follows that
$N=t^r\bn^{e_P}-\alpha^{e_P}\in\fI^{-rm+j}\setminus\fI^{-rm+j+1}$.  It
is obvious that $\bar{N}\in\bfP$ is a nonzero nilpotent operator, so
$y_\beta$ has Jordan decomposition $y_\beta= \overline{t^r
  \beta_\nu^{e_P}}=\alpha^{e_P}1+\bar{N}$.  This contradicts the
semisimplicity of $y_\beta$, so $X \in \mathfrak{P}^{-r+1}$.

On the other hand, if $X \in \mathfrak{P}^{-r+1}$, then $\bft^0=\fz^0
(\beta_\nu) = \fz^0(\beta_{-rm})$.  By Lemma~\ref{adequation}, $\fz^0
(\beta_{-rm})$ is one-dimensional if and only if $m=\gcd(-rm,n)=1$,
contradicting our assumption that $m>1$.  Hence, $m=1$ and $e_P=n$.
\end{proof}

\begin{proof}[Proof of Theorem~\ref{thm1}]
  First, assume that $(P, r, \beta)$ is strongly uniform.  Suppose
  that we have a nontrivial splitting $T=T_1\times T_2$, with
  corresponding idempotents $\epsilon_j$.  Setting $V_1 = V^{1 \times
    T_2}$ and $ V_2 = V^{T_1 \times 1}$, we show that $(V_1,V_2)$
  splits $P$ and $\beta$ at level $r$.  Note that $\epsilon_j \in
  \mathfrak{P}$, since $\epsilon_j \in \mathfrak{t} (\mathfrak{o})$.
  Therefore, $L^i_j = \epsilon_j L^i$, and $L^i = L^i_1 \oplus L^i_2$.
  By Remark~\ref{Vsplit}, in order to see that $(V_1, V_2)$ splits $P$
  and $\beta$, it suffices to show that $\epsilon_1 \beta_\nu
  \epsilon_2$ and $\epsilon_2 \beta_\nu \epsilon_1$ are in
  $\mathfrak{P}^{-r+1}$.

  Note that $\epsilon_j\in\Tfl$; indeed, it is a (nonempty) sum of the
  primitive idempotents $\chi_i$ for $\ft$.  By Lemma~\ref{410},
  $a_1\epsilon_1+a_2\epsilon_2\in Z^0(\bn)$ for any $a_1,a_2\in
  k^\times$.  This implies that
  $\Ad(a_1\epsilon_1+a_2\epsilon_2)(\epsilon_1\bn\epsilon_2)=\frac{a_1}{a_2}\epsilon_1
  \beta_\nu \epsilon_2\equiv \epsilon_1 \beta_\nu
  \epsilon_2\pmod\fP^{-r+1}$; accordingly, $\epsilon_1 \beta_\nu
  \epsilon_2\in\mathfrak{P}^{-r+1}$.  Similarly, $\epsilon_2 \beta_\nu
  \epsilon_2\in\mathfrak{P}^{-r+1}$.

  Let $(P_j, r, \beta_j)$ be the stratum corresponding to $V_j$.  By
  Remark~\ref{Vsplit}, each $(P_j, r, \beta_j)$ is strongly uniform.
  We next show that $(P_{1 2}, r, \partial_\beta)$ is uniform.  Using
  notation from the previous section, $\partial_{\beta}$ determines a
  map from $\bar{M}^j \to \bar{M}^{j-r}$.  It has already been
  established in \eqref{P12uniform} that $\mathscr{M} = \{M^j\}$ is
  uniform.  It
  remains to show that $\partial_{\beta}$ has trivial kernel in
  $\bar{M}^j$, say for all $j \ge 0$.  If $x \in \ker(\partial_{\beta})$, then
  $(\beta_1)_\nu x \equiv x (\beta_2)_\nu \pmod{M^{j-r+1}}$.
  Therefore, $\Ad (1+\iota_1 x\pi_2) (\beta_\nu) \equiv \beta_\nu
  \pmod{\mathfrak{P}^{j-r+1}}$, so $1+\iota_1 x\pi_2 \in Z^{j}(\beta_\nu)$.
  However, $(T_1 \times T_2) \cap (1 + \iota_1 M^j \pi_2) = 1$,
  implying that  $x\in M^{j+1}$.  We note that in the case $r = 0$, the
  eigenvalues of $\bar{\beta}_\nu$ are pairwise distinct modulo $\Z$
  by Definition~\ref{regstratum}.  A fortiori, the eigenvalues of
  $(\bar{\beta}_1)_\nu$ are distinct from the eigenvalues of
  $(\bar{\beta}_2)_\nu$ modulo $\Z$.  We have thus shown that
  $(P,r,\beta)$ is the direct sum of two strongly uniform
  strata, and Lemma~\ref{stronglyunif} shows that these strata are
  regular (centralized by the $T_j$'s).

We can iterate this procedure until  $(P,r,\beta)$ is the direct sum
of regular, strongly uniform strata each of which is centralized by a
rank one torus, i.e., by the units of a field.  Therefore, $(P, r, \beta)$
splits into a sum of pure strata.

Finally, suppose that $(P, r, \beta)$ is not strongly uniform.  When
$r > 0$, Corollary~\ref{nilsplitting} implies that $(P, r, \beta)$
splits into a strongly uniform stratum $(P_1, r, \beta_1)$ and a
non-fundamental stratum $(P_2, r, \beta_2)$.  By
Lemma~\ref{stronglyunif}, $V_2$ has dimension one and $e_P = 1$.  When
$r = 0$, Definition~\ref{regstratum} implies that the kernel of
$\bar{\beta}_\nu$ has dimension one and that the non-zero eigenvalues
of $\bar{\beta}_\nu$ are not integers.  Write $\bar{L}^0 = \bar{V}_1
\oplus \bar{V}_2$, where $\bar{V}_2 = \ker(\bar{\beta}_\nu)$ and
$\bar{V}_1$ is the span of the other eigenvectors.  It is easily
checked that any lift of this splitting to $L^0$ induces a splitting
$V = V_1 \oplus V_2$, and $(V_1, V_2)$ splits $(P, 0, \beta)$.
\end{proof}

\begin{proof}[Proof of Corollary~\ref{uniformsplitting}]
  When $(P, r, \beta)$ is strongly uniform, Theorem~\ref{thm1} states
  that $(P, r, \beta)$ splits into a sum of pure strata $(P_i, r,
  \beta_i)$ with $e_{P_i} = e_P$.  Therefore, by
  Lemma~\ref{dimstrata}, $(P_i, r, \beta_i)$ is centralized by a torus
  isomorphic to $E^\times$, and each component $V_i \subset V$ has
  dimension $e_P$.  It follows from Lemma~\ref{stronglyunif} that $T
  \cong (E^\times)^{n / e_P}$.  Otherwise, $e_P = 1$ and $(P, r,
  \beta)$ splits into a strongly uniform stratum $(P_1, r, \beta_1)$
  and a one-dimensional non-fundamental stratum $(P_2, r, \beta_2)$.
  In particular, by Lemma~\ref{stronglyunif}, this gives a splitting
  of a conjugate of $T$ into $T_1 \times T_2$, where $T_2 \cong
  F^\times$.  Since $e_P = 1$, it follows from the theorem that $T_1$
  also splits into rank one factors.
  
We now prove the last statement.  By Lemma~\ref{410}, we know that
$\tfl\cong\bft^0/\fn$, where $\fn$ is the image in $\bfP$ of $\prod
\fp_E$.  However, we have already seen that
$(\varpi_E,\dots,\varpi_E)$ generates $\fP^1$, so $\fn=\{0\}$.  The
proof for $\Tfl$ is similar.

\end{proof}

We can now prove Proposition~\ref{lemrss}.  First, we need a lemma.
\begin{lemma}\label{A1}
  Let $(P, r, \beta)$ be a regular stratum centralized by $T$, and
  suppose that $\bn\in \mathfrak{t} + \fP^{-r+m}$.  Then, $\bn$ is
  conjugate to an element of $\mathfrak{t}$ by an element of $P^m$.
\end{lemma}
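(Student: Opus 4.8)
The plan is to run a graded successive-approximation (Newton-type) argument. First I would reduce to the case $m\ge 1$: since $\bn$ represents $\beta$ and, by Remark~\ref{repint}, $\beta$ has a representative $\beta_\nu^0\in\ft\cap\fP^{-r}$, we have $\bn\in\beta_\nu^0+\fP^{-r+1}\subset\ft+\fP^{-r+1}$, so the hypothesis holds with $m=1$ in any case. By Proposition~\ref{lemrss}, $\beta_\nu^0$ is regular semisimple, and I would work throughout with the fixed induced map $\delta:=\delta_{\beta_\nu^0}\colon\bfP^\ell\to\bfP^{\ell-r}$.

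The crux is the graded vanishing statement: for every $\ell\ge 1$ one has a direct sum decomposition $\bfP^{-r+\ell}=\bft^{-r+\ell}\oplus\delta(\bfP^\ell)$. To prove it I would check three things. (i) $\ker(\delta\colon\bfP^\ell\to\bfP^{-r+\ell})=\bft^\ell$: the inclusion $\supseteq$ is clear, and $\subseteq$ is Corollary~\ref{liecencor} with $j=\ell+1$. (ii) $\dim_k\bfP^j$ is independent of $j$ because $P$ is uniform (Lemma~\ref{subquotient}), and $\dim_k\bft^j$ is independent of $j$ because $\ft\cong E^{n/e_P}$ with $E/F$ of degree $e_P$ (Corollary~\ref{uniformsplitting}); hence $\dim_k\delta(\bfP^\ell)=\dim_k\bfP^{-r+\ell}-\dim_k\bft^{-r+\ell}$. (iii) $\delta(\bfP^\ell)\cap\bft^{-r+\ell}=0$: let $\pi_\ft\colon\gl(V)\to\ft$ be the tame corestriction of Proposition~\ref{cores} (which applies since $T$ is uniform by Corollary~\ref{uniformsplitting} and $P=P_{T,L}$ by Proposition~\ref{uniquedet}); since $\pi_\ft$ is a $\ft$-bimodule map and $\ft$ is commutative, $\pi_\ft([Z,\beta_\nu^0])=[\pi_\ft(Z),\beta_\nu^0]=0$ for $Z\in\fP^\ell$, so $\delta(\bfP^\ell)$ lies in the kernel of the induced map $\bar\pi_\ft\colon\bfP^{-r+\ell}\to\bft^{-r+\ell}$, whereas $\pi_\ft$ restricts to the identity on $\ft$ and is therefore injective on $\bft^{-r+\ell}$. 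Now (ii) and (iii) force the direct sum.

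Granting this, I would build $p$ as a convergent infinite product $p=\cdots(1+Z_{m+1})(1+Z_m)$ with $Z_\ell\in\fP^\ell$, setting $p_m=1$ and $p_{\ell+1}=(1+Z_\ell)p_\ell$. The inductive hypothesis is $\Ad(p_\ell)(\bn)=\sigma_\ell+w_\ell$ with $\sigma_\ell\in\ft\cap\fP^{-r}$ and $w_\ell\in\fP^{-r+\ell}$; this holds at $\ell=m$ by hypothesis, and at every stage $\sigma_\ell$ is again a representative of $\beta$ (as $p_\ell\in P^m\subset P^1$ acts trivially on $\bfP^{-r}$ and $w_\ell\in\fP^{-r+1}$), so $\delta_{\sigma_\ell}$ and $\delta$ induce the same map $\bfP^\ell\to\bfP^{-r+\ell}$. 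For the step, use the decomposition to choose $Z_\ell\in\fP^\ell$ with $\delta(\bar Z_\ell)\equiv\bar w_\ell$ modulo $\bft^{-r+\ell}$; then $\overline{w_\ell+[Z_\ell,\sigma_\ell]}\in\bft^{-r+\ell}$, and because $\ell\ge 1$ the commutator $[Z_\ell,w_\ell]$ and all higher terms of $\Ad(1+Z_\ell)(\sigma_\ell+w_\ell)$ lie in $\fP^{-r+\ell+1}$; hence $\Ad(p_{\ell+1})(\bn)\in\sigma_\ell+(\ft\cap\fP^{-r+\ell})+\fP^{-r+\ell+1}$, which has the required form with $\sigma_{\ell+1}-\sigma_\ell\in\ft\cap\fP^{-r+\ell}$. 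Finally, convergence: $p_{\ell+1}-p_\ell\in\fP^\ell$ and $\fP$ is $t$-adically complete with $\fP^{e_P}=t\fP$, so $p_\ell\to p\in P^m$ ($P^m$ being closed); likewise $\sigma_\ell\to\sigma\in\ft\cap\fP^{-r}$ and $w_\ell\to 0$, whence $\Ad(p)(\bn)=\sigma\in\ft$. I expect the real work to be concentrated in the vanishing statement (ii)--(iii) and in this last simultaneous convergence bookkeeping (Cartan parts Cauchy, errors to zero); the rest is formal manipulation of the congruence filtration.
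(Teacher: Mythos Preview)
Your approach is essentially the paper's: a successive-approximation argument using tame corestriction to peel off one congruence level at a time. The paper cites Proposition~\ref{cores3} directly (after spending its first paragraph verifying that the block-diagonal leading coefficients $a_j$ are pairwise distinct, so that the hypotheses of that part apply), whereas you re-derive the needed decomposition $\bfP^{-r+\ell}=\bft^{-r+\ell}\oplus\delta(\bfP^\ell)$ from Corollary~\ref{liecencor}, a dimension count, and the $\ft$-bimodule property of $\pi_\ft$. Your route is slightly cleaner in that it sidesteps the distinctness check, but the two are otherwise the same argument.

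One caveat: your citation of Proposition~\ref{lemrss} is circular, since that proposition is proved \emph{using} Lemma~\ref{A1}. Fortunately you never actually use the regular semisimplicity of $\beta_\nu^0$ anywhere---your direct-sum step only needs $\beta_\nu^0\in\ft$ together with the bimodule property of $\pi_\ft$---so simply delete that sentence.
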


\begin{proof}
  Let $E/F$ be a field extension of degree $e_P$, and let $\varpi_E$
  be a uniformizer in $E$.  By Corollary~\ref{uniformsplitting}, $(P,
  r, \beta)$ splits into a sum of pure strata $(P_i, r, \beta_i)$,
  each of which is centralized by a torus isomorphic to $E^\times.$ In
  particular, we can choose a block-diagonal representative
  $\beta'_\nu = (a_1 \varpi_E^{-r}, a_2 \varpi_E^{-r}, \ldots,
  a_{n/e_P} \varpi_{E}^{-r}) \in \prod_{i} \mathfrak{P}^{-r}_i$.
  Denote the summands of $V$ by $V_i$.  We may identify the
  $(\ell-j)^{th}$ off-diagonal block with $\Hom_F (V_j, V_\ell)$.  Let
  $\mathfrak{n} \subset \gl(V)$ be the subalgebra of matrices in the
  $(\ell-j)^{th}$ off-diagonal block corresponding to $\Hom_E(V_j,
  V_\ell)$.  If $a_\ell = a_j$, then $1 + \mathfrak{n}$ centralizes
  $\beta_\nu$.  Since $((1+\mathfrak{n}) \cap P^i)P^{i+1} \nsubseteq
  T^i P^{i+1}$, this is a contradiction.  Thus, the $a_j$'s
  are pairwise distinct.

    By Proposition~\ref{cores}, there exists $X_m \in
    \mathfrak{P}^{m}$ such that $\bn - \pi_{\mathfrak{t}} (\bn) \equiv
    \ad (\beta'_\nu) (X_m) \equiv \ad (\bn) (X_m)
    \pmod{\mathfrak{P}^{2 - r}}$.  Therefore, $\Ad(1+X_m) (\bn) \equiv
    \pi_{\mathfrak{t}}(\bn) \pmod{\mathfrak{P}^{2-r}}$.  Inductively,
    we can find $X_j\in\fP^{j+1}$ so that, setting $p_j = (1 + X_j)
    (1+ X_{j-1}) \ldots (1+ X_m)$, $\Ad(p_j)(\bn) \in \mathfrak{t} +
    \mathfrak{P}^{j+1-r}$ and $p_{j} \equiv p_{j-1} \pmod{P^{j-1}}$.
    If we let $p\in P^1$ be the inductive limit of the $p_j$'s, we see
    that $\Ad(p)(\bn)\in\ft$.
\end{proof}
\begin{rmk}
We note that, in the argument above, it is not necessarily the case that $\bn$
is conjugate to $\pi_{\mathfrak{t}} (\bn)$.
\end{rmk}

\begin{proof}[Proof of Proposition~\ref{lemrss}]  
It was shown in the proof of Lemma~\ref{nonfund} that
  any representative $\bn$ is regular.  To show that $\bn$ is also
  semisimple, it suffices to show that it is conjugate to an element
  of a Cartan subalgebra $\ft$.

  First, suppose $(P, r, \beta)$ is strongly uniform. By
  Theorem~\ref{thm1} and Corollary~\ref{uniformsplitting}, there
  exists a splitting $V=V_1\oplus\dots\oplus V_{n/e_P}$ with $\dim
  V_i=e_P$ for each $i$ and a block diagonal $\beta'_\nu =
  (\beta_{i\nu}) \in \prod_{i = 1}^{n/e_P} \gl(V_i)$ such that
  $\beta_\nu \in \beta'_\nu+ \mathfrak{P}^{-r+1}$ and the $(P_i, r,
  \beta_i)$'s are pure strata.  Moreover, by Lemma~\ref{stronglyunif},
  we can choose a maximal torus $T$ centralizing $(P, r, \beta)$ such
  that the splitting of $V$ induces a splitting
  $T=T_1\times\dots\times T_{n/e_P}$, with $T_i$ centralizing $(P_i, r,
  \beta_i)$.  Since $T_i$ is isomorphic to the units of the field
  extension $E/F$ of degree $e_P$, Lemma~\ref{adequation} implies that
  $\beta_{i\nu} \in T_i\cap \mathfrak{P}_i^{-r} +
  \mathfrak{P}_i^{1-r}$.  Therefore, $\beta_\nu \in \mathfrak{t} +
  \mathfrak{P}^{1-r}$.  By Lemma~\ref{A1}, $\beta_\nu$ is conjugate to
  an element of $\mathfrak{t}.$

If $(P, r, \beta)$ is not strongly uniform, then $e_P = 1 $ by the
second part of Theorem~\ref{thm1}.  As above, we can choose a
splitting $V=V_1\oplus\dots \oplus V_n$, a diagonal representative
$\beta'_\nu\in \prod_{i=1}^n\gl(V_i)$, and a compatibly split torus
$T$ which centralizes $(P, r, \beta)$.  In this case, $\dim V_i=1$ for
all $i$, so $\ft=\prod_{i=1}^n\gl(V_i)$ and $\beta'_\nu\in\ft$.  In
particular, $\beta_\nu\in\mathfrak{t}^{-r} +
\mathfrak{P}^{1-r}$, and Lemma~\ref{A1} again implies that $\beta_\nu$
is conjugate to an element of $\mathfrak{t}$.
\end{proof}

We conclude this section with two lemmas that will be needed in
Section~\ref{modspace}.  We recall from Remark~\ref{repint} that if
$(P, r, \beta)$ is a regular stratum centralized by the maximal torus
$T$, then one can choose $\bn\in\ft$.  We next show that if two such
representatives are conjugate, then they are the same.
\begin{lemma}\label{A0}
  Suppose that $(P, r, \beta)$ is a regular stratum.  Choose
  representatives $\bn, \bn' \in \ft$ for $\beta$.  If $\Ad(g) (\bn) =
  \bn'$ for some $g \in \GL(V)$, then $\bn' = \bn$.\end{lemma}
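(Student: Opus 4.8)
The plan is to show that any $g$ conjugating $\bn$ to $\bn'$ must normalize the torus $T$, and then that the resulting $F$-algebra automorphism of $\ft$ is forced to be the identity by the regular semisimplicity of the leading term of $\bn$. First I would collect the elementary facts. Since $\bn$ and $\bn'$ both represent $\beta$, they lie in $\ft\cap\fP^{-r}$ and satisfy $\bn-\bn'\in\fP^{-r+1}$; by Proposition~\ref{uniquedet}(3) this gives $\bn-\bn'\in\ft\cap\fP^{-r+1}=\ft^{(-r+1)}$. By Proposition~\ref{lemrss}, $\bn$ is regular semisimple, so its centralizer in $\gl(V)$ has dimension $n$; since $\ft$ is a Cartan subalgebra (hence abelian of dimension $n$) containing $\bn$, it follows that $\ft=F[\bn]$, and similarly $\ft=F[\bn']$. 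Consequently $\Ad(g)(\ft)=\Ad(g)(F[\bn])=F[\bn']=\ft$, so $g$ normalizes $T$ and $w:=\Ad(g)|_\ft$ is an $F$-algebra automorphism of $\ft$ with $w(\bn)=\bn'$ and $w(\bn)\equiv\bn\pmod{\ft^{(-r+1)}}$.

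Next I would use the structure of $\ft$. By Corollary~\ref{uniformsplitting}, $\ft\cong\prod_{j=1}^{\ell}E_j$ with each $E_j$ isomorphic to the degree-$e_P$ extension $E/F$, and since $k$ is algebraically closed of characteristic zero, $E/F$ is cyclic Galois of degree $e_P$, with Galois group acting on a uniformizer $\omega$ of $E$ by $\omega\mapsto\zeta\omega$ for $\zeta\in\mu_{e_P}$. Hence $w$ permutes the primitive idempotents $\chi_j$ of $\ft$ by some permutation $\sigma$ and, on each Wedderburn component, is a Galois automorphism; writing $\omega_j$ for the uniformizer of the $j$-th component, we get $w(\omega_j)=\zeta_j\,\omega_{\sigma(j)}$ for some $\zeta_j\in\mu_{e_P}$. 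In particular $w$ preserves the canonical grading of $\ft$.

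Finally, let $\eta=\sum_{j}a_j\,\omega_j^{-r}\in\ft$ (with $a_j\in k$) be the leading term of $\bn$, i.e.\ its component of lowest degree $-r$, so that $\bn-\eta\in\ft^{(-r+1)}\subset\fP^{-r+1}$; then $\eta$ is itself a representative of $\beta$ lying in $\ft$, hence is regular semisimple by Proposition~\ref{lemrss}. Because $w$ preserves the grading, the leading term of $w(\bn)=\bn'$ is $w(\eta)$; on the other hand $\bn'-\bn\in\ft^{(-r+1)}$ forces $\bn$ and $\bn'$ to have the same leading term $\eta$. Comparing the two expressions for $w(\eta)$ using the form of $w$ above yields $a_{\sigma(j)}=a_j\,\zeta_j^{-r}$ for all $j$. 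But the $n=\ell e_P$ eigenvalues of $\eta$ are $\{a_j\,\zeta\,t^{-r/e_P}:\zeta\in\mu_{e_P}\}$ as $j$ ranges over $\{1,\dots,\ell\}$, and regular semisimplicity forces them to be distinct, so $a_j/a_{j'}\notin\mu_{e_P}$ whenever $j\ne j'$. Since $\gcd(r,e_P)=1$, the element $\zeta_j^{-r}$ lies in $\mu_{e_P}$, so $a_{\sigma(j)}=a_j\,\zeta_j^{-r}$ forces $\sigma(j)=j$ and then $\zeta_j=1$, for every $j$. Therefore $w=\mathrm{id}_\ft$, and $\bn'=w(\bn)=\bn$.

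The step I expect to be the main obstacle is the bookkeeping in the middle paragraph: one must use that $w$ is a genuine $F$-algebra automorphism of $\ft$ --- so that its ``Galois part'' can only rescale uniformizers by roots of unity --- rather than merely a filtered automorphism of $\ft$, for which the analogous leading-term comparison would fail. It is precisely this rigidity, combined with the regular semisimplicity of the leading term, that rules out a nontrivial $w$.
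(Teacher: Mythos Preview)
Your proof is correct and takes a genuinely different route from the paper's. Both arguments begin by noting that $\bn$ is regular semisimple (Proposition~\ref{lemrss}) and hence $g\in N(T)$, but they diverge thereafter. The paper observes that $\Ad(g)$ preserves each $\ft^i$ and then uses an averaging trick: since $\Ad(g^i)(\bn)\in\bn+\ft^{1-r}$ for all $i$, the average $\bn''=\frac{1}{m}\sum_{i=0}^{m-1}\Ad(g^i)(\bn)$ (with $m$ the order of $w$ in $W_T$) is again a representative of $\beta$ in $\ft$, and is fixed by $w$; regular semisimplicity of $\bn''$ then forces $w=1$. Your approach instead exploits that $\Ad(g)|_\ft$ is an $F$-\emph{algebra} automorphism of $\ft\cong E^{n/e_P}$, classifies such automorphisms explicitly as a permutation of the blocks composed with Galois automorphisms, and then compares degree $-r$ terms directly to see that the regular semisimplicity of the leading term rules out any nontrivial $(\sigma,\zeta_j)$.

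The paper's argument is slicker and would generalize more readily (it only needs that a regular element has trivial Weyl stabilizer), while yours makes the mechanism more transparent in this concrete setting and avoids the averaging step entirely. One small point worth noting for completeness: your leading-term argument implicitly handles the edge cases ($r=0$, or $e_P=1$ with one $a_j=0$) correctly, since in both situations $e_P=1$ forces $\zeta_j=1$ automatically and distinctness of the $a_j$ still gives $\sigma=\mathrm{id}$.
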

\begin{proof}

By Proposition~\ref{lemrss}, $\beta_\nu$ is regular semisimple.  Since
$\Ad (g) (\beta_\nu) \in \mathfrak{t}$, $g$ lies in the normalizer of
$T$.  Let $w$ be the image of  $g$ in the relative Weyl
group $W=N(T)/T$.  It suffices to show that $w$ is the identity.

First, we show that $\Ad(g) (\ft^i) \subset \mathfrak{t}^i$.  Recall from Corollary~\ref{uniformsplitting} that $\ft \cong
\prod_{i = 1}^{n/e_P} E$, so $\ft$ splits over $E$. Let $\omega_j$ be
a uniformizer of $E$ supported on the $j^{th}$ summand of $\ft$ as
before; we have $\omega_j \fP_j = \fP_j^1$ in the splitting determined
by Theorem~\ref{thm1}.  Therefore, $\ft^i $ consists of those
$(x_j) \in \prod_{i=1}^{n/e_P} E$ such that $x_j \in \omega_j^i
\fo_E$.  These are precisely the $F$-rational points of $\ft_E$ with
eigenvalues of degree at least $i/e_P$.  Since the action of $W$
permutes the eigenvalues, it follows that $\Ad(g) (\ft^i)
\subset (\ft^i)$.

Suppose that $s \in \ft^{1-r}$.  The previous paragraph shows that
$\Ad(g) (\bn + s) = \bn' + \Ad(g) (s) \in \bn + \ft^{1-r}$.  By
induction on $i$, $\Ad(g^i) (\bn) \in \bn + \ft^{1-r}$, so
$\Ad(g^i)(\bn)$ is a representative for $\beta$.  Let $m$ be the order
of $w$.  Then, $\bn'' = \frac{1}{m} (\sum_{i = 0}^{m-1}
\Ad(g^i)(\bn))$ is a representative for $\beta$ fixed by the action of
$w$.  Since $\bn''$ is regular semisimple, $w$ must be the identity.

\end{proof}

\begin{lemma}\label{isotropy}
  Suppose that $(P, r, \beta)$ is a regular stratum centralized by
  $T$ and that $\bn\in\ft^{-r}$.  Let $A\in\gl(V)^\vee$ be the
  functional determined by $\bn$ and $\nu$.
  Then, $A$ determines an element $A_i \in (\mathfrak{P}^i)^\vee$ by
  restriction, and the stabilizer of $A_i$ under the coadjoint action
  of $P^i$ is given by $T^i P^{r+1-i}$ whenever $r \ge 2 i$,
  and $P^i$ whenever $r < 2 i$.
\end{lemma}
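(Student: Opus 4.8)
The plan is to rewrite the stabilizer condition as a congruence and then strip off one torus factor at each level of the congruence filtration. By $\Ad$-invariance of $\langle\,,\,\rangle_\nu$, an element $g\in P^i$ fixes $A_i$ under the coadjoint action exactly when $\langle\Ad(g)\bn-\bn,X\rangle_\nu=0$ for all $X\in\fP^i$, and since $\ord(\nu)=-1$ Proposition~\ref{duality} identifies $(\fP^i)^\perp$ with $\fP^{1-i}$; thus the stabilizer is $\{g\in P^i:\Ad(g)\bn-\bn\in\fP^{1-i}\}$. Two facts are then immediate. First, $\bn\in\ft$ and $\ft$ is a commutative subalgebra of $\gl(V)$ containing $T$, so $\Ad(t)\bn=\bn$ for every $t\in T$ and in particular $T^i$ lies in the stabilizer. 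Second, assuming $i\ge 1$ so that $g=1+Y$ with $Y\in\fP^i$, the expansion $\Ad(1+Y)\bn=\bn+[Y,\bn]+Z$, where $Z$ collects the terms with at least two factors of $Y$ and hence lies in $\fP^{2i-r}$, shows by a comparison of exponents that $P^{r+1-i}$ lies in the stabilizer when $r\ge 2i$ (here $[Y,\bn]\in\fP^{1-i}$ and $Z\in\fP^{r+2-2i}\subset\fP^{1-i}$), and that \emph{all} of $P^i$ lies in the stabilizer when $r<2i$ (here $[Y,\bn]\in\fP^{i-r}\subset\fP^{1-i}$ and $Z\in\fP^{2i-r}\subset\fP^{1-i}$). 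The boundary case $i=0$ reduces to $i\ge1$ by first using $Z^0(\bn)=\bar{T}^0$ (Definition~\ref{regstratum}) to write $g\in T^0P^1$ when $r\ge 1$, and is immediate when $r=0$; so I assume $i\ge 1$ henceforth.

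This already proves the case $r<2i$, and, for $r\ge 2i$, gives $T^iP^{r+1-i}\subseteq\mathrm{Stab}(A_i)$; note that $T^iP^{r+1-i}$ is a subgroup of $P^i$ because $T^i\subset P\subset\GL(V)$ normalizes the ideal $\fP^{r+1-i}$, hence the subgroup $P^{r+1-i}$. It remains, when $r\ge 2i$, to prove $\mathrm{Stab}(A_i)\subseteq T^iP^{r+1-i}$.

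The plan for this is an induction over $m=i,i+1,\dots,r-i$ based on the claim: if $g\in P^m$ fixes $A_i$, then $g\in T^mP^{m+1}$, with the $P^{m+1}$-factor still fixing $A_i$ (which is automatic, since $T^m\subset T^i$ fixes $A_i$). Granting the claim and iterating, any $g\in\mathrm{Stab}(A_i)$ is written as $t_it_{i+1}\cdots t_{r-i}\,g'$ with $t_m\in T^m\subset T^i$ and $g'\in P^{(r-i)+1}=P^{r+1-i}$, so $g\in T^iP^{r+1-i}$, as required. To prove the claim, write $g=1+Y$ with $Y\in\fP^m$, so $\Ad(g)\bn-\bn=[Y,\bn]+Z$ with $Z\in\fP^{2m-r}$; combining $\Ad(g)\bn-\bn\in\fP^{1-i}$ with the inequalities $m\le r-i$ (giving $\fP^{1-i}\subset\fP^{m-r+1}$) and $m\ge 1$ (giving $\fP^{2m-r}\subset\fP^{m-r+1}$) yields $[Y,\bn]\in\fP^{m-r+1}$, i.e.\ $\ad(Y)(\bn)\in\fP^{-r+(m+1)}$. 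Since $(P,r,\beta)$ is regular and $\bn\in\ft^{-r}$, Corollary~\ref{liecencor} (with $X=Y$ and $\ell=m$) gives $Y\in\ft^m+\fP^{m+1}$; writing $Y=s+Y'$ with $s\in\ft^m$ and $Y'\in\fP^{m+1}$, and using that the congruence filtration on $\ft$ coincides with its canonical filtration (Proposition~\ref{uniquedet}(3)), the element $t:=1+s$ lies in $1+\ft^m=T^m$, and then $t^{-1}g=1+(1+s)^{-1}Y'\in 1+\fP^{m+1}=P^{m+1}$. This establishes the claim.

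I expect the inductive step to be the crux, and within it the extraction $Y\in\ft^m+\fP^{m+1}$ from Corollary~\ref{liecencor}: this is the one place where regularity of $(P,r,\beta)$ is genuinely used, and care is needed to see that the stabilizer congruence really forces $\ad(Y)(\bn)\in\fP^{-r+(m+1)}$ — this relies on the two exponent inequalities $m\le r-i$ and $m\ge 1$ making the bound on $[Y,\bn]$ exactly sharp enough to feed into the corollary. The remaining ingredients — that $1+s$ exponentiates into $T^m$, that the accumulated torus factors stay in $T^i$, and the various exponent comparisons — are routine, using Proposition~\ref{uniquedet}, Lemma~\ref{comptor}, and Corollary~\ref{uniformsplitting}.
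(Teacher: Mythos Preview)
Your proof is correct and follows essentially the same inductive descent through the congruence filtration as the paper's proof. The only cosmetic difference is that at each step you pass to the Lie algebra, expand $\Ad(1+Y)\bn$ explicitly, and invoke Corollary~\ref{liecencor} to extract $Y\in\ft^m+\fP^{m+1}$, whereas the paper stays at the group level and uses the defining condition $Z^j(\bn)=\bar{T}^j$ directly to write $q\in T^jP^{j+1}$; these are equivalent via Proposition~\ref{liecen}, so the arguments are the same in substance.
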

\begin{proof}

  Recall from Proposition~\ref{duality} that $(\mathfrak{P}^i)^\perp =
  \mathfrak{P}^{1-i}$.  Thus, $\Ad^*(p) (A_i) = A_i$ if and only if
  $\Ad (p)(\bn) \in \bn + \mathfrak{P}^{1-i}$.  If $r < 2 i$, then $-r
  + i \ge -i+1$.  Therefore, since $\Ad(p) (\bn)\in\bn +
  \mathfrak{P}^{-r+i}$ for any $p \in P^i$, $P^i$ lies in the
  stabilizer of $A_i$ in this case.

Suppose now that $\Ad^*(p) (A_i) = A_i$ and $r \ge 2 i$.  
The image of  $p$ in $\bar{P}^i$ must lie in
$Z^i (\bn)$; therefore $p  = t p' \in T^i P^{i+1}$.
Assume, inductively, that $p = t q \in T^i P^{j}$ with $j<r+1-i$.

Since $\Ad (t^{-1}) (\bn) = \bn$, $q$ stabilizes $A_i$.  Moreover,
$\Ad (P^j) (\bn) \subset \bn + \mathfrak{P}^{j-r};$ since $j - r <
1-i$, the image of $q$ in $\bar{P}^j$ lies in $Z^j (\bn)$.  Therefore,
$q \in T^j P^{j+1}\subset T^i P^{j+1}$.  We conclude
that $p\in T^i P^{r+1-i}$.  When $j = r+1-i$, $\Ad(P^j) (\bn)
\subset \bn + \mathfrak{P}^{1-i}$, so $P^j$ stabilizes $A_i$.  It is
now clear that $T^i P^{r+1-i}$ stabilizes $A_i$.
\end{proof}

Note that although the functionals $A_i$ depend on the choice of
$\bn\in\ft^{-r}$, their stabilizers do not.

\section{Connections and Strata}\label{strataconnections}

In this section, we describe how to associate a stratum to a formal
connection. 
The local theory of irregular singular point connections is well understood; 
an elegant classification is given in \cite[Theorem III.1.2]{Mal}. 
The geometric theory of strata provides a Lie-theoretic interpretation of elements
in the classical theory.
In particular, the combinatorics of fundamental strata may be used to
determine the slope of a connection, and the theory of
strata makes precise the notion of the leading term of a connection
with noninteger slope.  Moreover, a split stratum induces a splitting
on the level of formal connections.  



First, we recall some notation and basic
facts.  As before, $k$ is an algebraically closed field of
characteristic $0$, $\mathfrak{o}$ is the ring of formal power series
in a parameter $t$, and $F$ is the field of formal Laurent series.
\begin{enumerate}
\item $\diff_F$ (resp. $\diff_{\mathfrak{o}}$) is the ring of formal differential operators
on $F$ (resp. $\mathfrak{o}$).  $\diff_F$ is generated as an $F$-algebra
by $\partial_t = \frac{d}{dt}$ and contains the Lie algebra of
$k$-derivations (i.e., vector fields) on $F$:
$\Der_k(F) = F \partial_t$.
\item $\Omega^\times \subset \Omega^1_{F/k}$ is the $F^\times$-torsor
of non-zero one forms in $\Omega^1_{F/k}$; if $\omega, \nu \in \Omega^\times$,
then $\frac{\omega}{\nu} \in F^\times$ is the unique element such that
$\frac{\omega}{\nu} \nu = \omega$.
\item If $\nu \in \Omega^\times$, there is a unique vector field
$\tau_\nu \in \diff_F$ whose inner derivation takes $\nu$ to $1$, i.e.,
$\iota_{\tau_\nu} (\nu) = 1$.  For example, $\tau_{\frac{dt}{t}} = t \partial_t$,
and $\tau_{d f} = \frac{dt}{df} \partial$.
\item A connection $\nabla$ on an $F$-vector space $V$ is a $k$-linear
  derivation
\begin{equation*}
\nabla : V \to V \otimes_F \Omega^1_{F/k}.
\end{equation*}
The connection $\nabla$ gives $V$ the structure of a $\diff_F$-module:
if $v \in V$, and $\xi \in \Der_k(F)$, then $\xi (v) = \nabla_\xi
(v)\overset{\mathrm{def}}{=} \iota_{\xi} (\nabla (v))$.
\item A connection $\nabla$ is \emph{regular singular} if there exists
  an $\mathfrak{o}$-lattice $L \subset V$ with the property that
  $\nabla(L) \subset L \otimes_{\mathfrak{o}}
  \Omega^1_{\mathfrak{o}/k} (1)$.  Equivalently, if $\nu \in
  \Omega^\times$ has order $-1$, $\nabla_\tau (L) \subset L$.
  Otherwise, $\nabla$ is irregular.
\item Suppose $V$ has dimension $n$.  Let $V^\triv = F^n$ be the
  trivial vector space with standard basis.  If we fix a
  trivialization $\phi : V \overset{\sim}{\to} V^\triv$, then $\nabla$
  has the matrix presentation
\begin{equation}\label{gauge}
\nabla  = d +  [\nabla]_\phi
\end{equation}
where $[\nabla]_\phi \in \gl_n (F) \otimes_{F}\Omega^1_{F/k}$.  The
space of trivializations is a left $\GL_n(F)$-torsor, and
multiplication by $g$ changes the matrix of $[\nabla]_\phi$ by the
usual gauge change formula
\begin{equation}\label{gaugechange}
  g \cdot [\nabla]_\phi :=  g [\nabla]_\phi g^{-1} - (d g ) g^{-1}.
\end{equation}
Thus, $[\nabla]_{g \phi} = g \cdot [\nabla]$.
We note that the matrix  form of $\nabla_\tau$ is given by
$[\nabla_\tau]_\phi = \iota_\tau ([\nabla]_\phi)$, with gauge change formula
$g \cdot [\nabla_\tau]_\phi := g [\nabla_\tau]_\phi g^{-1} - (\tau g) g^{-1}$.
We will drop the subscript $\phi$ whenever there is no ambiguity about
the trivialization.
\end{enumerate}
\subsection{Strata contained in connections}\label{strataex}

Let $\nabla$ be a connection on an $n$-dimensional $F$-vector space.
Fix $\nu\in \Omega^\times$, and set $\tau = \tau_\nu$.  Suppose that
$\mathscr{L}$ is a lattice chain with the property that $\nabla_\tau
(L^i) \subset L^{i-r-(1+\ord(\nu))e_P}$.  We define
$\gr^i(\nabla_\tau)$ to be the following map induced by $\nabla_\tau$:
\begin{equation*}\label{grdef}
\gr^i (\nabla_\tau) : \bigoplus_{j = 0}^{e_P-1} \bar{L}^{i+j} \to 
\bigoplus_{j = 0}^{e_P-1} \bar{L}^{i+j-r-(1+\ord(\nu))e_P}.
\end{equation*}
By Lemma~\ref{subquotient}, this determines an element of
$\bfP^{-r-(1+\ord(\nu))e_P}$.  Equivalently, if we fix a
trivialization $\phi$ of $L^i$ (viewed as a trivialization of $V$
taking $L^i$ to $\fo^n$), then $\gr^i(\nabla_\tau)$ equals
the image of $\phi^{-1}[\nabla_\tau]\phi$ in
$\bfP^{-r-(1+\ord(\nu))e_P}$.

\begin{definition}\label{stratumcontainment}
We say that $(V, \nabla)$ contains the stratum $(P, r, \beta)$ if $P$ stabilizes
the lattice chain $(L^i)$,
$\nabla_\tau (L^i) \subset L^{i-r-(1+\ord(\nu))e_P}$ for all $i$, and
$\gr^j (\nabla_\tau) =\bbn\in \bfP^{-r-(1+\ord(\nu))e_P}$
for some $j$.
\end{definition}

\begin{proposition}\label{stratumindep}
The stratum $(P, r, \beta)$ is independent of $\nu \in \Omega^\times$.
\end{proposition}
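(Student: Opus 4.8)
The plan is to reduce the statement to the transformation rules for $\tau_\nu$, $\ord(\nu)$, and the pairing $\langle\cdot,\cdot\rangle_\nu$ under rescaling of the one-form; everything beyond that is bookkeeping of degree shifts. I would write $\nu'=f\nu$ with $f\in F^\times$ and set $a=\ord(f)$, so that $\ord(\nu')=\ord(\nu)+a$. Since contraction of a one-form against a vector field is $F$-bilinear, $\tau_{\nu'}=f^{-1}\tau_\nu$, and since a connection is $F$-linear in its vector-field argument, $\nabla_{\tau_{\nu'}}=f^{-1}\nabla_{\tau_\nu}$. First I would check the lattice condition: using $tL^m=L^{m+e_P}$ one gets $f^{-1}L^m=L^{m-ae_P}$ for every lattice of $\mathscr{L}$, so from $\nabla_{\tau_\nu}(L^i)\subset L^{i-r-(1+\ord(\nu))e_P}$ one obtains
\[
\nabla_{\tau_{\nu'}}(L^i)=f^{-1}\nabla_{\tau_\nu}(L^i)\subset L^{i-r-(1+\ord(\nu))e_P-ae_P}=L^{i-r-(1+\ord(\nu'))e_P},
\]
which is exactly the lattice condition for $\nu'$.

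Next I would match the graded terms. Fix an index $j$ with $\gr^j(\nabla_{\tau_\nu})=\bbn$ and choose a trivialization $\phi$ adapted to $L^j$; this identifies $P$ with a standard parahoric and exhibits $\gr^j(\nabla_{\tau_\nu})$ as the image in $\bfP^{-r-(1+\ord(\nu))e_P}$ of the matrix $G_\nu:=[\nabla_{\tau_\nu}]_\phi\in\mathfrak{P}^{-r-(1+\ord(\nu))e_P}$. Since $[\nabla_{\tau_{\nu'}}]_\phi=f^{-1}[\nabla_{\tau_\nu}]_\phi$, the matrix of $\nabla_{\tau_{\nu'}}$ in this trivialization is $G_{\nu'}=f^{-1}G_\nu$, which by the lattice computation lies in $\mathfrak{P}^{-r-(1+\ord(\nu'))e_P}$. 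By Proposition~\ref{duality} the class $\bar{G}_{\nu'}$ corresponds under $\langle\cdot,\cdot\rangle_{\nu'}$ to the functional $\bar{X}\mapsto\langle G_{\nu'},X\rangle_{\nu'}$ on $\bfP^r$, and directly from the definition $\langle A,B\rangle_\nu=\Res\left[\Tr(AB)\nu\right]$ one computes, for $X\in\mathfrak{P}^r$,
\[
\langle G_{\nu'},X\rangle_{\nu'}=\Res\left[\Tr(f^{-1}G_\nu X)\,f\nu\right]=\Res\left[\Tr(G_\nu X)\,\nu\right]=\langle G_\nu,X\rangle_\nu=\beta(\bar{X}),
\]
the last equality being the hypothesis $\gr^j(\nabla_{\tau_\nu})=\bbn$. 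Hence, with the same index $j$, $\gr^j(\nabla_{\tau_{\nu'}})$ is the coset $\bar\beta_{\nu'}\in\bfP^{-r-(1+\ord(\nu'))e_P}$ attached to $\beta$ by Proposition~\ref{duality} applied to $\nu'$. Together with the lattice condition this says $(V,\nabla)$ contains $(P,r,\beta)$ when computed with $\nu'$; running the same argument with $\nu$ and $\nu'$ interchanged gives the independence asserted in Definition~\ref{stratumcontainment}.

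I do not expect a genuinely hard step. The point requiring care is that the three shifts — $\tau_\nu\mapsto f^{-1}\tau_\nu$, $\ord(\nu)\mapsto\ord(\nu)+\ord(f)$, and the $-\ord(f)e_P$ translation of lattice indices caused by multiplication by $f^{-1}$ — all be tracked consistently, so that the degree $-r-(1+\ord(\nu))e_P$ passes exactly to $-r-(1+\ord(\nu'))e_P$ and the factor $f^{-1}$ appearing in the matrix is exactly cancelled by the factor $f$ in $\nu'=f\nu$ inside the pairing. I would also record at the outset that the functional attached to $\gr^j(\nabla_{\tau_\nu})$ is independent of the chosen matrix lift — this is the orthogonality $(\mathfrak{P}^r)^\perp=\mathfrak{P}^{1-r-(1+\ord(\nu))e_P}$ of Proposition~\ref{duality} — which is exactly what makes the equality $\gr^j(\nabla_{\tau_\nu})=\bbn$ a condition one can compare across different choices of $\nu$.
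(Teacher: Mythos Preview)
Your argument is correct and follows essentially the same approach as the paper: write $\nu'=f\nu$, use $\tau_{\nu'}=f^{-1}\tau_\nu$ and $\nabla_{\tau_{\nu'}}=f^{-1}\nabla_{\tau_\nu}$ to transport the lattice condition, and then observe that the $f^{-1}$ in the representative is exactly cancelled by the $f$ in $\nu'=f\nu$ inside the residue pairing, so $\beta_{\nu'}=f^{-1}\beta_\nu$ represents the same functional $\beta$. Your version is somewhat more explicit about the degree bookkeeping and the role of the trivialization, but there is no substantive difference.
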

\begin{proof}
  Take $\nu' = f \nu$ for some $f \in F^\times$, so that $\tau' =
  \tau_{\nu'} = \frac{1}{f} \tau_\nu$.  Since
  $\nabla_{\tau'}=\frac{1}{f} \nabla_\tau$, it is clear that
  $\nabla_{\tau'}(L^i)\subset L^{i-r-(1+\ord(\nu'))e_P}$ if and only
  if the analogous inclusion holds for $\nu$; in addition,
  $\gr^i(\nabla_{\tau'}) = \frac{1}{f}\gr^i \left(
    \nabla_\tau\right)$.  On the other hand, if $\bn\in
  \fP^{-r-(1+\ord(\nu))e_P}$ is a representative for the functional
  $\beta$ on $\bfP^r$, then $\langle \beta_\nu, X \rangle_\nu =
  \langle f^{-1} \beta_\nu, X \rangle_{\nu'}$ for all $X\in\fP^r$
  implies that one can take $\beta_{\nu'} = \frac{1}{f} \beta_\nu$.
  Hence, $\gr^i(\nabla_{\tau'}) =\bar{\beta_{\nu'}}$.
\end{proof}

For the rest of Section~\ref{strataconnections}, 
we will fix $\nu\in \Omega^\times$ of order $-1$.

\begin{proposition}\label{grindep}
  Suppose that $r \ge 1$.  Then, the coset in $\bfP^{-r}$ determined
  by $\gr^\ell (\nabla_\tau)$ under the isomorphism \eqref{priso}
  is independent of $\ell$.
\end{proposition}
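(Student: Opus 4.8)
The plan is to recognize $\gr^\ell(\nabla_\tau)$ as the restriction to the ``window'' $\{\ell,\dots,\ell+e_P-1\}$ of a single operator on $\gr(\mathscr{L})$, and to show that when $r\ge 1$ this operator is $\mathfrak{o}$-linear, so that all of its window restrictions represent the same coset in $\bfP^{-r}$.

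Since $\ord(\nu)=-1$, the standing hypothesis is $\nabla_\tau(L^i)\subset L^{i-r}$ for all $i$, so $\nabla_\tau$ induces $k$-linear maps $\psi_i\colon \bar{L}^i\to \bar{L}^{i-r}$ on the graded pieces, and $\gr^\ell(\nabla_\tau)$ is by construction the tuple $(\psi_\ell,\dots,\psi_{\ell+e_P-1})$ in $\bigoplus_{i=\ell}^{\ell+e_P-1}\Hom(\bar{L}^i,\bar{L}^{i-r})$; its image under \eqref{priso} (with $m=\ell$ and $r$ replaced by $-r$) is the coset $X_\ell$ to be shown independent of $\ell$. The first, substantive step is to check that the family $(\psi_i)_{i\in\Z}$ is $\mathfrak{o}$-linear of degree $-r$ on $\gr(\mathscr{L})$, i.e.\ that $\psi_{i+e_P}\circ t=t\circ\psi_i$ on $\bar{L}^i$ (recall $tL^i=L^{i+e_P}$). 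This is a one-line Leibniz computation: for $v\in L^i$ one has $\nabla_\tau(tv)=\tau_\nu(t)\,v+t\,\nabla_\tau(v)$, and since $\nu$ has order $-1$ the vector field $\tau_\nu$ satisfies $\tau_\nu(t)\in\mathfrak{p}$ (for instance $\tau_{\frac{dt}{t}}(t)=t$), so $\tau_\nu(t)\,v\in\mathfrak{p}L^i=L^{i+e_P}$, which is contained in $L^{i+e_P-r+1}$ exactly because $r\ge 1$; hence $\tau_\nu(t)\,v$ vanishes in $\bar{L}^{i+e_P-r}$ and $\psi_{i+e_P}(\overline{tv})=\overline{\nabla_\tau(tv)}=\overline{t\,\nabla_\tau(v)}=t\,\psi_i(\bar v)$. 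This is the only place the hypothesis $r\ge 1$ enters, and it is where the statement fails for $r=0$: there the correction term $\tau_\nu(t)\,v$ persists, reflecting the familiar ambiguity in the residue of a regular singular connection. I expect this step to be the main obstacle only in the sense of bookkeeping --- one must keep the identification of the $t$-action on $\gr(\mathscr{L})$ and the filtration indices straight --- rather than conceptually.

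The second step is formal. Let $Y\in\bfP^{-r}$ be the element whose image under \eqref{priso} with $m=0$ is $(\psi_0,\dots,\psi_{e_P-1})$, and regard $Y$ as a degree $-r$ endomorphism of the $\mathfrak{o}$-module $\gr(\mathscr{L})$ as in the observation preceding Lemma~\ref{subquotient}. Both $Y$ and the operator assembled from the family $(\psi_i)$ are $\mathfrak{o}$-linear (the latter by the first step), of degree $-r$, and they agree on $\bigoplus_{i=0}^{e_P-1}\bar{L}^i$, which generates $\gr(\mathscr{L})$ over $\mathfrak{o}$; hence their graded pieces agree in every degree, so the degree $i$ component of $Y$ equals $\psi_i$ for all $i\in\Z$. (Equivalently, one iterates the recursion $\psi_{i+e_P}=t\,\psi_i\,t^{-1}$ from the first step in both directions, starting from the window $\{0,\dots,e_P-1\}$.) It then follows that for every $\ell$ the image of $Y$ under \eqref{priso} with $m=\ell$ is $(\psi_\ell,\dots,\psi_{\ell+e_P-1})=\gr^\ell(\nabla_\tau)$, so $X_\ell=Y$ for all $\ell$, which is the assertion.
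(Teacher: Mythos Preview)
Your proof is correct and follows essentially the same approach as the paper: both reduce to the Leibniz computation $\nabla_\tau(t v)=\tau_\nu(t)\,v+t\,\nabla_\tau(v)$ and observe that the correction term $\tau_\nu(t)\,v$ lies in $L^{i+e_P}\subset L^{i+e_P-r+1}$ precisely when $r\ge 1$. Your framing in terms of $\mathfrak{o}$-linearity of the assembled graded operator is a clean way to package what the paper states more directly as ``coincidence up to homothety'', but the content is the same.
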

\begin{proof} The maps $\gr^\ell (\nabla_\tau)$ and $\gr^0
  (\nabla_\tau)$ determine the same element on $\bfP^{-r}$ when they
  ``coincide up to homothety''.  More precisely, we must show that if
  $0\le i < e_P$ and $\ell\le i+je_P< \ell +e_P$, then
  $\nabla_\tau(t^j v)\equiv t^j \nabla_\tau(v)\pmod {L^{i+je_P+1}}$
  for all $v\in L^i$.  By the Leibniz rule, $\nabla_\tau (t^j v) =
  \frac{\tau (t^j)}{t^j} (t^j v) + t^j \nabla_\tau (v)$.  However,
  $\frac{\tau (t^j)}{t^j} \in \mathfrak{o}$, so for $r\ge 1$,
  $\frac{\tau (t^j)}{t^j} (t^j v)\in L^{\ell+je_P}\subset
  L^{\ell+je_P-r+1}$ as desired.
\end{proof}

In other words, if $r\ge 1$, the $\gr^\ell(\nabla_\tau)$'s determine a
unique coset $\gr(\nabla_\tau) \in \bar{\fP}^{-r}$; viewed as a degree
$-r$ endomorphism of $\gr(\mathscr{L})$,
$\gr(\nabla_\tau)(\bar{x})=\gr^\ell(\nabla_\tau)(\bar{x})$ for
$x\in\bL^i$ and any $\ell$ with $\ell\le i < \ell+e_P$.

The following lemma will be used in Sections~\ref{formaltypes}
and \ref{modspace}.
\begin{lemma}\label{taup1}
  If $P \subset \GL_n(\fo)$ is a parahoric subgroup, then
  $\tau(\fP^\ell)\subset \fP^\ell$.  Moreover, if $p \in P$, then
  $\tau(p) p^{-1} \in \mathfrak{P}^1$.
\end{lemma}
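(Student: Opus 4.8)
The plan is to reduce both statements, by an innocuous change of trivialization, to the case of a parahoric whose lattice chain is ``monomial'', where everything becomes transparent. First I would dispose of two easy points. Since $\nu$ has order $-1$ we may write $\nu=u\frac{dt}{t}$ with $u\in\fo^\times$, so $\tau=u^{-1}t\partial_t$; as $\fo\cdot\id_n\subseteq\fP$, multiplication by $u^{-1}$ preserves each $\fP^j$, so it suffices to treat $\nu=\frac{dt}{t}$, i.e. $\tau=t\partial_t$. The only elementary fact I will use repeatedly is that $\tau(t^j\fo)\subseteq t^j\fo$ for every $j\in\Z$; in particular, writing any $g\in\GL_n(\fo)$ as $g=\sum_{j\ge0}t^jg_j$ with $g_j\in\gl_n(k)$ gives $\tau(g)=\sum_{j\ge1}jt^jg_j\in t\gl_n(\fo)=\gl_n(\fp)$.

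Next, since $P\subseteq\GL_n(\fo)=\mathrm{Stab}(\fo^n)$, the lattice chain $\mathscr{L}=(L^i)$ of $P$ contains $\fo^n$ (the lattices fixed by a parahoric are precisely those of its chain; cf.~\cite{Sa00}), so after reindexing $L^0=\fo^n$ and $L^{e_P}=t\fo^n$. Conjugating by a \emph{constant} matrix $g_0\in\GL_n(k)$ — which preserves the hypothesis $P\subseteq\GL_n(\fo)$ and, crucially, commutes with $\tau$ since $\tau(g_0)=0$ — I may arrange that the flag $(L^j/L^{e_P})_{0\le j\le e_P}$ in $k^n$ is a coordinate flag, so that each lattice $L^i=\bigoplus_k t^{a^{(i)}_k}\fo\,e_k$ is monomial, with all exponents $a^{(i)}_k$ in a single pair $\{q,q+1\}$, $q=\lfloor i/e_P\rfloor$. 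Two consequences are then immediate: $\tau(L^i)\subseteq L^i$ for every $i$ (the coordinates of any element of $L^i$ lie in the $\tau$-stable ideals $t^{a^{(i)}_k}\fo$), and $\gl_n(\fp)\subseteq\fP^1$ (any $X$ with entries in $\fp$ sends $L^i$ into $t^{q+1}\fo^n\subseteq L^{i+1}$, a one-line check using the exponent constraint).

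With these in hand the two assertions are short. For the first: given $X\in\fP^\ell$ and $v\in L^i$, the Leibniz rule gives $\tau(X)v=\tau(Xv)-X\tau(v)$, and here $Xv\in L^{i+\ell}$ forces $\tau(Xv)\in\tau(L^{i+\ell})\subseteq L^{i+\ell}$ while $\tau(v)\in L^i$ forces $X\tau(v)\in XL^i\subseteq L^{i+\ell}$; hence $\tau(X)L^i\subseteq L^{i+\ell}$ for all $i$, i.e. $\tau(X)\in\fP^\ell$. For the second: given $p\in P$ we have $\tau(p)\in\gl_n(\fp)\subseteq\fP^1$ by the above, while $p^{-1}\in P\subseteq\fP$; since $\fP^1$ is a two-sided ideal of $\fP$, $\tau(p)p^{-1}\in\fP^1\fP\subseteq\fP^1$. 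Undoing the conjugation by $g_0$ causes no trouble, again because $\tau(g_0)=0$ (the conjugation formula acquires no correction term).

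The only genuine subtlety is that $\tau$, acting entrywise, does not transform covariantly under a change of trivialization — this is exactly why the hypothesis $P\subseteq\GL_n(\fo)$ (a choice of standard lattice) is needed — so one must be sure the reduction to the monomial case does not disturb $\tau$. The resolution is that the reducing conjugation can be taken constant, equivalently that $\fo^n$ already lies in $\mathscr{L}$, so $\tau(g_0)=0$ and no logarithmic-derivative terms appear. (Had one reduced via a general $g\in\GL_n(\fo)$ instead, one would need the correction $\tau(g)g^{-1}\in\gl_n(\fp)\subseteq\fP^1$, which is available from the first paragraph but is slightly more work, and should be noted to avoid any appearance of circularity with the ``moreover'' statement.)
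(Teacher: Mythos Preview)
Your proof is correct. The first assertion follows the same route as the paper: pass to a compatible (monomial) basis so that $\tau(L^i)\subset L^i$, then apply the Leibniz rule $\tau(X)v=\tau(Xv)-X\tau(v)$. Your reduction via conjugation by a constant $g_0\in\GL_n(k)$ is exactly what the paper does when it invokes a basis ``compatible with $\sL$'' (Remark~\ref{compatiblebasis}); both arguments rest on the fact that $\fo^n$ lies in the lattice chain once $P\subset\GL_n(\fo)$, which the paper simply assumes in the line ``Let $\sL$ be the lattice chain stabilized by $P$ such that $L^0=\fo^n$.''

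For the second assertion you take a genuinely shorter path. The paper decomposes $P=H\ltimes P^1$ with $H\subset\GL_n(k)$, reduces to $p\in P^1$, and then invokes the exponential map (writing $p=\exp(X)$ and computing $\tau(\exp X)\exp(-X)$). Your argument bypasses all of this: since $\tau=t\partial_t$ annihilates the constant term, $\tau(p)\in t\gl_n(\fo)$ for any $p\in\GL_n(\fo)$, and the inclusion $t\gl_n(\fo)=\fg^1\subset\fP^1$ (which holds because $t^{q+1}\fo^n\subset L^{i+1}$ whenever $qe_P\le i<(q+1)e_P$) finishes immediately. This is more elementary and robust---in particular it sidesteps the formula $\tau(\exp X)\exp(-X)=\tau(X)$ in the paper's proof, which as written would require $[X,\tau(X)]=0$. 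The paper's conclusion is still correct since $\tau(X^n)\in\fP^n$ by the first part, but your argument avoids the issue entirely.
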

\begin{proof}

Let $\sL$ be the lattice chain stabilized by $P$ such that $L^0 = \fo^n$.  
We may choose a  basis ${e_1, \ldots, e_n}$ for $\fo^n$
that is compatible with $\sL$ (as in Remark~\ref{compatiblebasis}).  It is clear that
$\tau (L^j) \subset L^j$ for any $j$, since $\tau (e_i) \subset t \fo^n$.  Therefore,
if $v \in L^j$ and $X \in \fP^\ell$, 
$\tau (X) v = \tau (X v)- X \tau(v) \in \fP^{j+\ell}$.  It follows
that $\tau (X) \in \fP^\ell$.

By Lemma~\ref{subquotient}, there exists $H\subset\GL_n(k)$ for which
$P=P^1\ltimes H$.  Hence, it suffices to prove the second statement for $p\in P^1$. Since $P^1$ is topologically unipotent, 
$\exp: \mathfrak{P}^1 \to P^1$ is surjective.  
If $p= \exp(X)$, we obtain 
\begin{equation*}
\tau (\exp(X)) \exp(-X) = \tau(X) \exp(X) \exp(-X) = \tau(X)\in\fP^1.
\end{equation*}

\end{proof}



We now investigate the strata contained in a connection.

\begin{lemma}\label{contstrat}
Every connection $(V, \nabla)$ contains a stratum.
\end{lemma}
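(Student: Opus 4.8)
The plan is to exhibit a stratum almost by inspection, using the maximal parahoric. Fix a trivialization $\phi\colon V\xrightarrow{\sim}F^n$ and set $L=\fo^n$; in this trivialization $\nabla_\tau$ acts on $F^n$ by $v\mapsto\tau(v)+Xv$, with $X=[\nabla_\tau]_\phi\in\gl_n(F)$. Since the entries of $X$ lie in $F$, they have poles of bounded order, so there is a smallest integer $r\ge 0$ with $X\in\mathfrak P^{-r}$, where $\mathfrak P=\gl_n(\fo)$ and $\mathfrak P^j=t^j\gl_n(\fo)$. I would then take $P=\GL_n(\fo)$, the (uniform) parahoric stabilizing the lattice chain $L^i=t^iL$; here $e_P=1$, and since $\ord(\nu)=-1$ the shift $(1+\ord(\nu))e_P$ occurring in Definition~\ref{stratumcontainment} vanishes.

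The one point requiring a little care is the inclusion $\nabla_\tau(L^i)\subset L^{i-r}$ for \emph{all} $i$, not merely $i=0$. For this I would note, exactly as in the proof of Lemma~\ref{taup1}, that $\tau=\tau_\nu$ preserves each $L^i$ (it is a vector field of the form $g\partial_t$ with $g\in t\fo$, so $\tau(t^j\fo)\subset t^j\fo$), while $XL^i\subset\mathfrak P^{-r}\cdot t^iL=L^{i-r}$; adding the two contributions gives $\nabla_\tau(L^i)=\tau(L^i)+XL^i\subset L^{i-r}$, as required. In particular $\gr^0(\nabla_\tau)$ is defined, and because $\tau$ preserves each $L^i$ it is precisely the image of $X$ in $\bar{\mathfrak P}^{-r}=t^{-r}\gl_n(\fo)/t^{-r+1}\gl_n(\fo)$ (Lemma~\ref{subquotient}).

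To conclude, Proposition~\ref{duality} gives $(\bar{\mathfrak P}^r)^\vee\cong\bar{\mathfrak P}^{-r}$ (using $\ord(\nu)=-1$), so there is a functional $\beta\in(\bar{\mathfrak P}^r)^\vee$ with $\bar\beta_\nu=\gr^0(\nabla_\tau)$; then $(P,r,\beta)$ is a stratum, and by construction $(V,\nabla)$ contains it. (When $r=0$ the connection is regular singular and one may well have $\beta=0$, which is permitted by Definition~\ref{stratdef}.) There is no genuine obstacle here — the lemma is a soft existence statement, and the only thing to watch is the propagation of the bound on the $i=0$ term to all $L^i$ via the Leibniz rule together with $\tau_\nu(t^j)/t^j\in\fo$. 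The substantive refinement, that one can in fact choose the stratum to be fundamental with $r/e_P$ equal to the slope, is the content of Theorem~\ref{fundcontain} and is treated separately.
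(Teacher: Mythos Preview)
Your proof is correct and follows essentially the same approach as the paper's own argument: take a single lattice $L$ (equivalently, fix a trivialization), use the maximal parahoric with $e_P=1$, bound $\nabla_\tau(L)$ in some $L^{-r}$, and propagate to all $L^i$ via the Leibniz rule before reading off $\beta$ from $\gr^0(\nabla_\tau)$. The paper is terser (it just cites the Leibniz computation from Proposition~\ref{grindep} rather than writing out that $\tau$ preserves each $L^i$), but the content is identical.
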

\begin{proof}
  Take any lattice $L \subset V$ with stabilizer $P$, and let
  $\mathscr{L}$ be the lattice chain $(L^i=t^i L)$.  It is obvious that $\nabla_\tau (L)
  \subset L^{-r}$ for some $r \ge 0$.  The Leibniz rule calculation
  from the proof of Proposition~\ref{grindep} shows that $\nabla_\tau
  (L^i) \subset L^{i-r}$, so $(V,\nabla)$ contains $(P,r,\beta)$,
  where $\beta$ corresponds to $\gr^0(\nabla_\tau)$.
\end{proof}

One of the standard ways to study irregular singular connections
is to find a good lattice pair.  The theory of good lattice pairs
bears a superficial resemblance to the theory of fundamental strata.  
However, we will see that there are only a few cases in which there is a 
direct relationship between the two theories. 
\begin{lemma}\cite[Lemme II.6.21]{De} \label{goodlattice}
  Given a connection $(V,\nabla)$, there exist two
  $\mathfrak{o}$-lattices $M^1 \subset M^2 \subset V$ with the
  following properties:
\begin{enumerate}
\item $\nabla (M^1) \subset M^2 \otimes_{\mathfrak{o}} \Omega^1_{\mathfrak{o}/k} (1) $
\item
For all $\ell>0$, $\nabla$ induces an isomorphism
\begin{equation}\label{grisom} 
  \bar{\nabla} : M^1 (\ell) / M^1(\ell-1)\cong  \left[M^2
    \otimes_{\mathfrak{o}} \Omega^1_{\mathfrak{o}/k} (\ell+1) \right]
  / \left[M^2 \otimes_{\mathfrak{o}} \Omega^1_{\mathfrak{o}/k} (\ell)
  \right].\end{equation} The connection $\nabla$ is regular singular
if and only if $M^1 = M^2$.
\end{enumerate}
\end{lemma}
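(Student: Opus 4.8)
The plan is to deduce this (a lemma of Deligne) from the structure theory of formal connections, reducing to the case of pure slope. I would first invoke the canonical slope decomposition $V=\bigoplus_{\lambda\in\mathbb{Q}_{\ge 0}}V_\lambda$ into isotypic pieces of pure slope $\lambda$, which holds already over $F$; since a block-diagonal sum of good lattice pairs is visibly again one (conditions (1) and (2) are checked summand by summand), it suffices to treat $(V,\nabla)$ pure of a fixed slope $\lambda$. Throughout, the bookkeeping for (1) and (2) rests on a single Leibniz computation: taking $\nu=\tfrac{dt}{t}$ and $\tau=t\partial_t$, condition (1) for a pair $M^1\subset M^2$ is equivalent to $\nabla_\tau(M^1)\subset M^2$, and writing a class in $M^1(\ell)/M^1(\ell-1)$ as $t^{-\ell}v$ one gets $\nabla(t^{-\ell}v)=t^{-\ell-1}\bigl(-\ell v+\nabla_\tau v\bigr)\,dt$, so that $\bar\nabla$ in (2) is controlled entirely by the reduction mod $t$ of $\nabla_\tau$.

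Next I would handle the pure case of integral slope $\lambda=r\ge 0$. For $r=0$ the connection is regular singular; by Levelt's theorem (shearing adapted to the exponent lattice) I would choose a $\nabla_\tau$-stable lattice $L$ whose residue $\bar\nabla_\tau\in\mathrm{End}_k(L/tL)$ has no eigenvalue in $\mathbb{Z}_{\ge 1}$, and set $M^1=M^2=L$; then (1) is just regular singularity and the Leibniz formula shows that $\bar\nabla$ in (2) is $\bar\nabla_\tau-\ell$, invertible for all $\ell\ge 1$ by the choice of $L$. For $r\ge 1$ the unramified (elementary) part of the Turrittin--Levelt classification provides a lattice $L$ with $\nabla_\tau(L)\subset t^{-r}L$ whose leading term $\theta:=\overline{t^r\nabla_\tau}\in\mathrm{End}_k(L/tL)$ is invertible (for $\mathcal{E}^{q}\otimes(\text{reg.\ sing.})$ with $q$ of pole order $r$ this is an explicit rank-one computation); I would take $M^2=L$ and $M^1=t^rL$. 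Condition (1) follows from $\nabla_\tau(t^rL)=t^r\nabla_\tau(L)+r\,t^rL\subset L$, and in (2) the Leibniz formula shows that for $r\ge 1$ the "$-\ell v$" term drops below $M^2\otimes\Omega^1_{\mathfrak{o}/k}(\ell)$, so $\bar\nabla$ is $\theta$ up to the evident twists, hence an isomorphism. This also settles the last assertion: $M^1=M^2$ forces $\nabla_\tau(M^1)\subset M^1$, i.e.\ regular singularity, and the $r=0$ construction realizes $M^1=M^2$.

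For the remaining case, $V$ pure of non-integral slope $\lambda=p/q$ with $\gcd(p,q)=1$ and $q\ge 2$, I would pass to the tame extension $\tilde F=k((u))$, $u^q=t$. Then $\tilde V=V\otimes_F\tilde F$ is pure of integral slope $p$, so the previous paragraph produces a good lattice pair $(\tilde M^1,\tilde M^2)$ over $\tilde F$, and one descends along $\mathrm{Gal}(\tilde F/F)=\mu_q$. The twists are compatible since $\tfrac{dt}{t}=q\tfrac{du}{u}$, whence $\Omega^1_{\mathfrak{o}_F/k}(j)\otimes_{\mathfrak{o}_F}\mathfrak{o}_{\tilde F}=\Omega^1_{\mathfrak{o}_{\tilde F}/k}(j)$; given a $\mu_q$-stable good pair over $\tilde F$, its invariants are $\mathfrak{o}_F$-lattices $M^1\subset M^2$ in $V$ with $M^j\otimes_{\mathfrak{o}_F}\mathfrak{o}_{\tilde F}=\tilde M^j$ (tame Galois descent of lattices), and conditions (1), (2) descend by faithful flatness.

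The main obstacle is precisely this descent: the pair $(\tilde M^1,\tilde M^2)$ built over $\tilde F$ need not be $\mu_q$-stable, and the naive symmetrizations $\sum_{\gamma\in\mu_q}\gamma\tilde M^j$, though $\mu_q$-stable, are not obviously still good. I expect to resolve it in one of two ways: either construct the pair over $\tilde F$ equivariantly, arranging the Turrittin--Levelt normal form of $\tilde V$ so that $\mu_q$ permutes its elementary summands and the adapted lattice $\tilde L$ accordingly, forcing $\theta$ and hence $(t^p\tilde L,\tilde L)$ to be $\mu_q$-stable; or verify directly that the symbol computation underlying (2) only sees $\nabla_\tau$ modulo the relevant congruence level, so that replacing $\tilde M^j$ by its symmetrization does not destroy goodness. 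Either route is the technical heart of the argument; the rest is the routine Leibniz bookkeeping indicated above.
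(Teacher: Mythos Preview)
The paper does not give its own proof of this lemma: it is quoted verbatim from Deligne with the citation \cite[Lemme II.6.21]{De} and then used as a black box, so there is no argument here to compare your proposal against.

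On your proposal itself: the strategy of reducing via the slope decomposition and then invoking Turrittin--Levelt normal forms is a reasonable route to the statement, and your treatment of the regular singular and pure integral slope cases is essentially correct. The genuine gap is exactly where you flag it. In the non-integral slope case you pass to $\tilde F=k((u))$, build a good pair upstairs, and then need to descend; you correctly observe that neither the pair $(\tilde M^1,\tilde M^2)$ nor its naive symmetrization is obviously $\mu_q$-stable \emph{and} good, and you do not actually carry out either of your two suggested fixes. The first fix (arrange the Turrittin normal form of $\tilde V$ equivariantly) can be made to work, but it requires a nontrivial argument: one must show that the decomposition of $\tilde V$ into elementary pieces, and the choice of adapted lattice in each piece, can be made compatibly with the $\mu_q$-action permuting those pieces, including a compatible choice of the regular-singular lattices in the tensor factors. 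The second fix (symmetrize and check that goodness survives) is more delicate than you suggest, because condition~(2) is an \emph{isomorphism}, not just a containment, and summing lattices can destroy injectivity of the induced graded map. As written, this descent step is a sketch rather than a proof; to complete it you should either exhibit the equivariant Turrittin form explicitly or, more in the spirit of Deligne's original argument, construct the pair directly over $F$ without passing to a cover at all.
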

We call $M^1$ and $M^2$ a \emph{good lattice pair} for $(V, \nabla)$.
\begin{rmk}
  In the regular singular case, the data of a good lattice pair
  ($\bar{\nabla}$, $M^1$, $M^2$) is
  equivalent to a strongly uniform stratum contained in $(V, \nabla)$:
  take $\mathscr{L} = (t^i M^1)_{i \in \Z}$, and $\beta$ such that the
  image of $\bar{\beta}_\nu$ under the appropriate isomorphism
  \eqref{priso} is $\iota_\tau (\bar{ \nabla}) = \gr^{-1} (\nt) $.  However, it is not
  immediately possible to construct a fundamental stratum from a good
  lattice pair in general.

  Given a good lattice pair $M^1$ and $M^2$, one might naively
  construct a lattice chain $\mathscr{L}$ as follows.  Set $L^0 =
  M^1$.  Choose $s\in \mathbb{Z}_{\ge 0}$ such that $ L^0 (s) \supset
  M^2$ but $L^0 (s-1) \nsupseteq M^2$.  First, we suppose that $M^2 =
  L^0 (s)$.  Define $\mathscr{L}$ to be the chain $(L^i = t^i L^0)$.
  In this case, $e_P = 1$.  Take $\beta$ such that $\bbn = \gr^{-1}
  (\nt)$ as above.  Equation \eqref{grisom} implies that $(V, \nabla)$
  contains $(\GL(L^0), s, \beta)$, and this stratum is fundamental (in
  fact, strongly uniform).

  The naive generalization of the construction above does not
  necessarily produce a fundamental stratum.  Set $L^{1} = M^2 (-s) +
  L^0 (-1)$.  Since $L^0 (s) \supsetneq M^2$, it follows using
  Nakayama's Lemma that the map $M^2 (-s) \to L^0/L^0 (-1)$ is not
  surjective.  We conclude that $L^0 \supsetneq L^{1} \supsetneq L^0
  (-1)$.  This extends to a lattice chain $\mathscr{L}$ with $e_P =
  2$.

Finally, it is clear that there exists a minimal $r\ge 0$ such that
$\nabla_\tau (L^i) \subset L^{i-r}$ for $i=0,1$.  The usual Leibniz
rule argument shows that
$\nabla_\tau (L^i) \subset L^{i-r}$ for all $i$.  Choosing $\beta_\nu
\in \mathfrak{P}^{-r}$ whose coset corresponds to $\gr^j
(\nabla_\tau)$ (for some fixed $j$), we have the data necessary to
give a stratum contained in $(V,\nabla)$.

Notice that the stratum constructed above is not necessarily
fundamental.  For instance, suppose that
\begin{equation*}
[\nabla_\tau] = \begin{pmatrix}
0&  t^{-3} \\ 1 & 0 
\end{pmatrix} 
\end{equation*}
in $V = V^{\triv}$.  Set $M^1 = \mathfrak{o} e_1 + \mathfrak{o} e_2$
and $M^2 = \mathfrak{p}^{-3} e_1 + \mathfrak{o} e_2$.  It is easy to
check that this is a good lattice pair for $(V,\nabla)$, and our
construction gives $L^0 = M^1$ and $L^{1} = \mathfrak{o} e_1 +
\mathfrak{p} e_2$.  However, the coset in $\bfP^{-5}$ corresponding to
$\gr^0(\nabla_\tau)$ contains the nilpotent operator
$\left(\begin{smallmatrix} 0 & t^{-3} \\ 0 & 0
  \end{smallmatrix}\right)$.
\end{rmk}

In Theorem~\ref{theorem:Bu1}, we showed that a stratum is fundamental
if and only if it can not be reduced to a stratum with smaller slope.
The main goal of this section is to show that the slope of a
connection is the same as the slope of any fundamental stratum
contained in it.  First, we define the slope of a connection.

Fix a lattice $L \subset V$.  If $\bfe = \{e_j\}$ is a finite
collection of vectors in $V$, we define $v(\bfe) = m$ if $m$ is the
greatest integer such that $\bfe \subset t^m L$.  Take $\bfe$ to be a
basis for $V$.  An irregular connection $(V, \nabla)$ has slope
$\sigma$, for $\sigma$ a positive rational number, if the subset of
$\Q$ given by
\begin{equation*}
\{ \left|(\nabla_\tau^i \bfe) + \sigma i \right| \mid i > 0\}
\end{equation*}
is bounded.  Here, $\nt^i \bfe = \{ \nt^i (e_j)\}$.  By a theorem of
Katz \cite[Theorem~II.1.9]{De}, every irregular singular connection
has a unique slope, and the slope is independent both of the choice of
basis $\bfe$ and the choice of $\nu$ of order $-1$.  We define the
slope of a regular singular connection to be $0$.

\begin{lemma}\label{slope0} A connection $(V,\nabla)$ contains a stratum of slope
  $0$ if and only if it is regular singular, i.e., has slope $0$.
  Moreover, a regular singular connection contains a fundamental
  stratum with slope $0$ and no fundamental stratum with positive
  slope.
\end{lemma}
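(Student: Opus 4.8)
The plan is to establish the three claims in order. The equivalence ``$(V,\nabla)$ contains a stratum of slope $0$ $\iff$ $\nabla$ is regular singular $\iff$ $\nabla$ has slope $0$'' is essentially a matter of unwinding definitions: a stratum of slope $0$ has $r=0$, so (since $\ord(\nu)=-1$) the containment condition $\nt(L^i)\subset L^{i-r}$ becomes $\nt(L^i)\subset L^i$, which in particular exhibits $L^0$ as a $\nt$-stable lattice and so shows $\nabla$ is regular singular; conversely, if $\nt(L)\subset L$, then the Leibniz computation in the proof of Lemma~\ref{contstrat} gives $\nt(t^iL)\subset t^iL$ for all $i$, so $(V,\nabla)$ contains the slope-$0$ stratum $(\GL(L),0,\beta)$ with $e_P=1$. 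That ``regular singular'' is the same as ``slope $0$'' is immediate from our definition of the slope of a regular singular connection together with Katz's theorem (every irregular connection has a unique positive slope).

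To produce a \emph{fundamental} stratum of slope $0$, I would invoke the good lattice pair of Lemma~\ref{goodlattice}: since $\nabla$ is regular singular we may take $M^1=M^2$, and then, as in the remark following that lemma, the chain $\mathscr{L}=(t^iM^1)_{i\in\Z}$ together with a representative $\bn$ for which $\bbn$ corresponds to $\iota_\tau(\bar\nabla)=\gr^{-1}(\nt)$ defines a stratum $(\GL(M^1),0,\beta)$ contained in $(V,\nabla)$. It has $e_P=1$ and $r=0$, hence slope $0$; and the isomorphism \eqref{grisom} says exactly that $\bbn\in\End(\gr(\mathscr{L}))$ is invertible, so the stratum is fundamental (indeed strongly uniform) by Remark~\ref{asgrd}.

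The substantive point --- and the step I expect to be the main obstacle --- is that a regular singular connection contains \emph{no} fundamental stratum of positive slope, and here I would argue by contradiction. Fix a lattice $\tilde L$ with $\nt(\tilde L)\subset\tilde L$, and suppose $(V,\nabla)$ contains a fundamental stratum $(P,r,\beta)$ with $r\ge 1$; let $\mathscr{L}=(L^i)$ be the associated chain, so $\nt(L^i)\subset L^{i-r}$ for all $i$ and, by Proposition~\ref{grindep}, $\gr(\nt)=\bbn$, which is non-nilpotent by Remark~\ref{asgrd}. Writing $g=\gcd(r,e_P)$, introduce the operator $\Phi=t^{r/g}\,\nt^{\,e_P/g}$ on $V$. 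Since multiplication by $t^m$ shifts indices in $\mathscr{L}$ by $m e_P$, one checks $\Phi(L^\ell)\subset L^\ell$ for every $\ell$, so $\Phi$ descends to a $k$-linear endomorphism $\bar\Phi$ of $\bL^0=L^0/L^1$; a Leibniz bookkeeping argument --- using that for $r\ge 1$ the only word of length $e_P/g$ in $\tau$ and $[\nt]$ that survives modulo $\fP^{-re_P/g+1}$ is $[\nt]^{e_P/g}$, together with the definition of $\bn$ --- identifies $\bar\Phi$ with $y_\beta$ of \eqref{ybeta}. Because the stratum is fundamental, $\bbn$, and hence $y_\beta$, is non-nilpotent, so $y_\beta$ has a nonzero eigenvalue $\lambda\in k^\times$; lifting an eigenvector to $v\in L^0$ gives $\Phi^N(v)\notin L^1$ for all $N\ge 0$.

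On the other hand, $\nt$ preserves every lattice $t^m\tilde L$, so $\Phi$ sends $t^m\tilde L$ into $t^{m+r/g}\tilde L$; choosing $c$ with $L^0\subset t^c\tilde L$ we get $\Phi^N(v)\in t^{c+Nr/g}\tilde L$, which lies in $L^1$ as soon as $N$ is large enough that $c+Nr/g\ge M$, where $M$ is chosen with $t^M\tilde L\subset L^1$. This contradicts $\Phi^N(v)\notin L^1$, and the lemma follows. I expect the only real work to be the bookkeeping in this last claim: checking that $\Phi$ genuinely descends to $\bL^0$, that $\bar\Phi=y_\beta$, and that $\overline{\Phi^N}=\bar\Phi^{\,N}$, all of which require some care because $\nt$ is a derivation rather than an $\fo$-linear map and because of the interaction of the powers of $t$ with $\nt$.
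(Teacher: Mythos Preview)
Your first two parts match the paper's proof essentially verbatim. For the third part your argument is sound in outline, but there is one imprecision worth flagging: $y_\beta$ lives in $\bfP\cong\bigoplus_{j=0}^{e_P-1}\End(\bL^j)$, so your $\bar\Phi$ on $\bL^0$ is only the $\End(\bL^0)$-component of $y_\beta$, and the nonzero eigenvalue guaranteed by fundamentality may well sit in some $\bL^j$ with $j\neq 0$. This is easily repaired---$\Phi$ preserves every $L^\ell$, so one runs the same argument with $\bL^j$ in place of $\bL^0$---but as written the lift of the eigenvector to $L^0$ is not justified.

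The paper's route for this step is shorter and avoids the bookkeeping you correctly anticipated as delicate. It iterates $\nt$ directly rather than passing through $\Phi$ and $y_\beta$: with $\tilde L$ a $\nt$-stable lattice, choose $i$ and $m>0$ so that $L^{i-mr+e_P}\supset\tilde L\supset L^i$; then for $0\le j<e_P$ one has $\nt^m(L^{i+j})\subset\tilde L\subset L^{i+j-mr+1}$, whence (via Proposition~\ref{grindep}, which identifies the degree-$(-r)$ graded map of $\nt$ with $\bbn$ on every block) $\bbn^m(\bL^{i+j})=0$, so $\bbn$ is nilpotent on $\gr(\mathscr{L})$ and the stratum is not fundamental. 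Your construction of $\Phi$ and the eigenvalue contradiction is a repackaging of this same filtration argument, but it trades a one-line inclusion for the Leibniz expansion of $\nt^{e_P/g}$ and the verification that $\overline{\Phi^N}=\bar\Phi^{\,N}$.
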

\begin{proof}  

  If $(V, \nabla)$ contains $(P, 0, \beta)$, then there exists a
  lattice $L \in \mathscr{L}$ such that $\nabla_\tau (L) \subset L$.
  Thus, $\nabla$ is regular singular.  Now, suppose that $(V, \nabla)$
  is regular singular.  Lemma~\ref{goodlattice} gives a lattice $L =
  M^1 = M^2$ for which $\nabla_\tau (L) \subset L$.  Thus,
  $\nabla_\tau$ preserves the lattice chain $(L^i = t^i L)$, and the
  corresponding stratum has $r = 0$.  Moreover, \eqref{grisom} implies
  that $\gr^{-1}(\nabla_\tau) (L^{-1}/L^0) = L^{-1}/L^0$, so the
  stratum is fundamental.

  Suppose that the same connection $(V, \nabla)$ contains a stratum
  $(P, r, \beta)$ with $r > 0$.  Let $(L^i)$ be the associated lattice
  chain.  We will show that $\bar{\beta}\in\bfP^{-r}$ is a nilpotent
  operator on $\gr(\mathscr{L})$.  With the lattice $L$ as above,
  choose $i$ and $m>0$ such that $L^{i-m r+ e_P} \supset L \supset
  L^i$.  For $0 \le j <e_{P}$, we obtain $(\nabla_\tau)^m
  (L^{i+j})\subset L\subset L^{i+j-mr+1}$.  By
  Proposition~\ref{grindep}, the endomorphism $\bbn$ coincides with
  each $\gr^\ell(\nabla_\tau)$ on the latter's domain, so
  $\bbn^m(\bar{L}^{i+j})=\gr^{i-(m-1)r}(\nabla_\tau)\circ\dots\circ\gr^i(\nabla_\tau)(\bar{L}^{i+j})=0$.
  Therefore, $(P, r, \beta)$ is not fundamental.
\end{proof}

The following corollary describes the relationship between a
fundamental stratum contained in $(V, \nabla)$ and its slope.
\begin{proposition}\label{slope}
  If $(V, \nabla)$ contains the fundamental stratum $(P, r, \beta)$,
  then $\slope(\nabla)=r/e_P$.
\end{proposition}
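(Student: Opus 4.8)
The plan is to reduce the statement to the two cases already handled and to the combinatorial behavior of slope under the Leibniz rule. The slope-zero case is exactly Lemma~\ref{slope0}, so I may assume $r \ge 1$. The key observation is that if $(V,\nabla)$ contains the fundamental stratum $(P,r,\beta)$ with associated lattice chain $\mathscr{L}=(L^i)$, then $\nabla_\tau(L^i)\subset L^{i-r}$ for all $i$ (using $\ord(\nu)=-1$), and by Proposition~\ref{grindep} the induced degree $-r$ endomorphism $\gr(\nabla_\tau)=\bbn\in\bfP^{-r}$ of $\gr(\mathscr{L})$ is the same in every degree. Since $(P,r,\beta)$ is fundamental, Remark~\ref{asgrd} says $\bbn$ is non-nilpotent as an endomorphism of $\gr(\mathscr{L})$; equivalently, for every $m>0$ the composite $\gr^{i-(m-1)r}(\nabla_\tau)\circ\cdots\circ\gr^i(\nabla_\tau)$ is nonzero on some $\bar L^j$.

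First I would establish the upper bound $\slope(\nabla)\le r/e_P$. Fix a basis $\bfe$ for $V$ compatible with the lattice $L^0$ (as in Remark~\ref{compatiblebasis}), so that $v(\bfe)=0$. Since $\nabla_\tau$ shifts the chain by $-r$ and shifting by $e_P$ steps in the chain is the same as multiplying the index of $L$ by $t$, iterating gives $\nabla_\tau^m(\bfe)\subset t^{-\lceil mr/e_P\rceil}L^0$, so $v(\nabla_\tau^m\bfe)\ge -\lceil mr/e_P\rceil \ge -mr/e_P-1$. Hence $|v(\nabla_\tau^m\bfe)+(r/e_P)m|$ is bounded above; combined with the general fact that $v$ cannot grow too negatively this shows the set in the definition of slope is bounded, so $\slope(\nabla)\le r/e_P$. (Here I use Katz's theorem, quoted before the lemma, that the slope exists and is independent of the basis and of $\nu$; so it suffices to compute with this one basis.)

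For the reverse inequality I would use fundamentality. Suppose for contradiction that $\slope(\nabla)=\sigma<r/e_P$. By the definition of slope applied with $\sigma$, the quantities $v(\nabla_\tau^m\bfe)+\sigma m$ are bounded, so $v(\nabla_\tau^m\bfe)\le -\sigma m + C$ for some constant $C$; in particular, for $m$ large, $\nabla_\tau^m\bfe\not\subset t^{-\sigma m+C}L^0$ cannot be improved — but more to the point, $\nabla_\tau^m(L^0)\subset t^{\lceil-\sigma m+C'\rceil}L^0$ for a suitable $C'$, i.e.\ $\nabla_\tau^m(L^0)\subset L^{-\sigma m e_P + C''}$ for all large $m$. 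Now pick $m$ large enough that $\sigma m e_P < mr - e_P - C''$; then $\nabla_\tau^m(L^0)\subset L^{-mr+e_P+1}$ (roughly), and by the Leibniz-rule argument of Lemma~\ref{slope0} this forces the $m$-fold composite of the $\gr^\ell(\nabla_\tau)$'s, hence $\bbn^m$, to annihilate $\gr(\mathscr{L})$ in each degree. That contradicts the non-nilpotence of $\bbn$ guaranteed by fundamentality (Remark~\ref{asgrd}). Therefore $\slope(\nabla)\ge r/e_P$, and combining the two inequalities gives $\slope(\nabla)=r/e_P$.

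The main obstacle is bookkeeping the index shifts carefully: translating between the lattice $L$ used to define $v(\bfe)$ and slope, the lattice chain $\mathscr{L}$ stabilized by $P$, and the precise inequalities relating $\nabla_\tau^m(L^i)$ to a chain index that is an affine function of $m$ — and then choosing $m$ so that the rounding in $\lceil\,\cdot\,\rceil$ does not spoil the strict inequality $\sigma < r/e_P$. Once the index arithmetic is set up, both directions are short: the upper bound is a direct iteration, and the lower bound is the nilpotency-versus-fundamentality dichotomy from Lemma~\ref{slope0} applied to a high enough power of $\nabla_\tau$.
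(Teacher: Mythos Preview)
Your strategy is the same as the paper's --- one direction from the containment $\nabla_\tau^m(L^0)\subset L^{-mr}$, the other from non-nilpotence of $\bbn$ --- but your execution has two real logical slips that the paper's more direct argument avoids.

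First, in your ``upper bound'' paragraph: from $v(\nabla_\tau^m\bfe)+(r/e_P)m\ge -1$ you claim the set $\{|v(\nabla_\tau^m\bfe)+(r/e_P)m|\}$ is bounded. That is false as stated; a one-sided bound does not bound an absolute value. What you actually need is: since Katz guarantees a unique slope $\sigma$, boundedness of $|v(\nabla_\tau^m\bfe)+\sigma m|$ together with your inequality forces $(\sigma-r/e_P)m\le C$, hence $\sigma\le r/e_P$. Second, in your contradiction argument you silently pass from $v(\nabla_\tau^m\bfe)\ge -\sigma m-C$ (which is about the basis vectors) to $\nabla_\tau^m(L^0)\subset L^{-\sigma m e_P+C''}$ (which is about the whole lattice). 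Since $\nabla_\tau$ is not $\fo$-linear this needs the iterated Leibniz identity $\nabla_\tau^m(fe)=\sum_k\binom{m}{k}\tau^{m-k}(f)\nabla_\tau^k(e)$ together with $\tau(\fo)\subset\fo$ and control of $\nabla_\tau^k\bfe$ for \emph{all} $k\le m$, not just $k=m$. This is easy but must be said; your reference to ``the Leibniz-rule argument of Lemma~\ref{slope0}'' does not cover it, since there one already has $\nabla_\tau(L)\subset L$.

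The paper sidesteps both issues by choosing the basis $\bfe$ compatible with $\mathscr{L}$, so that $\bfe_j\subset W_j\cong\bar L^j$, and showing directly that $\nabla_\tau^i(w)\equiv\bbn^i(w)\pmod{L^{-ir+j+1}}$ for $w\in W_j$. Then fundamentality ($\bbn^i\notin\fP^{-ir+1}$) gives $\nabla_\tau^i\bfe\nsubseteq L^{-ir+e_P}$, pinning $v(\nabla_\tau^i\bfe)$ to either $-\lfloor ri/e_P\rfloor$ or $-\lceil ri/e_P\rceil$ and yielding $|v(\nabla_\tau^i\bfe)+ri/e_P|<1$ in one stroke. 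Your contradiction route works once patched, but the paper's direct computation is both shorter and avoids invoking the existence of the slope as an input.
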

\begin{proof}
  When $(V, \nabla)$ is regular singular, $r/e_P = 0=\slope(\nabla)$
  by Lemma~\ref{slope0}, so we assume that $(V, \nabla)$ is irregular
  singular.  Let $L = L^0$.  Choose an ordered basis $\bfe$ for $L$
  that is compatible with $\mathscr{L}$.  We use the notation from the
  proof of Lemma~\ref{subquotient}: $\bfe =
  \bigsqcup_{j=0}^{e_P-1}\bfe_j$ with $W_j = \spa(\bfe_j) \subset L^j$
  naturally isomorphic to $L^j/L^{j+1}$.  For $w \in W_j$, $(\nt)^i(w)
  \in \bn^i (w) + L^{-i r +j+1}$ by repeated application of
  Proposition~\ref{grindep}.  Since $(P, r, \beta)$ is fundamental,
  $\bn^i \notin \fP^{-r i+1}$.  Therefore, $\bn^i (\bfe_j) \nsubseteq
  L^{-ri + j + 1}$ for some $\bfe_j$.  It follows that $\nt^i \bfe_j
  \nsubseteq L^{-ri +j+1}$, and thus $\nt^i \bfe \nsubseteq L^{-r i
    +e_P}$.

  Let $\lceil \frac{r i}{e_P} \rceil$ (resp. $\lfloor \frac{r i}{e_P}
  \rfloor$) be the integer ceiling (resp. floor) of $\frac{r i}{e_P}$.
  Then, $\nt^i \bfe \subseteq L^{-r i} \subseteq t^{-\lceil
    \frac{ri}{e_P} \rceil} L^0$.  However, $\nt^i \bfe \nsubseteq
  t^{-\lfloor \frac{ri}{e_P} \rfloor +1} L^0$, since $t^{-\lfloor
    \frac{ri}{e_P} \rfloor +1} L^0 \subseteq L^{-r i+e_P}$.  We
  conclude that $v(\nt^i \bfe)$ is equal to either $-\lfloor \frac{r
    i}{e_P} \rfloor$ or $-\lceil \frac{r i}{e_P} \rceil$.  In
  particular, $\left| v(\nt^i \bfe) + \frac{r i}{e_P} \right| < 1$,
  which proves the proposition.
    \end{proof}

    We are now ready to state our main theorem on the relationship
    between slopes of connections and fundamental strata.  In the
    context of the representation theory of local fields, the
    analogous theorem is due to Bushnell \cite[Theorem 2]{Bu}.

\begin{theorem}\label{fundcontain}  Any stratum $(P,
  r, \beta)$ contained in $(V,\nabla)$ has slope greater or equal to
  $\slope(\nabla)$.  Moreover, the set of strata contained in $\nabla$
  with slope equal to $\slope(\nabla)$ is nonempty and consists
  precisely of the fundamental strata contained in $\nabla$.  In
  particular, every connection contains a fundamental stratum.
\end{theorem}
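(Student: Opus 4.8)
The plan is to combine three ingredients already in place: (i) Lemma~\ref{contstrat}, which produces some stratum $(P,r,\beta)$ contained in $(V,\nabla)$; (ii) Theorem~\ref{theorem:Bu1}, characterizing non-fundamental strata via reductions of strictly larger slope; and (iii) Proposition~\ref{slope}, which pins down the slope of a fundamental stratum contained in $(V,\nabla)$ as $r/e_P = \slope(\nabla)$. The logical skeleton is a minimal-slope argument: among all strata contained in $(V,\nabla)$, choose one of minimal slope and argue it must be fundamental, so that Proposition~\ref{slope} applies.

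First I would handle the lower bound. Suppose $(P,r,\beta)$ is contained in $(V,\nabla)$ and $r\ge 1$ (the case $r=0$ forcing $\nabla$ regular singular by Lemma~\ref{slope0}, where $\slope(\nabla)=0$ and the bound is trivial). I claim $\slope(P,r,\beta)\ge\slope(\nabla)$. If the stratum is fundamental this is exactly Proposition~\ref{slope}. If not, Theorem~\ref{theorem:Bu1} gives a reduction $(P',r',\beta')$ with strictly larger slope; and the key compatibility point is that a reduction of a stratum contained in $(V,\nabla)$ is again contained in $(V,\nabla)$ --- this follows because the reduction conditions (Definition~\ref{stratared}) only require $\beta_\nu + \mathfrak{P}^{1-r}\subset(\mathfrak{P}')^{-r'}$ and a common lattice $L$ in the two chains, so $\nabla_\tau$ still lowers the index along $\mathscr{L}'$ by the appropriate amount and $\gr^j(\nabla_\tau)$ still represents $\beta'$. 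Iterating this reduction process: slopes are bounded above (a stratum of slope $\sigma$ forces $v(\nabla_\tau^i\mathbf{e})$ to stay within distance $1$ of $-\sigma i$, but also $\nabla_\tau$ maps any fixed lattice $L$ into $L^{-N}$ for some fixed $N$ relative to a chain through $L$, bounding how negative the slope's reductions can push things --- more directly, one may just invoke that by Theorem~\ref{theorem:Bu1} a chain of strictly slope-increasing reductions must terminate), so after finitely many steps we reach a fundamental stratum contained in $(V,\nabla)$, whose slope is $\slope(\nabla)$ by Proposition~\ref{slope}; since each reduction strictly increased the slope, $\slope(P,r,\beta) < \slope(\nabla)$ is impossible, giving $\slope(P,r,\beta)\ge\slope(\nabla)$.

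This already yields everything: by Lemma~\ref{contstrat} there is \emph{some} stratum contained in $(V,\nabla)$, and running the reduction procedure just described on it produces a fundamental stratum contained in $(V,\nabla)$ --- so the set of fundamental strata contained in $\nabla$ is nonempty, proving the last sentence of the theorem. By Proposition~\ref{slope} every such fundamental stratum has slope exactly $\slope(\nabla)$, so it lies in the set of minimal-slope strata. Conversely, if $(P,r,\beta)$ is contained in $\nabla$ with $\slope(P,r,\beta)=\slope(\nabla)$ but is \emph{not} fundamental, Theorem~\ref{theorem:Bu1} gives a reduction of strictly larger slope, still contained in $\nabla$, contradicting the lower bound just proved. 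Hence the minimal-slope strata contained in $\nabla$ are precisely the fundamental ones.

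The main obstacle is the bookkeeping in the lower-bound step: one must verify carefully that a reduction of a stratum contained in a connection is again contained in that connection (checking the index-lowering condition $\nabla_\tau((L')^i)\subset(L')^{i-r'-(1+\ord\nu)e_{P'}}$ and that $\gr^j(\nabla_\tau)$ represents $\beta'$, using $\ord(\nu)=-1$), and that the slope-increasing reduction process terminates. Both are essentially contained in Bushnell's local arguments (\cite[Theorem~2]{Bu}) and in the Leibniz-rule computations already used in Propositions~\ref{grindep} and~\ref{slope}; no genuinely new idea is needed, but the termination/boundedness point should be stated explicitly rather than left implicit.
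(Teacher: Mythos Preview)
Your overall plan matches the paper's, but you have the direction of the reduction backward, and this breaks the logic. Theorem~\ref{theorem:Bu1} as printed has the inequality reversed; the correct statement (consistent with Bushnell's result and with how the paper itself invokes it in the proof of Theorem~\ref{fundcontain}) is that a non-fundamental stratum admits a reduction of \emph{strictly smaller} slope. With ``larger'' as you wrote it, your chain starting from a non-fundamental $(P,r,\beta)$ has increasing slopes terminating at a fundamental stratum of slope $\slope(\nabla)$, which yields $\slope(P,r,\beta)\le\slope(\nabla)$---the opposite of the lower bound you claim. Your converse paragraph has the same defect: a reduction of larger slope does not contradict the bound $\slope\ge\slope(\nabla)$. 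So as written, neither half of your argument closes.

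Once the direction is corrected, your argument becomes the paper's: slopes lie in $\tfrac{1}{n!}\Z_{\ge 0}$, so a decreasing chain of reductions terminates at a fundamental stratum of slope $\slope(\nabla)$, giving the lower bound; and a non-fundamental stratum at slope $\slope(\nabla)$ would reduce to something strictly smaller still contained in $\nabla$, a contradiction. One further point you gloss over but the paper handles: the reduction $(P',r',\beta')$ supplied by Theorem~\ref{theorem:Bu1} need not itself be contained in $\nabla$, because the intersection condition in Definition~\ref{stratared} only guarantees \emph{some} common representative, not that $[\nabla_\tau]$ represents $\beta'$. The fix is to keep $(P',r')$ but redefine $\beta'$ as the functional induced by $[\nabla_\tau]\in(\fP')^{-r'}$; this new stratum is contained in $\nabla$ and is still a reduction of the same smaller slope.
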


\begin{proof}
  The regular singular case is dealt with in Lemma~\ref{slope0}, so assume
  that $ (V, \nabla)$ is irregular singular.  By
  Proposition~\ref{slope}, any fundamental stratum contained in
  $\nabla$ has slope equal to $\slope(\nabla)$.  Now, let $(P_0, r_0,
  \beta_0)$ be any stratum contained in $\nabla$.  Assume that this
  stratum is not fundamental.  We show that the stratum has a
  reduction $(P_1, r_1, \beta_1)$ with strictly smaller slope which is
  also contained in $\nabla$.

  By Theorem~\ref{theorem:Bu1}, there is a reduction of $(P_0, r_0,
  \beta_0)$ to $(P_1, r_1, \beta_1')$ such that $r_1/e_{P_1} < r_0/
  e_{P_0}$.  Let $\mathscr{L}_0 = (L_0^i)$ and $\mathscr{L}_1=
  (L_1^i)$ be the lattice chains corresponding to $P_0$ and $P_1$
  respectively.  By definition, there is a lattice $L \in
  \mathscr{L}_0 \cap \mathscr{L}_1$; reindexing the lattice chains if
  necessary, we can assume without loss of generality that $L = L_0^0
  = L_1^0$.

  Choose a basis $\{e_1, \ldots, e_n\}$ for $L$, and write
  $\nabla_\tau (v) = [\nabla_\tau] (v) + \tau (v)$.  In particular, if
  $v \in L$, then $\tau(v) \in t L$, since $\tau (f e_j) = tf' e_j\in
  t L$ for $f\in\fo$.  Thus, for any $v \in L_0^i$, $0 \le i <
  e_{P_0}$ (resp. $w \in L_1^\ell$, $0 \le \ell < e_{P_1}$),
  $\nabla_\tau(v) - [\nabla_\tau](v) \in L_0^{e_{P_0}}$ (resp.
  $\nabla_\tau (w) - [\nabla_\tau](w) \in L_1^{e_{P_1}}$).  Therefore,
  $[\nabla_\tau]$ is a representative for both $\gr_0^0 (\nabla_\tau)$
  and $\gr_1^0 (\nabla_\tau)$.  By Proposition~\ref{grindep}, we
  conclude that $[\nabla_\tau]$ is a representative for $\beta_0$.

  By definition of a reduction, $[\nabla_\tau] \in \fP_1^{-r_1}$.  If
  we let $\beta_1\in(\bfP_1^r)^\vee$ be the corresponding functional,
  it is immediate that $(P_1, r_1, \beta_1)$ is also a reduction of $(P_0,
  r_0, \beta_0)$ with strictly smaller slope.  

Finally, consider the collection of all strata contained in $\nabla$; this
set is nonempty by Lemma~\ref{contstrat}.  Since $e_P\le n$ (where
$n=\dim V$, these slopes are all contained in  $\frac{1}{n!}
\mathbb{Z}$.  Thus, the set of these slopes has a minimum value $s>0$.
(This value is positive by Lemma~\ref{slope0}.)  The argument given above
shows that any stratum with slope $s$ is fundamental, so $\nabla$
contains a fundamental stratum and $s=\slope(\nabla)$.

\end{proof}

\subsection{Splittings}

Recall that the connection $(V,\nabla)$ is split by the direct sum
decomposition $V=V_1\oplus V_2$ if $\nt(V_1)\subset V_1$ and
$\nt(V_2)\subset V_2$.  In this section, we will show that any time a
connection $(V, \nabla)$ contains a fundamental stratum $(P, r,
\beta)$ that splits, then there is an associated splitting of the
connection itself.  We note that, in the language of flat
$\GL_n$-bundles, this corresponds to a reduction of structure of $(V,
\nabla)$ to a Levi subgroup.
\begin{lemma}\label{case0}
Suppose that $(V,\nabla)$ contains a fundamental stratum $(P, 0, \beta)$ that is split
by $(V_1,V_2)$.  Then, there exists $q\in P^1$ and a fundamental
stratum $(P', 0, \beta')$ contained in $\nabla$ such that $e_{P'}=1$,
$P'^1\subset P^1$, and $(qV_1,qV_2)$ splits both strata
\end{lemma}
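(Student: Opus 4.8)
The plan is to use the split structure of the stratum to put $\nabla$ into block-diagonal form by a gauge transformation in $P^1$, and then to extract a single-lattice stratum from that block form. Since $r=0$, Lemma~\ref{slope0} shows that $(V,\nabla)$ is regular singular; writing $\mathscr{L}=(L^i)$ for the lattice chain stabilized by $P$, we have $\nt(L^i)\subset L^i$ for all $i$, and in a trivialization compatible with $\mathscr{L}$ the matrix $[\nt]$ lies in $\fP=\fP^0$ and represents $\beta$. After replacing $\bn$ by the block-diagonal representative $\beta_{1\nu}\oplus\beta_{2\nu}$ attached to $V=V_1\oplus V_2$ (as in Section~\ref{section:splittings}), we have $[\nt]-\bn\in\fP^1$ with $\bn$ block-diagonal.

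First I would block-diagonalize $[\nt]$ by the usual successive-approximation argument: if the off-diagonal part of $[\nt]$ already lies in $\fP^j$ for some $j\ge 1$, gauge-transform $[\nt]$ by $1+X$ with $X$ in the off-diagonal part of $\fP^j$. Using $[\nt]-\bn\in\fP^1$, Lemma~\ref{taup1}, and the fact (checked as for $\fP^j$ itself) that $\tau$ preserves $M^j$ and $N^j$, the off-diagonal part at level $j$ changes by $\bigl(\delta_{\bn}-\bar\tau\bigr)(\bar X)$, where $\bar\tau$ is the operator on $\bfP^j$ induced by $\tau$. The induction then goes through provided $\delta_{\bn}-\bar\tau$ is bijective on the off-diagonal part $\bM^j\oplus\bN^j$ of $\bfP^j$. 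On $\bM^j$, using the isomorphism~\eqref{P12uniform} together with the fact that $\bn$ is block-diagonal of degree $0$, $\delta_{\bn}$ acts on the summand $\Hom(\bL_2^\ell,\bL_1^{\ell+j})$ by $x\mapsto \bar\beta^{\ell+j}_{1\nu}x-x\bar\beta^{\ell}_{2\nu}$ (indices mod $e_P$), so its eigenvalues are differences of an eigenvalue of $\bar\beta_{1\nu}$ and one of $\bar\beta_{2\nu}$, while $\bar\tau$ acts on that summand by an integer scalar (recording how far $\ell+j$ runs past $e_P$ in \eqref{P12uniform}). Hence the eigenvalues of $\delta_{\bn}-\bar\tau$ on $\bM^j$ have the form $\lambda_1-\lambda_2-a$ with $\lambda_i$ an eigenvalue of $\bar\beta_{i\nu}$ and $a\in\Z$; by part~(3) of Definition~\ref{splitstrata} (which applies since $r=0$) these never vanish, so $\delta_{\bn}-\bar\tau$ is invertible on $\bM^j$, and the same argument works on $\bN^j$ (using Lemma~\ref{spstratalemma}). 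Passing to the limit yields $g\in P^1$ with $g\cdot[\nt]$ block-diagonal; set $q=g^{-1}\in P^1$. Then $\nabla$ is split by $(qV_1,qV_2)$, and $(qV_1,qV_2)$ also splits $(P,0,\beta)$ by Remark~\ref{splitp1}.

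Next I would take $L'=L^0$ and $P'=\GL(L^0)$, so that $e_{P'}=1$; since $(P')^1=1+t\gl(L^0)$ and $tL^0=L^{e_P}\subset L^{i+1}$ for $0\le i<e_P$, one checks $(P')^1\subset P^1$. Because $q\in P^1$ stabilizes $\mathscr{L}$, it is an automorphism of $L^0$, so $L^0=(L^0\cap qV_1)\oplus(L^0\cap qV_2)$; as $\nt(L^0)\subset L^0$ and $\nt$ preserves each $qV_j$, we also get $\nt(L^0\cap qV_j)\subset L^0\cap qV_j$. Let $\beta'$ be the functional corresponding to the residue $\gr^0(\nt)\in\gl(L^0/tL^0)$; then $(P',0,\beta')$ is contained in $\nabla$ and is split at level $0$ by $(qV_1,qV_2)$, with $qV_j$-block $\bar\beta'_{j\nu}$ equal to the action of $\nt$ on $(L^0\cap qV_j)/t(L^0\cap qV_j)$. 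Since the gauge transformation $g$ is $\equiv 1\pmod{\fP^1}$, it leaves the reduction $\bbn$ unchanged, so $\bar\beta'_{1\nu}$ is block upper-triangular for the flag induced by $\mathscr{L}$ with diagonal blocks $\bar\beta^{0}_{1\nu},\dots,\bar\beta^{e_P-1}_{1\nu}$; these are invertible by strong uniformity of $(P_1,0,\beta_1)$, so $\bar\beta'_{1\nu}$ is invertible. Hence $(P'_1,0,\beta'_1)$ is strongly uniform and $(P',0,\beta')$ is fundamental. Finally, the eigenvalues of $\bar\beta'_{j\nu}$ are exactly those of $\bar\beta_{j\nu}$, so part~(3) of Definition~\ref{splitstrata} for $(P,0,\beta)$ yields part~(3) for $(P',0,\beta')$ and, since $e_{P'_{12}}=e_{P'_1}=1$, the strong uniformity of $(P'_{12},0,\partial_{\beta'})$ (its eigenvalues $\mu_1-\mu_2$ being nonzero). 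Thus $(qV_1,qV_2)$ splits $(P',0,\beta')$, which finishes the argument.

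The hard part is the first step, and within it the claim that $\delta_{\bn}-\bar\tau$ is bijective on the off-diagonal graded pieces. This is exactly where hypothesis~(3) in the definition of a split stratum is used, and the bookkeeping it requires — tracking the integer shifts contributed by $\bar\tau$ through the identifications of Lemma~\ref{subquotient} and \eqref{P12uniform} — is the most delicate part; the remaining verifications in the second step are bookkeeping.
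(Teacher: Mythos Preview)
Your argument is correct (up to a harmless sign: the operator you must invert on $\bM^j\oplus\bN^j$ is $\delta_{\bn}+\bar\tau$, not $\delta_{\bn}-\bar\tau$, but both are invertible by the same eigenvalue reasoning), and it takes a genuinely different route from the paper.

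The paper does \emph{not} block-diagonalize $\nabla$. It first defines $P'=\GL(L^0)$ and sets $\beta'_\nu=[\nt]$, then observes that $(V_1,V_2)$ may fail to split $\beta'$ at level $0$ only because the off-diagonal pieces of $[\nt]$ lying in $\fP^1$ need not lie in $(\fP')^1$. Since $\fP^{e_P}\subset(\fP')^1$, it suffices to push those off-diagonal pieces from $\fP^1$ into $\fP^{e_P}$, and this is done by a \emph{finite} sequence of conjugations $\Ad(1+Y_j)\Ad(1+X_j)$ for $j=1,\dots,e_P-1$, using only the invertibility of $\bar\partial_{\bn}$ on each $\bM^j$ and $\bN^j$ (condition~(2) of Definition~\ref{splitstrata} and Lemma~\ref{spstratalemma}). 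No limits, no gauge terms, and no analysis of $\bar\tau$ enter; the mod-$\Z$ eigenvalue condition~(3) is invoked only afterwards, in verifying the splitting conditions for the new stratum via a flag argument on $\bM'$.

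Your first step instead runs an \emph{infinite} gauge process that actually splits the connection $\nabla$; the operator at each level then picks up the $\bar\tau$-correction, and its invertibility genuinely needs condition~(3). This yields more than the lemma asks for---you have essentially proved the $r=0$ case of Theorem~\ref{thm2} directly, so the subsequent reduction to $e_{P'}=1$ and the appeal to Levelt's classical result in that proof would become unnecessary. The paper's route buys economy (a finite process using only condition~(2) at the key step) and keeps the lemma in its intended role as a reduction device; yours buys a self-contained treatment of the $r=0$ splitting of $\nabla$, at the cost of the $\bar\tau$ bookkeeping you correctly flag as the delicate point.
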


\begin{proof}

  Let $\mathscr{L}$ be the lattice chain stabilized by $P$, and,
  without loss of generality assume that $\beta$ is determined by
  $\gr^0 (\nabla_\tau)$.  Choose a trivialization for $L^0$, and let
  $[\nt]\in\fP$ be the corresponding $F$-endomorphism.  Now let
  $\mathscr{L}' = (L'_i=t^i L^0)$ with corresponding stabilizer
  $P'=\GL(L^0)$.  It is obvious that $P'^1\subset P^1$.  Setting
  $\beta'\in(\bfP')^\vee$ equal to the functional induced by $[\nt]$,
  we see that $(P', 0, \beta')$ is a stratum contained in $(V,\nabla)$
  with $e_{P'}=1$.  It will be convenient to denote $[\nt]$ by $\bn$
  or $\bn'$ depending on whether it is being viewed as a
  representative of $\beta$ or $\beta'$.  With this convention, it
  follows from Lemma~\ref{subquotient} that there is a parabolic
  subalgebra $\fq\subset\gl(\bar{L}^0)$ with Levi subalgebra $\fh$
  such that $\bbn'\in\fq$ and $\bbn$ is the projection of $\bbn'$ onto
  $\fh$.

  Since $(P, 0, \beta)$ is fundamental, there exists $0 \le j < e_P$
  such that $\bbn\in\End(\bL^j)$ is non-nilpotent.  It follows that
  $(P', 0, \beta')$ is fundamental.  The splitting $V = V_1 \oplus
  V_2$ for $\beta$ does not necessarily split $\beta'$ at level $0$:
  using the notation of Section~\ref{section:splittings}, it is
  possible that $\epsilon_1 \beta'_\nu \epsilon_2 \in \mathfrak{P}^1$
  has non-trivial image in $\bfP'$.  Identify $\bM^j$ with $\epsilon_1
  \mathfrak{P}^j \epsilon_2 / \epsilon_1 \mathfrak{P}^{j+1}
  \epsilon_2$ and $\bN^j$ with $\epsilon_2 \mathfrak{P}^j \epsilon_1 /
  \epsilon_2 \mathfrak{P}^{j+1} \epsilon_1$.  By
  Definition~\ref{splitstrata} and Lemma~\ref{splitstrata},
  $\bar{\partial}_{\beta'_\nu}$ (resp. $\bar{\partial}'_{\beta'_\nu}$)
  is an automorphism of each $\bM^j$ (resp. $\bM^j$).   In particular, there
  exists $X_1\in\epsilon_1 \mathfrak{P}^1 \epsilon_2 $ and $Y_1 \in
  \epsilon_2 \mathfrak{P}^1 \epsilon_1$ such that $\ad (X_1)
  (\beta'_\nu) \in - \epsilon_1 \beta'_\nu \epsilon_2 +
  \mathfrak{P}^2$ and $\ad (Y_1) (\beta'_\nu) \in - \epsilon_2
  \beta'_\nu \epsilon_1 + \mathfrak{P}^2$.  It follows that
  $\epsilon_i \Ad (1+Y_1)\Ad (1+X_1) (\beta'_\nu) \epsilon_j \in
  \mathfrak{P}^2$ for $i\ne j$.  Continuing this process, we construct
  $p =(1+Y_{e_P-1}) (1+X_{e_P-1} ) \ldots (1+Y_1) (1+X_1) \in P^1$
  such that $\epsilon_i \Ad (p) (\beta'_\nu) \epsilon_j\in
  \mathfrak{P}^{e_P}$ for $i\ne j$.  Since $\mathfrak{P}^{e_P} \subset
  \mathfrak{P}'$, this implies that $(V_1, V_2)$ splits $\Ad (p)
  (\beta'_\nu)$ at level $0$, so $(p^{-1} V_1, p^{-1} V_2)$ splits
  $\beta'_\nu$ at level $0$.  Clearly, $(p^{-1} V_2, p^{-1} V_2)$
  still splits $(P, 0, \beta_\nu)$; $q=p^{-1}$ will be the desired
  element of $P^1$.

  Without loss of generality, we may assume that $V_1$ and $V_2$ split
  $\mathscr{L}'$ and $\beta'_\nu$ at level $0$.  Since
  $\bar{\beta}'_\nu$ projects to $\bar{\beta}_\nu \in \mathfrak{h}$,
  it is clear that these matrices have the same eigenvalues (as do
  $\bar{\beta}_{j\nu}$ and $\bar{\beta}'_{j\nu}$ for $j = 1, 2$), so
  $(P',0, \beta')$ satisfies conditions (1) and (3) of
  Definition~\ref{splitstrata}.  By assumption $(P_1, 0, \beta_1)$ and
  $(P_{1 2}, 0, \partial_{\beta})$ are strongly uniform.  Since $(V_i
  \cap \mathscr{L}')$ is a sub-lattice chain of $\mathscr{L}_i$, it is
  clear that $(P_1',0, \beta'_1)$ is strongly uniform.  It remains to
  show that the induced map $\bar{\partial}_{\beta'_\nu}$ is an
  automorphism of $\End(\bM')$.

  Let $F^i$ be the image of $M^i \cap (M')^0$ in $\bM'$.  It is easy
  to see that $(M')^0\subset M^{1-e_P}$ and $M^{e_P}\subset (M')^1$,
  implying that $F^{e_P} = \{0\}$ and $\bM^0 = F^{-e_P+1}$.  Moreover,
  $\partial_{\beta'_\nu}$ preserves the flag $\{F^i\}$, so
  $F^i/F^{i+1}$ is a $\bar{\partial}_{\beta'_\nu}$-invariant subspace
  of $\bM^i$.  Since $(P_{1 2}, 0, \partial_{\beta})$ is strongly
  uniform, $\bar{\partial}_{\beta'_\nu}\in\Aut(\bM^i)$ for all $i$,
  hence the restrictions to $F^i/F^{i+1}$ are also automorphisms.  It
  follows that $\bar{\partial}_{\beta'_\nu}$ gives an automorphism of $\bM'$.

\end{proof}

\begin{theorem}\label{thm2}
  Suppose that $(V,\nabla)$ contains a fundamental stratum $(P, r, \beta)$.
  Let $(V_1, V_2)$ split $(P, r, \beta)$.  Then, there exists $p\in
  P^1$ such that $(pV_1,pV_2)$ splits both  $(P, r, \beta)$ and $\nabla$.
\end{theorem}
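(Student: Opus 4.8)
The plan is to reduce the statement to block‑diagonalizing the connection matrix by a gauge transformation in $P^1$, and then to carry out such a block‑diagonalization by successive approximation driven by the operators $\partial_{\beta_\nu}$ and $\partial'_{\beta_\nu}$. Let $\mathscr{L} = (L^i)$ be the lattice chain stabilized by $P$. Since $(V_1, V_2)$ splits $P$ we have $L^i = L^i_1 \oplus L^i_2$, so we may choose a trivialization $\phi$ of $L^0$ adapted to $V = V_1 \oplus V_2$; then $\tau$ preserves each of $\phi(V_1)$ and $\phi(V_2)$ coefficientwise. In this trivialization $[\nabla_\tau] \in \mathfrak{P}^{-r}$ (using $\ord(\nu) = -1$), and, replacing $\beta_\nu$ by its block‑diagonal part $\beta_{1\nu} \oplus \beta_{2\nu}$ as in Section~\ref{section:splittings}, we may assume $\beta_\nu \in \mathfrak{P}^{-r}$ is block‑diagonal with $[\nabla_\tau] \in \beta_\nu + \mathfrak{P}^{-r+1}$. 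It then suffices to find $p \in P^1$ with $p \cdot [\nabla_\tau]$ block‑diagonal: in the trivialization $p\phi$ the connection respects $V = V_1 \oplus V_2$, which amounts to a splitting of $\nabla$ by a $P^1$‑conjugate of $(V_1, V_2)$, and that same conjugate still splits $(P, r, \beta)$ by Remark~\ref{splitp1}.

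Assume first $r \geq 1$. Identify $\epsilon_1 \gl(V) \epsilon_2$ and $\epsilon_2 \gl(V)\epsilon_1$ with $V_{12}$ and $V_{21}$ carrying the lattice chains $\mathscr{M} = (M^j)$ and $\mathscr{N} = (N^j)$, and write any $X \in \gl(V)$ as a block‑diagonal part plus $X^M + X^N$. I would construct inductively off‑diagonal $X_j \in \mathfrak{P}^j$ ($j \geq 1$) so that $p_j := (1 + X_j) \cdots (1 + X_1) \in P^1$ has the off‑diagonal part of $p_j \cdot [\nabla_\tau]$ in $\mathfrak{P}^{j - r + 1}$ (the case $j = 0$ being exactly $[\nabla_\tau] \in \beta_\nu + \mathfrak{P}^{-r+1}$); the $t$‑adic limit $p = \lim p_j \in P^1$ then does the job. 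For the inductive step set $Y = p_{j-1} \cdot [\nabla_\tau]$, so $Y \equiv \beta_\nu \pmod{\mathfrak{P}^{1-r}}$ and the off‑diagonal part of $Y$ lies in $\mathfrak{P}^{j-r}$. The gauge‑change formula gives
\begin{equation*}
(1+X_j) \cdot Y \;=\; Y + [X_j, Y] - \tau(X_j)(1+X_j)^{-1} + (\text{terms in }\mathfrak{P}^{2j-r}),
\end{equation*}
and since $\tau(X_j) \in \mathfrak{P}^j$ by Lemma~\ref{taup1}, all correction terms except $[X_j, \beta_\nu]$ lie in $\mathfrak{P}^{j-r+1}$; as $\beta_\nu$ is block‑diagonal, $[X_j, \beta_\nu] = -\partial_{\beta_\nu}(X_j^M) - \partial'_{\beta_\nu}(X_j^N)$. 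Because $(P_{12}, r, \partial_\beta)$ is strongly uniform by hypothesis and $(P_{21}, r, \partial'_\beta)$ is strongly uniform by Lemma~\ref{spstratalemma}, Remark~\ref{sug} shows that $\bar\partial_{\beta_\nu} : \bar{M}^j \to \bar{M}^{j-r}$ and $\bar\partial'_{\beta_\nu} : \bar{N}^j \to \bar{N}^{j-r}$ are isomorphisms, so one can choose $X_j^M \in M^j$ and $X_j^N \in N^j$ cancelling the off‑diagonal part of $Y$ modulo $\mathfrak{P}^{j-r+1}$, completing the step.

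The main difficulty is the case $r = 0$, where the $-r$ shift no longer renders $\tau(X_j)$ and the higher commutators negligible, so the derivative $\tau$ must be tracked. Here I would first apply Lemma~\ref{case0} to replace $(P, 0, \beta)$ by a fundamental stratum $(P', 0, \beta')$ contained in $\nabla$ with $e_{P'} = 1$ and $(P')^1 \subset P^1$, split by a $P^1$‑conjugate of $(V_1, V_2)$. Since $e_{P'} = 1$, the filtration $\{(\mathfrak{P}')^j\}$ is the $t$‑adic one, and conjugating by $1 + t^j X_j$ with $X_j \in \gl_n(k)$ off‑diagonal ($j \geq 1$) alters the degree‑$j$ coefficient of $[\nabla_\tau]$ by $[X_j, \bar\beta'_\nu]$ together with a nonzero scalar multiple of $j X_j$ coming from $\tau(t^j)$; on each off‑diagonal block this is $\bar\partial_{\beta'_\nu}$ (resp. $\bar\partial'_{\beta'_\nu}$) shifted by a multiple of the identity, which is invertible for every $j \geq 1$ precisely because of condition~\ref{regpart} (the eigenvalues of $\bar\beta'_{1\nu}$ and $\bar\beta'_{2\nu}$ are distinct modulo $\Z$). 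Running the same successive approximation block‑diagonalizes $[\nabla_\tau]$; composing the resulting element of $(P')^1 \subseteq P^1$ with the one from Lemma~\ref{case0} yields the desired $p \in P^1$, which splits $\nabla$ by construction and $(P, 0, \beta)$ by Remark~\ref{splitp1}.
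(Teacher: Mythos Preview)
Your proposal is correct and follows essentially the same route as the paper: both reduce to block-diagonalizing $[\nabla_\tau]$ by a $P^1$-gauge transformation, and for $r\ge 1$ both run a successive-approximation scheme in which the off-diagonal correction at level $j$ is found by inverting $\bar\partial_{\beta_\nu}$ and $\bar\partial'_{\beta_\nu}$ (using strong uniformity of $(P_{12},r,\partial_\beta)$ and Lemma~\ref{spstratalemma}), with $\tau(X_j)\in\fP^j\subset\fP^{j-r+1}$ rendered negligible precisely because $r\ge 1$. The only difference is in the $r=0$ case: the paper invokes Lemma~\ref{case0} and then simply cites Levelt's classical result \cite[Lemma~2]{Le} for the period-one situation, whereas you sketch that argument explicitly, observing that at level $j$ the relevant operator on the off-diagonal blocks is $\bar\partial_{\beta'_\nu}$ shifted by $j$, which is invertible for all $j\ge 1$ exactly by condition~\ref{regpart}. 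Your version is thus slightly more self-contained; the paper's is shorter but relies on an external reference.
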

The case when $e_P = 1$ is well known (see \cite[Lemma 2]{Le}). 
\begin{proof}

We first recall from Remark~\ref{splitp1} that  $(pV_1,pV_2)$ splits
$(P, r, \beta)$ for any $p\in P^1$.  

Let $V'=F^n$, and let $V'=V'_1\oplus V'_2$ be the standard splitting
of $F^n$ into subspaces with $\dim V'_j=\dim V_j$.  Let
$\ep_j'\in\gl_n(F)$ be the corresponding idempotents.  By
\eqref{gauge}, $(W_1,W_2)$ splits $\nabla$ with $\dim W_j=\dim V_j$ if
and only if there exists a trivialization $\psi:V\to V'$ such that
\begin{equation}\label{projsplitting} \epsilon'_1 [\nt]_\psi \epsilon'_2 =
  \epsilon'_2 [\nt]_\psi \epsilon'_1 = 0,
\end{equation}
and $W_j=\psi^{-1}(V'_j)$.

Since $\nabla$ contains $(P,r,\b)$, there exist $j$ such that
$\gr^j(\nt)=\bbn$; reindexing the lattice chain if necessary, we can
assume that $j=0$.  Fix a trivialization $\phi$ such that
$\phi(L^0)=\fo^n$ and $\phi(V_j)=V'_j$.  We set $\fP'^m=
{}^\phi\fP^m\subset\gl_n(F)$ and similarly for $P'^m$.  Setting
$\bn'={}^\phi \bn$, we have $[\nt]_\phi\equiv \bn'\pmod{\fP'^{-r+1}}$.
By \eqref{gaugechange}, it suffices to find $h \in P'^1$ such that $h
\cdot [\nt]_\phi$ satisfies \eqref{projsplitting}; then $(pV_1,pV_2)$
splits $\nabla$, where $p=\phi^{-1}h^{-1}\phi\in P^1$.

Inductively, we construct $h_m\in P'^1$ such that $h_m\equiv
h_{m-1}\pmod{\fP'^{m-1}}$ and $\ep'_i(h_m\cdot[\nt]_\phi)\ep'_j\equiv
0\pmod{\fP'^{-r+m+1}}$ for $i\ne j$.  The limit $h=\lim h_m\in P^1$
will then satisfy \eqref{projsplitting}.  For $m=0$, we can take
$h_0=I$, since $(V_1,V_2)$ splits $(P,r,\b)$.  Now, suppose $m\ge 1$
and we have
already constructed $h_{m-1}$.  Let $Q
= h_{m-1}\cdot [\nt]_\phi$ and $Q_{ij} = \ep'_i Q \ep'_j$, so that
$Q_{12},Q_{21}\in P'^{-r+m}$.  We
will find $g=I+\ep'_1 X \ep'_2 + \ep'_2 Y \ep'_1\in P'^{m}$ with $X \in V_{1 2}$
and $Y \in V'_{2 1}$ satisfying 
\begin{equation}\label{gcong}
(g \cdot Q)_{1 2} \equiv (g \cdot Q)_{2 1} \equiv 0 \pmod{\fP'^{-r+m+1}}.
\end{equation}
The element $h_m=g h_{m-1}\in P^1$ will then have the desired properties.

Given $g$ as above, the gauge change formula $g \cdot Q = g Q g^{-1} -
\tau (g) g^{-1}$ immediately leads to the equation
\begin{small}
\begin{equation*}
\begin{pmatrix}
\id & X \\ Y & \id
\end{pmatrix}
\begin{pmatrix}
Q_{11} & Q_{1 2} \\ Q_{2 1} & Q_{22}
\end{pmatrix}
-
\tau\begin{pmatrix}
0 & X \\ Y & 0
\end{pmatrix}
=
\begin{pmatrix}
(g \cdot Q)_{11} & (g \cdot Q)_{12} \\
(g \cdot Q)_{21} & (g \cdot Q)_{22} 
\end{pmatrix}
\begin{pmatrix}
\id & X \\ Y & \id
\end{pmatrix}.
\end{equation*}
\end{small}

Since $XQ_{21}$ and $YQ_{12}$ lie in
$\fP'^{-r+2m}\subset\fP'^{-r+m+1}$, the congruences \eqref{gcong} are
equivalent to the system of congruences
\begin{equation*}\begin{aligned}
Q_{11} - (g \cdot Q)_{11} & \equiv 0 \\
Q_{22} - (g \cdot Q)_{22} & \equiv 0 \\
-\tau X + X Q_{2 2}  -  (g \cdot Q)_{11} X + Q_{12} 
& \equiv 0 \\
-\tau Y + Y Q_{11}  -  (g \cdot Q)_{22}  Y + Q_{21} 
& \equiv 0 ,
\end{aligned}\pmod{\fP'^{-r+m+1}}
\end{equation*}
where the first two automatically hold for any $g$ of the given form.
Suppose that $r\ge 1$.  In this case, 
$\tau X$ and $\tau Y$ are in $\fP'^{m}\subset\fP'{-r+m+1}$, so these
terms drop out of the congruences.   Substituting using the first two
congruences, the problem is reduced to finding $X$ and $Y$ such that
\begin{equation*}\begin{aligned}
Q_{11} X - X Q_{22} & \equiv  Q_{12} \pmod{\fP'^{-r+m+1}} \\
Q_{22} Y - Y Q_{11} & \equiv Q_{21} \pmod{\fP'^{-r+m+1}}.
\end{aligned}
\end{equation*}
However, since $Q\equiv \bn'\pmod{\fP'^{-r+1}}$, the first equation is
  equivalent to $\partial_{\beta'} (X) \equiv Q_{1 2}
  \pmod{\fP'^{-r+m+1}}$, and a solution $X$ exists since
  $(P_{12},r,\partial_\b)$ is strongly uniform.  Similarly,
  Lemma~\ref{spstratalemma} guarantees the existence of a solution $Y$
  to the second equation.

When $r = 0$, Lemma~\ref{case0} shows that there
exists $q\in P^1$ such that $(qV_1,qV_2)$ splits a fundamental stratum
$(\hat{P},0,\hat\b)$ with $\hat{P}^1\subset P^1$ and $e_{\hat{P}}=1$.
We are thus in the classical case of lattice chains with period $1$,
and there exists $q'\in\hat{P}^1$ such that $(q'qV_1,q'qV_2)$ splits
$\nabla$ by \cite[Lemma 2]{Le}.  The desired element of $P^1$ is thus
given by $p=q'q$.
\end{proof}

\subsection{Formal Types}\label{formaltypes}

Suppose that $(V, \nabla)$ is a formal connection which contains
a regular stratum.  We fix a trivialization $\phi : V \to F^n$.
In this section, we will show that the matrix of $(V, \nabla)$ in this
trivialization can be diagonalized by a gauge transformation into a 
uniform torus $\ft \subset \gl_n(F)$. The diagonalization of $[\nt]$
determines a functional $A \in (\ft^0)^\vee$ called a \emph{formal type},
and any two connections on $V$ with the same formal type are
isomorphic.

In the following, let $(P, r, \beta)$ be a regular stratum in
$\GL_n(F)$ with $P \subset \GL_n(\fo)$, and let $T \subset \GL_n(F)$
centralize $(P, r, \beta)$.  We denote $\phi^{-1} P \phi \subset
\GL(V)$ by $P^\phi$, and write $\beta^\phi$ for the pullback of
$\beta$ to $(\bfP^\phi)^{-r}$.  Suppose that $(V, \nabla)$ contains
$(P^\phi, r, \beta^\phi)$, and that $\beta^\phi$ is determined by
$\gr^0 (\nt)$.  (By Lemma~\ref{grindep}, the second condition is
superfluous when $r >0$.)  The goal of this section is the following
theorem:
\begin{theorem}\label{thm4}   Fix $\nu$ of order $-1$.  There
  exists $p \in P^1$ and a regular element $A_\nu \in \ft^{-r}$ such
  that $p \cdot [\nt]_\phi = A_\nu$ and $A_\nu$ is a representative
  for $\beta$.  Furthermore, the orbit of $A_\nu $ under $P^1$-gauge
  transformations contains $A_\nu +\fP^1$, and $A_\nu$ is unique
  modulo $\ft^1$.
\end{theorem}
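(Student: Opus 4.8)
The plan is to diagonalize $[\nt]_\phi$ into $\ft$ by an iterative procedure that improves the matrix modulo higher and higher powers of $\fP^1$, at each stage solving a cohomological equation using the fact that a regular stratum is graded-centralized by $T$. Since $(V,\nabla)$ contains $(P^\phi,r,\beta^\phi)$ with $\beta^\phi$ determined by $\gr^0(\nt)$, we have $[\nt]_\phi \in \fP^{-r}$ and, by Remark~\ref{repint}, we may pick a representative $\bn\in\ft^{-r}$ for $\beta$, so that $[\nt]_\phi \equiv \bn \pmod{\fP^{-r+1}}$. This is the base case $A_\nu^{(0)}=\bn$. Suppose inductively that we have found $p_{m-1}\in P^1$ and $A_\nu^{(m-1)}\in\ft^{-r}$ with $p_{m-1}\cdot[\nt]_\phi \equiv A_\nu^{(m-1)}\pmod{\fP^{-r+m}}$. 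Write the error as $A_\nu^{(m-1)}+Z$ with $Z\in\fP^{-r+m}$. We want $g=1+X$ with $X\in\fP^m$ such that $(1+X)\cdot(A_\nu^{(m-1)}+Z)$ agrees with an element of $\ft^{-r}$ modulo $\fP^{-r+m+1}$.

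The key computation: by the gauge-change formula \eqref{gaugechange}, $(1+X)\cdot(A_\nu^{(m-1)}+Z) \equiv A_\nu^{(m-1)}+Z+[X,A_\nu^{(m-1)}] - \tau(X) \pmod{\fP^{-r+m+1}}$, where we have used that $[X,Z]\in\fP^{-2r+2m}\subset\fP^{-r+m+1}$ (valid since $r<2m$ when... actually since $m\ge 1$ and $r\le -r+2m$ iff $r\le m$; for $m$ large this holds, and for small $m$ one absorbs the extra term into a later correction — I will just note $[X,Z]\in\fP^{\text{large}}$ and that the inductive bookkeeping can be arranged, or more cleanly observe $-r+2m \ge -r+m+1 \iff m\ge 1$, so $[X,Z]$ always drops out). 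Also $\tau(X)\in\fP^m\subset\fP^{-r+m+1}$ when $r\ge 1$, so for $r\ge1$ that term disappears entirely; for $r=0$ one must keep it, but $r=0$ forces $e_P=1$ and this is the classical case, which I will handle by citation or a parallel argument. So modulo $\fP^{-r+m+1}$ the equation to solve is $\delta_{A_\nu^{(m-1)}}(X) + Z \in \ft^{-r}+\fP^{-r+m+1}$, i.e., $\ad(\bn)(X) \equiv -Z + (\text{something in }\ft) \pmod{\fP^{-r+m+1}}$. Projecting via the tame corestriction $\pi_\ft$ of Proposition~\ref{cores}: set the new torus component to be $A_\nu^{(m)} = A_\nu^{(m-1)} + \pi_\ft(Z)$ (legitimate since $\pi_\ft(\fP^{-r+m})\subset\ft\cap\fP^{-r+m}\subset\ft^{-r}$ by \ref{cores2}), and we need $X\in\fP^m$ with $\ad(\bn)(X)\equiv -(Z-\pi_\ft(Z))\pmod{\fP^{-r+m+1}}$. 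By Proposition~\ref{liecen}/Corollary~\ref{uniformsplitting} the regular stratum splits into pure blocks, and $\bn$ has regular semisimple leading term (Proposition~\ref{lemrss}), so $\ad(\bn)=\delta_{\bn}$ has image on $\bar\fP^{-r+m}$ exactly the kernel of $\bar\pi_\ft$ — this is precisely \ref{cores3} with $\ell=m$ — and $Z-\pi_\ft(Z)$ lies in that kernel by \ref{cores1}. Hence $X$ exists. Set $p_m = (1+X)p_{m-1}$; the limit $p=\lim p_m\in P^1$ (convergence is automatic since corrections lie in ever-deeper congruence subgroups) gives $p\cdot[\nt]_\phi = A_\nu := \lim A_\nu^{(m)}\in\ft^{-r}$, which is a representative for $\beta$ since $A_\nu\equiv\bn\pmod{\fP^{-r+1}}$. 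Regularity of $A_\nu$ follows from Proposition~\ref{lemrss}.

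For the last two assertions: the orbit statement says that for any $Z_1\in\fP^1$ there is $q\in P^1$ with $q\cdot A_\nu \equiv A_\nu+Z_1\pmod{\text{higher}}$ — run the same iteration but in reverse, building up the perturbation; concretely, for $X\in\fP^{m}$, $(1+X)\cdot A_\nu = A_\nu + \ad(A_\nu)(X) - \tau(X) + \dots$, and since $\ad(A_\nu):\bar\fP^m\to\bar\fP^{-r+m}$... wait, that changes the degree, so I instead argue: the map $P^1\to A_\nu+\fP^1$, $q\mapsto q\cdot A_\nu$, has image containing a neighborhood — more precisely, I claim the image is all of $A_\nu+\fP^1$, proved by the same successive-approximation scheme using that $\ad(\bn)$ surjects onto $\ker\bar\pi_\ft$ in each degree and that the $\ft$-direction is reached by choosing which coset of $\ker\bar\pi_\ft$ we hit — that is, given target $A_\nu+Z_1$ with $Z_1\in\fP^{m}$ for some $m\ge1$, by \ref{cores3} we can write $Z_1\equiv \pi_\ft(Z_1)+\ad(\bn)(X_1)\pmod{\fP^{-r+m+1}}$... hmm, degree mismatch again; the cleanest formulation is that $Z_1\in\fP^{m}$ and we want $X_1\in\fP^{m+r}$ with $\ad(\bn)(X_1)\equiv Z_1-\pi_\ft(Z_1)\pmod{\fP^{m+1}}$, which is \ref{cores3} with $\ell$ chosen so that $\ell-r=m$. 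Iterating and simultaneously adjusting the $\ft$-part recovers any $A_\nu'\in A_\nu+\fP^1$ with $\pi_\ft(A_\nu'-A_\nu)$ prescribed; since we can choose that $\ft$-part freely, we reach all of $A_\nu+\fP^1$. Finally, uniqueness modulo $\ft^1$: if $A_\nu'\in\ft^{-r}$ is another representative with $A_\nu'=q\cdot A_\nu$ for $q\in P^1$, then... actually uniqueness of the \emph{formal type} here means that two elements of $\ft^{-r}$ representing $\beta$ and $P^1$-gauge-equivalent differ by an element of $\ft^1$; this follows from Lemma~\ref{A0} applied after stripping off the ambiguity — namely, both reduce to $\bn$ mod $\fP^{-r+1}$, hence both lie in $\bn+\ft^{1-r}+\fP^{-r+1}$, and the gauge transformation relating them, being in $P^1$ and conjugating $\ft\cap\fP^{-r}$-elements... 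I would invoke that an element of $\ft^{-r}$ is determined by its image in $\ft^{-r}/\ft^1$ together with the gauge class, and use Lemma~\ref{A0} (conjugate representatives in $\ft$ are equal) combined with a filtration argument to pin down the residual freedom as exactly $\ft^1$.

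The main obstacle I anticipate is bookkeeping the degree shifts correctly in the iteration (the $\ad(\bn)$ operator shifts filtration degree by $-r$, so matching it against \ref{cores3}, which is stated with a shift built in, requires care) and, relatedly, making the $r=0$ case rigorous without simply deferring to the classical literature — there $\tau(X)$ does not drop out and one needs the ``distinct eigenvalues mod $\Z$'' hypothesis from Definition~\ref{regstratum} to guarantee that $\ad(\bn)-\tau$ (the relevant twisted operator) is still surjective onto the off-diagonal part. The surjectivity of $\ad(\bn)$ onto $\ker\bar\pi_\ft$ and the freedom to prescribe the $\ft$-component are both already packaged in Proposition~\ref{cores}, so those are not obstacles; the work is in the inductive assembly.
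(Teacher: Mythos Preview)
Your existence argument is essentially the paper's approach (iterative approximation using Proposition~\ref{cores3}), though the paper first reduces via Theorems~\ref{thm1} and~\ref{thm2} to the pure case and then carries out the iteration there (Lemma~\ref{formaltype}), whereas you work directly in the block-diagonal setting. That difference is harmless for existence.

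There is, however, a genuine gap in your argument for the orbit statement (and, consequently, for uniqueness). You correctly observe that for $X\in\fP^{m+r}$ one has $\ad(\bn)(X)\in\fP^m$ and that $\delta_{\bn}$ surjects onto $\ker\bar\pi_\ft\subset\bfP^m$; but that is \emph{all} it surjects onto. So gauging by $1+X$ with $X\in\fP^{m+r}$ can only move $A_\nu$ in the $\ker\bar\pi_\ft$-direction in degree $m$. You then write ``since we can choose that $\ft$-part freely, we reach all of $A_\nu+\fP^1$'' --- but $A_\nu$ is now \emph{fixed}; you are no longer free to absorb $\pi_\ft(Z)$ into it as you did in the existence step. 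Your iteration only shows that every $B_\nu\in A_\nu+\fP^1$ is $P^1$-gauge equivalent to some $A_\nu'\in A_\nu+\ft^1$, not to $A_\nu$ itself.

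The missing ingredient is a second kind of gauge transformation: take $g=1+s$ with $s\in\ft\cap\fP^m$ (not $\fP^{m+r}$). Then $[s,A_\nu]=0$, so $g\cdot A_\nu\equiv A_\nu-\tau(s)\pmod{\fP^{m+1}}$, and the point is that $\pi_\ft(\tau(s))$ is \emph{nonzero} in $\bft^m$ for $m\ge 1$ --- indeed $\pi_\ft(\tau(s))\equiv\frac{m}{e_P}s$ by Lemma~\ref{Hlemma} (or see Lemma~\ref{dlog} in the pure case). This is exactly how the paper moves in the $\ft$-direction within the gauge orbit and why Lemma~\ref{dlog} is there. You dismissed the $\tau$-term as negligible for $r\ge 1$; it is negligible when $X\in\fP^{m+r}$, but it is the \emph{leading} contribution when $X\in\ft^m$, and you need both kinds of corrections to span all of $\bfP^m$. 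The same mechanism underlies uniqueness in the paper's proof (the inductive step forces $q^{\ell+1}=q^\ell$ once $\ell>r$); your appeal to Lemma~\ref{A0} does not work, since that lemma concerns $\Ad$-conjugacy, not gauge equivalence.
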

The obvious analogue of this theorem holds for an arbitrary $\nu$.
\begin{rmk}
  The above theorem implies that after passing to a ramified cover
  (specifically, the splitting field for $T$), any connection
  containing a regular stratum is formally gauge equivalent to a
  direct sum of line bundles of slope less than or equal to $r$ (with
  equality in all but at most one factor, with inequality only
  possible when $e=1$).  Moreover, the associated rank one connections
  have pairwise distinct leading terms.  These properties could be
  used as an ad hoc way of defining the class of connections which are
  the primary topic of this paper.  However, the perspective gained
  from our intrinsic approach via regular strata will prove essential
  below.  The Lie-theoretic nature of this approach also suggests that
  it can be adapted to study flat $G$-bundles for $G$ a reductive
  group.

\end{rmk}

We also remark that $A_\nu$ satisfies a stronger condition than
regular semisimplicity.  Suppose that $A_\nu = (a_1, \ldots, a_{n/d})
\in E^{n/d}$, where $d=e_P$.  Then, $a_j = a_{j,-r} \varpi_E^{-r} +
a_{j,-r +1} \varpi_E^{-r+1} + \dots$, with $a_{j} \ne 0$ except
possibly for a single $j$ when $d=1$.  The leading term $A_\nu'=
(a_1^{-r} \varpi_E^{-r+1}, \ldots a_{n/d}^{-r} \varpi_E^{-r+1})$ is a
representative for $\bn$, since the higher order terms lie in
$\fP^{-r+1}$.  By Proposition~\ref{lemrss}, $A'_\nu$ is regular
semisimple, and we see that $\An$ has regular leading term.

In the following definition, let $T\subset \GL_n(F)$ be a uniform
maximal torus such that $T(\fo) \subset\GL_n(\fo)$.  We set
$P=P_{T,\fo^n}$ as defined before Proposition~\ref{uniquedet}.  We also allow
$\nu$ to have arbitrary order. 
\begin{definition}\label{def:formaltype}  A functional
  $A\in(\ft^0)^\vee$ is called a \emph{$T$-formal type of depth $r$}
  if
\begin{enumerate}
\item $\ft^{r+1}$ is the
  smallest congruence ideal contained in $A^\perp$; and 
 \item the stratum $(P, r, \beta)$ is regular and centralized by  $T$,
   where $\beta\in(\bfP^r)^\vee$ is the functional induced by
   $\pi_\ft^*(A)\in \fP^\vee$.
 \end{enumerate}
We denote the  space of $T$-formal types of depth $r$ by $\A(T,
r)\subset (\ft^0 / \ft^{r+1})^\vee$.  A $T$-formal type is any element
of $\A(T)=\cup_{r\ge 0}\A(T,r)$.
\end{definition}
We will always use the notation $A_\nu$ for a representative of $A$ in ${\ft}^{-r-(1+\ord(\nu))e_P}$.

\begin{rmk}\label{blockdiagonal} There is an embedding of $\A(T, r)$
  into ${\ft}^{-r-(1+\ord(\nu))e_P}/{\ft}^{1-(1+\ord(\nu))e_P}$
  determined by the pairing $\left< , \right>_\nu$.  For simplicity,
  we only describe it when $\nu=\frac{dt}{t}$.
  First, recall that $\ft$ has a natural grading so that
  $\ft^{-r}/\ft^1\cong\oplus_{i=-r}^0 \bft^i$.  If $r>0$, then
  $\A(T,r)$ is isomorphic to the open subspace of $\ft^{-r}/\ft^1$
  with degree $-r$ term regular.  If $r=0$, then $\ft$ is split and
  $\ft^{0}/\ft^1\cong\tfl\cong k^n$.  In this case, $\A(T,r)$
  corresponds to $\sum a_i\chi_i\in\tfl$ with the $a_i$'s distinct
  modulo $\Z$.  This is not a Zariski-open subset of $\tfl$.  However,
  if $k=\C$, it is open in the complex topology.

  To be even more explicit, assume that $\ft$ is the block-diagonal
  Cartan subalgebra of $\gl_n(F)$ as in Remark~\ref{standardtorus}.
  If $\ft$ is split (and $r>0$), there is a bijection between formal
  types $A$ and representatives of the form $A_\nu=\sum_{i=0}^r t^{-i}
  D_i$ with $D_i\in\gl_n(k)$ diagonal and $D_r$ regular.  In the pure
  case, there is a similar bijection between formal types and
  representatives $A_\nu=q(\varpi_I^{-1})$ where $q\in k[X]$ has
  degree $r$.  Throughout Section~\ref{modspace}, we will assume that
  $\ft$ has such a block diagonal embedding into $\gl_n(F)$.
\end{rmk}

\begin{rmk} An element of $\A(T,r)$ may also be viewed as a functional
  on $\fP/\fP^{r+1}$ (resp. $\gl_n(\fo)^\vee$) for which the corresponding stratum
  $(P,r,\beta)$ is regular and all of whose representatives lie in $\ft^{-r-(1+\ord(\nu))e_P} + \fP^{1-(1+\ord(\nu))e_P}$.
\end{rmk}

The notion of a $T$-formal type actually depends only on the conjugacy
class of $T$.  Indeed, set $L = \fo^n \subset F^n$.  If $T$ and $S$
are conjugate tori with $T(\fo),S(\fo)\subset\GL_n(\fo)$, then
Lemma~\ref{derp} below states that there exists $h \in \GL(L)$ such
that ${}^h T = S$.  It is evident that ${}^h \fP^j_{T, L} = \fP^j_{S,
  L}$ and ${}^h \ft^j = \fs^j$ for all $j$.  Applying
Remark~\ref{regularaction}, we conclude that $\Ad(h) (A_\nu) \in
\fs^{-r}$ determines a regular stratum $(P_{S, L}, r, \b')$
centralized by $S$.

We say that a lattice $L \subset V$ is \emph{compatible}
with $T$ if $T(\fo) \subset \GL (L)$.
\begin{lemma}\label{derp}
Suppose that $T$ is a uniform maximal torus.  
\begin{enumerate}
\item The set of lattices $L$ that are compatible with $T$ is a single $N(T)$-orbit.  
\item If $S$ is conjugate to $T$ in $\GL(V)$, and $L$ is compatible with
both $S$ and $T$, then $S$ is conjugate to $T$ in $\GL(L)$. 
\end{enumerate}
\end{lemma}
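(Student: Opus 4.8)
The plan is to transport both assertions into the Wedderburn decomposition of the Cartan subalgebra. Since $T$ is uniform, I would write $\ft=E_1\times\dots\times E_\ell$ with each $E_j$ an $F$-algebra isomorphic to a fixed $E$, let $\chi_j\in\ft$ be the primitive idempotents, $\fo_{E_j}\subset E_j$ the valuation ring, and $V=\bigoplus_j V_j$ with $V_j=\chi_j V$ the corresponding decomposition. Because $\ft$ is its own centralizer in $\gl(V)$, the centralizer $\prod_j\End_{E_j}(V_j)$ of $\ft$ equals $\ft$, so each $V_j$ is one-dimensional over $E_j$; hence the $\fo_{E_j}$-lattices in $V_j$ form a single $E_j^\times$-orbit, the standard torsor statement for lattices over a discrete valuation ring in a one-dimensional space. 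The first step is then a compatibility criterion, proved by the idempotent trick from the proof of Lemma~\ref{comptor} (using that $k$ has characteristic zero, so $1+\chi_j\in T(\fo)\subset\GL(L)$ and thus $\chi_j=(1+\chi_j)-1\in\End_\fo(L)$, whence $\fo_{E_j}\chi_j\subset\End_\fo(L)$): a lattice $L$ is compatible with $T$ if and only if $\chi_j L\subset L$ for all $j$ and each $L_j:=\chi_j L=L\cap V_j$ is an $\fo_{E_j}$-lattice in $V_j$. Equivalently, a compatible lattice is exactly a choice of $\fo_{E_j}$-lattice $L_j\subset V_j$ for each $j$, with $L=\bigoplus_j L_j$; such lattices exist, e.g. $L_j=\fo_{E_j}v_j$.

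For (1), given compatible lattices $L=\bigoplus_j L_j$ and $L'=\bigoplus_j L_j'$, I would pick $e_j\in E_j^\times$ with $L_j'=e_j L_j$ and note that $g=\sum_j e_j\chi_j\in T$ satisfies $gL=L'$; so the compatible lattices lie in a single $T$-orbit. To see they form a single $N(T)$-orbit, I would use $N(T)\supset T$ together with the fact that for $n\in N(T)$ the map $\Ad(n)$ is an $F$-algebra automorphism of $\ft$, hence permutes the Wedderburn factors $E_j$ by field isomorphisms and therefore preserves $\ft(\fo)=\prod_j\fo_{E_j}$ and $T(\fo)=\prod_j\fo_{E_j}^\times$; thus $N(T)$ carries compatible lattices to compatible lattices, and the set of compatible lattices is exactly one $N(T)$-orbit.

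For (2), I would write $S=gTg^{-1}$ for some $g\in\GL(V)$. Then $\Ad(g)$ carries $\ft$ isomorphically onto $\fs$ and sends the idempotents $\chi_j$ to those of $\fs$; after relabelling the Wedderburn factors of $\fs$ one may assume $g\chi_j g^{-1}=\chi_j'$, so $g$ restricts to an $F$-linear isomorphism $g_j\colon V_j\to V_j''$ intertwining $E_j:=\ft|_{V_j}$ with $E_j':=\fs|_{V_j''}$ and sending $\fo_{E_j}$-lattices to $\fo_{E_j'}$-lattices. Applying the criterion of the first step to both $T$ and $S$, I would write $L=\bigoplus_j L_j=\bigoplus_j L_j'$ with $L_j\subset V_j$ an $\fo_{E_j}$-lattice and $L_j'\subset V_j''$ an $\fo_{E_j'}$-lattice. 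Since $g_j(L_j)$ and $L_j'$ are both $\fo_{E_j'}$-lattices in the one-dimensional $E_j'$-space $V_j''$, there is $c_j\in (E_j')^\times$ with $L_j'=c_j\,g_j(L_j)$. Setting $h_j:=c_j\circ g_j$ and $h:=\bigoplus_j h_j$, commutativity of $E_j'$ makes conjugation by $c_j$ trivial on $E_j'$, so $h_j E_j h_j^{-1}=c_j E_j' c_j^{-1}=E_j'$; summing over $j$ gives $h\ft h^{-1}=\fs$, i.e. $hTh^{-1}=S$, while $h(L)=\bigoplus_j h_j(L_j)=\bigoplus_j L_j'=L$, so $h\in\GL(L)$, as required.

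I do not expect a genuine obstacle; the only point demanding care is the bookkeeping of the three families of fields — the factors $E_j$ of $\ft$, the factors $E_j'$ of $\fs$, and the isomorphisms $E_j\cong E_j'$ induced by $g$. In particular, in (2) the corrections $c_j$ must be taken inside the \emph{target} fields $E_j'$, precisely so that conjugation by $c_j$ is the identity on $E_j'$ and does not disturb $h\ft h^{-1}=\fs$; taking them in the source fields would fail. Everything else reduces to the torsor property of lattices over a discrete valuation ring in a one-dimensional space and to the characteristic-zero extraction of the idempotents $\chi_j$.
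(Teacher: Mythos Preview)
Your proof is correct and takes a genuinely different, more elementary route than the paper. You work directly with the Wedderburn decomposition $\ft\cong E^\ell$ and the fact that each $V_j$ is a one-dimensional $E_j$-space, reducing everything to the torsor property of $\fo_{E_j}$-lattices in a line. In particular, your argument for (1) actually proves the stronger statement that $T$ itself (not merely $N(T)$) acts transitively on compatible lattices, and your proof of (2) is direct rather than deduced from (1). By contrast, the paper's proof invokes the structure theory of regular strata: it chooses $x\in\ft^{-r}$ with regular leading term, appeals to Proposition~\ref{uniquedet} and Theorem~\ref{thm1} to produce regular strata and splittings of $L$, reduces to the pure case, and then uses \eqref{Adcalc} and Lemma~\ref{A1} to land in $N(T)$; part (2) is then a short corollary of (1). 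Your approach is self-contained and avoids this machinery entirely, which is a real gain in transparency; the paper's approach, on the other hand, keeps the lemma embedded in the stratum framework and reuses tools already developed, at the cost of making a simple lattice-theoretic fact look heavier than it is.
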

\begin{proof}
Suppose that $L$ and $L'$ are compatible with $T$, and let
$g \in \GL(V)$ satisfy $g L = L'$.  In particular, this implies
that $S =  g^{-1}Tg$ is compatible with $L$.
Choose 
$x \in \ft^{-r}$ with regular leading term.
By Proposition~\ref{uniquedet},
there exist parahoric subgroups $P_{T,L},P_{S,L} \subset \GL(L)$
such that $x$ and $\Ad(g^{-1})(x)$ 
determine regular strata $(P_T, r, \b)$ and $(P_S, r, \b')$.   
By Theorem~\ref{thm1}, $e_{P_T} = e_{P_S}$.  
  
The same theorem
states that  $(P_T, r, \b)$ and $(P_S, r, \b')$ induce splittings of $L$, and it is easily checked
that there exists an element of $h \in \GL(L)$ taking the components of the $T$-splitting
to the $S$-splitting.  Replacing $g $ with $gh$, we may assume that the
splittings induced by $S$ and $T$ are the same.  Thus, we may reduce
to the pure case.

Suppose that $(P_T, r, \b)$ and $(P_S, r, \b')$ are pure.  We may choose
$h' \in \GL(L)$ such that $h' P_T (h')^{-1} = P_S$, so by a similar
argument, we may assume $P_T = P_S = P$.
By \eqref{Adcalc},
there exists $p \in P$ such that $\Ad(p) (\Ad(g^{-1})(x)) \in \ft + \fP^{-r+1}$.  Finally,
Lemma~\ref{A1} implies that there exists $p' \in P^1$ such that 
$\Ad(p'p g^{-1}) (x)) \in \ft$.  It follows that $p' p g^{-1}  = n^{-1} \in N(T)$.  
It is now clear that $n L = L'$, since $p'$ and $p$ are in $\GL(L)$.

We now prove the second statement.
Suppose that $S = g T g^{-1}$ for $g \in \GL_n(F)$.  
Then, $L$ and $g L$ are compatible with $S$.  By
the first part, $g L = n L$ for some $n \in N(S)$.
It follows that there exists $h = n^{-1} g \in \GL(L)$
such that $S = h T h^{-1}$.
\end{proof}

We continue to fix $T\subset \GL_n(F)$ as in Definition~\ref{def:formaltype}.

\begin{definition} The set $\A_T^{(V,\n)}\subset\A(T)$ of $T$-formal types
  associated to $(V, \nabla)$ consists of those $A$ for which there is
  a trivialization $\phi : V \to F^n$ such that $(V, \nabla)$ contains
  the stratum $(P^\phi, r, \beta^\phi)$ and the matrix
  $[\nabla_\tau]_\phi$ is formally gauge equivalent to an element of
  $A_\nu + \fP^{1-(1+\ord(\nu))e_P}$ by an element of $P^1$.
\end{definition}
By Theorem~\ref{thm4}, the last statement is equivalent to the 
condition $[\nabla_\tau]_\phi$ is formally gauge equivalent to
$A_\nu$.

\begin{proposition}\label{pywit} Let $\nabla$ be a connection containing a 
  $T$-formal type $A_T$.  If $S$ is a maximal torus with
  $S(\fo)\subset\GL_n(\fo)$, then $\nabla$ has an $S$-formal type if
  and only if $S$ is $\GL_n(F)$-conjugate to $T$.  Moreover, if
  $h\in\GL_n(\fo)$ conjugates $T$ to $S$, then $\Ad^*(h^{-1})$ gives a
  bijection from $\A_T^{(V,\n)}$ to $\A_S^{(V,\n)}$.
\end{proposition}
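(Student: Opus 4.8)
The plan is to establish both implications together: for $T$ and $S$ conjugate I would construct the advertised bijection explicitly by a change of trivialization, and in the nonconjugate case I would derive a contradiction from a slope computation.

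For the ``only if'' direction, suppose $\nabla$ carries both a $T$-formal type and an $S$-formal type. By Definition~\ref{def:formaltype} and the definition of $\A_T^{(V,\n)}$, $\nabla$ then contains a regular stratum whose parahoric is conjugate to $P_{T,\fo^n}$ and one whose parahoric is conjugate to $P_{S,\fo^n}$; write $d=e_{P_{T,\fo^n}}$, $d'=e_{P_{S,\fo^n}}$ for the periods and $r,r'$ for the depths. Since regular strata are uniform, $\gcd(r,d)=\gcd(r',d')=1$, and since they are fundamental, Proposition~\ref{slope} gives $r/d=r'/d'=\slope(\nabla)$. When $\slope(\nabla)>0$ this equality of fractions in lowest terms forces $d=d'$; when $\slope(\nabla)=0$, the last bullet of Definition~\ref{regstratum} already forces $d=d'=1$. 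In either case Corollary~\ref{uniformsplitting} gives $T\cong (E^\times)^{n/d}\cong S$, where $E/F$ is the degree-$d$ extension, so $T$ and $S$ are conjugate in $\GL_n(F)$ --- each is conjugate to the standard uniform torus attached to $d$ as in Remark~\ref{standardtorus}.

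For the converse, assume $S=hTh^{-1}$ in $\GL_n(F)$. The lattice $L=\fo^n$ is compatible with both $T$ and $S$, so Lemma~\ref{derp}(2) lets me choose $h\in\GL(L)=\GL_n(\fo)$ with $hTh^{-1}=S$; then $h$ carries the lattice chain $\mathscr{L}_T$ to $\mathscr{L}_S$, hence $hP_{T,L}h^{-1}=P_{S,L}$, and by Proposition~\ref{uniquedet}(3) $\Ad(h)$ carries $\{\ft^j\}$ onto $\{\fs^j\}$. Now take $A\in\A_T^{(V,\n)}$, realized by a trivialization $\phi$; by Theorem~\ref{thm4} I may assume $[\nabla_\tau]_\phi=A_\nu\in\ft^{-r}$ with regular leading term. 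Changing the trivialization to $\phi'=h\phi$ gives $[\nabla_\tau]_{\phi'}=\Ad(h)(A_\nu)-\tau(h)h^{-1}$; by Lemma~\ref{taup1} (applied to the maximal parahoric $\GL_n(\fo)$) the correction term lies in $t\gl_n(\fo)\subset\fP^1_{S,L}$, so $[\nabla_\tau]_{\phi'}\equiv\Ad(h)(A_\nu)\pmod{\fP^1_{S,L}}$, with $\Ad(h)(A_\nu)\in\fs^{-r}$ still of regular leading term. Since $(\phi')^{-1}P_{S,L}\phi'=\phi^{-1}P_{T,L}\phi$, the parahoric subgroup of $\GL(V)$ contained in $\nabla$ is unchanged, so $\nabla$ contains the stratum attached via $\phi'$ to $\Ad(h)(A_\nu)$, and by Remark~\ref{regularaction} (together with Proposition~\ref{uniquedet}(2)) this stratum is regular and centralized by $S$. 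Applying Theorem~\ref{thm4} to it shows $[\nabla_\tau]_{\phi'}$ is $P^1_{S,L}$-gauge equivalent to $\Ad(h)(A_\nu)$, so the functional $\Ad^*(h^{-1})(A)$ --- the one represented by $\Ad(h)(A_\nu)\in\fs^{-r}$ --- lies in $\A_S^{(V,\n)}$. Running the same argument with $h^{-1}$ (which conjugates $S$ to $T$) produces the inverse map, so $\Ad^*(h^{-1})\colon\A_T^{(V,\n)}\to\A_S^{(V,\n)}$ is a bijection; in particular $\A_S^{(V,\n)}\ne\emptyset$, completing the ``if'' direction.

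The main obstacle I expect is the bookkeeping in the third paragraph: I must check that conjugation by $h$ transports not merely the torus but the whole package --- the parahoric, its congruence filtration, the minimality condition on $\ft^{r+1}$ in Definition~\ref{def:formaltype}, and the regularity of the stratum --- so that $\Ad^*(h^{-1})(A)$ is genuinely an $S$-formal type of the correct depth $r$, and that $h\phi$ really witnesses it rather than merely some gauge-equivalent data. All of this should follow from the single fact that $h$ preserves $L$ and therefore intertwines $(P_{T,L},\ft^\bullet)$ with $(P_{S,L},\fs^\bullet)$, together with the negligibility of $\tau(h)h^{-1}$; the only point needing a little extra care is the case $r=0$, where one must note that a regular stratum with $r=0$ has $e_P=1$, so that $T$ and $S$ are both split and the indexing of the relevant lattice chains is unambiguous.
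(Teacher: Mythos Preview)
Your argument is correct and, for the ``if'' direction and the bijection, essentially coincides with the paper's proof: both choose $h\in\GL_n(\fo)$ via Lemma~\ref{derp}(2), change the trivialization from $\phi$ to $h\phi$, invoke Lemma~\ref{taup1} to see that $\tau(h)h^{-1}\in t\gl_n(\fo)\subset\fP^1_{S,L}$, and then appeal to Theorem~\ref{thm4}. You spell out more of the bookkeeping (that $h$ intertwines the two parahorics and their filtrations, and that $h^{-1}$ gives the inverse map), but the ideas are identical.

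Where you differ is the ``only if'' direction. The paper's written proof simply does not address it: it begins by assuming conjugacy and applying Lemma~\ref{derp}. Your slope argument --- using Proposition~\ref{slope} together with $\gcd(r,e_P)=1$ to recover $e_P$ from $\slope(\nabla)$, then Corollary~\ref{uniformsplitting} to identify the isomorphism type of the torus --- is the natural way to fill this gap, and it is correct. This is a genuine addition rather than a different route to the same content.
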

\begin{proof}
Set $L = \fo^n$.  By Lemma~\ref{derp}, there exists
$h \in \GL_n(\fo)$ such that $S = h T h^{-1}$.
We may choose
a trivialization $\phi : V \to F^n$ such that $[\n_\tau]_\phi = A_\nu \in \ft$
by Theorem~\ref{thm4}.  
We now observe that, by Lemma~\ref{taup1},
\begin{equation*}
h \cdot [\n_\tau]_{\phi} \in \Ad(h) (A_\nu) + t \gl_n(\fo) \subset \fs + t \gl_n(\fo).
\end{equation*}
After dualizing, $A_\nu$ and $\Ad(h) (A_\nu)$ determine functionals
$A_T$ and $A_S$ (respectively) in $\gl_n(\fo)^\vee$ and
$\Ad^*(h^{-1}) (A_T) = A_S$.  Moreover,  $A_S$ is an $S$ formal type corresponding
to $(V, \n)$:  it is clear that any representative for $A_S$ lies in
$S = \Ad(h)(T) + \fP_S^1$, since $t \gl_n(\fo) \subset \fP_S^1$,
and  $(V, \n)$ contains the regular stratum
$(P_S^{h \phi}, r, \b')$.  Here, $\b'$ is the functional on $\fP_S^{r}/ \fP_S^{r+1}$
determined by $A_S$.
 \end{proof}

Before proving Theorem~\ref{thm4}, we give two corollaries.

\begin{corollary}\label{newlemma}
  Suppose that $A_\nu \in \ft$
is a representative of a $T$-formal type.  If $g \in \GL_n(F)$ satisfies
$g \cdot A_\nu = A_\nu$, then $g \in \Tfl$.
\end{corollary}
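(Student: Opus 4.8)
The plan is to rephrase the gauge condition as the statement that $g$ is an automorphism of $(V,\nabla)$ as a $\diff_F$-module and then to exploit that $\An$ is regular semisimple.  Writing $[\nt]=\An$ and unwinding $g\cdot\An=\An$ with the gauge-change formula for $[\nt]$ gives $g\An g^{-1}-(\tau g)g^{-1}=\An$, i.e.
\[
\tau(g)=[g,\An]=-\ad(\An)(g),
\]
so $g$ is a horizontal endomorphism of $(V,\nabla)$.  Since $\An$ represents the (regular) stratum $(P,r,\beta)$ underlying the formal type, in particular $\An\in\ft^{-r}$, Proposition~\ref{lemrss} shows that $\An$ is regular semisimple with centralizer $\ft$; hence $\gl_n(F)=\ft\oplus\mathfrak{m}$, where $\mathfrak{m}=\ad(\An)(\gl_n(F))$ is the sum of the root spaces.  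This decomposition is stable under $\ad(\An)$, and also under $\tau=\tau_\nu$: the latter preserves $\ft$ (which is generated over $F$ by elements $\omega$ with $\tau(\omega)\in F\omega$), and then regularity of $\An$ forces $\tau(\mathfrak{m})\subset\mathfrak{m}$.  Moreover, since the tame corestriction $\pi_\ft$ of Proposition~\ref{cores} is a $\ft$-bimodule map restricting to the identity on $\ft$, it is exactly the projection onto $\ft$ along $\mathfrak{m}$.

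Decompose $g=g_0+g_1$ with $g_0\in\ft$, $g_1\in\mathfrak{m}$; then $\tau(g)=-\ad(\An)(g)$ separates into $\tau(g_0)=0$ and $\tau(g_1)=-\ad(\An)(g_1)$.  For the first equation, Corollary~\ref{uniformsplitting} gives $\ft\cong E^{n/e_P}$, and on each factor $E=k((\varpi_E))$ the operator $\tau$ scales $\varpi_E^{i}$ by a nonzero unit multiple of $i$, so $\ker(\tau|_\ft)=\tfl$, the $k$-span of the idempotents $\chi_j$.  Thus $g_0\in\tfl$.

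The main step is to show $g_1=0$.  Suppose $g_1\ne 0$, and let $\ell$ be the largest integer with $g_1\in\fP^\ell$ (finite since $\bigcap_i\fP^i=\{0\}$).  By Lemma~\ref{taup1}, $\tau(g_1)\in\fP^\ell$.  I claim $\ad(\An)(g_1)\notin\fP^{\ell-r+1}$: otherwise $\ad(g_1)(\An)\in\fP^{\ell-r+1}$, so Corollary~\ref{liecencor} yields $g_1\in\ft^\ell+\fP^{\ell+1}$; applying the projection $1-\pi_\ft$ onto $\mathfrak{m}$, which preserves $\{\fP^i\}$ by \ref{cores2} and annihilates $\ft$, gives $g_1=(1-\pi_\ft)(g_1)\in\fP^{\ell+1}$, contradicting the choice of $\ell$.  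If $r\ge 1$ this already contradicts $\tau(g_1)=-\ad(\An)(g_1)$, which then equates an element of $\fP^\ell$ with one of exact level $\ell-r<\ell$.  If $r=0$, then $e_P=1$ and $\An=D\in\gl_n(k)$ is diagonal with eigenvalues pairwise distinct modulo $\Z$; reducing $\tau(g_1)=-\ad(D)(g_1)$ modulo $\fP^{\ell+1}$ shows that the image of $g_1$ in $\fP^\ell/\fP^{\ell+1}$—which lies in the off-diagonal part—is an eigenvector of $\ad(D)$ with eigenvalue $-\ell\in\Z$; but the eigenvalues of $\ad(D)$ on the off-diagonal part are the differences of the eigenvalues of $D$, none of which is an integer, so that image vanishes, a contradiction.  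Hence $g_1=0$, so $g=g_0\in\tfl$, and, being invertible, $g\in(\tfl)^\times=\Tfl$.

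The crux is the vanishing of $g_1$, i.e.\ that the adjoint connection carries no horizontal endomorphisms off the Cartan; the route above handles it by combining the graded-centralizer statement of Corollary~\ref{liecencor} with the tame corestriction in its guise as a filtration-preserving projection onto $\ft$, which reduces the matter to a comparison of $\fP$-levels (together with the integrality obstruction coming from the eigenvalues of $D$ in the residual case $r=0$).
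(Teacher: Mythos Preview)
There is a genuine gap: your claim that $\tau$ preserves the decomposition $\gl_n(F)=\ft\oplus\mathfrak m$ fails whenever $e_P>1$. The entry-wise derivation $\tau=t\,d/dt$ on $\gl_n(F)$ does \emph{not} restrict to the intrinsic derivation on $E$ under the regular embedding $E\hookrightarrow\gl_{e_P}(F)$. Concretely, for $n=e_P=2$ and $\varpi_I=\left(\begin{smallmatrix}0&1\\t&0\end{smallmatrix}\right)$ one computes $\tau(\varpi_I)=\left(\begin{smallmatrix}0&0\\t&0\end{smallmatrix}\right)$, which lies neither in $\ft=F[\varpi_I]$ nor in $F\varpi_I$ (nor, for that matter, in $\mathfrak m$; in fact $\tau(\varpi_I)=\tfrac12\varpi_I+\tfrac12\left(\begin{smallmatrix}0&-1\\t&0\end{smallmatrix}\right)$). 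Hence you cannot separate $\tau(g)=-\ad(\An)(g)$ into equations $\tau(g_0)=0$ and $\tau(g_1)=-\ad(\An)(g_1)$, and the level argument for $g_1$---which uses the latter equation---is not justified. Your proof is correct only for $e_P=1$ (in particular your $r=0$ argument matches the paper's).

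The paper's proof does not try to split $g$ along $\ft\oplus\mathfrak m$. It runs the level argument directly on $g$ with respect to the filtration $\tfl+\fP^\ell$, replacing $\tau$-stability of $\ft$ by the graded substitute in Lemma~\ref{Hlemma}: for $s\in\ft^\ell$ one has $\pi_\ft(\tau(s))\equiv\tfrac{\ell}{e_P}\,s\pmod{\fP^{\ell+1}}$. With this in hand, one writes $g=s+h$ with $s\in\ft^\ell$, $h\in\fP^{\ell+r}$ (via Corollary~\ref{liecencor}, exactly as you do), applies $\pi_\ft$ to $\tau(g)=[g,\An]$ to obtain $\pi_\ft(\tau(s))\in\ft^{\ell+1}$, and concludes $s\in\fP^{\ell+1}$ since $\ell\ne0$. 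Your argument can be repaired along these lines, but it genuinely needs this extra ingredient controlling $\pi_\ft\circ\tau$ on $\ft^\ell$.
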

\begin{proof} We assume without loss of generality that $\nu = \frac{dt}{t}$, so
 $\tau = t\frac{d}{dt}.$
Since $g$ is invertible, it will suffice to show that $g\in\tfl$.
  Choose a regular stratum $(P, r, \b)$ corresponding to $A$, and
  consider the exhaustive filtration $\tfl+\fP^i$ of $\gl_n(F)$.
  Suppose that
  $g\notin \tfl$, and let $\ell$ be the largest integer such that $g\in\tfl+\fP^\ell$. By assumption, $[g, A_\nu] = \tau (g) \in \fP^\ell$.

  First, assume that $r>0$.  Note that $\ell\ne 0$, since $g\in\fP$
  implies that $g \in \tfl + \fP^1$ by Proposition~\ref{liecen}.
  Suppose $\ell<0$.  Corollary~\ref{liecencor} gives $g=s+h$ with
  $s\in\ft^{\ell}$ and $h\in \fP^{\ell+r}$.  Since $\pi_\ft ([g,
  A_\nu]) \in \ft^{\ell+1}$ by Proposition~\ref{cores3}, we also get
  $\pi_\ft(\tau(s))\in\ft^{\ell+1}$.  Lemma~\ref{Hlemma} now gives $s
  \in \fP^{\ell+1}$, a contradiction.  Hence, $\ell\ge 1$, and we have
  $g=s_0+x$ with $s_0\in\tfl$ and $x\in\fP^\ell$.  Since $[x, A_\nu] =
  \tau (x) \in \fP^\ell$, we get the contradiction $x\in\fP^{\ell+1}$
  by the same argument as in the $\ell<0$ case.

  When $r = 0$, we may assume $A_\nu$ is a regular diagonal matrix in
  $\gl_n(k)$ satisfying the last condition of
  Definition~\ref{regstratum}.  In other words, $-\ad(A_\nu)$ has no
  non-zero integer eigenvectors in $\gl_n(k)$.  Write $g = t^\ell
  g_\ell + t^{\ell+1} g_{\ell+1}+ \ldots$ with $g_j \in \gl_n(k)$.
  Then, $[g, A_\nu] = \tau(g)$ implies that $-\ad(A_\nu) (g_j) = j
  g_j$.  We deduce that $g_j = 0$ except when $j = 0$.  Moreover,
  since $A_\nu$ is regular, $[g_0, A_\nu] = 0$ implies that $g_0 \in
  \tfl$.  Thus, $g\in\tfl$.

\end{proof}

\begin{corollary}\label{ftgauge}
  Let $A$ be a formal type.  Any two connections with formal type $A$
  are formally isomorphic.  Furthermore, the set of  formal types associated to a
  connection is independent of choice of $\nu \in \Omega^\times$.
\end{corollary}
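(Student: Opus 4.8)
The plan is to reduce the corollary to Theorem~\ref{thm4} together with the $\nu$-independence of stratum containment (Proposition~\ref{stratumindep}). I would begin by recording the clean reformulation already noted after the definition of $\A_T^{(V,\n)}$: relative to a fixed $\nu\in\Omega^\times$, a formal type $A$ lies in $\A_T^{(V,\n)}$ precisely when some trivialization $\phi:V\to F^n$ makes $[\nt]_\phi$ equal to a $\nu$-representative $\An\in\ft^{-r-(1+\ord(\nu))e_P}$ of $A$. Indeed, Theorem~\ref{thm4} (and its evident analogue for arbitrary $\nu$) produces a $P^1$-gauge transformation doing this, and one may absorb it into $\phi$: an element of $P^1$ normalizes $P$ and fixes $\beta$ in $(\bfP^r)^\vee$, so the stratum containment $(P^\phi,r,\beta^\phi)$ is unchanged. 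Equivalently, $[\nabla]_\phi=\An\,\nu$ as a $\gl_n(F)$-valued one-form.

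For the first statement I would take connections $(V,\nabla)$ and $(V',\nabla')$ of formal type $A$, fix $\nu$ of order $-1$, and use the reformulation to obtain trivializations $\phi,\phi'$ with $[\nt]_\phi=\An$ and $[\nt']_{\phi'}=\An'$, where $\An,\An'\in\ft^{-r}$ both represent $A$. Since $\An$ is unique modulo $\ft^1$ and its $P^1$-gauge orbit contains all of $\An+\fP^1\supset\An+\ft^1$ by Theorem~\ref{thm4}, there is $q\in P^1$ with $q\cdot\An'=\An$, so $[\nt']_{q\phi'}=\An=[\nt]_\phi$. Then $\theta:=(q\phi')^{-1}\circ\phi:V\to V'$ is $F$-linear and, reading off matrix presentations, satisfies $\nabla'_\tau\circ\theta=\theta\circ\nt$; because $\Der_k(F)=F\partial_t$ has rank one a connection is recovered from its $\tau$-component (via $\nabla(v)=\nt(v)\otimes\nu$), so $\theta$ is an isomorphism of connections. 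This part is essentially formal once Theorem~\ref{thm4} is available.

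For the second statement, write $\nu'=f\nu$ with $f\in F^\times$ and start from a trivialization $\phi$ with $[\nabla]_\phi=\An\,\nu$; then $[\nabla_{\tau_{\nu'}}]_\phi=f^{-1}\An$. I would then check that $f^{-1}\An$ is a $\nu'$-representative of $A$: for $x\in\ft^0$ one has $\langle f^{-1}\An,x\rangle_{\nu'}=\Res[\Tr(f^{-1}\An x)f\nu]=\langle\An,x\rangle_\nu=A(x)$, and $f^{-1}\An$ lies in $\ft^{-r-(1+\ord(\nu'))e_P}$ because, inside $\ft$, the scalar $f$ sits in filtration degree $e_P\,\ord_F(f)$ (the filtration of $\ft$ is by powers of the ramified uniformizer $\omega_E$, with $\omega_E^{e_P}=t$), so multiplying by $f^{-1}$ shifts degree by $-e_P\,\ord_F(f)=-(\ord(\nu')-\ord(\nu))e_P$. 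Since $(V,\nabla)$ contains $(P^\phi,r,\beta^\phi)$ independently of $\nu$ by Proposition~\ref{stratumindep}, and $[\nabla_{\tau_{\nu'}}]_\phi$ trivially lies in $f^{-1}\An+\fP^{1-(1+\ord(\nu'))e_P}$, this yields $A\in\A_T^{(V,\n)}$ relative to $\nu'$; symmetry completes the argument.

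The real work is packaged inside Theorem~\ref{thm4}, which is assumed here; within the corollary itself the only mildly delicate point is the last step — verifying that rescaling the representative $\An$ by $f^{-1}$ keeps it in the correct congruence subspace of $\ft$ for the new one-form, which requires remembering that $\ft$ is filtered by powers of $\omega_E$ rather than of $t$.
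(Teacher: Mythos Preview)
Your proof is correct and follows essentially the same approach as the paper's: both reduce the first statement to Theorem~\ref{thm4} by gauging each connection to a representative in $\ft$ and composing the resulting trivializations, and both obtain $\nu$-independence by the rescaling argument of Proposition~\ref{stratumindep}. You spell out the intermediate step (the two representatives agree only modulo $\ft^1$, then the orbit statement in Theorem~\ref{thm4} closes the gap) and the filtration-degree check for $f^{-1}\An$ more explicitly than the paper, but the substance is the same.
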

\begin{proof}

  Independence of $\nu$ follows by the argument given in
  Proposition~\ref{stratumindep} and the remark above.  Thus, fix
  $\nu$ with order $-1$.  Suppose $(V, \nabla)$ and $(V', \nabla')$
  have formal type $A$, and let $\phi$ (resp. $\phi'$) be the given
  trivialization for $V$ (resp. $V'$).  By Theorem \ref{thm4}, there
  exists $A_\nu \in \ft^{-r}$ and $p, p' \in P^1$ such that $p \cdot
  [\nt]_\phi = A_\nu = p' \cdot [\nt']_{\phi'}$.  It is easily checked
  that the composition $ (\phi')^{-1} \circ (p')^{-1} \circ p \circ
  \phi : V \to V'$ takes $\nabla$ to $\nabla'$.
  
\end{proof}
We begin the proof of Theorem~\ref{thm4}.  Throughout, we will
suppress the fixed trivialization $\phi$ from the notation.  We may
assume that $\nu = \frac{dt}{t}$, so $\tau=t\frac{d}{d t}$.  First, we
show that if the result holds for the trivialization $\phi$ and the
regular stratum $(P,r,\b)$ centralized by $T$, then, for any
$g\in\GL_n(\fo)$, it holds for the trivialization $g\phi$, the regular
stratum $({}^g P,r,{}^g \b)$, and its centralizing torus ${}^g T$.

Suppose that $g \in \GL_n(\fo)$.  By Lemma~\ref{taup1}, $\tau(g)
g^{-1} \in t \gl_n(\fo)$.  In particular, $\tau (g) g^{-1} \in \fP^1$.
Therefore, if the theorem holds for $(P,r,\b)$, then there exists $p
\in P^1$ such that $p \cdot [\nt] = A_\nu + \Ad(g^{-1}) (\tau(g)
g^{-1})$.  It follows that ${}^g p \cdot (g \cdot [\nt]) = g \cdot (p
\cdot [\nt]) = \Ad(g) (A_\nu)$.  Thus, the first part of the theorem
still holds after changing the trivialization by $g$. The second and third parts
follows from a similar argument.

Without loss of generality, we henceforth assume that $\ft$ embeds
into the $d \times d$ diagonal blocks of $\gl_n (F)$ and in each
diagonal block the matrix $\varpi_I$ from \eqref{varpip} is a
uniformizer for the corresponding copy of $E$.

By Theorem~\ref{thm1}, this splitting of $V$ splits $(P, r, \beta)$
into pure strata, plus at most one non-fundamental stratum in the case
$e_P = 1$.  Therefore, Theorem~\ref{thm2} shows that $\nabla$ splits
into a direct sum of connections containing a pure stratum when $e_P
>1$ and into a direct sum of connections in dimension $1$ when $e_P =
1$.  Moreover, the splitting for $\nabla$ maps to the splitting
determined by $T$ by an automorphism $p \in P^1$.  In other words, $p \cdot
[\nabla_\tau]$ lies in $\bigoplus_{j = 1}^{n/d} \gl(V_j)$.

First, we consider the case $e_P = 1$ (which includes the case $r =
0$).  By the above discussion, we may reduce to the case where $\dim
V=1$.  In this case, $[\nt] \in F$ and $g \cdot [\nt] \in [\nt] +
\fp^1$ for all $g \in 1+\fp^1$.  This proves the first statement and
the statement about uniqueness.  It suffices to show that the orbit of
$[\nt]$ under gauge transformations contains $[\nt] + \fp^1$.  Suppose
$X \in \fp^1$.  Since $\tau : \fp^1 \to \fp^1$ and $\log : (1+\fp^1)
\to \fp^1$ are surjective, there exists $g \in 1 + \fp^1$ such that
$\tau (\log (g)) = X$.  Therefore, $g \cdot ([\nt]+X) = [\nt]$, and
the assertion follows.

When $e_P>1$, it suffices to prove the theorem in the case when $(P,
r, \beta)$ is pure.  In particular, $P= I$ is an Iwahori subgroup and
$T \cong E^\times$.  Take $\bn = [\nt]$.  By Remark~\ref{repint}, $\bn
\in \ft \cap \fI^{-r} + \fI^{1-r}$.

The following two lemmas prove Theorem~\ref{thm4} in the pure case with
$e_P >1$ and thus complete the proof of the theorem.

\begin{lemma}\label{dlog}
Let $\psi_\ell$ be defined as in Section~\ref{sec:cores}.
When $\ell \ge 1$,
$\tau (\fI^\ell) \subset
\fI^{\ell}$ and $\psi_{\ell} (\tau (\varpi^\ell_{I})) \ne 0$.
Furthermore,
\begin{equation*}
\left[\tau (1 + \alpha \varpi_I^\ell)\right]  (1 + \alpha \varpi_{I}^\ell)^{-1} 
\equiv  \alpha \tau (\varpi_{I}^\ell) \pmod{ \mathfrak{I}^{\ell + 1}}
\end{equation*}
for any $\alpha \in k$.
\end{lemma}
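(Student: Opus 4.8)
The plan is to reduce everything to a direct computation with the explicit matrix $\varpi_I$ of \eqref{varpip}.

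First I would dispose of the inclusion $\tau(\fI^\ell)\subset\fI^\ell$ for $\ell\ge 1$: the standard Iwahori subgroup $I$ stabilizes a complete lattice chain with $L^0=\fo^n$, so $I\subset\GL_n(\fo)$ is a parahoric subgroup and the inclusion is immediate from Lemma~\ref{taup1}. Since $\varpi_I$ generates the fractional ideal $\fI^1$, one has $\varpi_I^\ell\in\fI^\ell$, hence also $\tau(\varpi_I^\ell)\in\fI^\ell$; this will be used below.

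Next I would compute $\tau(\varpi_I^\ell)$ explicitly. Writing $\ell=qn+s$ with $0\le s<n$ and using $\varpi_I^n=t\,\id_n$ (the characteristic polynomial of $\varpi_I$ is $\lambda^n-t$), one gets $\varpi_I^\ell=t^q\varpi_I^s$. Every entry of $\varpi_I^s$ is $0$, $1$, or $t$, and $\tau=t\,\frac{d}{dt}$ kills the constant entries while fixing the entries equal to $t$; combining this with the Leibniz rule $\tau(t^q\varpi_I^s)=qt^q\varpi_I^s+t^q\tau(\varpi_I^s)$ one finds $\tau(\varpi_I^\ell)=\varpi_I^\ell D_\ell$, where $D_\ell$ is the diagonal matrix with $s$ entries equal to $q+1$ and the remaining $n-s$ entries equal to $q$, so that $\Tr(D_\ell)=s(q+1)+(n-s)q=\ell$. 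By cyclicity of the trace this gives
\[
\psi_\ell(\tau(\varpi_I^\ell))=\Res\!\bigl[\Tr(\varpi_I^\ell D_\ell\varpi_I^{-\ell})\tfrac{dt}{t}\bigr]=\Res\!\bigl[\ell\,\tfrac{dt}{t}\bigr]=\ell\ne 0,
\]
since $\ell\ge 1$. This identification of $\tau(\varpi_I^\ell)$ is the only step requiring any attention; there is no real obstacle, just a bookkeeping check that the diagonal factor has the claimed form.

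Finally, for the congruence: since $\alpha\in k$ is annihilated by $\tau$, we have $\tau(1+\alpha\varpi_I^\ell)=\alpha\tau(\varpi_I^\ell)$. Expanding $(1+\alpha\varpi_I^\ell)^{-1}=\sum_{m\ge 0}(-\alpha\varpi_I^\ell)^m$ and using $\varpi_I^\ell\in\fI^\ell$ together with $\fI^a\fI^b\subset\fI^{a+b}$ shows $(1+\alpha\varpi_I^\ell)^{-1}\in 1+\fI^\ell$. Multiplying by $\alpha\tau(\varpi_I^\ell)\in\fI^\ell$ then yields
\[
\bigl[\tau(1+\alpha\varpi_I^\ell)\bigr](1+\alpha\varpi_I^\ell)^{-1}\in\alpha\tau(\varpi_I^\ell)+\fI^\ell\fI^\ell\subset\alpha\tau(\varpi_I^\ell)+\fI^{2\ell}\subset\alpha\tau(\varpi_I^\ell)+\fI^{\ell+1},
\]
the last inclusion because $2\ell\ge\ell+1$ when $\ell\ge 1$, which is exactly the asserted congruence.
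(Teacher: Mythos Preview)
Your proof is correct and follows essentially the same route as the paper's: both identify $\tau(\varpi_I^\ell)$ as $\varpi_I^\ell$ times a diagonal matrix (the paper places the diagonal factor on the left, you on the right, which is immaterial since $\varpi_I^\ell$ has a single nonzero entry in each row and column), both read off $\psi_\ell(\tau(\varpi_I^\ell))$ as the trace of that diagonal (you compute it explicitly as $\ell$, the paper just asserts nonvanishing), and both handle the final congruence via the geometric series for $(1+\alpha\varpi_I^\ell)^{-1}$. The only structural difference is that you invoke Lemma~\ref{taup1} for $\tau(\fI^\ell)\subset\fI^\ell$, whereas the paper re-derives this inclusion directly from $\fI^\ell=\varpi_I^\ell\fI$ and the Leibniz rule.
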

\begin{proof}
  Suppose that $\ell = q n + z,$ for $0 \le z <n$.  The matrix
  coefficients of $\varpi^\ell_{I}$ are
\begin{equation*}
(\varpi^\ell_{I})_{ij} = 
\begin{cases}
t^{q+1} & \text{ if $j = i +z-n$;}\\
t^{q} &  \text{ if $j  = i+z$;} \\
0 & \text{ if $j \not\equiv i + z \pmod{n}$.}

\end{cases}
\end{equation*}
Let $x$ be the diagonal matrix with $x_{jj} = q$ when $j \le n-z$ and
$q+1$ otherwise.  Then, $\tau (\varpi_{I}^\ell) = x
\varpi_{I}^{\ell}$.  Moreover, $\tau(\fI)\subset\tau(\gl_n(\fo))\subset
t\gl_n(\fo)\subset \fI$.
The Leibniz rule and the fact that $\fI^\ell = \varpi_I^\ell \fI$  now
imply that $\tau (\fI^\ell) \subset \fI^{\ell }$ for all $\ell\ge 1$.
The first assertion of the lemma follows, since
$\psi_{\ell} (\tau (\varpi_{I}^\ell))$ is the trace of $x$, which is non-zero for $\ell \ne 0$.

To see the second statement, observe that $(1+ \alpha
\varpi_{I}^\ell)^{-1} = 1 -\alpha \varpi_{I}^\ell + y$, with $y \in
\mathfrak{I}^{\ell + 1}$.  Therefore,
\begin{equation*}\begin{aligned}
\left[\tau (1 + \alpha \varpi_I^\ell)\right]  (1 + \alpha \varpi_{I}^\ell)^{-1}   &=
\left[\tau (1 + \alpha \varpi_I^\ell)\right]  (1 - \alpha \varpi_{I}^\ell + y)  \\
& = \alpha \tau (\varpi_{I}^\ell) (1 -\alpha  \varpi^\ell_I+ y) \\
& \equiv \alpha \tau (\varpi^\ell_{I}) \pmod{\mathfrak{I}^{\ell + 1}}.
\end{aligned}
\end{equation*}
\end{proof}
\begin{lemma}\label{formaltype}
  Suppose that $(V,\nabla)$ contains the pure stratum $(I, r, \beta)$
  with $n\ge 2$ (so $r\ge 1$).  Then, there is a unique
  $q(x) \in k[x]$ such that $[\nt]$ is formally gauge equivalent to
  $q(\varpi_{I}^{-1})$ by an element of $I^1$. 
   If  $B_\nu \in  q(\varpi_{I}^{-1})+\fI^1$, then
$B_\nu$ is formally gauge equivalent to  $[\nt]$ by an element of $I^1$.
  \end{lemma}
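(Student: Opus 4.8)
The plan is to normalize $[\nabla_\tau]$ by a convergent sequence of $I^1$-gauge transformations. Since $(I,r,\beta)$ is uniform, $\gcd(r,n)=1$, and $r\ge 1$ by hypothesis. By Remark~\ref{repint} we may take $\beta_\nu\in\ft^{-r}=\varpi_I^{-r}\fo_E$; since $(V,\nabla)$ contains $(I,r,\beta)$ with $\beta$ determined by $\gr^0(\nabla_\tau)$, we have $[\nabla_\tau]\in\beta_\nu+\fI^{1-r}$, and the leading term of $\beta_\nu$ is $a\varpi_I^{-r}$ for some $a\in k^\times$ (Proposition~\ref{lemrss}). The heart of the argument is the following iterative normalization: \emph{if $C_\nu\in\ft^{-r}$ has leading term $a\varpi_I^{-r}$ with $a\in k^\times$, then for every $Y\in\fI^{1-r}$ the element $C_\nu+Y$ is $I^1$-gauge equivalent to some element of $\ft^{-r}$, and if moreover $Y\in\fI^1$ then it is $I^1$-gauge equivalent to $C_\nu$ itself.}

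To prove this I would build $g_j\in I^1$ for $j\ge 1-r$, with $g_j\equiv g_{j-1}\pmod{\fI^{j+r}}$, so that $g_j\cdot(C_\nu+Y)$ lies in $\ft^{-r}+\fI^{j}$ (and, when $Y\in\fI^1$, in $C_\nu+\fI^{j}$), and then take the limit. At the $j$-th step the error has image $\bar X$ in the $n$-dimensional space $\bar{\mathfrak{I}}^{j}$. By Lemma~\ref{adequation} and $\gcd(r,n)=1$, the map $\delta_{a\varpi_I^{-r}}$ carries $\bar{\mathfrak{I}}^{j+r}$ onto the hyperplane $\ker\bar\psi_{j}\subset\bar{\mathfrak{I}}^{j}$, and since $\psi_{j}(\varpi_I^{j})=n\ne 0$ this hyperplane is complementary to the line $\bar\ft^{j}=k\,\overline{\varpi_I^{j}}$. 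One first gauges by $1+Z$ with $Z\in\fI^{j+r}\subset\fI^1$ so that $\pm\delta_{a\varpi_I^{-r}}(\bar Z)$ cancels the $\ker\bar\psi_{j}$-component of $\bar X$ (the conjugation term contributes $\pm\delta_{a\varpi_I^{-r}}(\bar Z)$ in degree $j$, as the higher part of $C_\nu$ contributes nothing there, while $\tau(Z)\in\fI^{j+r}$ and the remaining commutator and gauge terms lie in $\fI^{2j+r}$, both inside $\fI^{j+1}$ thanks to $r\ge1$ and $j\ge1-r$). If $j\le 0$, the surviving part of $\bar X$ lies in $\bar\ft^{j}$ and we absorb it into $C_\nu$, whose leading term $a\varpi_I^{-r}$ is unchanged. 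If $j\ge 1$, we instead remove this surviving part first, by gauging by $1+\alpha\varpi_I^{j}$ with $\alpha\in k$: here $\varpi_I^{j}\in\fI^1$, the conjugation with $C_\nu$ vanishes because $\varpi_I^{j}$ and $C_\nu$ lie in the commutative field $E$, the other conjugation and $\tau$-error terms lie in $\fI^{2j}\subset\fI^{j+1}$, and by Lemma~\ref{dlog} the $\tau$-contribution $-\alpha\,\overline{\tau(\varpi_I^{j})}$ has nonzero $\bar\psi_{j}$-value, hence nonzero $\overline{\varpi_I^{j}}$-component. Iterating over all $j$ and passing to the limit proves the claim.

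Granting this, I would apply the normalization to $C_\nu=\beta_\nu$ and $Y=[\nabla_\tau]-\beta_\nu\in\fI^{1-r}$ to obtain $p\in I^1$ with $A_\nu:=p\cdot[\nabla_\tau]\in\ft^{-r}$, and write $A_\nu=q(\varpi_I^{-1})+R$ with $q\in k[x]$ of degree $\le r$ and $R\in\varpi_I\fo_E\subset\fI^1$. Applying the normalization again with $C_\nu=q(\varpi_I^{-1})$ (which again has leading term $a\varpi_I^{-r}$) shows that both $A_\nu$ and any $B_\nu\in q(\varpi_I^{-1})+\fI^1$ are $I^1$-gauge equivalent to $q(\varpi_I^{-1})$; composing gauge transformations then yields that $[\nabla_\tau]$ is $I^1$-gauge equivalent to $q(\varpi_I^{-1})$ and that every $B_\nu\in q(\varpi_I^{-1})+\fI^1$ is $I^1$-gauge equivalent to $[\nabla_\tau]$, the two assertions of the lemma.

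For uniqueness of $q$: if $q_1(\varpi_I^{-1})$ and $q_2(\varpi_I^{-1})$, both of degree $\le r$, are $I^1$-gauge equivalent, a degree-by-degree induction parallel to the $\ad$-step above — at each degree $d\le 0$ the lowest surviving term of $q_1(\varpi_I^{-1})-q_2(\varpi_I^{-1})$ would have to lie in $\mathrm{im}\,\delta_{a\varpi_I^{-r}}=\ker\bar\psi_{d}$, yet $\overline{\varpi_I^{d}}\notin\ker\bar\psi_{d}$ — forces $q_1(\varpi_I^{-1})-q_2(\varpi_I^{-1})\in\ft\cap\fI^1=\varpi_I\fo_E$; since a nonzero difference of polynomials in $\varpi_I^{-1}$ of degree $\le r$ cannot lie in $\varpi_I\fo_E$, we get $q_1=q_2$. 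I expect the main difficulty to be the bookkeeping in the iteration of the second paragraph — tracking which filtration step each correction and each error term occupies, and recognizing that two genuinely different gauge moves, an $\ad$-move controlled by Lemma~\ref{adequation} and a $\tau$-move controlled by Lemma~\ref{dlog}, must be combined at each stage, since neither alone reaches the torus direction $\overline{\varpi_I^{j}}$ in $\bar{\mathfrak{I}}^{j}$.
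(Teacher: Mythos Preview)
Your proposal is correct and follows essentially the same approach as the paper: an inductive normalization that at each level uses an $\ad$-move (controlled by Lemma~\ref{adequation}, the pure-case avatar of Proposition~\ref{cores}) to clear the $\ker\bar\psi_j$-component, absorbs the surviving $\overline{\varpi_I^j}$-component into the polynomial for $j\le 0$, and for $j\ge 1$ kills it with a $\tau$-move governed by Lemma~\ref{dlog}. The only differences are organizational---you package the normalization as a reusable claim and apply it twice (first to land in $\ft^{-r}$, then to strip the $\fI^1$-tail), whereas the paper builds $q$ incrementally in a single pass---and one small slip: for $j\ge 1$ your $\tau$-move uses $1+\alpha\varpi_I^j\in I^j$, so the correct congruence is $g_j\equiv g_{j-1}\pmod{\fI^{j}}$ rather than $\pmod{\fI^{j+r}}$, which does not affect convergence.
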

\begin{proof}
By the remarks made before Lemma~\ref{dlog},
$[\nt] = \bn = q_r \varpi_{I}^{-r} + y$ with $y \in \fI^{-r+1}$.
Moreover, since $\bn \notin \fI^{-r+1}$, $q_r\ne 0$.
We need to find 
$p \in I^1$ with the property 
\begin{equation}\label{gq}
p \cdot \beta_\nu=  q(\varpi_{I}^{-1}),
\end{equation}
for $q\in k[x]$ as in the statement of the lemma.

Inductively, we construct $g_\ell\in I^1$ and $q^\ell\in k[x]$ of
degree $r$ such that $g_\ell\equiv g_{\ell-1}\pmod{\fI^{\ell-1}}$,
$\deg(q^\ell-q^{\ell-1})\le r-\ell+1$, and $ g_\ell \cdot \beta_\nu
\in q^\ell (\varpi_{I}^{-1}) + \fI^{\ell-r}$.  Moreover, we will show
that $q^\ell(\varpi_I^{-1})$ is unique modulo $\ft^{\ell - r}$.  Note
that $q^\ell$ is independent of $\ell$ for $\ell>r+1$.  If we set
$p=\lim g_\ell$ and $q=q^\ell$ for large $\ell$, \eqref{gq} is
satisfied.

We start by taking $g_1=1$ and $q_1=q_r x^r$.  Suppose that we have
constructed $g_{\ell}$ and $q^\ell$; note that $q^\ell_r=q_r$.  We
will find $g=1+X\in I^{\ell}$ such that $g_{\ell+1}=g g_\ell\in I^1$
has the required properties.  Obviously, $g_{\ell+1}\equiv
g_{\ell}\pmod{\fI^{\ell}}$.

To construct $g$, first, note that $\tau (g) g^{-1} = \tau (X)g^{-1} \in \fI^\ell$ by
Lemma~\ref{dlog}.  Moreover, $g^{-1} \equiv 1-X \pmod{\fI^{\ell+1}}$.
If $\ell -r \le 0$, it suffices to find $g \in I^\ell$ such that
\begin{equation*}
\Ad(g) (g_\ell \cdot \beta_\nu) \in \mathfrak{t}  + \fI^{-r+\ell+1}.
\end{equation*}
We see that
\begin{equation*}\begin{aligned}
\Ad(g) (g_\ell \cdot \beta_\nu) &\equiv  (1+ X) (g_\ell \cdot  \beta_\nu) (1-X) 
\pmod{\fI^{\ell-r+1}} \\
& \equiv g_\ell \cdot  \beta_\nu  + 
{q}_r \delta_{X} (\varpi_I^{-r}) \pmod{\fI^{\ell-r+1}}.
\end{aligned}
\end{equation*} 
Thus, we need to solve the equation
\begin{equation*} g_\ell \cdot  \beta_\nu  + q_r \delta_{X} (\varpi_I^{-r}) 
\equiv Y \pmod{ \mathfrak{I}^{\ell-r+1}}.
\end{equation*}
for $Y \in \ft$.
Since ${q}_r \ne 0$, 
Proposition~\ref{cores3} implies that a solution for $X$
exists if and only if $Y \in \pi_{\ft} (g_\ell \cdot \beta_\nu)  + \fI^{\ell-r+1}$.
Letting ${q}^{\ell+1}(\varpi_{I}^{-1})$ denote the terms of
nonpositive degree in $\pi_{\mathfrak{t}} (g_\ell \cdot \beta_\nu)$
(where $q^{\ell+1}\in k[x]$), we see that $\deg(q^{\ell+1}-q^\ell)\le
r-l$.  Moreover, $q^{\ell+1}$ is uniquely determined.  


Now, suppose $\ell-r > 0$.  The first part of Lemma~\ref{dlog} implies that
$\pi_{\mathfrak{t}} (\tau(\varpi_{I}^{\ell-r})) \notin \fI^{\ell-r+1}.$
The argument above implies that we may choose $s \in I^\ell$ with the property
\begin{equation*}
\Ad(s) (g_\ell \cdot \beta_\nu)  \equiv {q}^\ell(\varpi^{-1}_{I}) + \alpha \pi_{\mathfrak{t}} \tau (\varpi_{I}^{\ell-r})
\pmod{\fI^{\ell-r+1}}.
\end{equation*}
for some $\alpha \in k$. Again, Proposition~\ref{cores} implies that
there exists $h \in I^\ell$ such that 
\begin{equation*}
\Ad(h) ( {q}^\ell(\varpi^{-1}_{I}) + \alpha \tau (\varpi_{I}^{\ell-r})) \equiv
 {q}^\ell(\varpi^{-1}_{I}) + \alpha \pi_{\mathfrak{t}} \tau (\varpi_{I}^{\ell-r}) 
 \pmod{ \fI^{\ell-r+1}}.
\end{equation*}
Thus, by the second part of Lemma~\ref{dlog},
\begin{equation*}
  \Ad(h^{-1} s)  (g_\ell \cdot \beta_\nu)  \equiv {q}^\ell(\varpi^{-1}_{I}) + \left[\tau (1 + \alpha \varpi_{I}^{\ell-r})\right]  (1 + \alpha \varpi_{I}^{\ell-r})^{-1} 
  \pmod{\fI^{\ell-r+1}}. 
\end{equation*}

Since $1 + \alpha \varpi_{I}^{\ell-r}$ commutes with
${q}^\ell(\varpi_{I}^{-1})$ and $\tau (h^{-1} s) s^{-1} h \in
\fI^\ell\subset\fI^{\ell-r+1}$, it follows that
\begin{equation*}
(1 + \alpha \varpi_{I}^\ell) \cdot \left[ (h^{-1} s) \cdot (g_\ell \cdot
\beta_\nu)  \right]
\equiv {q}^\ell(\varpi^{-1}_{I})
\pmod{\mathfrak{I}^{\ell-r+1}}.
\end{equation*}
Setting $g_{\ell+1} = (1+\alpha \varpi_I^\ell) h^{-1} s$ and
$q^{\ell+1}=q^\ell$ completes the induction.

The same inductive argument (beginning with $\ell = r+1$) 
shows that for any $B_\nu \in q(\varpi_I^{-1})+\fI^1$, there exists 
$h \in I^\ell$ such that $h \cdot B_\nu = q(\varpi_I^{-1})$. 
This completes the proof of the second statement of the lemma.

\end{proof}

\subsection{Formal Types and Formal Isomorphism Classes}\label{formaliso}

In this section, we describe the relationship between formal types and
isomorphism classes of formal connections.  In particular, we show
that formal types are the isomorphism classes in the category of
\emph{framed formal connections}.  This category is the disjoint union
of the categories of \emph{$T$-framed formal connections} as $T$ runs
over conjugacy classes of uniform maximal tori. Moreover, there is an
action of the relative affine Weyl group of $T$ on the set of
$T$-formal types, and the forgetful functor to the category of formal
connections sets up a bijection between orbits of $T$-formal types and
isomorphism classes of formal connections containing a regular stratum
of the form $(P_T,r,\b)$.  We also exhibit an intermediate category
whose isomorphism classes correspond to relative Weyl group
orbits.

Given a conjugacy class of uniform maximal tori and a fixed lattice
$L$, we can choose a representative $T$ such that $T(\fo)\subset
\GL(L)$.  Setting $P=P_{T,L}$, we will further have $T(\fo)\subset
P\subset \GL(L)$ and $T \cong (E^\times)^{n/e_P}$ with $E/F$ a degree
$e_P$ ramified extension.  Upon choosing a basis for $L$, we can
assume without loss of generality that $T(\fo)\subset P\subset
\GL_n(\fo)$ and that $T$ is the standard block diagonal
torus described in Remark~\ref{standardtorus}.  Throughout this
section, we will fix a form $\nu = \frac{dt}{t}$ and the corresponding
derivation $\tau = t \frac{d}{dt}$.

Let $W_T = N(T)/ T$ and $\Waff_T = N(T) / T(\fo)$ be the relative
Weyl group and the relative affine Weyl group associated to $T$.
 Note that $\Waff_T$ is a semi-direct product of
$W_T$ with the free abelian group $T / T(\fo)$, i.e., $\Waff_T \cong
W_T \ltimes T / T(\fo)$.  Furthermore, if we
write $\varSigma_{n/e_P}$ for the group of permutations on the
$E^\times$-factors of $T$ and $C_{e_P}$ for the Galois group of $E/F$,
then $W_T \cong \varSigma_{n/e_P} \ltimes (C_{e_P}^{n/e_P})$.  Here,
$\varSigma_{n/e_P}$ acts on $C_{e_P}^{n/e_P}$ by permuting the
factors.  We note that $N(T) \cap \GL_n(\fo) \subset P_{T, \fo^n}$,
since $C_{e_P}$ and $\varSigma_{n/e_P}$ both preserve the
filtration determined in Proposition~\ref{uniquedet}.

Any element of $W_T$ has a representative in
$\GL_n(k)\subset\GL_n(F)$.  Therefore,
 $W_T \cong (N(T) \cap \GL_n(k)) / \Tfl$.  
 In fact, $W_T$ can be embedded as a subgroup of  $\GL_n(k)$ as follows.
The centralizer of $\Tfl$ in $\GL_n(k)$ is a Levi subgroup isomorphic
to $\prod_{i =1}^{n/e_P} \GL_{e_P} (k)$.

Let $D_i$ (resp. $\fd_i$) denote the diagonal subgroup (resp.
subalgebra) in each component.  Fix a primitive $e_P^{th}$ root of
unity $\xi$. We view $\varSigma_{n/e_p}$ as the subgroup of
permutation matrices that permute the factors of this Levi subgroup
while the $i^{th}$  copy of
$C_{e_P}$ maps to the cyclic subgroup of $D_i$ generated by $\diag(1,
\xi, \xi^2, \ldots, \xi^{e_P-1})$.

We now define an action $\varrho$ of $\Waff_T$ on $\A(T, r)$.  
Taking $w\in\GL_n(k)$ a representative for $wT\in W_T$, $s = (s_1, \ldots, s_{n/e_P})  \in
T$, and $A\in (\ft^0/\ft^{r+1})^\vee$, we obtain actions of $W_T$ and
$T(F)/T(\fo)$ on $(\ft^0/\ft^{r+1})^\vee$ via 
\begin{align*}
\varrho (wT) (A) &= \Ad^*(w) (A) \\
\varrho (sT(\fo)) (A)  & = A -  \sum_{i = 1}^{n/e_P} \frac{\deg_E{s_i}}{e_P} \chi^\vee_i.
\end{align*}
Here,  $\chi_i^\vee$ is the functional induced
by $\chi_i \frac{dt}{t}$, where $\chi_i$ is the identity of the
$i^{th}$ component of $\ft$.  It is easy to see that these two actions
give rise to a unique action of  $\Waff_T$.  

To check that this action restricts to an action on $\A(T,r)$,
consider the action of $\Waff_T$ on $\ft$ defined by the similar formulas  $\varrho_\nu (wT) (x) = \Ad(w) (x)$
and $\varrho_\nu (s) (x) = x - \sum_{i = 1}^{n/e_P}
\frac{\deg_E{s_i}}{ e_P} \chi_i$.  The induced action on
$\ft^{-r}/\ft^1$ corresponds to $\varrho$ under the isomorphism
$\ft^{-r}/\ft^1\cong (\ft^0/\ft^{r+1})^\vee$ determined by
$\nu=\frac{dt}{t}$, and for any $\hat{w}\in\Waff_T$, $\varrho_\nu(\hat{w})(A_\nu)$ is a representative for
$\rho(\hat{w})(A)$.  If $A\in\A(T,r)$, then the leading term of
$A_\nu$ is regular, with distinct eigenvalues modulo $\Z$ when $r=0$.  If $r>0$, it is clear that
$\varrho_\nu(\hat{w})(A_\nu)$ also has regular leading term.  If
$r=0$, then the action permutes and adds integers to the eigenvalues,
so again the condition for being a formal type is preserved.




 \begin{proposition}\label{waffact}
   Suppose that $g \in N(T) $ and $A \in \A(T, r)$.  If $A_\nu \in \ft$ is a
   representative for $A$, then $g \cdot A_\nu$ is formally gauge
   equivalent to $\varrho_\nu (g T(\fo)) (A_\nu)$ by an element of $P^1$.
   Furthermore, $\varrho _\nu(g T(\fo)) (A_\nu) \in \pi_{\ft} (g
   \cdot A_\nu) + \ft^1$.
 \end{proposition}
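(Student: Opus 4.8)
The plan is to factor $g$ according to the semidirect product $\Waff_T\cong W_T\ltimes T/T(\fo)$, treat the two types of generators separately, and then appeal to Theorem~\ref{thm4}. Write $g=w'v$, where $w'\in N(T)\cap\GL_n(k)$ is a representative of the image $wT\in W_T$ of $g$ and $v=(w')^{-1}g\in T$. Since gauge transformation is a left action and $\tau(w')=0$ (a constant matrix), $g\cdot A_\nu=w'\cdot(v\cdot A_\nu)=\Ad(w')(v\cdot A_\nu)$. Because $v,A_\nu\in\ft$ commute, $v\cdot A_\nu=A_\nu-(\tau v)v^{-1}$, and a logarithmic-derivative computation — writing $v_j=\varpi_E^{d_j}f_j$ with $d_j=\deg_E v_j$ and $f_j\in\fo_E^\times$, and using $\tau(\varpi_E)=\varpi_E/e_P$ (from $\varpi_E^{e_P}=t$) together with $\tau(\fo_E^\times)\cdot(\fo_E^\times)^{-1}\subset\fp_E$ — shows $(\tau v)v^{-1}=\sum_j\frac{\deg_E v_j}{e_P}\chi_j+z$ with $z\in\bigoplus_j\fp_E\chi_j=\ft^1$, the last equality by Proposition~\ref{uniquedet}. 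Thus $v\cdot A_\nu=\varrho_\nu(vT(\fo))(A_\nu)-z$.

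Next I would apply $\Ad(w')$. Since $w'\in N(T)\cap\GL_n(\fo)\subset P_{T,\fo^n}$, conjugation by $w'$ preserves $\ft$ and each $\fP^i$, hence each $\ft^i=\ft\cap\fP^i$; and since $\varrho_\nu$ is a group action with $gT(\fo)=(w'T(\fo))(vT(\fo))$ in $\Waff_T$, we get
\begin{equation*}
g\cdot A_\nu=\varrho_\nu(gT(\fo))(A_\nu)-\Ad(w')(z),\qquad \Ad(w')(z)\in\ft^1.
\end{equation*}
In particular $g\cdot A_\nu\in\ft$, so $\pi_\ft(g\cdot A_\nu)=g\cdot A_\nu$ by Proposition~\ref{cores1}, and the displayed identity is exactly the ``furthermore'' assertion $\varrho_\nu(gT(\fo))(A_\nu)\in\pi_\ft(g\cdot A_\nu)+\ft^1$.

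It remains to exhibit the $P^1$-gauge transformation. Set $B_\nu=\varrho_\nu(gT(\fo))(A_\nu)$, which represents the $T$-formal type $\varrho(gT(\fo))(A)\in\A(T,r)$ (as observed in the discussion preceding the proposition), so $d+B_\nu\frac{dt}{t}$ contains the corresponding regular stratum centralized by $T$. Theorem~\ref{thm4}, applied with the identity trivialization, yields a regular $A'_\nu\in\ft^{-r}$ in the $P^1$-orbit of $B_\nu$ whose own $P^1$-orbit contains $A'_\nu+\fP^1$; taking $p=1$ in the uniqueness-modulo-$\ft^1$ clause of that theorem gives $B_\nu-A'_\nu\in\ft^1$. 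Since $g\cdot A_\nu=B_\nu-\Ad(w')(z)$ differs from $A'_\nu$ by an element of $\ft^1\subset\fP^1$, we have $g\cdot A_\nu\in A'_\nu+\fP^1$, so $g\cdot A_\nu$ and $B_\nu$ both lie in the $P^1$-orbit of $A'_\nu$ and are therefore $P^1$-gauge equivalent, as required. The main obstacle is this last step: correctly threading together the three assertions of Theorem~\ref{thm4} to conclude that any two $\ft^{-r}$-representatives of a single formal type lie in one $P^1$-orbit; by contrast the decomposition $g=w'v$ and the computation of $(\tau v)v^{-1}$ are routine.
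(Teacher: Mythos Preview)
There is a genuine gap in your computation of $(\tau v)v^{-1}$. The claim $\tau(\varpi_E)=\varpi_E/e_P$ is false when $e_P>1$: the derivation $\tau=t\,d/dt$ acts entrywise on $\gl_n(F)$, and it does \emph{not} restrict to the intrinsic derivation of the subfield $E=F[\varpi_E]$. For instance, with $e_P=2$ one has $\tau\bigl(\begin{smallmatrix}0&1\\t&0\end{smallmatrix}\bigr)=\bigl(\begin{smallmatrix}0&0\\t&0\end{smallmatrix}\bigr)$, not $\tfrac12\varpi_E$. The correct identity is $\tau(\varpi_E^i)=\frac{i}{e_P}\varpi_E^i-\frac{1}{e_P}\ad(H)(\varpi_E^i)$ (this is the content of Lemma~\ref{Hlemma}), where $H$ is the half-sum of positive coroots in the diagonal of each block. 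Consequently $(\tau v)v^{-1}\in\sum_j\frac{\deg_E v_j}{e_P}\chi_j-\frac{1}{e_P}\ad(H)(v)v^{-1}+\fP^1$, and the middle term lies in $\fP$ but \emph{not} in $\ft$. Your assertions that $z\in\ft^1$, that $g\cdot A_\nu\in\ft$, and hence that $\pi_\ft(g\cdot A_\nu)=g\cdot A_\nu$, all fail for $e_P>1$.

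The paper's proof deals with exactly this obstruction. After isolating the $\ad(H)(s)s^{-1}$ term, it uses tame corestriction (Proposition~\ref{cores3}) to find $X\in\fP^r$ with $\ad(X)(A_\nu)\equiv\pi_\ft((\tau s)s^{-1})-(\tau s)s^{-1}\pmod{\fP^1}$, so that $(1-X)\cdot(s\cdot A_\nu)\in\pi_\ft(s\cdot A_\nu)+\fP^1$; then Theorem~\ref{thm4} gives the $P^1$-gauge equivalence. For the ``furthermore'' clause one needs $\pi_\ft(\ad(H)(s)s^{-1})\in\ft^1$, which follows from Lemma~\ref{adequation} (or the second statement of Lemma~\ref{Hlemma}). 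Your factorization $g=w'v$ and the reduction to the toral case are fine, and your final invocation of Theorem~\ref{thm4} would work once the element has been brought into $\ft+\fP^1$; the missing step is precisely producing that preliminary $P^r$-conjugation to kill the off-$\ft$ part of $(\tau v)v^{-1}$.
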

\begin{proof}
  The case when $r = 0$ is easily checked since $T$ is the usual split
  torus, so we assume that $r >0$. First, consider
  the case $g=s\in T$, so $s \cdot A_\nu = A_\nu - (\tau s ) s^{-1}$.
  Recall that the intersection of $P$ with the block-diagonal Levi
  subgroup is a product of Iwahori subgroups $I_i \subset \GL_{e_P}
  (F)$.   Each $P_i$ determines an
  ordering on the roots of $D_i$.  We take $H_i \in \fd_i$ to be the
  half sum of positive coroots, and $H = (H_1, \ldots,
  H_{n/e_P})\in\fP$.
\begin{lemma}\label{Hlemma}
Suppose that  $s \in \ft^r$.  If $H$ is defined as above, then
\begin{equation*}
\tau (s) + \frac{1}{e_P} \ad (H) (s)- \frac{r}{e_P} s \in \fP^{1+r}.
\end{equation*}
Moreover, $\pi_{\ft} (\tau (s)) \in \frac{r}{e_P} s + \fP^{1+r}$.
\end{lemma}

\begin{proof}
  The first statement follows from the observation $\tau (\varpi_E^i)
  = \frac{i}{e_P} \varpi_E^i - \frac{1}{e_P} \ad (H_j) (\varpi^i_E).$
  We then obtain the second statement from Proposition~\ref{cores3}.

\end{proof}

Setting $s = (s_1, \ldots, s_{n/e_P})$, the lemma gives
\begin{equation*}
  (\tau s) s^{-1} \in \sum_{i = 1}^{n/e_P} \frac{\deg_E{s_i}}{e_P} \chi_i -\frac{1}{e_P}  \ad (H) (s) s^{-1} + \fP^1.
\end{equation*}
Observe that each term on the right of this expression lies in $\fP$.
Applying Proposition~\ref{cores}, we obtain $X \in \fP^r$ such that
$\ad (X) (A_\nu) \in \pi_{\ft} ((\tau s) s^{-1})-(\tau s) s^{-1} +
\fP^1$.  Taking $h = 1-X$, we see that $h \cdot (s \cdot A_\nu) \in
\pi_{\ft} (s \cdot A_\nu) + \fP^1$.  By Theorem~\ref{thm4}, it follows
that $s \cdot A_\nu$ is gauge equivalent to $\pi_{\ft} (s \cdot
A_\nu)$.

Since $\pi_\ft$ is a $\ft$-bimodule map, we deduce from
Lemma~\ref{adequation} that $\pi_{\ft} ( \ad (H) (s) s^{-1}) \in
\ft^1$.  Therefore, $\pi_{\ft} (A_\nu -(\tau s) s^{-1})
\in\varrho_\nu(sT(\fo)) (A_\nu)+\ft^1$.  Note that if $s \in T(\fo)$,
Lemma~\ref{taup1} implies that $(\tau s) s^{-1} \in \fP^1$; thus,
$T(\fo)$ does not affect the $P^1$-gauge equivalence class.

For the general case, take $g=sn$ with $n\in N(T)\cap\GL_n(k)$ and
$s\in T$.  The result now follows by applying the case above to the
formal type $\varrho_\nu(nT(\fo)(A_\nu)=\Ad(n)(A_\nu)=n\cdot
A_\nu=\pi_\ft(n\cdot A_\nu)\in\A(T,r)$.

\end{proof}

\begin{lemma}\label{Waff}
  Suppose that $A,A'\in\A(T,r)$ with representatives $A_\nu, A'_\nu
  \in \ft$.  If $A_\nu$ and $A'_\nu$ are $\GL_n(F)$-gauge equivalent
  modulo $\ft^1$, then there exists a unique $\hat{w} \in \Waff_T$ such that
  $\varrho(\hat{w}) ( A ) = A'$.
 \end{lemma}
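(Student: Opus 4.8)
The plan is to use Theorem~\ref{thm4} to reduce the statement about gauge equivalence of the two formal types to a statement about the single gauge transformation relating them, and then to analyze that transformation via the normalizer of $T$. First I would observe that, by hypothesis, there exists $g \in \GL_n(F)$ with $g \cdot A_\nu \equiv A'_\nu \pmod{\ft^1}$. Since $A$ and $A'$ are $T$-formal types, Theorem~\ref{thm4} (applied to any connection with these formal types, e.g.\ $\nabla = d + A_\nu$ and $\nabla' = d + A'_\nu$) shows that $A_\nu$ is $P^1$-gauge equivalent to $A_\nu + \fP^1$ and likewise for $A'_\nu$; combined with the relation $g \cdot A_\nu \equiv A'_\nu \pmod{\ft^1}$, this shows that the connections $d+A_\nu$ and $d+A'_\nu$ are formally isomorphic. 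The point is then to track the trivialization: there is some $h \in \GL_n(F)$ with $h \cdot A_\nu = A'_\nu$ exactly.

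The key step is to show that such an $h$ may be taken to lie in $N(T)$. Here I would argue as follows. By Proposition~\ref{lemrss}, both $A_\nu$ and $A'_\nu$ are regular semisimple, and $h \cdot A_\nu = h A_\nu h^{-1} - (\tau h) h^{-1} = A'_\nu$. Using the uniqueness clause of Theorem~\ref{thm4} (both $A_\nu$ and $A'_\nu$ are representatives lying in $\ft$), together with Corollary~\ref{ftgauge} and Proposition~\ref{pywit}, the torus $T$ is determined up to $\GL_n(F)$-conjugacy by the connection; in fact one shows that $h$ conjugates $T$ to $T$, i.e.\ $h \in N(T)$, by the same device used in the proof of Proposition~\ref{pywit}: the pushforward torus must again be a centralizing torus for a regular stratum attached to the isomorphic connection, and by Lemma~\ref{A0} (applied after a further $P^1$-adjustment) the representative in $\ft$ is rigid. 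Once $h \in N(T)$, write $\hat{w} = h T(\fo) \in \Waff_T$. Then Proposition~\ref{waffact} gives that $h \cdot A_\nu$ is $P^1$-gauge equivalent to $\varrho_\nu(\hat w)(A_\nu)$, and moreover $\varrho_\nu(\hat w)(A_\nu) \in \pi_\ft(h \cdot A_\nu) + \ft^1 = \pi_\ft(A'_\nu) + \ft^1 = A'_\nu + \ft^1$. Thus $\varrho(\hat w)(A)$ and $A'$ have representatives congruent modulo $\ft^1$; but two $T$-formal types with representatives in $\ft$ that agree modulo $\ft^1$ are equal by the uniqueness clause of Theorem~\ref{thm4}. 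Hence $\varrho(\hat w)(A) = A'$.

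For uniqueness of $\hat w$: if $\varrho(\hat w_1)(A) = \varrho(\hat w_2)(A)$, then $\varrho(\hat w_2^{-1}\hat w_1)(A) = A$, so it suffices to show that the stabilizer of a point of $\A(T,r)$ in $\Waff_T$ is trivial. Taking representatives and using that $\varrho_\nu(\hat w)(A_\nu)$ is then a representative for $A$ congruent to $A_\nu$ modulo $\ft^1$, Corollary~\ref{newlemma} forces the representative $\hat w \in N(T)$ of $\hat w T(\fo)$ to lie in $\Tfl \subset T(\fo)$, so $\hat w$ is trivial in $\Waff_T$; more directly, a nontrivial $\hat w$ either permutes the distinct leading-term eigenvalues nontrivially or shifts a degree, in either case changing the formal type.

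I expect the main obstacle to be the passage from ``$A_\nu$ and $A'_\nu$ are $\GL_n(F)$-gauge equivalent'' to ``the intertwining element can be chosen in $N(T)$''. This requires combining the rigidity statements (uniqueness in Theorem~\ref{thm4}, Lemma~\ref{A0}, Corollary~\ref{newlemma}) with the conjugacy analysis of Lemma~\ref{derp} and Proposition~\ref{pywit}, being careful that gauge equivalence of connections (which involves the $\tau$-term $(\tau h)h^{-1}$) rather than plain conjugation is what is available; the regular-semisimplicity of the leading terms is what makes the normalizer appear, but one must check that the derivative term does not obstruct landing in $N(T)$ exactly rather than merely modulo $P^1$.
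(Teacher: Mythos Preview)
Your overall strategy---reduce to an exact gauge equivalence $h\cdot A_\nu = A'_\nu$, relate $h$ to $N(T)$, then invoke Proposition~\ref{waffact}---matches the paper's, and your uniqueness argument is fine (the paper simply cites freeness of the $\Waff_T$-action on $\A(T,r)$). But the step you flag as the ``main obstacle'' really is a gap, and you have not closed it.

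The issue is that from $h A_\nu h^{-1} - (\tau h)h^{-1} = A'_\nu$ you cannot conclude $h\in N(T)$: the term $(\tau h)h^{-1}$ need not lie in $\ft$, so $\Ad(h)(A_\nu)$ need not lie in $\ft$, and regular-semisimplicity gives nothing. None of Proposition~\ref{pywit}, Lemma~\ref{A0}, or Lemma~\ref{derp} helps here, since those are statements about conjugation or about elements already known to lie in $\GL_n(\fo)$; Corollary~\ref{newlemma} only applies once you already know $h$ differs from something in $N(T)$ by a controlled factor. The paper resolves this as follows. Fix a split torus $D$ with $D(\fo)\subset P$ and use the affine Bruhat decomposition to write $h=p_1 n p_2$ with $p_1,p_2\in P$ and $n\in N(D)$. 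Lemma~\ref{taup1} gives $(\tau p_i)p_i^{-1}\in\fP^1$, and writing $n=d\sigma$ with $d\in D$, $\sigma\in N(D)\cap\GL_n(\cplx)$ gives $(\tau n)n^{-1}=(\tau d)d^{-1}\in\fd(\fo)\subset\fP$. This controls all derivative contributions enough that $\Ad(h)(A'_\nu - p_2^{-1}\tau p_2)$ lies in $A_\nu+\fP$ up to a term in $\fP$. Now Lemma~\ref{A1} supplies $q_1\in P^r$, $q_2\in P^{r+1}$ so that after these small adjustments the conjugated element lands in $\ft$; regular-semisimplicity then forces $q_1^{-1}hq_2^{-1}\in N(T)$, and one sets $\hat w=(q_1^{-1}hq_2^{-1})T(\fo)$. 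The remaining verification that $\varrho(\hat w)(A)=A'$ uses Proposition~\ref{waffact} and $N(T)$-equivariance of $\pi_\ft$ exactly as you sketch. You should also note that the case $r=0$ requires separate treatment: here $T$ is split and the paper appeals to the Riemann-Hilbert correspondence to reduce to a statement about exponentials of the constant terms.
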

\begin{proof}

  Since $\Waff_T$ acts freely on $\A(T,r)$, it suffices to show
  existence of $\hat{w}$.  First, take $r =0$, so $\ft$ is the space
  of diagonal matrices.  Write $A_\nu(0)$ for the image of $A_\nu$
  under the evaluation map $t \mapsto 0$.  Then, $A_\nu$ and $A'_\nu$
  are gauge equivalent modulo $t$ if and only if there exists $f \in
  \GL_n(\C)$ such that $\Ad(f) \exp(2 \pi i A_\nu (0)) = \exp (2 \pi i
  A'_\nu (0))$.  (This follows from the Riemann-Hilbert correspondence
  and \cite[Theorem 5.5 and Section 17.1]{Wa}).  Therefore, $f$ lies
  in the normalizer of $T$, and $\Ad(f) (A_\nu (0))$ differs from
  $A'_\nu(0)$ by a diagonal matrix with integer entries.  In
  particular, $A$ and $A'$ lie in the same $\Waff_T$ orbit.

  Now, assume $r > 0$.  Suppose that $h \cdot A'_\nu = A_\nu+x$ for
  some $x\in\fP^1$.  Fix a
  split torus $D$ with $D (\fo) \subset P$.  Using the affine Bruhat
  decomposition, we may write $h = p_1 n p_2$, where $p_1, p_2 \in P$
  and $n \in N(D)$.  We see that
\begin{equation*}
A_\nu + x = \Ad(h) (A'_\nu- p_2^{-1} \tau p_2) - (\tau p_1) p_1^{-1} - \Ad(p_1) ((\tau n) n^{-1}).
\end{equation*}
By Lemma~\ref{taup1}, $p_2^{-1} \tau p_2$ and $(\tau p_1) p_1^{-1}$
both lie in $\fP^1$.  Moreover, there exists $d \in D$ and $\sigma \in
N(D) \cap \GL_n(\C)$ such that $n = d \sigma$, so $(\tau n) n^{-1} =
(\tau d) d^{-1} \in \fd(\fo)\subset\fP$.  In particular, $\Ad(h)
(A'_\nu - p_2^{-1} \tau p_2) \in A_\nu + \Ad(p_1)((\tau d) d^{-1}) +
\fP^1$.  By Lemma~\ref{A1}, there exist $q_1 \in P^{r}$ and $q_2 \in
P^{r+1}$ such that $\ad(q_2) (A'_\nu - p_2^{-1} \tau p_2)$ and
$\ad(q_1^{-1}) (A_\nu + x + (\tau p_1) p_1^{-1} + \Ad(p_1) ((\tau n)
n^{-1}))$ lie in $\ft$.  Since $A'_\nu - p_2^{-1} \tau p_2$ is regular
by Proposition~\ref{lemrss}, it follows that $h \in q_1 N(T) q_2$.
 
Set $g = q_1^{-1} h q_2^{-1}$ and $\hat{w} = g T(\fo)$.  We will show that
$\varrho(\hat{w}) (A) = A'$.  The element $A^1_\nu = \pi_{\ft} ( q_2 \cdot A_\nu)\in \ft$ is a valid representative for $A$, since
$\pi_{\ft} (\Ad(q_2) (A_\nu))\in A_\nu + \fP^1$ and $(d q_2) q_2^{-1}
\in \fP^1$.  Moreover, the fact that $\pi_\ft$ is a $N(T)$-map implies
that $\pi_{\ft} ((g q_2) \cdot A_\nu) =\pi_{\ft}
(g \cdot A^1_\nu )$. 

By Proposition~\ref{waffact}, $ \pi_{\ft} (g \cdot A^1_\nu ) \in
\varrho_\nu (\hat{w}) (A^1_\nu) + \ft^1$, so $\pi_{\ft} ((g q_2 )
\cdot A_\nu) \in \varrho_\nu (\hat{w}) (A^1_\nu)+\ft_1$. 
Write $q_1 = 1 + X$ for $X \in \fP^r$ so that $(g q_2) \cdot A_\nu \in
A'_\nu - \ad(X) (A'_\nu) + \fP^1$.  It follows that $\pi_{\ft} ((g
q_2) \cdot A_\nu) \in A'_\nu + \ft^1$ by Proposition~\ref{cores3}.
Thus, $\hat{w} = g T(\fo)$ satisfies the Lemma.
\end{proof}

\begin{theorem}\label{waffthm}
  Suppose $(V,\nabla)$ is a formal connection.  If $A \in \A^{(V,
    \n)}_T\cap\A(T,r)$ and $A' \in \A^{(V, \n)}_{T'}\cap \A(T',r')$,
  then $T$ and $T'$ are $\GL_n(\fo)$-conjugate and $r=r'$.  Moreover,
  if $h\in\GL_n(\fo)$ satisfies ${}^hT=T'$, then there exists a unique
  $\hat{w} \in \Waff_T$ such that $A'=\Ad^*(h^{-1})\varrho(\hat{w}) (A)$.
\end{theorem}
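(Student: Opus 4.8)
The plan is to reduce Theorem~\ref{waffthm} to the two structural facts already in hand: Proposition~\ref{pywit}, which transports formal types along conjugations by elements of $\GL_n(\fo)$, and Lemma~\ref{Waff}, which controls when two formal types (for the \emph{same} torus) are $\GL_n(F)$-gauge equivalent modulo $\ft^1$. First I would show that the data forces $T$ and $T'$ to be conjugate and $r=r'$. Since $A\in\A^{(V,\n)}_T$ and $A'\in\A^{(V,\n)}_{T'}$, each of $T$ and $T'$ is the torus attached to a regular stratum contained in $(V,\nabla)$; by Theorem~\ref{thm4} and Corollary~\ref{ftgauge}, $[\nabla_\tau]$ is formally gauge equivalent (in suitable trivializations) both to a representative $A_\nu\in\ft^{-r}$ and to a representative $A'_\nu\in(\ft')^{-r'}$. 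Hence these two are $\GL_n(F)$-gauge equivalent. The equality $r=r'$ then follows because the slope of the connection equals $r/e_P=r'/e_{P'}$ by Proposition~\ref{slope} (applied to the fundamental stratum underlying a regular stratum), and $e_P=e_{P'}$ because both tori have the same splitting field $E/F$ — more precisely, $A_\nu$ and $A'_\nu$ are $\GL_n(F)$-conjugate regular semisimple elements (Proposition~\ref{lemrss}), so their centralizers $\ft$ and $\ft'$ are conjugate, giving $T$ conjugate to $T'$; conjugate uniform tori have equal $e_P$ by Corollary~\ref{uniformsplitting} (or directly by the structure of $\ft$), and then $r=r'$.

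Next I would upgrade ``$\GL_n(F)$-conjugate'' to ``$\GL_n(\fo)$-conjugate.'' This is exactly Lemma~\ref{derp}(2): the lattice $L=\fo^n$ is compatible with both $T$ and $T'$ (by construction $T(\fo),T'(\fo)\subset\GL_n(\fo)=\GL(L)$), and $T,T'$ are conjugate in $\GL_n(F)$ by the previous paragraph, so they are conjugate by some $h\in\GL(L)=\GL_n(\fo)$, i.e.\ ${}^hT=T'$. Fix such an $h$. By Proposition~\ref{pywit}, $\Ad^*(h^{-1})$ is a bijection $\A^{(V,\n)}_T\to\A^{(V,\n)}_{T'}$; in particular $\Ad^*(h)(A')\in\A^{(V,\n)}_T\cap\A(T,r)$. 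Call this element $A''$. Now both $A$ and $A''$ lie in $\A^{(V,\n)}_T\cap\A(T,r)$, so by Theorem~\ref{thm4} their representatives $A_\nu,A''_\nu\in\ft^{-r}$ are each $P^1$-gauge equivalent to $[\nabla_\tau]$ in the appropriate trivialization; composing, $A_\nu$ and $A''_\nu$ are $\GL_n(F)$-gauge equivalent (even modulo $\ft^1$, trivially).

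Now Lemma~\ref{Waff} applies directly to the pair $A,A''\in\A(T,r)$: there is a unique $\hat w\in\Waff_T$ with $\varrho(\hat w)(A)=A''=\Ad^*(h)(A')$, equivalently $A'=\Ad^*(h^{-1})\varrho(\hat w)(A)$, which is the desired formula. The only remaining point is the \emph{uniqueness} of $\hat w$ in the statement of Theorem~\ref{waffthm}: this follows from the uniqueness in Lemma~\ref{Waff} together with the fact that $\varrho$ is a free action of $\Waff_T$ on $\A(T,r)$ (noted in the discussion preceding Proposition~\ref{waffact}), so $\hat w$ is determined by the equation $\varrho(\hat w)(A)=\Ad^*(h)(A')$.

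The main obstacle I anticipate is the bookkeeping in the first paragraph: extracting ``$T\sim T'$ and $r=r'$'' cleanly from ``$A_\nu$ and $A'_\nu$ represent the same connection.'' One must be careful that the formal types live a priori in different trivializations, so one should first invoke Corollary~\ref{ftgauge} (independence of $\nu$ and the fact that formal-type data only sees the isomorphism class) to put everything in a common frame, then use that a regular semisimple element's $\GL_n(F)$-conjugacy class determines its centralizing Cartan subalgebra up to conjugacy, hence the torus up to conjugacy, and finally that the slope invariant pins down $r$ once $e_P$ is known. Everything after that is a formal chase through Proposition~\ref{pywit}, Lemma~\ref{derp}, Theorem~\ref{thm4}, and Lemma~\ref{Waff}.
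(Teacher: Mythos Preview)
Your overall strategy matches the paper's: reduce to a single torus via Proposition~\ref{pywit}, then invoke Lemma~\ref{Waff}. The paper's proof is in fact just two lines---it cites Proposition~\ref{pywit} to conclude $r=r'$ and to assume $T=T'$, then observes that the representatives $A_\nu,A'_\nu$ are gauge equivalent modulo $\fP^1$ and applies Lemma~\ref{Waff} directly.

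There is one imprecision in your first paragraph. You write that ``$A_\nu$ and $A'_\nu$ are $\GL_n(F)$-conjugate regular semisimple elements, so their centralizers $\ft$ and $\ft'$ are conjugate.'' But you only established that $A_\nu$ and $A'_\nu$ are \emph{gauge} equivalent, i.e.\ $A'_\nu=gA_\nu g^{-1}-(\tau g)g^{-1}$; the correction term $(\tau g)g^{-1}$ prevents you from concluding directly that $g\ft g^{-1}=\ft'$. This gap is easily bypassed: Proposition~\ref{pywit} already states outright that $\nabla$ has an $S$-formal type if and only if $S$ is $\GL_n(F)$-conjugate to $T$, so the conjugacy of $T$ and $T'$ (and hence, via Lemma~\ref{derp}(2), their $\GL_n(\fo)$-conjugacy) requires no additional argument. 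Once $T=T'$, the equality $r=r'$ follows from the slope as you say. In short, your first paragraph re-derives part of what Proposition~\ref{pywit} already packages, and in doing so introduces an unjustified step; simply citing the proposition at the outset, as the paper does, avoids the issue and shortens the proof.
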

\begin{proof} Proposition~\ref{pywit}
  shows that $r=r'$ and allows us to assume without loss of generality
  that $T=T'$.
  By definition of formal types, any choice of representatives $A_\nu$
  and $A'_\nu$ are formally gauge equivalent modulo $\fP^1$.  The
  theorem now follows from Lemma~\ref{Waff}. 

\end{proof}
\begin{corollary}
Suppose that $A \in \A^{(V, \n)}_T$.
Let $\phi$ be an associated trivialization, let $(P^\phi, r, \b^\phi)$
be the associated stratum (in $\GL(V)$), and let $L = \phi^{-1} (\fo^n)$.
Suppose $A' \in \A^{(V, \n)}_T$ has associated trivialization $\phi'$,
and choose $\hat{w} \in \Waff_T$  such that $A' = \varrho(\hat{w}) (A)$.
\begin{enumerate}
\item If $\phi'^{-1} (\fo^n) = L$, then $\hat{w} \in W_T$.
\item If, in addition, $(P^\phi, r, \b^\phi) = (P^{\phi'}, r, \b^{\phi'})$, 
then $\hat{w}$ is the identity.
\end{enumerate}
\end{corollary}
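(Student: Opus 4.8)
The plan is to move everything into the fixed $\GL_n(F)$-picture and follow the change of trivialization. Write $g=\phi'\circ\phi^{-1}\in\GL_n(F)$, so that $[\nabla_\tau]_{\phi'}=g\cdot[\nabla_\tau]_\phi$. By Theorem~\ref{thm4} there are $p,p'\in P^1$ and representatives $A_\nu,A'_\nu\in\ft^{-r}$ of $A$ and $A'$ with $p\cdot[\nabla_\tau]_\phi=A_\nu$ and $p'\cdot[\nabla_\tau]_{\phi'}=A'_\nu$; hence $h:=p'gp^{-1}$ satisfies $h\cdot A_\nu=A'_\nu$. This is exactly a gauge element realizing the $\GL_n(F)$-equivalence of two $\ft$-representatives of $A$ and $A'$, so feeding this particular $h$ into the construction in the proof of Lemma~\ref{Waff} produces a representative $g_0\in N(T)$ of the (unique, by Theorem~\ref{waffthm}) class $\hat w\in\Waff_T$, of the form $g_0=q_1^{-1}h\,q_2^{-1}$ with $q_1,q_2\in P$ when $r>0$, and $g_0=h\bmod t\in\GL_n(k)$ when $r=0$ (the $r=0$ case being handled by reduction modulo $t$, as in \emph{loc.\ cit.}).

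For (1): the hypothesis $\phi'^{-1}(\fo^n)=L=\phi^{-1}(\fo^n)$ is equivalent to $g\in\GL_n(\fo)$, and since $p,p'\in P^1\subset\GL_n(\fo)$ and $q_1,q_2\in P\subset\GL_n(\fo)$, we get $g_0\in N(T)\cap\GL_n(\fo)$. Choosing a $k$-rational representative $w_0\in N(T)\cap\GL_n(k)$ of $g_0T\in W_T$, the element $g_0w_0^{-1}$ lies in $T\cap\GL_n(\fo)=T(\fo)$ by Lemma~\ref{comptor}, so $\hat w=g_0T(\fo)=w_0T(\fo)$ lands in the canonical copy of $W_T$ inside $\Waff_T$; i.e.\ $\hat w\in W_T$.

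For (2): the extra hypothesis $P^\phi=P^{\phi'}$ forces $gP_{T,\fo^n}g^{-1}=P_{T,\fo^n}$, so $g$ normalizes the lattice chain $\mathscr{L}_T$; as $g\in\GL_n(\fo)$ fixes $L^0=\fo^n$ it must fix every lattice in the chain, i.e.\ $g\in P_{T,\fo^n}=P$. Thus $h=p'gp^{-1}\in P$, and hence $g_0=q_1^{-1}hq_2^{-1}\in N(T)\cap P$. Writing $\hat w=wT(\fo)$ with $w\in N(T)\cap\GL_n(k)$ as in part (1), one checks that $u:=w^{-1}h\in P$ satisfies $\Ad(u)(A_\nu)\equiv A_\nu\pmod{\fP^1}$, so $\bar u\in Z^0(\beta_\nu)=\bar T^0$ and $u\in T(\fo)P^1$; combined with $h\cdot A_\nu=A'_\nu$ this exhibits $A'_\nu$ as a $T(\fo)P^1$-gauge translate of $A_\nu$ conjugated by $w$, whence Corollary~\ref{newlemma} and Lemma~\ref{A0} pin down the $\ft$-representatives and reduce the claim to showing $w\in\Tfl$. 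The last point is where the coincidence of the full stratum data (not merely of the parahorics) is used: $\beta^\phi=\beta^{\phi'}$ forces $\Ad(\bar w)(\bar\beta_\nu)=\bar\beta_\nu$, so $\bar w\in Z^0(\beta_\nu)=\bar T^0$, and together with $w\in N(T)\cap\GL_n(k)$ this gives $w\in\Tfl$; hence $\hat w$ is the identity of $\Waff_T$.

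The step I expect to be the main obstacle is precisely this last one: extracting from the hypothesis that the strata agree \emph{as data in $\GL(V)$} the statement that the Weyl component of $\hat w$ actually fixes the leading term $\bar\beta_\nu$ — equivalently, that no nontrivial permutation or Galois twist of the pure blocks of the stratum is compatible with the given reduction of structure. This is where the genuine regularity condition $Z^0(\beta_\nu)=\bar T^0$ (rather than the weaker bookkeeping used in part (1)) has to be invoked, and some care is needed to see that the relevant conjugating element lands in $\bar T^0$ rather than in a larger subgroup of $\bar P$. Everything else is a routine assembly of Theorem~\ref{thm4}, the affine-Bruhat analysis inside Lemma~\ref{Waff}, and the rigidity statements Lemma~\ref{A0} and Corollary~\ref{newlemma}.
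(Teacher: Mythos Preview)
Your argument is correct and close in spirit to the paper's, with one genuine difference and one inefficiency worth noting.

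For part~(1), you take a more direct route than the paper. The paper first asserts $P^\phi=P^{\phi'}$ ``by Proposition~\ref{uniquedet}'' (a step that is not fully explained there) in order to force the gauge element into $P$, and only then compares with a $k$-rational Weyl representative. You instead track the explicit $N(T)$-representative $g_0=q_1^{-1}hq_2^{-1}$ produced inside the proof of Lemma~\ref{Waff}: since $g\in\GL_n(\fo)$, $p,p'\in P^1$, and $q_1\in P^r,\ q_2\in P^{r+1}$ are all in $\GL_n(\fo)$, you get $g_0\in N(T)\cap\GL_n(\fo)$, and then Lemma~\ref{comptor} gives $g_0\in (N(T)\cap\GL_n(k))\cdot T(\fo)$, i.e.\ $\hat w\in W_T$. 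This avoids the parahoric-equality claim entirely and is arguably cleaner.

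For part~(2), both arguments hinge on the same point---the hypothesis $\beta^\phi=\beta^{\phi'}$ (pullback of the \emph{same} $\beta$ through the two trivializations) forces $\Ad(\bar g)(\bar\beta_\nu)=\bar\beta_\nu$, so $\bar g\in Z^0(\beta_\nu)=\bar T^0$---but the paper applies this immediately to $h$ (since $\bar h=\bar g$) to get $h\in\Tfl P^1$, absorbs the $\Tfl$ factor, and invokes the uniqueness clause of Theorem~\ref{thm4} to conclude $A=A'$. Your route via the decomposition $h=wu$, the verification $u\in T(\fo)P^1$, and the appeal to Corollary~\ref{newlemma} and Lemma~\ref{A0} is correct but circuitous: once you know $\bar w\in\bar T^0$ and $w\in\GL_n(k)$, you have $w\in\Tfl\subset T(\fo)$ and $\hat w=wT(\fo)$ is trivial on the nose---no further rigidity lemmas are needed.
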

\begin{proof}
By Lemma~\ref{Waff}, there exists $\hat{w} \in \Waff_T$
such that $\varrho(\hat{w}) (A) = A'$.  Recall that
$\Waff_T \cong T/T(\fo) \ltimes W_T$, and $W_T \cong (N(T) \cap \GL_n(k)) / \Tfl$.
Observe that the set of trivializations of satisfying $\phi'^{-1} (\fo^n) = L$
is a single $\GL_n(\fo)$-orbit.  
Moreover,
$P^\phi = P^{\phi'}$ by Proposition~\ref{uniquedet}.
Since $P$ is its own normalizer in $\GL_n(\fo)$, it follows that 
$A_\nu$ and $A'_\nu$ are gauge equivalent by an element $p \in P$
in both cases.

In the second case, the stabilizer of $\b + \fP^{1-r}$ in $P$ is equal to
$P^1 \Tfl$.  Without loss of generality, take $p \in P^1$.  Now, the uniqueness
statement in Theorem~\ref{thm4} implies that $A = A'$.

In the first case, as long as $r>0$, we have $A'_\nu \in  \Ad(n) (A_\nu) + \fP^{-r+1}$, where
$n \in N(T)$ is a representative for $\hat{w}$.   Since $n\in mT(\fo)$ for some $m\in N(T) \cap \GL_n(k) \subset P$,  we also $A'_\nu \in  \Ad(m) (A_\nu) + \fP^{-r+1}$.  We deduce that 
$p \in m P^1$, and the same uniqueness argument shows that 
$A'_\nu = \Ad(m) (A_\nu)$.  This implies that
$A' = \varrho(m T(\fo)) (A)$, and simple transitivity of the $\Waff_T$-action gives $\hat{w}=\varrho(mT(\fo))\in W_T$.   If $r=0$, then let $m\in\GL_n(k)$ be the image of $p$ modulo $t$.  Since $A'_\nu\in\Ad(p)(A_\nu)+t\gl_n(\fo)$, we have $A'_\nu=\Ad(m)(A_\nu)$.  We conclude that $\hat{w}=\varrho(mT(\fo))\in W_T$ as before.

\end{proof}

We can now make precise the relationship between formal types and
formal isomorphism classes in terms of moduli spaces of certain
categories of formal connections.
In the following, we consider three related categories of formal
connections.  
We define $\sC$ to be the full subcategory of formal
connections $(V, \n)$ of rank $n$ such that $(V, \n)$ contains
a regular stratum. 
Let
$\sC^{lat}$ be the category of triples $(  V, \n, L)$, where $V \in \sC$ and $L
\subset V$ is a distinguished $\fo$-lattice such such that $P \subset
\GL(L)$ for some regular stratum $(P, r, \b)$ contained in $V$.  Morphisms
in $\Hom_{\sC^{lat}}( (V, \n, L), ( V', \n', L'))$ in $\sC^{lat}$ consist of
homomorphisms $\phi : V \to V'$ (in the category $\sC$) such that $L'
\cap \phi (V) = \phi (L)$.  Note that if $\phi$ is an isomorphism,
this implies that $\phi (L) = L'$.   Finally, $\sC^{fr}$ is the
category of \emph{framed} connections.  This consists of objects in $\sC^{lat}$
where $ \Hom_{\sC^{fr}} ( (V, \n, L), ( V', \n', L'))$ is the set of
isomorphisms $\phi \in \Hom_{\sC^{lat}} ( ( V, \n, L), ( V', \n', L'))$ such that
$(\phi^{-1} (P'), r, \phi^*(\b')) = (P, r, \b)$.

Fix a uniform torus $T \subset \GL_n(F)$ satisfying $T(\fo)\subset\GL_n(\fo)$ and an integer $r \ge 0$.
We denote the full subcategory of $\sC$ (resp. $ \sC^{lat}$, $\sC^{fr}$)
of connections that have formal type in $\A(T, r)$ by
$\sC(T, r)$ (resp. $\sC^{lat} (T,r)$, $\sC^{fr} (T, r)$).  Proposition~\ref{pywit}
implies that the subcategory $\sC(T,r)$ only depends on the conjugacy 
class of $T$ in $\GL_n(F)$.  It follows from Theorem~\ref{waffthm} that the set of objects in $\sC$  is the disjoint union of 
objects in $\sC(T_i,r)$, taken over a set of representatives $T_i$ as above
for the conjugacy classes of uniform tori.  The analogous statement
holds for $\sC^{lat}$ and $\sC^{fr}$.

\begin{corollary}\label{catcor}
  Fix a uniform torus $T\subset \GL_n(F)$ with $T(\fo)\subset\GL_n(\fo)$
  and $r \in \Z^{\ge 0}$.  Then, $\A(T,r)$ is the moduli space for
  $\sC^{fr}(T, r)$, $\A(T, r) / W_T$ is the moduli space for
  $\sC^{lat}(T,r)$, and $\A(T, r) / \Waff_T$ is the moduli space for
  the category $\sC(T,r)$.
\end{corollary}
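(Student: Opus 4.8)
The plan is to unwind what "moduli space" means for each of the three categories and to check in each case that the isomorphism classes of objects are in natural bijection with the points of the claimed quotient of $\A(T,r)$; the technical heart has already been isolated in Theorem~\ref{waffthm} and the preceding corollary, so this proof is largely bookkeeping. First I would recall that every object of $\sC(T,r)$, $\sC^{lat}(T,r)$, or $\sC^{fr}(T,r)$ has a nonempty set of associated $T$-formal types lying in $\A(T,r)$: this is the definition of $\A^{(V,\n)}_T$ together with Theorem~\ref{thm4}, which guarantees that once a suitable trivialization $\phi$ is chosen, $[\n_\tau]_\phi$ is gauge-equivalent by an element of $P^1$ to some $A_\nu\in\ft^{-r}$, hence $(V,\n)$ has at least one formal type in $\A(T,r)$. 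So in all three cases there is a well-defined surjection from objects to the relevant quotient of $\A(T,r)$; the content is to identify exactly when two objects are isomorphic, and to see that an object, together with its extra data, pins down a formal type (resp. a $W_T$-orbit, resp. a $\Waff_T$-orbit) uniquely.

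Next I would handle the three cases in increasing order of quotient. For $\sC^{fr}(T,r)$: an object is a framed connection $(V,\n,L)$, and a morphism in $\sC^{fr}$ is an isomorphism $\phi$ with $\phi(L)=L'$ and $(\phi^{-1}(P'),r,\phi^*(\b'))=(P,r,\b)$. I would argue that fixing the lattice $L$ and the stratum $(P,r,\b)$ forces, by Proposition~\ref{uniquedet}, that $P=P_{T,L}$, and then by the uniqueness clause of Theorem~\ref{thm4} the formal type $A$ is \emph{uniquely} determined — not just up to $\Waff_T$. Hence $(V,\n,L)\mapsto A$ is a well-defined invariant, it is surjective (given $A\in\A(T,r)$, build the connection $d+A_\nu$ on $F^n$ with $L=\fo^n$, which contains the required regular stratum), and the second part of the corollary preceding this one shows that two framed connections with the same stratum and lattice have equal formal types, i.e.\ the invariant is injective on isomorphism classes. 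That gives $\A(T,r)$ as the moduli space. For $\sC^{lat}(T,r)$: now morphisms are isomorphisms with $\phi(L)=L'$ but the stratum is allowed to change, so by part~(1) of that same corollary the ambiguity in the formal type is exactly an element of $W_T$; conversely Proposition~\ref{waffact} shows each $w\in W_T$ is realized by an honest gauge transformation (by an element of $\GL_n(k)\subset\GL_n(\fo)$, which preserves $L=\fo^n$) relating $d+A_\nu$ to $d+\varrho_\nu(w)(A_\nu)$. Hence the isomorphism classes biject with $\A(T,r)/W_T$. For $\sC(T,r)$: here even the lattice is forgotten, morphisms are arbitrary isomorphisms of formal connections, Theorem~\ref{waffthm} says the set of associated formal types (now ranging over all conjugates of $T$, but we have fixed the representative $T$) of a given connection is a single $\Waff_T$-orbit, and Proposition~\ref{waffact} together with the full translation subgroup $T/T(\fo)$ — realized by genuine gauge transformations $s\cdot A_\nu$ — shows every element of the orbit is attained; thus isomorphism classes biject with $\A(T,r)/\Waff_T$.

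The step I expect to be the main obstacle is making the phrase "moduli space" precise and matching it to the bijections just described: the excerpt never defines a moduli functor or stack for these categories, so I would interpret "$X$ is the moduli space for $\sC'$" to mean exactly "$X$ is in natural bijection with the set of isomorphism classes of objects of $\sC'$, compatibly with the formal-type invariant," and I would state this interpretation explicitly at the start of the proof. The remaining subtlety is purely formal: checking that the $\Waff_T$-action $\varrho$ really does descend to a \emph{free} action on $\A(T,r)$ (used to get a genuine quotient set rather than a stacky quotient), which is already recorded in the proof of Lemma~\ref{Waff} ("$\Waff_T$ acts freely on $\A(T,r)$"), and that the forgetful functors $\sC^{fr}(T,r)\to\sC^{lat}(T,r)\to\sC(T,r)$ induce precisely the quotient maps $\A(T,r)\to\A(T,r)/W_T\to\A(T,r)/\Waff_T$ — which is immediate from how the invariants were constructed. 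Once the interpretation is fixed, no new computation is needed: every ingredient (existence via Theorem~\ref{thm4}, rigidity of framings via its uniqueness clause and the preceding corollary, realizability of Weyl and affine-Weyl elements via Proposition~\ref{waffact}, and the orbit description via Theorem~\ref{waffthm}) is already in hand.
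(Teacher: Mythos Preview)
Your proposal is correct and matches the paper's approach: the corollary is stated without proof precisely because it is meant to follow immediately from Theorem~\ref{thm4}, Proposition~\ref{waffact}, Theorem~\ref{waffthm}, and the unnamed corollary just before it, and you have correctly identified and organized exactly those ingredients. Your explicit remark that ``moduli space'' here must be read as ``set of isomorphism classes'' is a useful clarification the paper leaves implicit.
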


\section{Moduli Spaces}\label{modspace}

In this section, we will describe the moduli space $\mathscr{M} (A^1,
\ldots, A^m)$ of `framed' connections on $C = \proj^1(\cplx)$ with
singular points $\{ x_1, \ldots, x_m \}$ and formal type $A^i$ at
$x_i$.  In our explicit construction, we show that this moduli space is
the Hamiltonian reduction of a symplectic manifold via a torus
action.  

Set $k = \cplx$. We denote by $F_x \cong F$ the field of Laurent
series at $x \in C$ and $\mathfrak{o}_x \subset F_x$ the ring of power
series.  Let $V$ be a trivializable rank $n$ vector bundle on
$\proj^1$; thus, there is a noncanonical identification of $V$ with
the trivial rank $n$ vector bundle $V^\triv\cong\struct_{\proj^1}^n$.
The space of global trivializations of $V$ is a $\GL_n(\cplx)$-torsor,
so we will fix a base point and identify each trivialization $\phi$
with an element $g \in \GL_n(\cplx)$.  Thus, we will write $[\nabla]$
for the matrix of $\nabla$ in the fixed trivialization, and $g \cdot
[\nabla]$ for $[\nabla]_\phi$.

Define $V_x = V \otimes_{\struct_\glo} F_x$ and $L_x = V
\otimes_{\struct_\glo} \fo_x$.  The inclusion $V_\cplx = \Gamma
(\proj^1; V) \subset V_x$ gives $V_x$ a natural $\cplx$-structure.
Furthermore, $L_x$ determines a unique maximal parahoric $G_x =
\GL(L_x) \cong \GL_n(\fo)$.  In particular, by the remarks preceding
Lemma~\ref{subquotient}, there is a one-to-one correspondence between
parahoric subgroups $P \subset G_x$ and parabolic subgroups $Q \subset
\GL(V_\cplx)$, where $Q = P / G_x^1$.

  Let  $T_x$ be a uniform torus in $\GL_n(F_x)$
such that $T_x(\fo_x) \subset \GL_n(\fo_x)$, and set $P_x = P_{T_x, \fo_x^n}$.
  In the following,  $(V,\nabla)$ is a
connection on $C$, and  $A_x \in \A(T_x, r)$ is a formal type
associated to $(V, \n)$ at $x$.  This means that the
formal completion $(V_x,\nabla_x)$ at $x$ has formal type $A_x$.  We
denote the corresponding $\GL_n(F_x)$-stratum by $(P_x, r, \beta_x)$.
We may assume, by Proposition~\ref{pywit}, that $T_x$
has a block-diagonal embedding in $\GL_n(F_x)$ as in Remark~\ref{blockdiagonal}.
We write $U_x = P^1_x / G^1_x$.  Furthermore, if $g \in
\GL_n(\cplx)$, $P^g_x \subset \GL(V)$ and $\beta^g_x$ are the
pullbacks of $P$ and $\beta$, respectively, under the corresponding
trivialization (as in Section~\ref{formaltypes}).

\begin{definition}\label{framingdef}
A \emph{compatible framing} for $\nabla$ at $x$ is an element  $g \in \GL_n (\cplx)$
with the property that $\nabla$ contains the $\GL(V_x)$-stratum $(P^g_x, r, \beta^g_x )$
defined above.
We say that $\nabla$ is \emph{framable} at $x$ if there exists such a $g$.
\end{definition}
For example, suppose that $e_{P_0} = 1$. Choose
$\nu \in \Omega_0^\times$ of order $-1$.  By
Remark~\ref{blockdiagonal}, $A_{0\nu} = \frac{1}{t^r} D_r +
\frac{1}{t^{r-1}} D_{r-1} + \dots$ where $D_j\in\GL_n(\cplx)$ are
diagonal matrices and $D_r$ is regular.  It follows that $g$ is a
compatible framing for $(V,\nabla)$ at $0$ if and only if
\begin{equation*}
g \cdot [\nabla] = \frac{1}{t^r} D_r \nu + \frac{1}{t^{r-1}} M_{r-1} \nu + \dots,
\end{equation*}
with $M_j \in \gl_n(\cplx)$.

Now, let $\mathbf{A} = (A_1, \dots, A_m)$ be a collection of formal
types $A_i$ at points $x_i \in \proj^1$.
\begin{definition}\label{mod}
The category $\mathscr{C}^* (\mathbf{A})$ of framable connections
with formal types $\mathbf{A}$ is the category whose objects are
connections $(V, \nabla)$, where
\begin{itemize}
\item $V$ is a trivializable rank $n$ vector bundle on $\proj^1$;
\item $\nabla$ is a meromorphic connection on $V$ with singular points
  $\{x_i\}$;
\item $\nabla$ is framable and has formal type $A^i$ at $x_i$;
\end{itemize}
and whose morphisms are vector bundle maps compatible with the
connections.
The moduli space of this category is denoted  by $\mathscr{M}^* (\mathbf{A})$.
\end{definition}

By Corollary~\ref{ftgauge}, any two objects in $\mathscr{C}^*
(\mathbf{A})$ correspond to connections that are formally isomorphic
at each $x_i$. Note that $\mathscr{C}^* (\mathbf{A})$ is not a full
subcategory of the category of meromorphic connections.  However, the
next proposition show that the moduli space of this full subcategory
coincides with $\mathscr{M}^* (\mathbf{A})$, so this moduli space may
be viewed as a well-behaved subspace of the moduli stack of
meromorphic connections.


\begin{proposition}
  Suppose that $(V, \n)$ and $(V', \n')$ are framable connections in
  $\mathscr{C}^*(\bfA)$.  If they are isomorphic as
  meromorphic connections, then they are isomorphic as framable
  connections.
\end{proposition}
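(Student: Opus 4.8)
The plan is to show that any isomorphism between $(V,\nabla)$ and $(V',\nabla')$ in the category of meromorphic connections — that is, a gauge transformation $g$ invertible over the function field $\cplx(\proj^1)$ with $g\cdot[\nabla]=[\nabla']$ after fixing global trivializations of the (trivializable) bundles $V$ and $V'$ — is automatically holomorphic and invertible at every point of $\proj^1$. A holomorphic $\GL_n$-valued function on the compact connected curve $\proj^1$ is constant, so $g\in\GL_n(\cplx)$, and then $g$ is an isomorphism of vector bundles with connection, i.e.\ a morphism in $\sC^*(\bfA)$. Since $(V,\nabla)$ and $(V',\nabla')$ already lie in $\sC^*(\bfA)$, this proves the proposition.

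First I would treat the nonsingular points. For $x\notin\{x_1,\dots,x_m\}$ both $\nabla$ and $\nabla'$ are holomorphic at $x$, so there are fundamental solution matrices $Y,Y'\in\GL_n(\struct_x)$ putting the two connections into the form $d$ near $x$. In these local flat frames the intertwiner $g$ becomes $Y'^{-1}gY$, which is annihilated by $d$ and hence is a constant (necessarily invertible) matrix; thus $g=Y'\,(\mathrm{const})\,Y^{-1}\in\GL_n(\struct_x)$ is holomorphic and invertible at $x$.

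The substantive step is at each singular point $x_i$, and here framability is essential. Using that $\nabla$ is framable at $x_i$ and has formal type $A^i$ there, together with Theorem~\ref{thm4} (and Corollary~\ref{ftgauge} to align representatives), I would produce a \emph{constant} reframing $g_i\in\GL_n(\cplx)$ and a local gauge $p_i\in P_{x_i}^1\subset\GL_n(\struct_{x_i})$ with $(p_ig_i)\cdot[\nabla_\tau]=A^i_\nu\in\ft_{x_i}$, and likewise $g'_i\in\GL_n(\cplx)$, $p'_i\in P_{x_i}^1$ with $(p'_ig'_i)\cdot[\nabla'_\tau]=A^i_\nu$ for the \emph{same} representative $A^i_\nu$ (the freedom to match representatives comes from the clause of Theorem~\ref{thm4} that the $P^1$-orbit of $A_\nu$ contains $A_\nu+\fP^1$). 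Then $h_i:=(p'_ig'_i)\,g\,(p_ig_i)^{-1}$ satisfies $h_i\cdot A^i_\nu=A^i_\nu$, so Corollary~\ref{newlemma} forces $h_i\in\Tfl_{x_i}\subset\GL_n(\cplx)$. Hence $g=(g'_i)^{-1}(p'_i)^{-1}h_i\,p_i\,g_i$ is a product of matrices each holomorphic and invertible at $x_i$, so $g\in\GL_n(\struct_{x_i})$. Combining this with the previous paragraph gives that $g$ is holomorphic and invertible on all of $\proj^1$, completing the argument.

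I expect the main obstacle to be purely bookkeeping: carefully unwinding Definition~\ref{framingdef} and the definition of the formal type associated to $(V,\nabla)$ to justify the factorization of the "framing followed by diagonalization" trivialization at $x_i$ as (constant reframing)$\,\cdot\,(P^1$-gauge$)$, and checking that $\nabla$ and $\nabla'$ can indeed be brought to a common $A^i_\nu$ — but all of this uses only the existence and uniqueness statements already contained in Theorem~\ref{thm4}, and no new estimate is needed.
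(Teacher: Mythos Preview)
Your proposal is correct and follows essentially the same route as the paper: reduce to showing $g\in\GL_n(\fo_{x_i})$ at each singular point, use Theorem~\ref{thm4} to put both connections in the form $A^i_\nu$ via elements of $\GL_n(\fo_{x_i})$, and apply Corollary~\ref{newlemma} to force the resulting stabilizer element into $\Tfl_{x_i}$. The paper is slightly terser—it simply asserts regularity at nonsingular points and does not decompose the $\GL_n(\fo)$ normalizer as (constant framing)$\cdot(P^1$-gauge)—but the substance is identical.
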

\begin{proof}
  Choose trivializations for $V$ and $V'$.  Then, $(V, \n)$ and $(V',
  \n')$ are isomorphic as meromorphic connections if and only if there
  exists a meromorphic section $g$ of the trivial $\GL_n(\C)$-bundle
  such that $g \cdot [\n] = [\n']$.  Moreover, $g$ is necessarily
  regular at all points of $\proj^1 \backslash \{x_1, \ldots, x_m\}$.
  It suffices to show that $g$ is regular at each of the the singular
  points of $\n$.  Thus, we may reduce to the following local problem:
  if $\n$ and $\n'$ are formal framed connections, $g \cdot [\n] =
  [\n']$, and $\n$ and $\n'$ have the same formal
  type,  then $g \in \GL_n(\fo)$.
  
  Fix $\nu = \frac{dt}{t}$.  By Theorem~\ref{thm4}, there exist
  $g_1,g_2\in\GL_n(\fo)$ such that $g_1 \cdot [\nt] = g_2 \cdot[\nt']
  = A_\nu$. Therefore, $(g_2 g g_1^{-1}) \cdot A_\nu = A_\nu$.  By
  Corollary~\ref{newlemma}, this implies that $g_2 g g_1^{-1} \in
  \Tfl$.  It follows that $g \in \GL_n(\fo)$.

\end{proof}

We will construct $\mathscr{M}^* (\mathbf{A})$ using symplectic
reduction, so in general $\mathscr{M}^* (\mathbf{A})$ will not be a
manifold.  Following Section 2 of \cite{Boa}, we define an extended
moduli space $\widetilde{\mathscr{M}}^*(\mathbf{A})$ that resolves
$\mathscr{M}^* (\mathbf{A})$.

\begin{definition}\label{xmod}
  The category $\widetilde{\mathscr{C}}^*(\mathbf{A})$ of
  \emph{framed} connections with formal types $\mathbf{A}$ has objects
  consisting of triples $(V, \nabla, \mathbf{g})$, where 
\begin{itemize} \item  $(V, \nabla)$
  satisfies the first two conditions of Definition~\ref{mod};
\item $\mathbf{g} = (U_{x_1}g_1, \ldots, U_{x_m}g_m)$, where $g_i$ is a
  compatible framing for $\nabla$ at $x_i$;
\item the formal type $(A')^i$ of $\nabla$ at $x_i$ satisfies
  $(A')^i|_{\ft^1}=A^i|_{\ft^1}$. 
\end{itemize}
A morphism between $(V, \nabla, \mathbf{g})$ and $(V', \nabla',
\mathbf{g}')$ is a vector bundle isomorphism $\phi : V \to V'$ that is
compatible with $\n$ and $\n'$, with the added condition that
$(\phi_{x_i}^{-1}(P'^{g'_i}_{x_i}), r, \phi_{x_i}^*((\b'_{x_i})^{g'_i})) =
(P^{g_i}_{x_i}, r, \b^{g_i}_{x_i})$ for all $i$.  We let
$\widetilde{\mathscr{M}}^*(\mathbf{A})$ denote the corresponding
moduli space.
\end{definition}


\begin{rmk}
  Define $\Waff_{T_{x_i}}$ and $W_{T_{x_i}}$ as in
  Section~\ref{formaliso}.  The groups $\bfW = \prod_{x_i}
  W_{T_{x_i}}$ and $\bfWa = \prod_{x_i} \Waff_{T_{x_i}}$ act
  componentwise on $\prod_{x_i} \A(T_{x_i}, r_{x_i})$.  We note that a
  global connection $(V, \n)$ lies in $\sC^*(\bfA)$ if $(V_{x_i},
  \n_{x_i})$ is isomorphic to the diagonalized connection $(F_{x_i}^n,
  d + A^i_\nu \nu)$ in $\sC^{lat}$.  It follows from
  Corollary~\ref{catcor} that the categories $\sC^* ( \bfA')$ and
  $\sC^*(\bfA)$ have the same objects if and only if $\bfA' = \bfw
  \bfA$ for some $\bfw \in \bfW$.    In particular, $\sM^*(\bfw \bfA) \cong \sM^* (\bfA)$.
  If we let $j$ denote the injection of these spaces into the moduli
  space of meromorphic connections, then $j(\sM^*(\bfw \bfA))=j(\sM^*
  (\bfA))$.  On the other hand, if $\bfA'$ is not in the $\bfWa$-orbit
  of $\bfA$, then $j(\sM^*(\bfA'))$ and  $j(\sM^* (\bfA))$ are
  disjoint.  This is because connections in the corresponding
  categories are not even formally isomorphic by
  Theorem~\ref{waffthm}.

  Now, suppose that $\bfs = (s_1, \ldots, s_m) \in \bfWa$ and $ s_i
  \in T_{x_i} (F) / T_{x_i} (\fo)$.  In this case, $\sC^*( \bfs \bfA)
  \ne \sC^* (\bfA)$ unless $s_i$ is the identity.  However, it is
  clear that $\varrho(s_i) (A^i)|_{\ft^1} = A^i |_{\ft^1}$.  We deduce
  that $\widetilde{\sC}^*(\bhw \bfA) = \widetilde{\sC}^*( \bfA)$, and
  $\tM^*(\bhw \bfA) \cong \tM^* (\bfA)$ for all $\bhw \in \bfWa$.
(Indeed, $\widetilde{\sC}^*(\bfA') = \widetilde{\sC}^*(\bfA)$
  if and only if for every $x_i$ 
there exists $\hat{w}_i \in \Waff_{T_{x_i}}$ such that 
$\varrho(\hat{w}_i)((A')^i) |_{\ft^1} = A^i|_{\ft^1}$.)


\end{rmk}

Let $X$ be a symplectic variety with a Hamiltonian action of 
a group $G$.  There is a moment map $\mu_G : X \to \mathfrak{g}^\vee$.
If $\alpha \in \mathfrak{g}^\vee$ lies in $[\mathfrak{g}, \mathfrak{g}]^\perp$, 
so that the coadjoint orbit of $\alpha$ is a singleton, then
the symplectic reduction $X \sslash_\alpha G$  is defined to be the quotient
$\mu^{-1} (\alpha) / G$.  

In Section~\ref{extended orbits}, we will use the formal type $A_i$ at
$x_i$ to define \emph{extended orbits} $\mathscr{M}_i$ and
$\widetilde{\mathscr{M}}_i$.  These are smooth symplectic manifolds
with a Hamiltonian action of $\GL_n(\cplx)$.  The following theorem
generalizes \cite[Proposition 2.1]{Boa}:
\begin{theorem}\label{modthm} 
  Let   $\mathscr{M}^* (\mathbf{A})$, $\widetilde{\mathscr{M}}^* (\mathbf{A})$
  be the moduli spaces defined above.
  \begin{enumerate}
  \item The moduli space  $\mathscr{M}^* (\mathbf{A})$ is a symplectic reduction of $\prod_{i} \mathscr{M}_i$:  
\begin{equation*}
\mathscr{M}^*(\mathbf{A}) \cong (\prod_{i} \mathscr{M}_i) \sslash_0 \GL_n(\cplx).
\end{equation*}
\item Similarly,
\begin{equation*}
\widetilde{\mathscr{M}}^*(\mathbf{A}) \cong (\prod_{i} \widetilde{\mathscr{M}}_i) \sslash_0 \GL_n(\cplx).
\end{equation*}
Moreover, $\widetilde{\mathscr{M}}^* (\mathbf{A})$ is a symplectic manifold.
\item  Let $T_i = T_{x_i}$. 
There is a Hamiltonian action of $\Tfl_i$
on $\widetilde{\mathscr{M}}^*( \mathbf{A})$, and $\mathscr{M}^*(\mathbf{A})$
is naturally a symplectic reduction of $\widetilde{\mathscr{M}}^* (\mathbf{A})$ 
by the group $\prod_i \Tfl_i$.
\end{enumerate}
\end{theorem}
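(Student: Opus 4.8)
The plan is to reconstruct a global connection on $\proj^1$ from its local behaviour at the $x_i$ and to match the gluing constraint against a moment map. After fixing the base trivialization of $V$, an object of $\widetilde{\mathscr{C}}^*(\mathbf{A})$ becomes a $\gl_n(\cplx)$-valued meromorphic one-form $[\nabla]$ on $\proj^1$ with poles in $\{x_1,\dots,x_m\}$ together with the framings $\mathbf{g}=(U_{x_i}g_i)_i$. Since $\proj^1$ carries no nonzero holomorphic one-form, the map sending $[\nabla]$ to its tuple of principal parts at the $x_i$ is injective, and its image is exactly the set of tuples of principal parts whose residues sum to $0$ (the residue theorem, plus the fact that on $\proj^1$ every such tuple is realized by a unique global meromorphic one-form). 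Thus the whole datum is equivalent to the collection of its local pieces at the $x_i$ --- the compatibly framed principal part $(B_i,g_i)$ of $[\nabla]$ at $x_i$ --- subject only to $\sum_i\res_{x_i}B_i=0$. By the construction of the extended orbits in Section~\ref{extended orbits}, the space of such a local piece at $x_i$ is precisely $\widetilde{\mathscr{M}}_i$: its points are compatibly framed principal parts whose formal type agrees with $A^i$ modulo $\ft^1$, taken modulo $U_{x_i}$; the $\GL_n(\cplx)$-action is the change-of-base-trivialization action $(B,g)\mapsto(hBh^{-1},gh^{-1})$, and (from the same section) the corresponding moment map $\widetilde{\mathscr{M}}_i\to\gl_n(\cplx)^\vee$ is $\res_{x_i}$ under the trace identification. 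Consequently the moment map for the diagonal $\GL_n(\cplx)$-action on $\prod_i\widetilde{\mathscr{M}}_i$ is $\sum_i\res_{x_i}$, its zero fibre is exactly the set of tuples of compatibly framed local pieces that glue to a global $[\nabla]$, and passing to the $\GL_n(\cplx)$-quotient identifies out the choice of base trivialization. This gives the bijection $\widetilde{\mathscr{M}}^*(\mathbf{A})\cong(\prod_i\widetilde{\mathscr{M}}_i)\sslash_0\GL_n(\cplx)$, which upgrades to an isomorphism of moduli spaces by naturality; the identical argument, with the non-extended orbits $\mathscr{M}_i$ (compatibly framed principal parts of formal type exactly $A^i$, no framing remembered) in place of $\widetilde{\mathscr{M}}_i$, proves (1).

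For the smoothness claim in (2) I would check that $\GL_n(\cplx)$ acts freely on the zero fibre. The stabilizer of a point is the group of automorphisms of the framed connection $(V,\nabla,\mathbf{g})$; such an automorphism is a constant $\phi\in\GL_n(\cplx)$ with $\phi\cdot[\nabla]=[\nabla]$ and with $h_i:=g_i\phi g_i^{-1}\in P^1_{x_i}$ for every $i$ (preservation of the framing coset). Then $h_i$ stabilizes the local matrix $g_i\cdot[\nabla_\tau]$, which by Theorem~\ref{thm4} is carried to $A^i_\nu\in\ft_{x_i}^{-r}$ by some $p_i\in P^1_{x_i}$; hence $p_ih_ip_i^{-1}$ stabilizes $A^i_\nu$ under the gauge action, so $p_ih_ip_i^{-1}\in\Tfl_{x_i}$ by Corollary~\ref{newlemma}. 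Since it also lies in $P^1_{x_i}$ and $\Tfl_{x_i}\cap P^1_{x_i}=\{1\}$ (Lemma~\ref{410}), we get $h_i=1$ for all $i$, whence $\phi=1$. A free action on the zero fibre makes $0$ a regular value there, so the Marsden--Weinstein quotient is a smooth symplectic manifold, and we equip $\widetilde{\mathscr{M}}^*(\mathbf{A})$ with the symplectic structure transported through the bijection above.

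Part (3) I would deduce by reduction in stages. Because $T_{x_i}$ is abelian, the subtorus $\Tfl_{x_i}$ acts on $\widetilde{\mathscr{M}}_i$ by left translation on the framing, $(B,g)\mapsto(B,tg)$, fixing $B$ (conjugating the standard form $A^i_\nu\in\ft_{x_i}$ by $t\in T_{x_i}$ does nothing); this action is Hamiltonian, it commutes with the right $\GL_n(\cplx)$-action, it descends to $\widetilde{\mathscr{M}}^*(\mathbf{A})$, and by Section~\ref{extended orbits} the reduction $\widetilde{\mathscr{M}}_i\sslash_{c_i}\Tfl_{x_i}$ at the value $c_i$ prescribed by $A^i$ (the component of $A^i$ forgotten in passing to $A^i|_{\ft^1}$) is $\mathscr{M}_i$. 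Carrying these commuting actions to the product and reducing in the two possible orders gives
\begin{align*}
\widetilde{\mathscr{M}}^*(\mathbf{A}) \sslash_{\mathbf{c}} \prod_i \Tfl_{x_i}
&= \Bigl(\prod_i \widetilde{\mathscr{M}}_i\Bigr) \sslash_{(0,\mathbf{c})} \Bigl(\GL_n(\cplx)\times\prod_i \Tfl_{x_i}\Bigr) \\
&= \Bigl(\prod_i \mathscr{M}_i\Bigr) \sslash_0 \GL_n(\cplx) \;\cong\; \mathscr{M}^*(\mathbf{A}),
\end{align*}
the last isomorphism being part (1). The real obstacle lies outside this excerpt: the work is in Section~\ref{extended orbits}, namely building $\mathscr{M}_i$ and $\widetilde{\mathscr{M}}_i$ as the correct finite-dimensional symplectic models of (framed) local connections and verifying that their $\GL_n(\cplx)$-moment map is the residue and that $\widetilde{\mathscr{M}}_i\sslash\Tfl_{x_i}=\mathscr{M}_i$. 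Granted that, everything above is bookkeeping, the only genuinely new local input being the freeness computation, which rests on Corollary~\ref{newlemma} and Lemma~\ref{410}.
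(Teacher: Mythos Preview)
Your proof is correct and follows the paper's approach closely: principal parts plus the residue theorem to globalize, Proposition~\ref{glmoment} for the moment map, freeness of the $\GL_n(\cplx)$-action (your argument is essentially Lemma~\ref{freeact}), and Proposition~\ref{411} together with reduction-in-stages for part~(3). The one difference is that you deduce regularity of $0$ from freeness via the standard identity $\mathrm{Im}(d\mu_p)=\mathfrak{g}_p^{\mathrm{ann}}$, whereas the paper proves directly that $\tilde\mu_{\GL_n}$ is a submersion (Lemma~\ref{submersion}); your route is shorter and equally valid.
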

This theorem will be proved in Section~\ref{proof}.

\begin{rmk} We also obtain a version of this theorem when additional
  singularities corresponding to regular singular points are allowed
  (Theorem~\ref{thm3}).  In the case of the Katz-Frenkel-Gross
  connection~\cite{Katz,FGr}, the moduli space reduces to a point,
  consistent with the rigidity of this connection.  Theorem~\ref{thm3}
  also allows one to construct many other examples of connections with
  singleton moduli spaces, which are thus plausible candidates for
  rigidity.

\end{rmk}

\begin{rmk}



It is not surprising that these moduli spaces  are
symplectic: it is conceivable that this fact might be proved independently
using the abstract methods of \cite[Section 6]{Fed}.
The advantage
of Theorem~\ref{modthm} is that it gives explicit constructions of
$\sM^*(\bfA)$ and $\tM^* (\bfA)$ in a number of important, novel cases (including
connections with `supercuspidal' type singularities).  Moreover, the
fact that $\tM^*(\bfA)$ is smooth allows one to generalize the work of
Jimbo, Miwa, and Ueno \cite{JMU} and explicitly calculate the
isomondromy equations in these cases (see \cite{BrSa2}).
\end{rmk}

\subsection{Extended Orbits}\label{extended orbits}

In this section, we will construct symplectic manifolds, called
extended orbits, which will be ``local pieces'' of the moduli spaces
$\mathscr{M}^* (\mathbf{A})$ and $\widetilde{\mathscr{M}}^*
(\mathbf{A})$.   Without loss of generality, we will take our singular
point to be $x=0$, and we will suppress the subscript $x$ from $F_x$,
$P_x$, $A_x$, etc. 

Our study of extended orbits is motivated by the relationship between
coadjoint orbits and gauge transformations.  In the following, fix $\nu \in \Omega^\times_0$.
\begin{proposition}\label{gaugecoadjoint}
The map $\gl_n(F) \to \fP^\vee$ determined by $\nu$
intertwines the gauge action of $P$ on $\gl_n(F)$ with the coadjoint action
of $P$ on $\fP^\vee$.
\end{proposition}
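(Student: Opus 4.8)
The plan is to recognize the gauge action as the coadjoint action twisted by a translation term that is invisible to $\fP$. Under the identification of $X\in\gl_n(F)$ with the matrix of $\nabla_\tau$ for $\tau=\tau_\nu$, the gauge action of $g\in P$ is $g\cdot X=\Ad(g)(X)-(\tau g)g^{-1}$, as in \eqref{gaugechange}. The map determined by $\nu$ sends $X$ to the functional $\langle X,\cdot\rangle_\nu$ on $\fP$, so, using the standard left coadjoint action $(\Ad^*(g)\alpha)(Y)=\alpha(\Ad(g^{-1})Y)$, the claim is precisely that
\[
\langle g\cdot X,Y\rangle_\nu=\langle X,\Ad(g^{-1})Y\rangle_\nu\qquad\text{for all }Y\in\fP .
\]
So I would first split the left-hand side into the conjugation part $\langle\Ad(g)(X),Y\rangle_\nu$ and the translation part $\langle(\tau g)g^{-1},Y\rangle_\nu$.

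For the conjugation part, cyclicity of the trace gives $\Tr\big((gXg^{-1})Y\big)=\Tr\big(X(g^{-1}Yg)\big)$, hence $\langle\Ad(g)(X),Y\rangle_\nu=\langle X,\Ad(g^{-1})Y\rangle_\nu$, with no hypothesis on $\nu$ or on $Y$. It then remains to show the translation term vanishes on $\fP$, i.e.\ $(\tau g)g^{-1}\in\fP^{\perp}$. When $\ord(\nu)=-1$, Lemma~\ref{taup1} gives $(\tau g)g^{-1}\in\fP^{1}$, and Proposition~\ref{duality} with $s=0$ identifies $\fP^{1}=(\fP)^{\perp}$, which closes this case. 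For arbitrary $\nu$, I would set $f=\nu/\tfrac{dt}{t}\in F^{\times}$, so that $\tau_\nu=f^{-1}\tau_{dt/t}$ and $\langle A,B\rangle_\nu=\langle fA,B\rangle_{dt/t}$; then $\langle(\tau_\nu g)g^{-1},Y\rangle_\nu=\langle(\tau_{dt/t}g)g^{-1},Y\rangle_{dt/t}=0$ by the order $-1$ case. Combining the two parts yields the intertwining.

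I do not expect a genuine obstacle here: the argument is linear algebra together with Lemma~\ref{taup1} and Proposition~\ref{duality}. The points that need care are getting the variance conventions right, so that the gauge action is matched with $\Ad(g^{-1})$ on $\fP$ inside the coadjoint action with no stray transpose or sign, and confirming that Lemma~\ref{taup1} applies, which it does because in this setting $P=P_x$ is a parahoric subgroup of $G_x=\GL_n(\fo_x)$; the reduction of an arbitrary $\nu$ to the order $-1$ case via rescaling by $f$ should be carried out explicitly rather than assumed.
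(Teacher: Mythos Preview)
Your proposal is correct and follows essentially the same approach as the paper: both use trace cyclicity for the $\Ad$ part, Lemma~\ref{taup1} to place $(\tau g)g^{-1}\in\fP^1$, and Proposition~\ref{duality} to identify $\fP^1=\fP^\perp$. Your argument is in fact slightly more streamlined than the paper's, which first decomposes $p=hu$ with $h\in H\subset\GL_n(\cplx)$ and $u\in P^1$ before invoking Lemma~\ref{taup1}, whereas you apply that lemma directly to $g\in P$; your explicit reduction of general $\nu$ to the order $-1$ case is also a bit more careful than the paper's bare ``without loss of generality.''
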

\begin{proof}
  Recall from Lemma~\ref{subquotient} that $P=H\ltimes P^1$ for
  $H\cong Q/U$ a Levi subgroup of $\GL_n(\cplx)$.  Thus, we may write
  any element of $P$ as $p = h u$, for $h \in H$ and $u \in P^1$.
  Without loss of generality, we may assume that $\nu$ has order $-1$.
  Thus,
\begin{equation*}
p \cdot X = h\cdot (\Ad(u) (X) - \tau (u) u^{-1}) = \Ad(p) (X) - \Ad(h) (\tau(u) u^{-1}).
\end{equation*}
Lemma~\ref{taup1} shows that $\Ad(h) (\tau(u) u^{-1}) \in
\mathfrak{P}^1 = \mathfrak{P}^\perp$.  Applying
Proposition~\ref{duality}, we see that
$\Ad^*(p)(\langle X,\cdot\rangle_\nu\vert_{\fP})=\langle
\Ad(p)(X),\cdot\rangle_\nu\vert_{\fP}= \langle p\cdot
X,\cdot\rangle_\nu\vert_{\fP}$.
\end{proof}

From now on, we assume $\nu$ has order $-1$.  We suppose that $A$ is a
formal type at $0$ stabilized by a torus $T$ with $T(\mathfrak{o})
\subset P$ and that the corresponding regular stratum $(P, r, \b)$ has
$r > 0$.  In particular, any connection with formal type $A$ is
irregular singular.  Denote the projection of $\pi^*_\ft(A)$ onto
$(\mathfrak{P}^1)^\vee$ by $A^1$.  Let $G = \GL_n(\mathfrak{o})$ be
the maximal standard parahoric subgroup at $0$ with congruence
subgroups (resp. fractional ideals) $G^i$ (resp.  $\fg^i$).  Then,
$G^1 \subset P \subset G$, and $P / G^1 \cong Q$.
For any subgroup $H\subset G$ with Lie algebra $\fh$, there is a
natural projection $\pi_{\fh} : \mathfrak{g}^\vee \to (\fh)^\vee$
obtained by restricting functionals to $\fh\subset \mathfrak{g}$.
Denote the $P$-coadjoint orbit of $\pi^*_\ft(A)$
by $\orbit$, and the $P^1$-coadjoint orbit of ${A}^1$ by $\orbit^1$.

\begin{definition}
  Let $A$ be a formal type at $0$ with irregular singularity, and
  let $U$ be the unipotent radical of $Q$. We
  define the extended orbits $\mathscr{M}(A)$ and
  $\widetilde{\mathscr{M}} (A)$ by
\begin{itemize}
\item $\mathscr{M}(A) \subset (Q \backslash \GL_n(\cplx)) \times \mathfrak{g}^\vee$
is the subvariety defined by
\begin{equation}\label{madef}
\mathscr{M}(A) = \{ (Q g, \alpha) \mid \pi_{\mathfrak{P}} (\Ad^*(g) (\alpha)) \in \orbit) \};
\end{equation}
\item $\widetilde{\mathscr{M}} (A) \subset (U \backslash
  \GL_n(\cplx)) \times \mathfrak{g}^\vee$ is defined by
\begin{equation*}\label{matwiddledef}
\widetilde{\mathscr{M}}(A) = \{ (U g, \alpha) \mid \pi_{\mathfrak{P}^1} (\Ad^*(g) (\alpha)) \in \orbit^1) \};
\end{equation*}
\end{itemize}
\end{definition}

\begin{proposition}\label{framing}
The extended orbits $\mathscr{M} (A)$ and $\widetilde{\mathscr{M}}(A)$ are isomorphic to
symplectic reductions of $T^* G \times \orbit$ and $T^* G \times
\orbit^1$ respectively:
\begin{equation*}\begin{aligned}
\mathscr{M}(A) & \cong T^*G \times \orbit \sslash_0 P \\
\widetilde{\mathscr{M}}(A) & \cong T^*G \times \orbit^1 \sslash_0 P^1.
\end{aligned}
\end{equation*}
In particular, the natural symplectic form on $T^*G \times \orbit$
descends to both  $\mathscr{M}(A)$ and $\widetilde{\mathscr{M}}(A)$.
Moreover, 
$\mathscr{M}(A)$ and $\widetilde{\mathscr{M}}(A)$ are smooth
symplectic manifolds.
\end{proposition}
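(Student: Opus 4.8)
The plan is to identify $\mathscr{M}(A)$ and $\widetilde{\mathscr{M}}(A)$ explicitly as symplectic reductions of the standard symplectic manifold $T^*G \times \orbit$ (resp. $T^*G \times \orbit^1$) by the group $P$ (resp. $P^1$), and then to invoke the general fact that symplectic reduction at $0$ of a symplectic manifold by a \emph{free} and proper group action yields a smooth symplectic manifold. First I would recall the standard model $T^*G \cong G \times \mathfrak{g}^\vee$ (trivializing via left translation), on which $P$ acts by $p\cdot(g,\alpha) = (gp^{-1}, \alpha)$ — this is the restriction of the right $G$-action lifted to the cotangent bundle, and it is free since $P$ acts freely on $G$. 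The moment map for the $P$-action on $T^*G$ is $(g,\alpha)\mapsto \pi_\fP(\Ad^*(g)(\alpha))$ (restriction to $\fP\subset\fg$ of the coadjoint-transported covector). For the diagonal action on $T^*G\times\orbit$, where $P$ acts on the coadjoint orbit $\orbit\subset\fP^\vee$ in the usual Hamiltonian way with moment map the inclusion $\orbit\hookrightarrow\fP^\vee$ composed with negation, the combined moment map is $\mu_P(g,\alpha,\xi) = \pi_\fP(\Ad^*(g)(\alpha)) - \xi$. Then $\mu_P^{-1}(0)$ consists of triples with $\pi_\fP(\Ad^*(g)(\alpha))=\xi\in\orbit$, and quotienting by $P$ — since $\xi$ is determined by $(g,\alpha)$ and the residual freedom is exactly $g\mapsto gp^{-1}$ for $p\in P$ — gives precisely pairs $(Pg,\alpha)$ with $\pi_\fP(\Ad^*(g)(\alpha))\in\orbit$. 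Passing from the right coset $Pg$ to $Qg$ uses $P/G^1\cong Q$ together with $G^1\subset\GL_n(\C)$ being a unipotent subgroup on which everything is harmless; more precisely one observes that the condition $\pi_\fP(\Ad^*(g)(\alpha))\in\orbit$ is invariant under right multiplication of $g$ by $G^1$ (since $G^1\subset P$), so the quotient by $P$ on the $\GL_n(\C)$-factor is literally $Q\backslash\GL_n(\C)$. This matches \eqref{madef}. The argument for $\widetilde{\mathscr{M}}(A)$ is identical with $P$ replaced by $P^1$, $\orbit$ by $\orbit^1$, $Q$ by $U=P^1/G^1$.

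The key point for smoothness and the induced symplectic structure is that $0$ is a regular value of $\mu_P$ (resp. $\mu_{P^1}$) and that $P$ (resp. $P^1$) acts freely on $\mu_P^{-1}(0)$. Freeness of the $P$-action on all of $T^*G$ is immediate from freeness of right translation on $G=\GL_n(\C)$; properness likewise holds because $P$ is a closed subgroup of the algebraic group $G=\GL_n(\C)$ (working over $\C$, one can take the smooth/analytic viewpoint throughout, as the paper does from the start of Section~\ref{modspace}). Regularity of the value $0$ follows from freeness: for a free Hamiltonian action the moment map is a submersion at every point of a fiber where the action is free, because $d\mu_P$ surjects onto $\fp^\vee$ precisely when the infinitesimal action is injective. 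Hence the Marsden–Weinstein theorem applies verbatim: $\mu_P^{-1}(0)/P$ is a smooth manifold carrying a canonical symplectic form pulled back from, and descending, the symplectic form on $T^*G\times\orbit$. The same reasoning handles $\widetilde{\mathscr{M}}(A)$, using that $P^1$ is closed in $\GL_n(\C)$ and acts freely on $G$ by right translation.

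I would then close the loop by noting that the isomorphisms $\mathscr{M}(A)\cong T^*G\times\orbit\sslash_0 P$ and $\widetilde{\mathscr{M}}(A)\cong T^*G\times\orbit^1\sslash_0 P^1$ established above transport the descended symplectic form to $\mathscr{M}(A)$ and $\widetilde{\mathscr{M}}(A)$, and that both are therefore smooth symplectic manifolds. The only mild subtlety — and the step I expect to require the most care — is the bookkeeping around the quotient map on the $\GL_n(\C)$-factor: one must check that the map $(g,\alpha)\mapsto (Qg,\alpha)$ (resp. $(Ug,\alpha)$) identifies $\mu_P^{-1}(0)/P$ with the subvariety defined in \eqref{madef} as a set \emph{and} as a symplectic manifold, i.e. that no smoothness is lost when realizing the abstract reduced space inside $(Q\backslash\GL_n(\C))\times\fg^\vee$. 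This is handled by observing that $\pi_\fP(\Ad^*(g)(\alpha))$ depends on $g$ only through $G^1 g$ in a way compatible with the orbit condition, so the defining equation of $\mathscr{M}(A)$ descends from $\mu_P^{-1}(0)$ along a principal $Q$-bundle, and the reduced symplectic form is the one inherited from $T^*G\times\orbit$. Everything else is a routine application of the standard symplectic reduction machinery.
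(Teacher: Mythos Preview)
Your overall strategy---exhibit the extended orbits as Marsden--Weinstein quotients, observe that $P$ (resp.\ $P^1$) acts freely on $T^*G$ by translation, and conclude smoothness and the descended symplectic form from the standard reduction theorem---is the same as the paper's. However, there is a genuine gap: you repeatedly identify $G$ with $\GL_n(\cplx)$, but in this section $G = \GL_n(\fo)$ is the maximal parahoric, an infinite-dimensional pro-algebraic group. The statements ``$P$ is a closed subgroup of the algebraic group $G=\GL_n(\cplx)$'' and ``$G^1\subset\GL_n(\cplx)$ being a unipotent subgroup'' are false as written: $G^1 = 1 + t\gl_n(\fo)$ is the \emph{kernel} of the reduction map $G \to \GL_n(\cplx)$, not a subgroup of $\GL_n(\cplx)$, and it is pro-unipotent, not unipotent. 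Consequently $T^*G$ is not a finite-dimensional symplectic manifold, and your direct appeal to properness, regular values, and Marsden--Weinstein is only formal.

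The paper handles this in two ways. First, in the proof itself it establishes smoothness of $\mu_{P^1}^{-1}(0)$ by hand, exhibiting it as an affine bundle over $G \times \orbit^1$ with fiber $\fP/\fg^1$, and then checks via an explicit local section that the quotient map to $\widetilde{\mathscr{M}}(A)$ is a principal $P^1$-bundle. Second, the remark immediately following the proposition explains that one may replace $G$ by the finite-dimensional quotient $G/G^\ell$ for $\ell$ large enough that $A \in (\fg^\ell)^\perp$, so that all the symplectic geometry genuinely takes place in finite dimensions. Once you make this replacement, your freeness-implies-submersion argument goes through; without it the argument is incomplete. As a secondary matter, your action $p\cdot(g,\alpha)=(gp^{-1},\alpha)$ together with the moment map $\pi_\fP(\Ad^*(g)\alpha)$ is not consistent with any single trivialization convention and does not match the paper's (left action $p\cdot(g,\alpha)=(pg,\alpha)$ with moment map $-\pi_{\fP^1}(\Ad^*(g)\alpha)$, so that the quotient is $P\backslash G \cong Q\backslash\GL_n(\cplx)$ rather than $G/P$); this is easily repaired but worth sorting out so the bookkeeping in your final paragraph comes out right.
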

\begin{rmk}
  Note that $T^*G$ is not finite dimensional.  However, for $\ell$
  sufficiently large, $A \in (\mathfrak{g}^\ell)^\perp$.  Since
  $G^\ell \subset P^1$, we see that $G/P^1 \cong
  (G/G^\ell)/(P^1/G^\ell)$.  Thus, in Proposition~\ref{framing}, it
  suffices to consider $T^*(G/G^\ell) \times \orbit \sslash_0 P$
  (resp.  $T^*(G/G^\ell) \times \orbit^1 \sslash_0 P^1$).  This fact,
  although concealed in our notation, ensures that we are always
  applying results from algebraic and symplectic geometry to
  finite-dimensional varieties.
\end{rmk}
\begin{proof}
  The proof in each case is similar, so we will prove the second isomorphism.
  The group $P^1$ acts on $T^* G$ by the usual left action $p (g,
  \alpha) = (p g, \alpha)$ and on $\orbit^1$ by the coadjoint
  action.  Moreover, on each factor, the action of $P^1$ is
  Hamiltonian with respect to the standard symplectic form.  The
  moment map for the diagonal action of $P^1$ is the sum of the two
  moment maps:
\begin{equation*}\begin{aligned}
&\mu_{P^1} : T^* G \times \orbit^1  \to (\mathfrak{P}^1)^\vee \\
&\mu_{P^1} (g, \alpha, \beta)  =  \pi_{\mathfrak{P}^1} (-\Ad^*(g) (\alpha)) + \beta.
\end{aligned}
\end{equation*}
In particular, 
\begin{equation*}
\mu_{P^1}^{-1} (0) = \{(g, \alpha, \beta) \mid \pi_{\mathfrak{P}^1}(\Ad^*(g) (\alpha)) = \beta\}.
\end{equation*}
We will show that $\mu_{P^1}^{-1}(0)$ is smooth.
Let $\varphi: \mu_{P^1}^{-1} (0) \to G \times \orbit^1$ be defined by
$\varphi (g, \alpha, \beta) = (g, \beta)$.  Choose a local section 
$ f: \orbit^1 \to \orbit^1 + (\fP^1)^\perp \subset \fg^\vee$.
Then, $\varphi^{-1} (g, \beta) = 
\{(g, \Ad^*(g^{-1})(f(\beta)+X) , \beta) \mid X \in  (\fP^1)^\perp \}$.
Therefore, $\mu_{P^1}^{-1} (0)$ is an affine bundle over $G \times \orbit^1$
with fibers isomorphic to $\fP/\fg^1$.  It follows that $\mu_{P^1}^{-1} (0)$
is smooth.

Since $U \cong P^1 / G^1$, $U \backslash \GL_n(\cplx) \cong
P^1 \backslash G$.  Therefore, the map $(g, \alpha, \beta) \mapsto
(P^1 g, \alpha)$ takes $\mu_{P^1}^{-1} (0)$ to
$\widetilde{\mathscr{M}}(A)$; moreover, the fibers are $P^1$ orbits,
so the map identifies $P^1 \backslash \mu_{P^1}^{-1} (0) \cong
\widetilde{\mathscr{M}}(A)$.

Finally, choose a local section $\zeta : P^1 \backslash G \to G$ with
domain $W$, and let $W' = \widetilde{\mathscr{M}} (A) \cap (W \times
\fg^\vee)$.  There is a section $\zeta' : W' \to \mu_{P^1}^{-1} (0)$
given by $\zeta' (P^1 g, \alpha) = (\zeta'(P^1 g), \alpha, \pi_{\fP^1}
(\Ad^*(\zeta(P^1 g)) (\alpha)))$.  This shows that $\mu_{P^1}^{-1} (0)
\to \widetilde{\mathscr{M}}(A)$ is a principal $P^1$-bundle, since
$P^1$ acts freely on the fibers.  In particular,
$\widetilde{\mathscr{M}}(A)$ is smooth, and the symplectic form on
$T^* G \times \orbit^1$ descends to $\widetilde{\mathscr{M}}(A)$.
\end{proof}

Let $\res : \mathfrak{g}^\vee \to \gl_n(\cplx)^\vee$ be the
restriction map dual to the inclusion $\gl_n(\cplx) \to \mathfrak{g}$.
Notice that if we fix a representative $\alpha_\nu \in \gl_n(F)$ for
$\alpha \in \mathfrak{g}^\vee$, then $\gl_n(\cplx)^\vee \cong
\gl_n(\cplx)$ under the trace pairing and $\res(\alpha)$ corresponds
to the ordinary residue of $\alpha_\nu \nu$.

There is a Hamiltonian left action of $\GL_n(\cplx)$ on $T^*G$ defined
by
\begin{equation}\label{rtact}
\rho (h) (g, \alpha) = (g h^{-1}, \Ad^*(h) \alpha).
\end{equation}
The moment map $\mu_\rho$ is given by $\mu_\rho (g, \alpha) =
\res(\alpha)$.  To see this, observe that $\rho$ is the restriction to
$\GL_n(\cplx)$ of the usual left action of $G$ on $T^*G$ (via
inversion composed with right multiplication).  Hence, the map
$\mu_\rho$ is just the composition of the moment map for right
multiplication $\mu (g, \alpha) = \alpha$ with $\res$.

The action $\rho$ defines left actions of $\GL_n(\cplx)$ on the 
first components of 
$T^*G \times \orbit$ and $T^*G \times \orbit^1$ respectively.  These
actions commute with the left actions of $P$ and $P^1$, and 
it is clear that $\mu_P$ and $\mu_{P^1}$ are
$\GL_n(\cplx)$-equivariant.
\begin{lemma}\label{moeq}
Let $G_1$ and $G_2$ act on a symplectic manifold $X$ via Hamiltonian
actions, and let $\mu_1$ and $\mu_2$ be the corresponding moment maps.  
If $\mu_2$ is $G_1$-invariant on $\mu_1^{-1} (\lambda)$, 
then there is a natural Hamiltonian action of $G_2$ on $X \sslash_\lambda G_1$.
Furthermore, if $\iota_\lambda : \mu_{1}^{-1}  (\lambda) \to X$ and 
$\pi_\lambda : \mu_{1}^{-1} (\lambda) \to X \sslash_\lambda G_1$ are
the natural maps, then the induced moment map $\bar{\mu}_{2}$ on $X \sslash_\lambda G_1$
is the unique map satisfying $\mu_{2} \circ \iota_\lambda = \bar{\mu}_{2} \circ \pi_\lambda$.
\end{lemma}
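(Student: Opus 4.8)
\textbf{Proof plan for Lemma~\ref{moeq}.}

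The plan is to reduce everything to the standard fact that symplectic reduction by $G_1$ at the value $\lambda$ (where $\lambda\in[\mathfrak{g}_1,\mathfrak{g}_1]^\perp$, so $\{\lambda\}$ is a coadjoint orbit and $\mu_1^{-1}(\lambda)$ is $G_1$-stable) produces a symplectic manifold $X\sslash_\lambda G_1=\mu_1^{-1}(\lambda)/G_1$ together with the diagram $\iota_\lambda:\mu_1^{-1}(\lambda)\hookrightarrow X$, $\pi_\lambda:\mu_1^{-1}(\lambda)\to X\sslash_\lambda G_1$, where $\iota_\lambda^*\omega_X=\pi_\lambda^*\omega_{\mathrm{red}}$. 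First I would observe that since the $G_1$- and $G_2$-actions need not commute a priori, the hypothesis we actually use is precisely that $\mu_2$ is $G_1$-invariant on $\mu_1^{-1}(\lambda)$: this guarantees that $\mu_1^{-1}(\lambda)$ is $G_2$-stable. Indeed, for $Y\in\mathfrak{g}_2$ the Hamiltonian vector field $X_{\langle\mu_2,Y\rangle}$ is tangent to the level set $\mu_1^{-1}(\lambda)$ because $d\langle\mu_2,Y\rangle$ annihilates the kernel of $d\mu_1$ along that level set — this is equivalent to $G_1$-invariance of $\langle\mu_2,Y\rangle$ on $\mu_1^{-1}(\lambda)$, using that the tangent space to the $G_1$-orbit and $\ker d\mu_1$ are symplectic-orthogonal complements of each other in $T_x\mu_1^{-1}(\lambda)^\perp$. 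So $G_2$ acts on $\mu_1^{-1}(\lambda)$, and since its action commutes with nothing is needed beyond the fact that it preserves $\iota_\lambda^*\omega_X$ and descends along $\pi_\lambda$: the $G_2$-action on $\mu_1^{-1}(\lambda)$ sends $G_1$-orbits to $G_1$-orbits (again by $G_1$-invariance of $\mu_2$ one checks the two actions are compatible on the level set even without global commutativity), hence descends to an action on $X\sslash_\lambda G_1$.

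Next I would verify this descended action is Hamiltonian. Since $\langle\mu_2,Y\rangle$ is $G_1$-invariant on $\mu_1^{-1}(\lambda)$, it factors through $\pi_\lambda$ as a function $\overline{\langle\mu_2,Y\rangle}$ on $X\sslash_\lambda G_1$; assembling these over $Y\in\mathfrak{g}_2$ gives a map $\bar\mu_2:X\sslash_\lambda G_1\to\mathfrak{g}_2^\vee$ characterized by $\mu_2\circ\iota_\lambda=\bar\mu_2\circ\pi_\lambda$, which is exactly the uniqueness statement in the lemma (it is forced because $\pi_\lambda$ is a surjective submersion). To see $\bar\mu_2$ is a moment map for the descended $G_2$-action, I would pull back the defining identity $\iota_{X_{\langle\mu_2,Y\rangle}}\omega_X=d\langle\mu_2,Y\rangle$ along $\iota_\lambda$, use $\iota_\lambda^*\omega_X=\pi_\lambda^*\omega_{\mathrm{red}}$ and the fact that the $G_2$-generator on the level set projects to the $G_2$-generator downstairs, and then conclude by the injectivity of $\pi_\lambda^*$ on forms that $\iota_{X^{\mathrm{red}}_Y}\omega_{\mathrm{red}}=d\overline{\langle\mu_2,Y\rangle}$. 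Equivariance of $\bar\mu_2$ follows from equivariance of $\mu_2$ on the level set by the same descent argument.

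I expect the only genuine subtlety — the "hard part" — to be the very first step: justifying that the $G_2$-action preserves $\mu_1^{-1}(\lambda)$ and that it descends to the quotient \emph{without} assuming the two group actions commute. This is where the precise hypothesis "$\mu_2$ is $G_1$-invariant on $\mu_1^{-1}(\lambda)$" must be used carefully; the tangency claim rests on the symplectic-orthogonality identity $T_x(G_1\cdot x)=(\ker d_x\mu_1)^{\omega}$ valid at points of $\mu_1^{-1}(\lambda)$, plus the observation that the flow of $X_{\langle\mu_2,Y\rangle}$ preserves $\langle\mu_1,Z\rangle$ for all $Z\in\mathfrak{g}_1$ on the level set (since $\{\langle\mu_2,Y\rangle,\langle\mu_1,Z\rangle\}=-X_{\langle\mu_1,Z\rangle}\langle\mu_2,Y\rangle$ vanishes there, being the derivative of a $G_1$-invariant function along a $G_1$-orbit direction). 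Everything after that — factoring functions, the characterizing identity, and checking the moment map axioms — is routine diagram-chasing with $\iota_\lambda$ and $\pi_\lambda$ and I would not belabor it.
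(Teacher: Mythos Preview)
The paper does not prove this lemma at all: it simply cites \cite[Theorem~4.3.5]{AM} and moves on. Your plan is therefore not a comparison with the paper's proof so much as an independent argument in lieu of the citation, and in its broad strokes it is correct and standard.

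There is one step you wave through that deserves a sharper justification. You assert that the $G_2$-action on $\mu_1^{-1}(\lambda)$ ``sends $G_1$-orbits to $G_1$-orbits (again by $G_1$-invariance of $\mu_2$\ldots)''. Invariance of $\mu_2$ is what gets the Hamiltonian vector fields $X_{\langle\mu_2,Y\rangle}$ tangent to the level set, but it is \emph{not} the direct reason the action respects $G_1$-orbits; absent commutativity of the two actions, there is no a~priori reason an element of $G_2$ should carry a $G_1$-orbit to a $G_1$-orbit. The clean argument is intrinsic: once you know $G_2$ preserves $\mu_1^{-1}(\lambda)$ and acts symplectically on $X$, it preserves the pulled-back form $\iota_\lambda^*\omega_X$, hence preserves its null foliation. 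Under the usual regularity and freeness hypotheses for reduction, the leaves of that null foliation are precisely the $G_1$-orbits, so $G_2$ permutes them and the action descends. This replaces your parenthetical hand-wave with a one-line structural reason and closes the only real gap. Everything after that --- the factoring of $\mu_2$ through $\pi_\lambda$ (forced by surjectivity of $\pi_\lambda$), and the verification that $\bar\mu_2$ is a moment map via $\iota_\lambda^*\omega_X=\pi_\lambda^*\omega_{\mathrm{red}}$ --- is exactly the routine diagram chase you describe.
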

This follows from \cite[Theorem 4.3.5]{AM}.  Thus, $\rho$ descends to
natural Hamiltonian actions on $\mathscr{M}(A)$ and
$\widetilde{\mathscr{M}}(A)$.  For example, if $(Q g, \alpha) \in
\mathscr{M}(A)$ and $h\in\GL_n(\cplx)$, then
\begin{equation*}
h(Q g, \alpha)  = (Q g h^{-1}, \Ad^*(h) (\alpha)).
\end{equation*}

\begin{proposition}\label{glmoment}
The moment map for the action of $\GL_n(\cplx)$ on 
$\mathscr{M}(A)$ is given by
\begin{equation*}
\mu_{\GL_n} (Q g, \alpha)) = 
\res (\alpha).
\end{equation*}
The action of $\GL_n(\cplx)$ on $\widetilde{\mathscr{M}}(A)$ 
has the analogous moment map $\tmug$.
\end{proposition}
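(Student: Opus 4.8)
The plan is to reduce the statement to the known moment-map computation for the $\GL_n(\cplx)$-action $\rho$ on $T^*G$ together with Lemma~\ref{moeq}. First I would recall that, by Proposition~\ref{framing}, $\mathscr{M}(A)$ is realized as the symplectic reduction $T^*G\times\orbit\sslash_0 P$, and that the $\GL_n(\cplx)$-action on $\mathscr{M}(A)$ descends from the Hamiltonian action $\rho$ of $\GL_n(\cplx)$ on the first factor $T^*G$ (extended trivially to $\orbit$), as described in \eqref{rtact} and the discussion following it. The key input is that $\rho$ is the restriction to $\GL_n(\cplx)\subset G$ of the left $G$-action on $T^*G$ (inversion composed with right translation), whose moment map is $(g,\alpha)\mapsto\alpha\in\fg^\vee$; composing with the inclusion $\GL_n(\cplx)\hookrightarrow G$ dualizes to $\res:\fg^\vee\to\gl_n(\cplx)^\vee$, so the moment map for $\rho$ on $T^*G\times\orbit$ is $(g,\alpha,\gamma)\mapsto\res(\alpha)$.

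Next I would verify the hypotheses of Lemma~\ref{moeq} with $G_1=P$, $G_2=\GL_n(\cplx)$, $X=T^*G\times\orbit$, and $\lambda=0$: the two actions commute (the left $P$-action multiplies $g$ on the left while $\rho$ multiplies $g^{-1}$ on the right and acts coadjointly on the $\orbit$-factor, and coadjoint actions of $P$ and $\GL_n(\cplx)$ on distinct tensor slots commute), and the $\GL_n(\cplx)$-moment map $(g,\alpha,\gamma)\mapsto\res(\alpha)$ is $P$-invariant on $\mu_P^{-1}(0)$ because left multiplication by $p\in P$ fixes $\alpha$. Lemma~\ref{moeq} then produces a Hamiltonian $\GL_n(\cplx)$-action on $\mathscr{M}(A)=X\sslash_0 P$ whose moment map $\bar\mu_{\GL_n}$ is the unique map with $\mu_{\GL_n}\circ\iota_0=\bar\mu_{\GL_n}\circ\pi_0$. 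Under the identification of $\mathscr{M}(A)$ with the quotient $P\backslash\mu_P^{-1}(0)$ from Proposition~\ref{framing}, a point $(Qg,\alpha)\in\mathscr{M}(A)$ is the image of some $(g,\alpha,\gamma)\in\mu_P^{-1}(0)$, so $\bar\mu_{\GL_n}(Qg,\alpha)=\res(\alpha)$; this is well-defined precisely because $\res(\alpha)$ is unchanged along $P$-orbits. The same argument with $P^1$ in place of $P$ and $\orbit^1$ in place of $\orbit$ gives $\tmug(Ug,\alpha)=\res(\alpha)$.

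The only genuine point requiring care — and the main obstacle — is the bookkeeping translating between the three incarnations of $\mathscr{M}(A)$: the explicit subvariety of $(Q\backslash\GL_n(\cplx))\times\fg^\vee$ defined by \eqref{madef}, the quotient $P\backslash\mu_P^{-1}(0)$, and the abstract reduction in Lemma~\ref{moeq}. One must check that the isomorphism from Proposition~\ref{framing} sends the class of $(g,\alpha,\gamma)$ to $(Qg, \Ad^*(\text{something})\alpha)$ in a way compatible with the $\GL_n(\cplx)$-actions on both sides, so that the formula $\res(\alpha)$ indeed matches the coordinate $\alpha$ appearing in \eqref{madef} rather than some twist of it. Since $\res$ is $\GL_n(\cplx)$-equivariant and both actions act by $\Ad^*$ on the $\fg^\vee$-slot composed with right translation on $g$, this compatibility is immediate once the identifications are spelled out, and no further calculation is needed.
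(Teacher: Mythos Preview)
Your proposal is correct and is precisely the approach the paper takes: the paper's proof is the single sentence ``This follows directly from Lemma~\ref{moeq},'' and you have simply unpacked what that entails (verifying the commutation of the $P$- and $\GL_n(\cplx)$-actions, the $P$-invariance of $\res(\alpha)$, and tracking the identification from Proposition~\ref{framing}). One minor wording slip: in your commutation check you refer to $\rho$ acting coadjointly on the ``$\orbit$-factor,'' but you mean the $\alpha$-coordinate of $T^*G$; the $\GL_n(\cplx)$-action is trivial on the actual $\orbit$ (or $\orbit^1$) factor, as you correctly stated earlier.
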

\begin{proof}
This follows directly from Lemma~\ref{moeq}.
\end{proof}
\begin{lemma}\label{submersion}
  The moment map $\tmug:\widetilde{\mathscr{M}}(A) \to \gl_n(\cplx)$ is
  a submersion.
\end{lemma}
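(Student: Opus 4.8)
The strategy is to trace through the symplectic reduction description of $\widetilde{\mathscr{M}}(A)$ from Proposition~\ref{framing} and use the explicit affine-bundle structure of $\mu_{P^1}^{-1}(0)$ over $G\times\orbit^1$ established there. Since $\widetilde{\mathscr{M}}(A) \cong P^1\backslash\mu_{P^1}^{-1}(0)$, and $\tmug$ is induced from the $\GL_n(\cplx)$-equivariant map $(g,\alpha,\beta)\mapsto \res(\alpha)$ on $\mu_{P^1}^{-1}(0)$ (by Lemma~\ref{moeq} and Proposition~\ref{glmoment}), it suffices to show that the composite $\mu_{P^1}^{-1}(0) \to \gl_n(\cplx)$, $(g,\alpha,\beta)\mapsto\res(\alpha)$, is a submersion, since $\pi_\lambda$ is a submersion and $\tmug\circ\pi_\lambda$ factors through it. Equivalently, I want to show that for each point of $\widetilde{\mathscr{M}}(A)$ the differential of $\tmug$ is surjective onto $\gl_n(\cplx)$.

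\textbf{Key steps.} First I would fix a point $(Ug,\alpha)\in\widetilde{\mathscr{M}}(A)$; by applying the $\GL_n(\cplx)$-action (which is equivariant, so submersivity at one point of an orbit is equivalent to submersivity at any other) I may reduce to a convenient normal form, e.g.\ $g$ the identity coset, so $\pi_{\fP^1}(\alpha)\in\orbit^1$. Next, using the section $\zeta':W'\to\mu_{P^1}^{-1}(0)$ from the proof of Proposition~\ref{framing}, I can realize a neighborhood of the point concretely and compute tangent vectors: deformations of $\alpha$ within $\mu_{P^1}^{-1}(0)$ at fixed $g$ are precisely $\alpha + X$ with $X\in(\fP^1)^\perp = \fP$ (modulo $\fg^1$), together with coadjoint deformations $\ad^*(Y)(\alpha)$ for $Y\in\fg$ coming from moving $g$. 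The image of $d\tmug$ therefore contains $\res((\fP^1)^\perp) = \res(\fP)$ and $\res(\ad^*(\fg)(\alpha))$. The crucial observation is that $\fP \supset \gl_n(\cplx)$ (the constant matrices lie in every parahoric subalgebra at $0$, since $P\subset G=\GL_n(\fo)$ and $G$ contains $\GL_n(\cplx)$, hence $\fg\supset\gl_n(\cplx)$, and more to the point $\fP\supset\fg^1$ always and $\res$ restricted to constants is the identity). More carefully: $\res$ maps $\fP$ onto $\gl_n(\cplx)^\vee\cong\gl_n(\cplx)$ surjectively, because $\res(\alpha_\nu)$ is the residue of $\alpha_\nu\nu$ and any constant matrix $M\in\gl_n(\cplx)\subset\fP$ has $\res(M) = M$ (for $\nu=\frac{dt}{t}$). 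Hence already the $X$-directions alone surject onto $\gl_n(\cplx)$, giving the result.

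\textbf{Main obstacle.} The delicate point is bookkeeping the finite-dimensional truncation: $T^*G$ is infinite-dimensional, so I should work with $T^*(G/G^\ell)$ for $\ell\gg 0$ as in the remark following Proposition~\ref{framing}, and verify that $\res$ still makes sense and remains surjective after this truncation — which it does, since $\res$ only sees the residue term and $\gl_n(\cplx)$ maps to a complement of $\fg^1\subset\fP$ that survives the quotient by $\fg^\ell$ for $\ell\geq 1$. A secondary point is making precise that the affine-bundle fiber directions $(\fP^1)^\perp/(\fg^\ell\text{-part})$ genuinely contribute to $\ker d\mu_{P^1}$ and that pushing forward along $\pi_\lambda$ and then applying $\res$ does not collapse them — this follows because the $P^1$-action fixes the $\alpha$-component up to coadjoint action by $\fP^1$, and $\res(\ad^*(\fP^1)(\alpha))$ is a proper subspace that we are quotienting by, but the full $\res(\fP)$ image downstairs still contains $\res$ of the constants, which is all of $\gl_n(\cplx)$. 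I expect the entire argument to be short once the normal-form reduction and the identification $\res(\gl_n(\cplx))=\gl_n(\cplx)$ are in place; the write-up hazard is purely the infinite-dimensional hygiene, which the cited remark already licenses.
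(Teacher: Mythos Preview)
Your argument has a genuine gap at the ``crucial observation.'' You claim that the constant matrices $\gl_n(\cplx)$ lie in $\fP$, so that $\res$ applied to the affine-fiber directions $(\fP^1)^\perp\cong\fP/\fg^1$ already hits all of $\gl_n(\cplx)^\vee$. But this inclusion is false whenever $e_P>1$: for instance, if $P$ is an Iwahori, $\fP$ consists of matrices that are upper-triangular modulo $t$, so a strictly lower-triangular constant matrix is not in $\fP$. In general, the image of $\fP$ under $t\mapsto 0$ is the parabolic subalgebra $\fq\subset\gl_n(\cplx)$, so $\res((\fP^1)^\perp)=\fq$, which under the trace pairing is $\fu^\perp\subsetneq\gl_n(\cplx)^\vee$. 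Thus the fiber directions alone give only $\fu^\perp$, not everything.

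This is exactly where the paper's proof does real work. After obtaining $\fu^\perp$ from the fiber directions, one must still surject onto $\fu^\vee\cong\gl_n(\cplx)^\vee/\fu^\perp$, and for this the paper uses the tangent directions coming from the orbit $\orbit^1$, namely $\ad^*(X)(\alpha)$ for $X\in\fP^1$, together with (when $r\le e_P$) the $\GL_n(\cplx)$-action directions. The argument sets up a filtration $\fu^\vee=\fu^1\supset\fu^2\supset\cdots\supset\fu^{e_P}=0$ by the images of $\fP^{j-e_P}\cap\fg$ and shows that $X\mapsto\ad^*(X)(\alpha)$ induces a surjection $\bfP^{j-e_P+r}\to\fu^j/\fu^{j+1}$ for each $j$; the key input is the tame corestriction statement Proposition~\ref{cores3}, which identifies the image of $\delta_{\alpha_\nu}$ on each graded piece as $\ker(\bar\pi_\ft)$, and then a concrete calculation (producing elements like $t^{-1}\Res(\varpi_E^{j-e_P}\,dt)$) to check that $\bar\pi_\ft$ restricted to the kernel of the map to $\fu^j/\fu^{j+1}$ is still surjective. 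When $r\le e_P$ the orbit directions do not reach high enough in the filtration, and one additionally uses the $\GL_n(\cplx)$-action on $\widetilde{\mathscr{M}}(A)$ restricted to $\fw^1=\fP^{1-e_P+r}\cap\gl_n(\cplx)$ to fill in the missing graded pieces. None of this is visible from your outline, and your proposed shortcut does not survive once one notices $\gl_n(\cplx)\not\subset\fP$.
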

\begin{proof}
  By Proposition~\ref{framing}, $\widetilde{\mathscr{M}}(A)$ is
  smooth.  We will show that the differential map $d \tmug$ on
  tangent spaces is surjective.  Note that $\tmug(Ug g',
  \Ad^*(g'^{-1}) \alpha) = g'^{-1} \tmug (U g, \alpha)$.
  Therefore, it suffices to show that the tangent map is surjective at
  points $s=(U,\a)$ in the subvariety $S$ defined by taking $g$ to be the
  identity.

  Let $\fu = \Lie(U)$, so $\fu^\perp \subset \gl_n(\cplx)^\vee$.
  Indeed, $\fu^\perp \cong (\fP^1)^\perp \subset \fg^\vee$.  If we
  choose a section $f : (\fP^1)^\vee \to \fg^\vee,$ we see that
  $\orbit^1 \times \fu^\perp \cong S$ by the map $(\gamma, y) \mapsto
  (U, f(\gamma) +y)$.  Here, the image of $y$ is identified with its
  image in $\fg^\vee$.  In particular, the image of $d \tmug (T_s
  \widetilde{\mathscr{M}}(A))$ contains $\fu^\perp \subset
  \gl_n(\cplx)^\vee$.  Therefore, it suffices to show that the
  composition of $d \tmug$ with the quotient $\gl_n(\cplx)^\vee \to
  \fu^\vee$ is surjective.  Observe that tangent vectors to $\orbit^1$
  are of the form $\ad^*(X) (\alpha)$ for $X \in \fP^1$.

  First, suppose that $r>e_P$.  In this case, we will show that
  $\ad^*(\fP^{1-e_P+r})(\a)\subset T_s\orbit^1$ surjects onto
  $\fu^\vee$.  More precisely, we will construct a filtration
  $\fu^\vee=\fu^1\supset\fu^2\supset\dots\supset\fu^{e_P}=\{0\}$ such
  that the map $X\mapsto \ad^*(X)(\a)$ induces a surjection
  $\bpi^j_\fu:\bfP^{j-e_P+r}\to\fu^j/\fu^{j+1}$ for each $j$.  Since
  $\fu\cong \fP^1/\fg^1$, we see that $\fu^\vee \cong \fg/ \fP$ under
  the duality isomorphism.  We now obtain the desired filtration on
  $\fu^\vee$ by subspaces of the form $\fu^j \cong (\fP^{j-e_P} \cap
  \fg) / \fP$.  More explicitly, $\fu^j$ is the restriction of
  $(\fP^{e_P-j+1})^\perp\subset (\fP^1)^\vee$ to $\fu$.  Note that the
  map $\bpi_\fu^j=\tau^j\circ(-\delta_{\an})$, where
  $\tau^j:\bfP^{j-e_P}\to\fu^j/\fu^{j+1}$ is the surjection defined by
  $\tau^j(X)=(\langle X,\cdot\rangle_\nu|_\fu)+\fu^{j+1}$.  
  Furthermore,  $\bpi_\fu^j$ depends only on the coset 
  $\an+ \fP^{-r+1}$.

  By assumption, $\alpha_{\nu} \in A_{ \nu} + \fP^{-r+1}$.
  Proposition~\ref{cores} shows that
  $\delta_{\an}(\bfP^{j-e_P+r})=\ker(\bar{\pi}_\ft)$.  Since
  $\bar{\pi}_\ft:\bfP^{j-e_P}\to \bft^{j-e_P}$ is a surjection, a
  diagram chase shows that $\tau^j|_{\ker(\bar{\pi}_\ft)}$ is
  surjective if and only if $\bar{\pi}_\ft|_{\ker(\tau^j)}$ is
  surjective.  We now verify this last statement.  In the case $e_P =
  n$, recall the description of $\varpi_E$ from \eqref{varpip}.  It is
  easily checked that $Y_{j\nu} = t^{-1} \Res (\varpi_E^{j-e_P} dt)$
  corresponds to a non-zero element $Y_j$ of $\ker(\bar{\pi}^j_\fu)$
  (since $1 \le j <e_P$) and $\pi_\ft (Y_{j\nu}) = \frac{e_P- j}{e_P}
  \varpi_E^{j-e_P}$.  Therefore, the span of $Y_{j\nu}$ surjects onto the
  one-dimensional space $\bft^{j-e_P}$.  A similar proof works for
  $e_P < n$, using the observation in Corollary~\ref{uniformsplitting}
  that $\ft \cong E^{n/e_P}$.  

  Now, assume that $1\le r\le e_P$.  The above argument shows that
  $\bpi^j_\fu(\bfP^{j-e_P+r})=\fu^j/\fu^{j+1}$; however, in this case,
  we can only conclude that the image of $d \tmug|_{T_s\orbit^1}$
  contains $\fu^{e_{P}- r + 1}$.  Let $\fw^j = \fP^{j-e_P+r}\cap
  \gl_n(\cplx)$; it follows that $\fw^j / \fw^{j+1}$ determines a
  well-defined subset of  $\bfP^{j-e_P + r}$.  We claim that
  $\bar{\pi}^j_{\fu} (\fw^j/ \fw^{j+1}) = \fu^j/\fu^{j+1}$ for $1 \le
  j \le e_P - r$.  Observe that $t^{-1} \fg \supset \fP^{\ell} \supset
  \fP^{\ell+1} \supset t \fg$ for $-e_P \le \ell \le 0$.  Therefore,
  we may take a representative $\bn \in t^{-1} \gl_n(\cplx) +
  \gl_n(\cplx)$ for $\an + \fP^{-r+1}$.  Similarly, choose a
  representative $X + t^{-1} X'$ for $\bar{X} \in \bfP^{j-e_P+r}$,
  where $X, X' \in \gl_n(\cplx)$.  It follows that $\langle \ad(X+
  t^{-1} X') \bn, Y \rangle_\nu = \langle \ad(X) \bn, Y \rangle_\nu$
  whenever $Y \in \fu$.  This proves the claim, and we conclude that
  $\ad^* (\fw^1) (\alpha)$ surjects onto $\fu^\vee / \fu^{e_P - r +
    1}$ .
  
  Finally, let $X \in \fw^1$.  The action of $\GL_n(\cplx)$ on
  $\widetilde{\mathscr{M}}(A))$ gives rise to a map $\gl_n(\cplx)\to
  T_s\widetilde{\mathscr{M}}(A)\subset
  \fu\backslash\gl_n(\cplx)\times\fg^\vee$; explicitly, $X\mapsto
  (-X,\ad^*(X)\a)$, which is sent to $\res(\ad^*(X) (\alpha))$ by
  $d\tmug$.  Therefore, $d \tmug$ maps tangent vectors coming from
  $\fw^1\subset \gl_n(\cplx)$ surjectively onto $\fu^\vee/\fu^{e_P - r
    + 1}$.  It follows that the image of $d \tmug$ contains
  $\fu^\vee$, so $\tmug$ is a submersion.

\end{proof}

\begin{lemma}\label{freeact}
$\GL_n(\cplx)$ acts freely on $\widetilde{\mathscr{M}}(A)$.
\end{lemma}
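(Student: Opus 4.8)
The plan is to reduce the statement to the rigidity property recorded in Corollary~\ref{newlemma}. Since $h\in\GL_n(\cplx)$ acts on $\widetilde{\mathscr{M}}(A)$ by $h\cdot(Ug,\alpha)=(Ugh^{-1},\Ad^*(h)\alpha)$, the stabilizer of $(Ug,\alpha)$ is conjugate under $g$ to the stabilizer of $g\cdot(Ug,\alpha)=(U,\Ad^*(g)\alpha)$; so it is enough to show that if $(U,\alpha)\in\widetilde{\mathscr{M}}(A)$ and $h\in U$ satisfies $\Ad^*(h)\alpha=\alpha$, then $h=I$. When $e_P=1$ we have $P=\GL_n(\fo)=G$, so $P^1=G^1$ and $U=P^1/G^1$ is trivial, and there is nothing to prove; thus assume $e_P\ge2$, so that $r\ge1$.

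Fix $\nu$ of order $-1$. By Proposition~\ref{duality} we have $\fg^\perp=t\gl_n(\fo)$, so $\alpha$ has a unique representative $\alpha_\nu=\sum_{i\le0}(\alpha_\nu)_i t^i$ with $(\alpha_\nu)_i\in\gl_n(\cplx)$ supported in nonpositive degrees. Because $h$ is a constant matrix, $\Ad(h)$ preserves each homogeneous component $t^i\gl_n(\cplx)$, so the relation $\Ad^*(h)\alpha=\alpha$ — which says $\Ad(h)(\alpha_\nu)\equiv\alpha_\nu\pmod{t\gl_n(\fo)}$ — in fact forces $\Ad(h)((\alpha_\nu)_i)=(\alpha_\nu)_i$ for every $i\le0$, i.e. $\Ad(h)(\alpha_\nu)=\alpha_\nu$ exactly. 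Since $\tau(h)=0$, the gauge action of $h$ agrees with its adjoint action, so $h\cdot\alpha_\nu=\alpha_\nu$. On the other hand, the condition $\pi_{\fP^1}(\alpha)\in\orbit^1$ unwinds via the pairing to $\alpha_\nu\in\Ad(P^1)(A_\nu)+\fP\subseteq A_\nu+\fP^{1-r}$, where $A_\nu\in\ft^{-r}$ is the representative attached to the $T$-formal type $A$ (and is regular semisimple by Proposition~\ref{lemrss}). In particular $d+\alpha_\nu\nu$ contains the regular stratum $(P,r,\beta)$, so Theorem~\ref{thm4} furnishes $p_1\in P^1$ with $p_1\cdot\alpha_\nu=A_\nu'$ for some representative $A_\nu'\in\ft$ of $A$.

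Set $k=p_1hp_1^{-1}$. Using associativity of the gauge action, $k\cdot A_\nu'=(p_1h)\cdot\alpha_\nu=p_1\cdot(h\cdot\alpha_\nu)=p_1\cdot\alpha_\nu=A_\nu'$, so $k\in\Tfl$ by Corollary~\ref{newlemma}. Since $h\in U$ and $p_1\in P^1$ both lie in $\GL_n(\fo)$, reduction modulo $t$ gives well-defined images in $\bar G=\GL(L^0/tL^0)$ with $\bar h=\bar p_1^{\,-1}\bar k\,\bar p_1$. The image $\bar k$ of the block-scalar matrix $k\in\Tfl$ is semisimple, so $\bar h$ is semisimple; but $\bar h$ is the image of $h\in U$, hence unipotent. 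Therefore $\bar h=I$, so $\bar k=\bar p_1\bar h\bar p_1^{\,-1}=I$; and since $\Tfl\hookrightarrow\bar T^0\subseteq\bar G$ by Lemma~\ref{410}, this forces $k=I$ and finally $h=p_1^{-1}kp_1=I$.

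The heart of the argument — and the place where care is needed — is the observation in the second paragraph that a \emph{constant} $h$ fixing the functional $\alpha$ fixes the truncated representative $\alpha_\nu$ on the nose, not merely modulo $t\gl_n(\fo)$; this is exactly what lets one diagonalize the connection via Theorem~\ref{thm4} and then invoke the \emph{exact} rigidity of Corollary~\ref{newlemma}, after which the Jordan decomposition in $\bar G$ closes the proof. The reduction to the case in which the first coordinate is $U$, the identity $\fg^\perp=t\gl_n(\fo)$, and the translation of $\pi_{\fP^1}(\alpha)\in\orbit^1$ into $\alpha_\nu\in A_\nu+\fP^{1-r}$ are all routine.
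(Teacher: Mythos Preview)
Your proof is correct and follows essentially the same strategy as the paper: reduce to $g=1$ and $h\in U$, observe that a constant $h$ fixing the functional $\alpha$ must fix the nonpositive-degree representative $\alpha_\nu$ exactly, conjugate $\alpha_\nu$ into $\ft$ by an element of $P^1$, conclude the conjugate of $h$ lies in the torus, and then finish using the unipotence of $h$. The differences are only in the tools chosen at two points. First, the paper diagonalizes via the \emph{adjoint} action using Lemma~\ref{A1} and then invokes regular semisimplicity directly to land ${}^qh$ in $T(\fo)$, whereas you diagonalize via the \emph{gauge} action using Theorem~\ref{thm4} and then invoke Corollary~\ref{newlemma} to land $k$ in $\Tfl$. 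Second, for the endgame the paper uses the splitting $P^1\cong U\ltimes G^1$ to deduce $h$ is $U$-conjugate to an element of $\Tfl$ and hence trivial, while you pass to $\GL_n(\cplx)$ by reduction mod $t$ and use Jordan decomposition. Both routes are short and equivalent in spirit.

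One minor imprecision: the element $A_\nu'\in\ft$ produced by Theorem~\ref{thm4} is a representative of \emph{some} $T$-formal type (namely the formal type of $d+\alpha_\nu\,\nu$), not necessarily of $A$ itself, since the hypothesis $\pi_{\fP^1}(\alpha)\in\orbit^1$ only pins down $\alpha_\nu$ modulo $\fP$. This is harmless, as Corollary~\ref{newlemma} only requires $A_\nu'$ to represent some $T$-formal type.
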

\begin{proof}
  Suppose that $h\in \GL_n(\cplx)$ fixes $(U g, \alpha)$.  In
  particular, $U g h = U g$, so $ {}^g h\in
  U$.
  To show that $h=1$, it suffices to show that ${}^g h=1$, so without
  loss of generality, we may assume that $g = 1$ and $h \in U$.
  By Proposition~\ref{duality}, there exists a representative
  $\alpha_\nu \in \gl_n(F)$ for $\alpha$ with terms only in
  nonpositive degrees.  The fact that $\Ad^*(h^{-1})(\alpha)=\alpha$
  implies that $\Ad(h^{-1}) (\alpha_\nu) = \alpha_\nu + X$ for $X \in
  \mathfrak{g}^1$.  Since $h \in \GL_n(\cplx)$, $X = 0$, and we see
  that $\Ad(h^{-1})(\alpha_\nu) = \alpha_\nu$.

  We will show that $h$ is $P^1$-conjugate to an element of
  $T(\mathfrak{o})$.  In particular, since $P^1 \cong U\ltimes
  G^1$,  we
  see that $h$ is $U$-conjugate to an element of $T(\mathfrak{o}) G^1
  \cap \GL_n(\cplx) = \Tfl$.  Since $\Tfl \cap P^1$ is trivial,
  Corollary~\ref{uniformsplitting} implies that $h = 1$.

  Take $p \in P^1$ such that $\Ad^*(p) (\alpha) = A^1$; thus, $\Ad(p)
  (\alpha_\nu) \in \ft +\mathfrak{P}$. By
  Lemma~\ref{A1}, there exists $p' \in P^1$ and a representative
  $A^1_\nu \in \ft^{-r}$ of $A^1$ such that
  $\Ad(p') (\Ad(p) (\alpha_\nu)) = A^1_\nu$.  Therefore, setting $q =
  p'p\in P^1$, $\Ad(({}^q h)^{-1}) A^1_\nu = A^1_\nu$.  By
  Lemma~\ref{lemrss}, ${}^q h \in T\cap G= T(\mathfrak{o})$.
\end{proof}

\begin{lemma}\label{412}
  If $(Q g_1, \alpha)$ and $(Q g_2, \alpha)$ both lie in
  $\mathscr{M}(A)$, then $g_2 = p g_1$ for some $p \in Q$.
  Moreover, if $(U g_1, \alpha)$ and $(U g_2, \alpha)$ both
  lie in $\widetilde{\mathscr{M}}(A)$, then $g_2= u s g_1$ for some
  $u \in U$ and $s \in \Tfl$.
  \end{lemma}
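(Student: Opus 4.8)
The plan is to treat the two assertions in parallel, reducing each to the computation of a suitable stabilizer. For the first statement, suppose $(Qg_1,\alpha)$ and $(Qg_2,\alpha)$ both lie in $\mathscr{M}(A)$, so $\pi_{\mathfrak{P}}(\Ad^*(g_j)(\alpha))\in\orbit$ for $j=1,2$, where $\orbit$ is the $P$-coadjoint orbit of $\pi_\ft^*(A)$. Write $g=g_2g_1^{-1}$. The point is that $\pi_{\mathfrak{P}}(\Ad^*(g_1)(\alpha))$ and $\pi_{\mathfrak{P}}(\Ad^*(g)(\Ad^*(g_1)(\alpha)))=\pi_{\mathfrak{P}}(\Ad^*(g_2)(\alpha))$ lie in the same $P$-orbit; after left-translating by $P$ we may assume $\pi_{\mathfrak P}(\Ad^*(g_1)(\alpha))=\pi_\ft^*(A)$ and that $g$ moves $\pi_\ft^*(A)$ to another point of $\orbit$, hence (again translating by $P$) fixes $\pi_\ft^*(A)$ on $\mathfrak P$. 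So it suffices to show that the stabilizer in $\GL_n(\C)$ of $\pi_\ft^*(A)\vert_{\mathfrak P}$ under the coadjoint action lies in $Q$. By Proposition~\ref{gaugecoadjoint}, this coadjoint stabilizer corresponds to the gauge stabilizer of a representative $A_\nu\in\ft^{-r}$ inside $\GL_n(\C)$; by Corollary~\ref{newlemma}, any $h\in\GL_n(F)$ with $h\cdot A_\nu=A_\nu$ lies in $\Tfl$, and $\Tfl\subset P$ maps into $Q=P/G^1$. (One must be slightly careful that the coadjoint stabilizer of a restriction to $\mathfrak P$ can be larger than the gauge stabilizer; here the relevant condition is $h\cdot A_\nu\in A_\nu+\mathfrak P^1$, and Theorem~\ref{thm4} together with Corollary~\ref{newlemma} pins this down to $\Tfl\,P^1$, whose image in $\GL_n(\C)$ is $\Tfl$, lying in $Q$.) Unwinding the translations, $g_2\in Qg_1$ as claimed.

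For the second statement the argument is the analogous one, one filtration step finer. Suppose $(Ug_1,\alpha),(Ug_2,\alpha)\in\widetilde{\mathscr M}(A)$, so $\pi_{\mathfrak P^1}(\Ad^*(g_j)(\alpha))\in\orbit^1$, the $P^1$-orbit of $A^1=\pi_\ft^*(A)\vert_{\mathfrak P^1}$. As before, after left-multiplying $g_1$ and $g_2$ by elements of $P^1$ — which is legitimate since $U\backslash\GL_n(\C)\cong P^1\backslash G$ and $P^1\subset G$ — we may assume $\pi_{\mathfrak P^1}(\Ad^*(g_1)(\alpha))=A^1$ and that $g=g_2g_1^{-1}$ fixes $A^1$ on $\mathfrak P^1$. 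Now apply Proposition~\ref{gaugecoadjoint} (with the subgroup $P^1$): $g$ gauge-fixes a representative $A^1_\nu\in\ft^{-r}$ modulo $\mathfrak P^1$-stuff, i.e.\ $g\cdot A^1_\nu\in A^1_\nu+\mathfrak P^1$. By Theorem~\ref{thm4}, the $P^1$-gauge orbit of $A^1_\nu$ contains $A^1_\nu+\mathfrak P^1$ and $A^1_\nu$ is unique modulo $\ft^1$; combined with Corollary~\ref{newlemma}, the full gauge stabilizer of $A^1_\nu$ in $\GL_n(\C)$ satisfying $g\cdot A^1_\nu\in A^1_\nu+\mathfrak P^1$ is $\Tfl\,P^1\cap\GL_n(\C)$. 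Since $P^1\cong U\ltimes G^1$ and $\GL_n(\C)\cap G^1=\{1\}$, this intersection is $\Tfl\,(U\cap\GL_n(\C))=\Tfl\,U$ — more precisely it is $U\rtimes\Tfl$ with $\Tfl$ normalizing $U$. Hence $g\in U\Tfl$, giving $g_2=us\,g_1$ with $u\in U$, $s\in\Tfl$ after reabsorbing the earlier $P^1$-translations into $u$ (noting $U$ and $\Tfl$ together generate the image of $P^1$ in $U\backslash\GL_n(\C)$ and that $s\in\Tfl$ commutes appropriately).

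The main obstacle I expect is bookkeeping around the passage between the coadjoint picture on $(\mathfrak P^1)^\vee$ and the gauge picture on $\ft^{-r}$: Proposition~\ref{gaugecoadjoint} only gives that the trace-pairing map \emph{intertwines} the two $P$- (resp.\ $P^1$-)actions, so identifying the stabilizer of a restricted functional requires knowing that no "extra" elements fix the restriction without fixing the honest gauge class. This is exactly where Theorem~\ref{thm4}'s uniqueness-modulo-$\ft^1$ clause and Corollary~\ref{newlemma} must be invoked in tandem, and care is needed because $A^1$ is only a functional on $\mathfrak P^1$, not on all of $\ft$. A secondary, purely group-theoretic point to get right is the semidirect structure $P^1\cong U\ltimes G^1$ and the identifications $U\backslash\GL_n(\C)\cong P^1\backslash G$, $\Tfl\cap P^1=\{1\}$ (Corollary~\ref{uniformsplitting}), which together guarantee that the stabilizer coset decomposes cleanly as $U\Tfl$ rather than something larger; I would state these identifications explicitly at the start and then the two deductions become short.
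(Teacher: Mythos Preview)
Your reduction to a stabilizer computation is the right idea, and for the first statement your approach via Theorem~\ref{thm4} and Corollary~\ref{newlemma} can be made to work, essentially matching the paper's. But the second statement has a genuine gap.

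The slip is in the line ``$g\cdot A^1_\nu\in A^1_\nu+\mathfrak P^1$.'' The hypothesis $\pi_{\mathfrak P^1}(\Ad^*(g_j)(\alpha))\in\orbit^1$ translates via Proposition~\ref{duality} to the representative lying in $A_\nu+(\mathfrak P^1)^\perp=A_\nu+\mathfrak P$, \emph{not} $A_\nu+\mathfrak P^1$. This matters: Theorem~\ref{thm4} only guarantees that the $P^1$-gauge orbit of $A_\nu$ contains $A_\nu+\mathfrak P^1$, so you cannot conclude that the two elements (call them $\delta_\nu$ and $h\cdot\delta_\nu$, both in $A_\nu+\mathfrak P$) lie in the gauge orbit of a \emph{single} $A_\nu$. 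If you push each separately into $\mathfrak t$ by $P^1$-gauge, you get $A'_\nu$ and $A''_\nu$ which are a priori different (they may even have different formal types, agreeing only on $\mathfrak t^1$), so Corollary~\ref{newlemma} does not apply. You also cannot appeal to Proposition~\ref{gaugecoadjoint} for the element $h=p_2gp_1^{-1}$, since after your $P^1$-translations $h$ lies in $G$ but not (a priori) in $P$.

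The paper closes this gap by working with the \emph{adjoint} action rather than gauge, via Lemmas~\ref{A1} and~\ref{A0}: conjugate $\alpha_\nu$ and $\Ad(g)(\alpha_\nu)$ into $\mathfrak t$ by elements of $P^1$ (Lemma~\ref{A1}); the two resulting elements of $\mathfrak t$ are representatives of the same $\beta$ and are $\GL(V)$-conjugate, hence equal by Lemma~\ref{A0}; then the conjugating element centralizes a regular semisimple element (Proposition~\ref{lemrss}), so lies in $T\cap G=T(\mathfrak o)$. Unwinding gives $g\in T(\mathfrak o)P^1\cap\GL_n(\mathbb C)=U\Tfl$. The key ingredient your argument is missing is precisely Lemma~\ref{A0}, which supplies the equality $A'_\nu=A''_\nu$ that Corollary~\ref{newlemma} needs as input.
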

\begin{proof}
  Notice that $(Q , \Ad^*(g_1) \alpha)$ and $(Q g_2
  (g_1^{-1}), \Ad^*(g_1) \alpha)$ satisfy the conditions of the first
  statement.  There is a similar reformulation of the second
  statement.  Thus, we may assume without loss of generality that
  $g_1$ is the identity; we set $g_2 = g$.

  In the first case, note that by Lemmas~\ref{A1} and \ref{A0}, there
  exist $p_1, p_2 \in P$ such that $\Ad(p_1 g ) (\alpha_\nu) = A_\nu =
  \Ad(p_2) (\alpha_\nu)$ for some $A_\nu \in
  \mathfrak{t}$.  Since $p_1 g p_2^{-1}$ centralizes the regular
  semisimple element $A_\nu$, $p_1 g p_2^{-1} \in T \cap G =
  T(\mathfrak{o})$.  We conclude that $g \in P \cap
  \GL_n(\cplx)=Q$.

  In the second case, the same argument shows that whenever $\Ad^*(g)
  (\alpha) \in \orbit^1$, $g = p_1^{-1} s p_2 $ for some $s \in
  T(\mathfrak{o})$ and $p_i \in P^1$.  Since $P^1$ is normal in $P$,
  $g = u s$ for some $u \in P^1$.  By
  Corollary~\ref{uniformsplitting}, we may assume that $s \in \Tfl$.
  This implies that $u\in \GL_n(\cplx) \cap P^1 = U$ as desired.
\end{proof}

\begin{lemma}\label{short}
Let $\alpha \in \mathfrak{g}^\vee$ be a functional such that
$\pi_{\mathfrak{P}^1} (\alpha) = A^1$.  Then, if $s \in T(\mathfrak{o})$,
$\pi_{\mathfrak{P}^1} (\Ad^*(s ) \alpha)  = A^1$.
\end{lemma}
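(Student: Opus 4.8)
The plan is to unwind both projections together with the coadjoint action and then invoke the equivariance of the tame corestriction. First I would recall from Definition~\ref{def:formaltype} and the discussion preceding the definition of the extended orbits that $A^1=\pi_{\mathfrak{P}^1}(\pi_\ft^*(A))$; that is, $A^1$ is the restriction to $\mathfrak{P}^1$ of the functional $X\mapsto A(\pi_\ft(X))$ on $\mathfrak{P}$, where $\pi_\ft\colon\gl_n(F)\to\ft$ is the tame corestriction of Proposition~\ref{cores}. Consequently the hypothesis $\pi_{\mathfrak{P}^1}(\alpha)=A^1$ says precisely that $\alpha(X)=A(\pi_\ft(X))$ for every $X\in\mathfrak{P}^1$.

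Next I would compute directly. Fix $s\in T(\fo)$ and $X\in\mathfrak{P}^1$. By the definition of the coadjoint action, $(\Ad^*(s)\alpha)(X)=\alpha(\Ad(s^{-1})X)$. Since $s\in T(\fo)\subset P$ and $P$ stabilizes the lattice chain and hence normalizes each $\mathfrak{P}^i$, the element $\Ad(s^{-1})X$ again lies in $\mathfrak{P}^1$, so the hypothesis on $\alpha$ applies to it and gives $(\Ad^*(s)\alpha)(X)=A\big(\pi_\ft(\Ad(s^{-1})X)\big)$. Now Proposition~\ref{cores5}, applied to $s^{-1}\in T\subset N(T)$, yields $\pi_\ft(\Ad(s^{-1})X)=\Ad(s^{-1})\big(\pi_\ft(X)\big)$; and since $T$ is abelian with Lie algebra $\ft$ (concretely, $T=\prod_j E_j^\times$ acts trivially by conjugation on $\ft=\prod_j E_j$), conjugation by $s^{-1}$ fixes $\pi_\ft(X)\in\ft$. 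Hence $(\Ad^*(s)\alpha)(X)=A(\pi_\ft(X))=A^1(X)$, and as $X\in\mathfrak{P}^1$ was arbitrary this is exactly the claim $\pi_{\mathfrak{P}^1}(\Ad^*(s)\alpha)=A^1$.

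There is no genuine obstacle here; the lemma is essentially a bookkeeping exercise. The only points requiring care are (i) observing that $\Ad(s^{-1})$ preserves $\mathfrak{P}^1$ so that the hypothesis on $\alpha$ can be invoked at $\Ad(s^{-1})X$ rather than at $X$ itself, and (ii) combining the $N(T)$-equivariance of $\pi_\ft$ from Proposition~\ref{cores} with the triviality of $\Ad(T)$ on $\ft$, which together transport the torus action through the corestriction. One may optionally note that the argument in fact proves the stronger statement that $\Ad^*(s)$ fixes the full functional $\pi_\ft^*(A)$ on $\mathfrak{P}$, of which the stated lemma is the restriction to $\mathfrak{P}^1$.
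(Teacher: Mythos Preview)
Your proof is correct and essentially equivalent to the paper's. The paper argues on the side of representatives: since $A^1$ has a representative in $\ft$, any $\alpha_\nu$ lies in $\ft+\mathfrak{P}$, and $T(\fo)$ fixes $\ft$ pointwise while preserving $\mathfrak{P}=(\mathfrak{P}^1)^\perp$. You carry out the dual computation directly on functionals, invoking Proposition~\ref{cores5} for the $N(T)$-equivariance of $\pi_\ft$; this is the same idea, and your closing remark that the argument in fact proves $\Ad^*(s)$ fixes $\pi_\ft^*(A)$ on all of $\mathfrak{P}$ matches the paper's reasoning exactly.
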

\begin{proof}
  Since any representative of $A^1$ lies in $\ft(\fo)$, $\an \in \ft
  + \mathfrak{P}$.  The lemma is now clear, since $T(\mathfrak{o})$
  preserves $\mathfrak{P}$ and stabilizes $\ft$.
  \end{proof}

We are now ready to describe the relationship between
$\mathscr{M}(A)$ and $\widetilde{\mathscr{M}}(A)$.  Recall, from
Lemma~\ref{410}, that $\Tfl = T(\mathfrak{o}) \cap \GL_n(\cplx)$.
There is a left action of $\Tfl$ on $\widetilde{\mathscr{M}} (A)$
given by $s (U g, \alpha) = (U s g, \alpha)$.  To see this,
note that by assumption,
$\pi_{\mathfrak{P}^1} (\Ad^*(g) (\alpha)) \in \orbit^1$, so there
exists $u \in P^1$ such that $\Ad^*(u) (\pi_{\mathfrak{P}^1} (\Ad^*(g)
(\alpha))) = A^1$.  We wish to show that there exists $u' \in
P^1$ such that $\Ad^*(u') (\pi_{\mathfrak{P}^1} (\Ad^*(s g) (\alpha))) =
A^1$.  However,
\begin{multline}\label{Tact}
\Ad^*(u^s) (\pi_{\mathfrak{P}^1}( \Ad^*(s g) (\alpha))) =
\pi_{\mathfrak{P}^1} (\Ad^*({}^s u)  \Ad^*(s g) (\alpha)) = \\
\pi_{\mathfrak{P}^1} (\Ad^*(s)  \Ad^*(u g) (\alpha)) =
A^1,
\end{multline}
where the last equality follows from Lemma~\ref{short}.
In particular, $s (U g, \alpha) \in \widetilde{\mathscr{M}}(A)$.

We will show that this action is Hamiltonian with moment map
$\mu_{\Tfl}$
defined as follows.  Take $(U g, \alpha) \in
\widetilde{\mathscr{M}}(A)$.  There exists $u \in P^1$ such that
\begin{equation} \label{ucondition}
\pi_{\mathfrak{P}^1} (\Ad^*(u g) (\alpha)) = A^1.
\end{equation}
Define a map 
\begin{equation*}\label{mut}
\mu_{\Tfl} (U g, \alpha) = - (\Ad^*(u g) (\alpha))\vert_{\Tfl}.
\end{equation*}

We need to show that this map is well-defined.  Let $\tilde{A} =
\Ad^*(u g) (\alpha)$.  Suppose that $u' \in P^1$ satisfies
\eqref{ucondition}.  Observe that $\Ad^*(u' u^{-1}) (A^1) = A^1$.  By
Lemma~\ref{isotropy}, $u' u^{-1} \in T(\mathfrak{o}) P^{r}$.  It
suffices to show that whenever $s \in T(\mathfrak{o})$ and $p \in
P^r$, $(\Ad^*(s p)(\tilde{A}))\vert_{\Tfl}=(\tilde{A})\vert_{\Tfl}$.
In fact, we will prove the stronger statement:
\begin{equation} \label{tpres}\pi_{\ft\cap \fP}(\Ad^*(s p)
(\tilde{A}))=\pi_{\ft\cap \fP}(\tilde{A}).
\end{equation}

Fix a representative $\tilde{A}_\nu\in\mathfrak{P}^{-r}$.  By
Proposition \ref{cores4}, the projection $(\gl(F))^\vee \to
\mathfrak{t}^\vee$ corresponds to tame corestriction
$\pi_{\mathfrak{t}} : \gl_n(F) \to \mathfrak{t}$ after dualizing.
Thus, $\pi_\ft (\Ad(s p) (\tilde{A}_\nu))$ is a representative of
$\pi_{\ft\cap\fP}(\Ad^*(s p) \tilde{A})$.  Since $\pi_{\mathfrak{t}}$
commutes with the action of $\mathfrak{t}$, $\pi_{\mathfrak{t}} (\Ad(s
p) (\tilde{A}_\nu) )= \pi_{\mathfrak{t}} (\Ad(p) (\tilde{A}_\nu))$.
By Proposition~\ref{cores3}, $\pi_{\mathfrak{t}} (\Ad(p)
(\tilde{A}_\nu)) - \pi_{\mathfrak{t}}
(\tilde{A}_\nu)\in\mathfrak{P}^1=\fP^\perp$, so $\pi_{\mathfrak{t}}
(\tilde{A}_\nu)$ is a representative for both functionals in
\eqref{tpres} as desired.

The following lemma generalizes \cite[Lemma 2.3]{Boa}.  
The proof is more complicated, due to the absence of a 
`decoupling' map in the general case.
\begin{proposition}\label{411}  Let $\Lambda= A\vert_{\tfl}$.
  The action of $\Tfl$ on $\widetilde{\mathscr{M}}(A)$ is Hamiltonian
  with moment map $\mu_{\Tfl}$.  Moreover,
\begin{equation*}
\mathscr{M}(A) \cong \widetilde{\mathscr{M}}(A) \sslash_{-\Lambda} \Tfl.
\end{equation*}
\end{proposition}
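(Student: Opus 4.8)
The statement has two parts, which I would handle in order: that $\mu_{\Tfl}$ is a moment map for the $\Tfl$-action on $\widetilde{\mathscr{M}}(A)$, and that $\mathscr{M}(A)\cong\widetilde{\mathscr{M}}(A)\sslash_{-\Lambda}\Tfl$. For the first, well-definedness of $\mu_{\Tfl}$ has already been checked above (Lemma~\ref{isotropy}, Propositions~\ref{cores3} and~\ref{cores4}), so what remains is to recognize it as the moment map of a Hamiltonian action. The plan is to exhibit the $\Tfl$-action on $\widetilde{\mathscr{M}}(A)=(T^*G\times\orbit^1)\sslash_0 P^1$ as one that descends from the Hamiltonian $\Tfl$-action on $T^*G\times\orbit^1$ given by $s\cdot(g,\alpha,\beta)=(sg,\alpha,\Ad^*(s)\beta)$. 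Since $\Tfl\subset P$ normalizes $P^1$, this action conjugates the $P^1$-action into itself; and since $\pi_{\fP^1}$ is $\Tfl$-equivariant and $\Ad^*(\Tfl)$ fixes $A^1$ (Lemma~\ref{short}), it preserves $\mu_{P^1}^{-1}(0)$. Hence it descends, and the descended action is exactly $s\cdot(Ug,\alpha)=(Usg,\alpha)$. Lemma~\ref{moeq} then says the descended action is Hamiltonian with moment map determined by $\mu^{\mathrm{up}}_{\Tfl}\circ\iota_0=\overline{\mu}_{\Tfl}\circ\pi_0$, where $\mu^{\mathrm{up}}_{\Tfl}$ is the sum of the moment map for left translation by $\Tfl$ on $T^*G$ (restricted to $\tfl$) and the one for the $\Ad^*$-action of $\Tfl$ on $\orbit^1$; evaluating this along $\mu_{P^1}^{-1}(0)$ and using the definition of $\mu_{\Tfl}$ identifies $\overline{\mu}_{\Tfl}=\mu_{\Tfl}$.

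For the second part, which is the analogue of \cite[Lemma 2.3]{Boa} but lacks the decoupling map that in the $e_P=1$ case splits $\orbit$ as a product of $\orbit^1$ with a diagonal factor, I would work with the natural map directly. Since $U\subset Q$, the projection $U\backslash\GL_n(\cplx)\to Q\backslash\GL_n(\cplx)$ induces $\Phi\colon(Ug,\alpha)\mapsto(Qg,\alpha)$, defined wherever the target lands in $\mathscr{M}(A)$. I would show: (a) $\Phi$ carries $\mu_{\Tfl}^{-1}(-\Lambda)$ into $\mathscr{M}(A)$; (b) $\Phi$ restricted to $\mu_{\Tfl}^{-1}(-\Lambda)$ is onto $\mathscr{M}(A)$; (c) its fibres are exactly $\Tfl$-orbits; (d) the induced bijection $\overline{\Phi}\colon\widetilde{\mathscr{M}}(A)\sslash_{-\Lambda}\Tfl\to\mathscr{M}(A)$ is symplectic. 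For (a): given $(Ug,\alpha)\in\mu_{\Tfl}^{-1}(-\Lambda)$, pick $u\in P^1$ with $\pi_{\fP^1}(\Ad^*(ug)\alpha)=A^1$ and put $\beta=\Ad^*(ug)\alpha$; the constraint $\mu_{\Tfl}(Ug,\alpha)=-\Lambda$ says $\beta$ agrees with $\pi^*_\ft(A)$ on $\tfl$, hence (as $\ft^0\subset\fP^1+\tfl$) on $\ft^0$. Since $\pi_\fP(\Ad^*(g)\alpha)=\Ad^*(u^{-1})\pi_\fP(\beta)$ and $\orbit$ is $P$-stable, it suffices to prove $\pi_\fP(\beta)\in\orbit$; this is the crux, and the tool is the tame corestriction: Proposition~\ref{cores} lets one correct a representative $\beta_\nu$ by an element of $\fP^1$ so that its $\ft$-corestriction becomes $A_\nu$, after which the inductive conjugation of Lemma~\ref{A1} (with uniqueness from Lemma~\ref{A0}) puts $\pi_\fP(\beta)$ in $\orbit$. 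Claim (b) runs the same argument in reverse, using Lemma~\ref{A1} to manufacture the required framing; (c) is Lemma~\ref{412}: if $(Ug_1,\alpha)$ and $(Ug_2,\alpha)$ both map to $(Qg,\alpha)$ then $g_2=usg_1$ with $u\in U$ and $s\in\Tfl$, so $(Ug_2,\alpha)=(Usg_1,\alpha)=s\cdot(Ug_1,\alpha)$, that is, they lie in one $\Tfl$-orbit; and (d) follows by tracking the symplectic forms through Proposition~\ref{framing}, since $\widetilde{\mathscr{M}}(A)\sslash_{-\Lambda}\Tfl$ is a reduction of $T^*G\times\orbit^1$ by $P^1\rtimes\Tfl$ at the level that is $0$ on $\fP^1$ and $-\Lambda$ on $\tfl$, $\mathscr{M}(A)$ is a reduction of $T^*G\times\orbit$ by $P$ at $0$, and $\overline{\Phi}$ matches the two reduced forms.

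The real obstacle is step (a) — equivalently the surjectivity in (b): showing that imposing $\mu_{\Tfl}=-\Lambda$ on top of membership in $\widetilde{\mathscr{M}}(A)$, which controls only $\pi_{\fP^1}(\Ad^*(g)\alpha)$ up to $P^1$-coadjoint action, is exactly what upgrades to the strictly stronger condition $\pi_\fP(\Ad^*(g)\alpha)\in\orbit$ defining $\mathscr{M}(A)$. In Boalch's setting the decoupling map makes this transparent; here one must instead bridge the two distinct coadjoint orbits $\orbit$ and $\orbit^1$ using the corestriction and normal-form machinery (Proposition~\ref{cores}, Lemmas~\ref{A1}, \ref{A0}, \ref{412}), and the delicate bookkeeping is keeping track of which congruence-ideal corrections are available at each stage of the induction.
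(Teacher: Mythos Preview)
Your overall plan matches the paper's, and your handling of the reduction isomorphism (steps (a)--(d)) is essentially what the paper does. For (a) the paper is more direct than you suggest: from $\pi_{\fP^1}(\Ad^*(ug)\alpha)=A^1$ together with $\mu_{\Tfl}=-\Lambda$ one gets $\pi_\ft(\Ad(ug)\an)\in A_\nu+\ft^1$, and a single application of Proposition~\ref{cores3} then produces $X\in\fP^r$ with $\Ad(1+X)\Ad(ug)\an\in A_\nu+\fP^1$, so $\pi_\fP(\Ad^*(g)\alpha)\in\orbit$. Invoking Lemmas~\ref{A1} and~\ref{A0} is heavier than needed (Lemma~\ref{A1} conjugates into $\ft$, not into $A_\nu+\fP^1$, and Lemma~\ref{A0} does not by itself pin down the $\fP^1$-coset), though the route can be made to work with an extra appeal to Proposition~\ref{cores3}.

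The genuine gap is in the first part. You take for granted ``the [moment map] for the $\Ad^*$-action of $\Tfl$ on $\orbit^1$,'' but $\orbit^1$ is a coadjoint orbit of $P^1$, and $\Tfl$ acts on it through automorphisms of $P^1$; it is not automatic that this action is Hamiltonian, and even granting that, you have no formula for its moment map---yet you need one both to verify the $P^1$-invariance hypothesis of Lemma~\ref{moeq} on $\mu_{P^1}^{-1}(0)$ and to identify the descended map with the already-defined $\mu_{\Tfl}$. The paper supplies this by lifting $A^1$ to $\tilde{A}\in(\tfl)^\perp\subset(\tfl\oplus\fP^1)^\vee$, letting $\tilde{\orbit}$ be its $(\Tfl\ltimes P^1)$-coadjoint orbit, and proving in a separate lemma (Lemma~\ref{wrapup}) that the projection $\tilde{\pi}\colon\tilde{\orbit}\to\orbit^1$ is a $\Tfl$-equivariant symplectomorphism; the crux is that the $P^1$-stabilizers of $\tilde{A}$ and of $A^1$ coincide, checked via Lemma~\ref{isotropy} and Proposition~\ref{cores} parts~(3)--(4). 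The $\Tfl$-moment map on $\orbit^1$ is then $\tilde{\mu}(\beta)=\tilde{\pi}^{-1}(\beta)|_{\tfl}$, after which the $P^1$-invariance and the identification with $\mu_{\Tfl}$ become direct computations. This construction is exactly the complication the paper flags in remarking that there is no ``decoupling'' map in the general case.
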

\begin{proof}
  Recall that $\orbit^1$ be the $P^1$-coadjoint orbit containing
  $A^1$.  If $\beta \in \orbit^1$, we may take $\alpha \in
  \fg^\vee$ such that $\pi_{\fP^1} (\alpha) = \beta$.  The torus
  $\Tfl$ acts on $\orbit^1$ by $s \cdot \beta = \pi_{\fP^1}( (\Ad^*(s)
  (\alpha)))$.  (One sees that this element is in $\orbit^1$ by an
  argument similar to the one used to show \eqref{Tact}, and it is
  easily checked that it is independent of the choice of $\alpha$.)

  We construct a moment map for this action.  Consider the semi-direct
  product $\Tfl \ltimes P^1 \subset P$, and lift $A^1 \in \orbit^1$ to
  $\tilde{A} \in (\tfl)^\perp \subset (\tfl \times
  \mathfrak{P}^1)^\vee$.  Let $\tilde{\orbit} \subset (\tfl \times
  \mathfrak{P}^1)^\vee$ be the coadjoint orbit of $\tilde{A}$.  Since
  $\Tfl$ stabilizes $A^1$ by Lemma~\ref{short}, it is clear that it
  stabilizes $\tilde{A}$ as well.  In particular, $P^1$ acts
  transitively on $\tilde{\orbit}$.  We will prove in
  Lemma~\ref{wrapup} that the natural map $\tilde{\pi} :
  \tilde{\orbit} \to \orbit^1$ is a $\Tfl$-equivariant symplectic
  isomorphism.  Therefore, the moment map $\tilde{\mu} : \orbit^1 \to
  (\tfl)^\vee$ is given by
\begin{equation*}
\tilde{\mu} (\beta) = \pi_{\Tfl} (\tilde{\pi}^{-1} (\beta)),
\end{equation*}
where $\pi_{\Tfl}$ is the projection $(\tfl\times \mathfrak{P}^1)^\vee
\to (\tfl)^\vee$.

We remark that if a different lift of $A^1$ is chosen, say
$\tilde{A} + \gamma$ for $\gamma \in (\mathfrak{P}^1)^\perp \cong (\tfl)^\vee$, 
then 
\begin{equation}\label{altlift}
(\Ad^*(u) (\tilde{A} + \gamma)) (z) = 
\Ad^*(u) (\tilde{A}) (z) + \gamma(z)
\end{equation}
for $u\in P^1$ and $z \in \tfl$.
In particular, this changes $\widetilde{\mu}$ by a constant $\gamma$.

The action of $\Tfl$ on $\widetilde{\mathscr{M}}(A)$ descends from a
Hamiltonian action of $\Tfl$ on $T^* G \times \orbit^1$.  Indeed, if
$(g, \alpha, \beta) \in T^*G \times \orbit^1$, then $s (g, \alpha,
\beta) = (s g, \alpha, s\cdot\beta)$ defines a Hamiltonian action; the
moment map $\mu'$ is given by the sum of the natural moment
map on $T^* G$ and $\tilde{\mu}$ .
Moreover, $\Tfl$ preserves $\mu_{P^1}^{-1} (0)$, and the map from
$\mu_{P^1}^{-1} (0) \to \widetilde{\mathscr{M}}(A)$ is
$\Tfl$-equivariant.

We will show that the restriction of   $\mu'$ to $\mu_{P^1}^{-1} (0)$ is
$P^1$-invariant.  Let $(g, \alpha, \beta) \in \widetilde{\mathscr{M}}(A)$,
and define $\phi(g, \alpha)$ to be the projection of 
$\Ad^*(g) (\alpha)$ onto $(\tfl \times \mathfrak{P}^1)^\vee$.
Then, if $u \in P^1$,
\begin{equation*}\begin{aligned}
\mu' (u (g, \alpha, \beta)) & = 
\mu' (u g, \alpha, \Ad^*(u) \beta)  \\
& =\pi_{\Tfl} \left(-\Ad^*(u) \phi(g ,\alpha) +
\Ad^*(u) \tilde{\pi}^{-1} (\beta) \right) 
\end{aligned}
\end{equation*}
However, $\beta = \pi_{\mathfrak{P}^1} (\Ad^*(g) (\alpha))$
lies in $\orbit^1$,  so $\phi(g, \alpha)$ must lie in 
a coadjoint orbit containing $\tilde{A} - \gamma$
for some $\gamma \in (\tfl)^\vee$.
Equation~\eqref{altlift} implies that 
\begin{equation*}
\pi_{\Tfl} (- \phi(g, \alpha)+\tilde{\pi}^{-1}(\beta)) = \pi_{\Tfl} \left(
-\Ad^*(u) \phi(g, \alpha) +\Ad^*(u) \tilde{\pi}^{-1} (\beta) \right)
= \gamma.
\end{equation*}
Thus, $\mu'$ is $P^1$-invariant.  By Lemma~\ref{moeq}, 
the action of $\Tfl$ on $\widetilde{\mathscr{M}}(A)$
is Hamiltonian, and the moment map descends from $\mu'$.

It remains to show
that $\gamma = \mu_{\Tfl} (U g, \alpha)$.  By $P^1$-invariance,
it suffices to consider the case where $\pi_{\mathfrak{P}^1} (\Ad^*(g) (\alpha)) = A^1$.
By construction, $\tilde{\mu}(A^1) = 0$, so 
$\gamma = -\Ad^*(g) (\alpha) |_{\tfl} = \mu_{\Tfl}(U g, \alpha)$.

We now prove that $\mathscr{M}(A) \cong \widetilde{\mathscr{M}}(A)
\sslash_{-\Lambda} \Tfl$.  First, we show that if $\mu_{\Tfl}(U
g,\alpha)= -\Lambda$, then $(Q g,\a)\in\orbit$.  Let $u \in P^1$
satisfy $\pi_{\mathfrak{P}^1} (\Ad^* (u g) \alpha) = A^1$.  Choosing a
representative $\an$ for $\a$, we have $\Ad(ug)(\an)\in \An+\fP$.
Applying $\pi_\ft$, we see that $\pi_\ft(\Ad(ug)(\an))=\An+z$ for some
$z\in\ft(\fo)$.  In fact, $z\in\ft\cap \fP^1$ because the restrictions
of $\Ad^* (u g) \alpha$ and $A$ to $(\tfl)^\vee$ agree.  By
Proposition~\ref{cores3}, there exists $X \in \mathfrak{P}^r$ such
that $\Ad(1+X) \Ad (u g) \alpha_\nu) \in A_\nu + \mathfrak{P}^1$, so
$\pi_{\mathfrak{P}} (\Ad^*((1+X)u g) \alpha) =
\pi^*_\ft(A)$, i.e., $\Ad^*(g) (\alpha) \in
\orbit$.  Thus, we have a map $\mu_{\Tfl}^{-1} (-\Lambda) \to
\mathscr{M}(A)$ given by $(U g, \alpha) \mapsto (Q g, \alpha)$.
Lemma~\ref{412} shows that the fibers of the map are $\Tfl$-orbits,
and we obtain the desired isomorphism.
\end{proof}

\begin{lemma}\label{wrapup}
In the notation from the previous proof, the map $\tilde{\pi} : \tilde{\orbit} \to \orbit^1$
is a $\Tfl$-equivariant symplectic isomorphism.
\end{lemma}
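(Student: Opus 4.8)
The plan is to realize both $\tilde{\orbit}$ and $\orbit^1$ as homogeneous spaces for $P^1$ with a common isotropy group, and then to verify $\Tfl$-equivariance and the symplectic property by a computation at the base points $\tilde A\in\tilde{\orbit}$ and $A^1\in\orbit^1$. Write $\mathfrak{l}=\tfl\oplus\fP^1$ for the Lie algebra of $\Tfl\ltimes P^1$, so that $\tilde{\pi}:\mathfrak{l}^\vee\to(\fP^1)^\vee$ is restriction of functionals; note that $\fP^1$ is an ideal of $\mathfrak{l}$ (since $\tfl\subset\fP^0$ and $[\fP^0,\fP^1]\subset\fP^1$), so $P^1$ is normal in $\Tfl\ltimes P^1$ and $\tilde{\pi}\circ\Ad^*(g)=\Ad^*(g)\circ\tilde{\pi}$ for every $g\in\Tfl\ltimes P^1$. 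In particular $\tilde{\pi}$ is $\Tfl$-equivariant, the $\Tfl$-action on $\orbit^1$ being the coadjoint one since $\Tfl$ normalizes $P^1$; so that part is free.

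First I would check that $\Tfl$ --- indeed all of $T(\fo)$ --- fixes $\tilde A$: on $\fP^1$ this says $\Ad^*(s)A^1=A^1$, which follows from Lemma~\ref{short} applied to $\pi_\ft^*(A)$ together with the commutation identity above; on $\tfl$ it is immediate since $\tilde A\in(\tfl)^\perp$ and $\Ad(s)$ fixes $\tfl$ pointwise ($T$ abelian). Hence $\tilde{\orbit}=(\Tfl\ltimes P^1)\cdot\tilde A=P^1\cdot\tilde A$, so $P^1$ acts transitively on $\tilde{\orbit}$, and $\tilde{\pi}(\tilde{\orbit})=P^1\cdot\tilde{\pi}(\tilde A)=P^1\cdot A^1=\orbit^1$, i.e.\ $\tilde{\pi}$ is surjective.

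The crux is $\mathrm{Stab}_{P^1}(\tilde A)=\mathrm{Stab}_{P^1}(A^1)$; the inclusion $\subseteq$ is formal from $\tilde{\pi}\circ\Ad^*(u)=\Ad^*(u)\circ\tilde{\pi}$. For $\supseteq$, recall from Lemma~\ref{isotropy} with $i=1$ that $\mathrm{Stab}_{P^1}(A^1)=T^1P^r$, a subgroup generated by $T^1$ and $P^r$, so it suffices to show each fixes $\tilde A$. The case of $T^1\subset T(\fo)$ was handled above. For $u=\exp(X)\in P^r$ and $z\in\tfl$, expanding $(\Ad^*(u)\tilde A)(z)=\tilde A(\Ad(u^{-1})z)$ gives constant term $\tilde A(z)=0$, linear term $\tilde A([z,X])=A^1([z,X])=A\big(\pi_\ft([z,X])\big)$, and higher terms in $(A\circ\pi_\ft)(\fP^{2r})$. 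Now $\pi_\ft([z,X])=[z,\pi_\ft(X)]=0$ since $\pi_\ft$ is a $\ft$-bimodule map (Proposition~\ref{cores}) and $\ft$ is commutative, while $(A\circ\pi_\ft)(\fP^{2r})\subseteq A(\ft^{2r})\subseteq A(\ft^{r+1})=0$ because $2r\ge r+1$; together with the fact that $u$ fixes $A^1=\tilde A|_{\fP^1}$, this yields $\Ad^*(u)\tilde A=\tilde A$. So $P^r$, and hence $T^1P^r$, fixes $\tilde A$. This is the step I expect to be the main obstacle: it is exactly where the lack of a product (``decoupling'') structure forces one to invoke the corestriction identity $\pi_\ft([\ft,\cdot])=0$. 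Granting it, $\tilde{\pi}$ induces a bijection $P^1/\mathrm{Stab}_{P^1}(\tilde A)\xrightarrow{\sim}P^1/\mathrm{Stab}_{P^1}(A^1)$, hence an isomorphism of varieties $\tilde{\orbit}\xrightarrow{\sim}\orbit^1$.

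Finally I would check that $\tilde{\pi}$ is symplectic. Since $\Tfl$ fixes $\tilde A$, the tangent space $T_{\tilde A}\tilde{\orbit}$ equals $\{\ad^*(X)\tilde A:X\in\fP^1\}$, and the Kirillov--Kostant--Souriau form there is $\omega_{\tilde{\orbit}}(\ad^*(X)\tilde A,\ad^*(Y)\tilde A)=\tilde A([X,Y])=A^1([X,Y])$, using $[X,Y]\in\fP^1$; on $\orbit^1$ the KKS form is $\omega_{\orbit^1}(\ad^*(X)A^1,\ad^*(Y)A^1)=A^1([X,Y])$, and $d\tilde{\pi}_{\tilde A}(\ad^*(X)\tilde A)=\ad^*(X)A^1$ by $P^1$-equivariance. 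Hence $\tilde{\pi}^*\omega_{\orbit^1}=\omega_{\tilde{\orbit}}$ at $\tilde A$; since $\tilde{\pi}$ is $P^1$-equivariant, both forms $P^1$-invariant, and $P^1$ transitive on $\tilde{\orbit}$, the equality propagates everywhere. (As in the remark following Proposition~\ref{framing}, all of this may be carried out in finite-dimensional quotients $G/G^\ell$, so the coadjoint-orbit formalism is legitimate.) This completes the proof.
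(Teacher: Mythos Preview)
Your proof is correct and follows essentially the same strategy as the paper: identify both orbits as $P^1$-homogeneous spaces, show the stabilizers of $\tilde A$ and $A^1$ in $P^1$ agree (both equal to $T^1 P^r$ via Lemma~\ref{isotropy}), and check the Kirillov--Kostant--Souriau forms match at the base point. The minor differences are that you obtain $\Tfl$-equivariance more directly from the general identity $\tilde\pi\circ\Ad^*(g)=\Ad^*(g)\circ\tilde\pi$ (valid since $\fP^1$ is an ideal of $\tfl\oplus\fP^1$), and you verify that $P^r$ fixes $\tilde A$ on $\tfl$ via the exponential series and the corestriction identity $\pi_\ft([z,X])=[z,\pi_\ft(X)]=0$, whereas the paper instead applies Proposition~\ref{cores}(3) to show $\pi_\ft(\Ad(u)\tilde A_\nu)\equiv\pi_\ft(\tilde A_\nu)\pmod{\fP^1}$ directly; both arguments rest on the same corestriction machinery.
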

\begin{proof}
  First, we show $\Tfl$-equivariance.  We have already observed that
  there is a transitive action of $P^1$ on $\tilde{\orbit}$ and that
  $\Tfl$ stabilizes $\tilde{A}$.  For any $s\in\Tfl$ and $u\in P^1$,
  we calculate
\begin{equation*}\begin{aligned}
\tilde{\pi} (\Ad^*(s) \Ad^*(u) \tilde{A})& = 
\tilde{\pi} (\Ad^*({}^su) \tilde{A}) = \Ad^*({}^su) \tilde{\pi} (\tilde{A}) \\
& = \pi_{\fP^1}(\Ad^*(s) \Ad^*(u) \Ad^*(s^{-1}) (\pi^*_\ft(A))) \\ 
& = \pi_{\fP^1}(\Ad^*(s) \Ad^*(u) (\pi^*_\ft(A)))=s\cdot (\Ad^*(u) (\pi^*_\ft(A))).
\end{aligned}
\end{equation*}

Next, we show that the stabilizer of $\tilde{A}$ in $P^1$ is the same
as the stabilizer of $\pi^*_\ft(A)$.  Let $\tilde{A}_\nu \in \mathfrak{P}^{-r}$
be a representative for $\tilde{A}$.  In fact, $\tilde{A}_\nu \in
(\mathfrak{t} + \mathfrak{P}) \cap (\tfl)^\perp$.  By
Lemma~\ref{isotropy}, the stabilizer of $\pi^*_\ft(A)$ is precisely
$(T(\mathfrak{o}) \cap P^1) P^r$ if $r\ge 2$ or $P^1$ if $r=1$ (in
which case $\pi^*_\ft(A)$ is a singleton orbit).  It suffices to show that this
group stabilizes $\tilde{A}$, as the stabilizer of $\tilde{A}$ is a
subgroup of the stabilizer of $\pi^*_\ft(A)$.  Since $\tilde{A}_\nu \in
\mathfrak{t}+\mathfrak{P}$, $T\cap P^1$ stabilizes $\tilde{A}$.  Now,
take $u \in P^r$, $z \in \tfl$ and $X \in \mathfrak{P}^1$.  We see
that
\begin{equation*}
\Ad^*(u) (\tilde{A}) (z + X) = \tilde{A} (\Ad(u^{-1}) z + \Ad(u^{-1}) X) =
\tilde{A} (\Ad(u^{-1}) z) + \tilde{A} (X),
\end{equation*}
so we need only check that $\Ad^*(u) \tilde{A} (z) = \tilde{A}(z)$.
However, by Proposition~\ref{cores4}, $\Ad^*(u) \tilde{A} (z) =
\langle \Ad (u) \tilde{A}_\nu, z \rangle_\nu = \langle
\pi_{\mathfrak{t}} (\Ad (u) \tilde{A}_\nu), z \rangle_\nu$.
Proposition~\ref{cores3} implies that $\pi_{\mathfrak{t}} (\Ad(u)
\tilde{A}_\nu) \equiv \pi_{\mathfrak{t}} (\tilde{A}_\nu)
\pmod{\mathfrak{P}^1}$.  It follows that the stabilizer of $\tilde{A}$
is, indeed, the same as the stabilizer of $\pi^*_\ft(A)$; moreover, since
$\tilde{\pi}$ is a $P^1$-map, it follows that $\tilde{\pi}$ is an
isomorphism.

Finally, we need to show that $\tilde{\pi}$ preserves the natural symplectic
form on each coadjoint orbit.  In particular, since $\tilde{\orbit}$ and
$\orbit^1$ are $P^1$ orbits, it suffices by transitivity to show that
the symplectic forms are the same at 
$\tilde{A}$ and $A^1$.  In other words,
we need to show that
$\tilde{A} ([ X_1+z_1, X_2+z_2]) =A^1 ([X_1, X_2])$
for $z_j \in \tfl$ and $X_j \in \fP^1$.
This is clear, since the restriction of $\tilde{A}$ to
$\mathfrak{P}^1$ is exactly $A^1$, and $\tfl$ lies
in the kernel of the symplectic form at $\tilde{A}$.
\end{proof}

\subsection{Proof of the theorem}\label{proof}

Let $V$ be a trivializable vector bundle on $\proj^1$, and let
$\nabla$ be a meromorphic connection with singularities at $\{x_1,
\ldots, x_m \}$.  We assume that $\nabla$ has compatible framings
$\{g_1, \ldots, g_m\}$ at each of the singular points and that
$\nabla$ has formal type $A_i\in\ft_i^\vee$ at $x_i$.  We define
$\orbit_i \subset \mathfrak{P}_i$ (resp. $\orbit^1_i \subset
\mathfrak{P}_i^1$) to be the coadjoint orbit of $\pi^*_\ft(A_i)$ under $P_i$
(resp. $P^1_i$).  We fix a global trivialization as in the beginning
of the section; as usual, we will use this fixed trivialization to
identify subgroups of $\GL(V_x)$ and $\GL_n(F_x)$, etc.

\begin{definition}
  The \emph{principal part} $[\nabla_x]^{pp}$ of $\nabla$ at $x$ is
  the image of $ [\nabla_x]$ in $\fg_x^\vee$ by the residue-trace
  pairing.
\end{definition}
 To give an example, if
$[\nabla_0] = M_{-1} \frac{dt}{t^2} + M_{0} \frac{dt}{t} + M_1 dt +
M_2 t dt + \ldots$, with the $M_i\in\GL_n(\cplx)$, then
$[\nabla_0]^{pp} (X) = \Res (\Tr ((M_{-1} \frac{dt}{t^2} + M_{0} \frac{dt}{t}) X))$
for any $X \in \fg_0$.  

We set $[\nabla_i]^{pp}$ to be the principal part of
$\nabla$ at $x_i$.  It is a consequence of
the duality theorem (\cite[Theorem II.2]{Se}) that
$\nabla$ is uniquely determined by the collection
$\{[\nabla_i]^{pp}\}$.  Moreover, the residue theorem (\cite[Proposition
II.6]{Se}) shows that $\sum_{i} \res([\nabla_i]^{pp}) = 0$.

If $g_i$ is a compatible framing for $\nabla$ at $x_i$, 
\begin{equation*}
  \pi_{\mathfrak{P}_i} ( (\Ad^*(g_i) [\nabla_i]^{pp})  \in \orbit_i \qquad
  \text{and} \qquad \pi_{\mathfrak{P}^1_i} (  \Ad^*(g_i) [\nabla_i])^{pp})  \in \orbit^1_i.
\end{equation*}
This follows from the observation that $g_i \cdot [\nabla_i]^{pp} =
\Ad^*(g_i) [\nabla_i]^{pp}$ and Proposition~\ref{gaugecoadjoint}.  In
particular, since $g_i \cdot [\nabla_i]$ is formally gauge equivalent
to $\pi^*_\ft(A)$ by an element of $p_i \in
P_i^1$, it follows that $\Ad^*(p_i) \Ad^*(g_i) [\nabla_i]^{pp} =
\pi^*_\ft(A)$.

Finally, we need to define extended orbits $\mathscr{M}(A)$ and
$\widetilde{\mathscr{M}}(A)$ in the case where $A$ is a regular
singular formal type.  In particular, the corresponding uniform
stratum is of the form $(G, 0, \b)$.  Since $\pi^*_\ft(A)$ is a functional on
$\fg$ that kills $\fg^1$, we may think of $\pi^*_\ft(A)$ as an element of
$\gl_n(\cplx)^\vee$.  We define $(\tfl)' \subset \gl_n(\cplx)^\vee$ to
be the set of functionals of the form $\phi(X) =
\Tr(DX)$,  where $D\in
\tfl $ is a diagonal matrix with distinct eigenvalues modulo $\Z$.

The following definition comes from Section 2 of \cite{Boa}.
\begin{definition} Let $A$ be a formal type corresponding to a stratum
  $(G, 0, \b)$.  Define $\mathscr{M} (A) = \orbit_A$, the coadjoint
  orbit of $\pi^*_\ft(A)$ in $\mathfrak{g}^\vee$.
  Moreover, let
\begin{equation*}
\widetilde{\mathscr{M}}(A) := \{ (g, \a) \in \GL_n(\cplx) \times \gl_n(\cplx)^\vee \mid 
\Ad^*(g) \a \in (\tfl)' \} \subset G \times \mathfrak{g}^\vee.
\end{equation*}
\end{definition}
\begin{rmk}
  This definition of $\mathscr{M}(A)$ coincides with the definition
  for $r>0$ given in \eqref{madef} (where now $Q=\GL_n(\cplx)$), but
  this is not true for $\widetilde{\mathscr{M}}(A)$.  Indeed,
  $\widetilde{\mathscr{M}}(A)$ is independent of formal type when
  $r=0$.

  However, the essential results of Section~\ref{extended orbits}
  remain true in the regular singular case.  By \cite[Theorem
  26.7]{GuSt}, $\widetilde{\mathscr{M}} (A)$ is a symplectic
  submanifold of $T^* \GL_n(\cplx)$.  Moreover, $\Tfl$ (resp.
  $\GL_n(\cplx)$) acts on $\widetilde{\mathscr{M}} (A)$ by left
  multiplication (resp. inversion composed with right multiplication).
  The moment map for $\Tfl$ is simply $(g, X) \mapsto -\Ad^*(g) (X)$,
  and the map $(g, \a) \mapsto \a$ induces an isomorphism
  $\widetilde{\mathscr{M}} (A) \sslash_{-A}\Tfl \cong \mathscr{M}(A)$.
\end{rmk}

\begin{proof}[Proof of Theorem~\ref{modthm}]
  For each $x_i$, set $\mathscr{M}_i = \mathscr{M}(A^i)$, and
  $\widetilde{\mathscr{M}}_i = \widetilde{\mathscr{M}}(A^i)$.  As  above, a
  meromorphic connection $\nabla$ on $\proj^1$ is uniquely determined
  by the principal parts at its singular points $\{x_i\}$.  
  Moreover,
  any collection $\{ M_i \}$, where $M_i\in \fg_i^\vee$, that also
  satisfies the residue condition 
  \begin{equation}\label{rescondition}
\sum_i \res (M_i ) = 0
\end{equation}
  determines a unique connection with singularities only at the
  $x_i$'s and with principal part at $x_i$ given by $M_i$.

  There is a map $\mathscr{M}_i \to \fg_i^\vee$ obtained by taking
  $(Q_i g, \alpha_i)$ to $\a_i$.  Lemma~\ref{412}
  implies that this map is one-to-one, and it identifies elements of
  $\mathscr{M}_i$ with the principal part of a framed connection at
  $x_i$ with formal type $A_i$.  We conclude that any element of
  $\prod_i \mathscr{M}_i$ satisfying \eqref{rescondition} uniquely
  determines a connection $\nabla$ with framed formal type $A_i$ at
  $x_i$.

  The action of $\GL_n(\cplx)$ on $\prod_i \mathscr{M}_i$ induced by
  its action on global trivializations of $V$ is the product of the
  left actions on $\mathscr{M}_i$ given in \eqref{rtact}. Therefore,
  it follows from Proposition~\ref{glmoment} that the moment map of
  this action is simply
\begin{equation*}
\mu : \prod_i (Q_i g_i, \alpha_i) \mapsto \sum_i \res(\alpha_i).
\end{equation*}  
The moment map $\tmu:\prod_i
\widetilde{\mathscr{M}}_i\to\gl_n(\cplx)^\vee$ is defined similarly.
The residue condition \eqref{rescondition} now translates into an
$m$-tuple lying in $\mu^{-1}(0)$, so
\begin{equation*}
\mathscr{M}^*(\mathbf{A}) \cong \left(\prod_i \mathscr{M}_i\right) \sslash_0 \GL_n(\cplx).
\end{equation*}
Similarly, $\widetilde{\mathscr{M}}^*(\mathbf{A}) \cong \left(\prod_i
  \widetilde{\mathscr{M}}_i\right) \sslash_0 \GL_n(\cplx)$: the map
$\tmu^{-1}(0)\to \widetilde{\mathscr{M}}^*(\mathbf{A})$ takes
$(U_ig_i,\a_i)$ to the data $(V,\nabla,\mathbf{g})$, where $\nabla$
has principal part $\a_i$ at $x_i$ and $\mathbf{g}=(U_ig_i)$.

By Lemma~\ref{freeact}, $\GL_n(\cplx)$ acts freely on $\widetilde{\mathscr{M}}_i$,
so the action on $\prod_i \widetilde{\mathscr{M}}_i$ is free.  Moreover,
Lemma~\ref{submersion} states  that $\tmu$ is a submersion on each factor,
so $\tmu$ is a submersion.  Therefore, $\tmu^{-1} (0)$ is smooth.
It follows
that $\widetilde{\mathscr{M}}^*(\mathbf{A})$ is a smooth symplectic
variety.

Finally, let $\Lambda_i = A_i\vert_{\tfl_i}.$ The action of $\prod_i
\Tfl_i$ on $\prod_i \widetilde{\mathscr{M}}_i$ commutes with the
action of $\GL_n(\cplx)$, so by Lemma~\ref{moeq}, there is a natural
Hamiltonian action of $\prod_i
\Tfl_i$ on $\widetilde{\mathscr{M}}^*(\mathbf{A})$.  Similarly, there
is a Hamiltonian action of $\GL_n(\cplx)$ on 
\begin{equation*}
\prod_i (\widetilde{\mathscr{M}}_i \sslash_{- \Lambda_i}
\Tfl_i )\cong (\prod_i
\widetilde{\mathscr{M}}_i) \sslash_{\prod_i (- \Lambda_i)} \prod
\Tfl_i.
\end{equation*}
We now see that taking the iterated symplectic reduction of the
product of local data by $\GL_n(\cplx)$ and the product of the local
tori is independent of order:
\begin{equation*}
\left( (\prod_i \widetilde{\mathscr{M}}_i) \sslash_0 \GL_n(\cplx) \right) \sslash_{\prod_i (-\Lambda_i)}
\prod_i \Tfl_i \cong 
\prod_i (\widetilde{\mathscr{M}}_i \sslash_{- \Lambda_i}
\Tfl_i ) \sslash_0 \GL_n(\cplx);
\end{equation*}
indeed, both are isomorphic to the symplectic reduction via the
product action: $\prod \widetilde{\mathscr{M}}_i \sslash_{(0,\prod_i
  (- \Lambda_i))} (\GL_n(\cplx)\times\prod \Tfl_i)$.  By
Proposition~\ref{411}, it follows that
\begin{equation*}
\widetilde{\mathscr{M}}^*(\mathbf{A}) \sslash_{\prod_i (-\Lambda_i)} \prod_i \Tfl_i
\cong \mathscr{M}^*(\mathbf{A}).
\end{equation*}
\end{proof}
\begin{rmk}
  In the case $m >1$ above, we only require $\mu$ to be a submersion
  on one factor in $\prod_{i = 1}^m \widetilde{\mathscr{M}}(A_i)$.  In
  particular, the residue map on $\widetilde{\mathscr{M}}(A_1) \times
  \prod_{i = 2}^m \mathscr{M}(A_i)$ is submersive.  Moreover, by
  Lemma~\ref{freeact}, the action of $\GL_n(\cplx)$ on
  $\widetilde{\mathscr{M}}(A_1) \times \prod_{i = 2}^m
  \mathscr{M}(A_i)$ is free.  Therefore, $\mathscr{M}' (\mathbf{A}) =
  \left( \widetilde{\mathscr{M}}(A_1) \times \prod_{i = 2}^m
    \mathscr{M}(A_i)\right) \sslash_0 \GL_n(\cplx)$ is smooth, and
  $\mathscr{M}' (\mathbf{A}) \sslash_{-\Lambda_1} \Tfl_1 \cong
  \mathscr{M}(\mathbf{A})$.
\end{rmk}

We state here a more general version of Theorem~\ref{modthm}.  The
proof is essentially the same; however, it allows us to consider
regular singular points with arbitrary monodromy.  In particular, this
construction includes the $\GL_n$ case of the flat $G$-bundle
constructed in \cite{FGr}.

Let $\{\hat{\orbit}_j\}$ be a collection of `non-resonant' adjoint
orbits in $\gl_n(\cplx)$; this means that the distinct eigenvalues of
elements $\hat{\orbit}_j$ do not differ by nonzero integers.  Using
the trace pairing, we may identify $\hat{\orbit}_j$ with a coadjoint
orbit $\orbit_j \subset \gl_n(\cplx)^\vee$.  Thus, we say that a
connection $\nabla$ on the trivial bundle $V$ over $C$ has residue in
$\orbit_j$ at $y_j \in C$ if the principal part at $y_j$ corresponds
to an element of $\orbit_j$ in $\gl_n(\cplx)^\vee$.  Equivalently,
$[\nabla_{y_j}]^{pp} = X \frac{dt}{t}$ for some $X \in
\hat{\orbit}_j$.  By the
standard theory of regular singular point connections (see, for
example, \cite[Chapter II]{Wa}), if a connection $(V, \nabla)$ has
non-resonant residue $X \in \hat{\orbit}_{j}$, then $(V, \nabla)$ is
formally equivalent to $d + X \frac{dt}{t}$.

Let $\mathbf{B} = \{\orbit_j\}$ be a finite collection of non-resonant
adjoint orbits corresponding to a collection of regular singular
points $\{y_j\} \subset C$, and let $\mathbf{A} = \{A_i\}$ be a finite
collection of formal types at $\{x_i\} \subset C$, disjoint
from $\{y_j\}$.
\begin{definition}
Define $\mathscr{M} (\mathbf{A}, \mathbf{B})$ to be the moduli space of
connections $\nabla$ on the trivial bundle $V$ with the following properties:
\begin{enumerate}
\item $(V, \nabla)$ is compatibly framed at each $x_i$, with formal type $A_i$;
\item $(V, \nabla)$ is regular singular and has residue in
  ${\orbit}_j$ at each $y_j$.
\end{enumerate}
If $\mathbf{A}$ is nonempty, we define the extended moduli space  $\mathscr{M}
(\mathbf{A}, \mathbf{B})$ of isomorphism classes of data
$(V,\nabla,\mathbf{g})$, where $(V,\nabla)$ satisfy the conditions
above and $\mathbf{g}=(g_i)$ is a collection of local compatible
framings at the $x_i$'s.
\end{definition}

We omit the proof of the following theorem, since it is almost
identical to the proof of Theorem~\ref{modthm}.  We note that a
similar construction is used in \cite{Boa2} and \cite{Boa1} in the
case where the $A_i$ are totally split.
\begin{theorem}\label{thm3} \mbox{}
\begin{enumerate}\item The moduli space $\mathscr{M} (\mathbf{A},
  \mathbf{B})$ is a symplectic reduction of the product of local data:
  \begin{equation*}\mathscr{M} (\mathbf{A}, \mathbf{B}) \cong \left[ \left( \prod_i
      \mathscr{M}(A_i) \right) \times \left( \prod_j \orbit_j \right)
  \right] \sslash_0 \GL_n(\cplx).
\end{equation*}
\item If $\mathbf{A}$ is nonempty, then $\widetilde{\mathscr{M}}
  (\mathbf{A}, \mathbf{B})$ is a symplectic manifold, and \begin{equation*}\widetilde{\mathscr{M}} (\mathbf{A}, \mathbf{B}) = 
\left[ \left( \prod_i \widetilde{\mathscr{M}}(A_i) \right) \times
\left( \prod_j \orbit_j \right) \right] \sslash_0 \GL_n(\cplx).
\end{equation*}
\item 
\begin{equation*}
\mathscr{M} (\mathbf{A}, \mathbf{B}) \cong
\widetilde{\mathscr{M}} (\mathbf{A}, \mathbf{B}) \sslash_{\prod_i (-\Lambda_i)} \left(\prod_i \Tfl_i\right).
\end{equation*}
\end{enumerate}
\end{theorem}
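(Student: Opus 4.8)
The plan is to transcribe the proof of Theorem~\ref{modthm} almost verbatim, with each regular singular point $y_j$ contributing the fixed coadjoint orbit $\orbit_j$ in place of an extended orbit. The ingredients that genuinely need re-examination are the parametrization of connections on the trivial bundle by their principal parts (now including the points $y_j$), the identification of $\orbit_j$ with the local data at $y_j$, and the book-keeping in the iterated symplectic reduction; the rest is mechanical.

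First I would recall from \cite[Theorem II.2]{Se} that a meromorphic connection $\nabla$ on $V$ with singularities contained in $\{x_i\}\cup\{y_j\}$ is uniquely determined by the collection of principal parts $\{[\nabla_{x_i}]^{pp}\}\cup\{[\nabla_{y_j}]^{pp}\}$, and from \cite[Proposition II.6]{Se} that these satisfy $\sum_i\res([\nabla_{x_i}]^{pp})+\sum_j\res([\nabla_{y_j}]^{pp})=0$, while conversely every collection of principal parts obeying this relation arises from a unique such $\nabla$. The local data at each $x_i$ is handled exactly as in Theorem~\ref{modthm}: by Lemma~\ref{412} the maps $(Q_ig,\alpha_i)\mapsto\alpha_i$ and $(U_ig,\alpha_i)\mapsto\alpha_i$ identify $\mathscr{M}(A_i)$ and $\widetilde{\mathscr{M}}(A_i)$ with, respectively, the set of principal parts and the set of (principal part, framing) pairs at $x_i$ of connections with formal type $A_i$, and by Proposition~\ref{glmoment} the corresponding $\GL_n(\cplx)$-moment map is $\res$. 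At a regular singular point $y_j$, an element of $\orbit_j$ regarded inside $\gl_n(\cplx)^\vee\hookrightarrow\fg_{y_j}^\vee$ is exactly the principal part $X\frac{dt}{t}$ of a connection with residue $X\in\hat{\orbit}_j$; non-resonance guarantees, by the standard theory of regular singular connections (\cite[Chapter II]{Wa}), that such a connection is formally equivalent to $d+X\frac{dt}{t}$, and conversely that a regular singular connection with residue in $\orbit_j$ has principal part of this shape, so $\orbit_j$ is precisely the local moduli space at $y_j$. The $\GL_n(\cplx)$-action coming from a change of global trivialization restricts on $\orbit_j$ to the coadjoint action (this is where Proposition~\ref{gaugecoadjoint} enters), whose moment map is the inclusion $\orbit_j\hookrightarrow\gl_n(\cplx)^\vee$, i.e. $\res([\nabla_{y_j}]^{pp})$ once again.

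Granting this, the $\GL_n(\cplx)$-moment map on $\left(\prod_i\mathscr{M}(A_i)\right)\times\left(\prod_j\orbit_j\right)$ sends a tuple to $\sum_i\res(\alpha_i)+\sum_j\beta_j$, so its zero set is exactly the set of principal-part data satisfying the residue relation, hence, by the previous paragraph, is in bijection with the objects of the relevant category; this proves part~(1), and the identical computation with $\widetilde{\mathscr{M}}(A_i)$ in place of $\mathscr{M}(A_i)$, the framing at each $x_i$ being carried along, proves the displayed identity in part~(2). For smoothness when $\mathbf{A}$ is nonempty, I would argue as in the remark following Theorem~\ref{modthm}: it suffices to have a single factor, say $\widetilde{\mathscr{M}}(A_1)$, on which $\GL_n(\cplx)$ acts freely and whose $\GL_n(\cplx)$-moment map is a submersion---both hold by Lemmas~\ref{freeact} and \ref{submersion} (or their analogues for regular singular formal types recorded above)---so that freeness and submersivity propagate to the whole product, $\tmu^{-1}(0)$ is smooth, and $\widetilde{\mathscr{M}}(\mathbf{A},\mathbf{B})$ is a smooth symplectic manifold.

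Finally, for part~(3) I would run the order-of-reduction argument from the end of the proof of Theorem~\ref{modthm}. The product action of $\prod_i\Tfl_i$, trivial on the $\orbit_j$-factors, commutes with $\GL_n(\cplx)$, so by Lemma~\ref{moeq} it descends to a Hamiltonian action on $\widetilde{\mathscr{M}}(\mathbf{A},\mathbf{B})$, and reducing first by $\GL_n(\cplx)$ and then by $\prod_i\Tfl_i$, or in the opposite order, both give the reduction by the product group $\GL_n(\cplx)\times\prod_i\Tfl_i$; applying Proposition~\ref{411} factorwise to rewrite $\widetilde{\mathscr{M}}(A_i)\sslash_{-\Lambda_i}\Tfl_i\cong\mathscr{M}(A_i)$ then yields $\widetilde{\mathscr{M}}(\mathbf{A},\mathbf{B})\sslash_{\prod_i(-\Lambda_i)}\left(\prod_i\Tfl_i\right)\cong\mathscr{M}(\mathbf{A},\mathbf{B})$. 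The only step demanding real care, rather than transcription, is the treatment of the $y_j$: one must verify that the coadjoint orbit $\orbit_j$ with its natural symplectic form really is the correct local piece, and in particular that no $\Tfl_j$-factor belongs at a regular singular point, which is exactly why the points of $\mathbf{B}$ enter as fixed orbits rather than as extended orbits.
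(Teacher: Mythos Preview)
Your proposal is correct and matches the paper's approach exactly: the paper simply states that the proof is almost identical to that of Theorem~\ref{modthm} and omits it, and you have spelled out precisely the modifications needed---treating each $y_j$ via its coadjoint orbit $\orbit_j$ with the inclusion as moment map, carrying along the residue/duality argument, and reusing Lemmas~\ref{freeact}, \ref{submersion}, \ref{moeq}, and Proposition~\ref{411} for the irregular factors. There is nothing to add.
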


\end{document}